\newcommandx{\com}[2][1=]{\todo[inline,linecolor=black,backgroundcolor=red!30,#1]{#2}}
\newcommand{\precdot}{\prec\mathrel{\mkern-5mu}\mathrel{\cdot}}
\title{Duality and bicrystals on infinite binary matrices}
\author{Thomas Gerber}
\address[T.G.]{\'Ecole Polytechnique F\'ed\'erale de Lausanne, Route Cantonale, 1015 Lausanne, Suisse.}
\email{thomas.gerber@epfl.ch}
\thanks{T.G. is supported by an \textit{Ambizione} grant of the Swiss National Science Foundation.}
\author{C\'edric Lecouvey}
\address[C.L.]{Universit\'e de Tours, Parc de Grandmont, 37200 Tours, France.}
\email{cedric.lecouvey@lmpt.univ-tours.fr}
\begin{document}

\begin{abstract}
The set of finite binary matrices of a given size is known to carry a finite type $A$ bicrystal
structure. 
We first review this classical construction, explain how it yields a
short proof of the equality between Kostka polynomials and one-dimensional
sums together with a natural generalisation of the $2M-X$ Pitman transform.
Next, we show that, once the relevant formalism on families of infinite binary matrices is introduced, this is a particular case of a much more general phenomenon.
Each such family of matrices is proved to be endowed with Kac-Moody bicrystal and tricrystal structures defined from
the classical root systems. 
Moreover, we give an explicit decomposition of these multicrystals, reminiscent of 
the decomposition of characters yielding the Cauchy identities.
\end{abstract}

\maketitle

\tableofcontents

\sloppy

\section{Introduction}

Crystals are oriented graphs which can be interpreted as the
combinatorial skeletons of certain modules for complex Lie algebras and their infinite-dimensional analogues: the Kac-Moody algebras.
Crystal bases were introduced by Lusztig (for any finite root system)
and Kashiwara (for classical root systems) in 1990. 
The graph structure arises from 
the action of the so-called Kashiwara operators,
a certain renormalisation of the Chevalley operators.
In Kashiwara's approach, crystals are obtained
via \textquotedblleft crystallisation\textquotedblright\ at $q=0$ of
representations of the corresponding quantum group, a $q$-deformation of
the Kac-Moody algebra introduced by Jimbo.
Later, it was proved that crystals coincide with Littelmann's graphs defined by using his path model.
Since its introduction, crystal theory has revealed numerous fruitful
interactions with modern particle physics theory and integrable systems.
We refer the reader to \cite{BumpSchilling2017} and the references therein for a recent exposition.

\medskip

The present paper is concerned with generalisations of crystals where
multimodule structures are considered rather than just ordinary module
structures. This means that we consider analogues of crystals for complex vector
spaces endowed with commuting actions of several Lie algebras (or Kac-Moody algebras). 
It turns out that most of the combinatorial structures 
(crystals, Fock spaces, one-dimensional sums, Pitman transforms) 
that we shall consider in the sequel were defined to solve problems connected to theoritical physics. 
We expect similar interactions with the results we establish here.

\medskip

The prototypical example of an interesting bicrystal is obtained by starting from the $\mathfrak{gl}_{n}\times
\mathfrak{gl}_{\ell}$-module $\mathbb{C}^{n}\otimes\mathbb{C}^{\ell}$ and
considering the associated symmetric and antisymmetric $(\mathfrak{gl}_{n}\times\mathfrak{gl}_{\ell})$-modules 
$S(\mathbb{C}^{n}\otimes\mathbb{C}^{\ell})$ and $\Lambda(\mathbb{C}^{n}\otimes\mathbb{C}^{\ell})$.
Using two sets of indeterminates $\{x_{1},\ldots,x_{n}\}$ and
$\{y_{1},\ldots,y_{\ell}\}$, one can check that their characters are given by
the formulas
\[
\mathrm{char} \ S(\mathbb{C}^{n}\otimes\mathbb{C}^{\ell})=\prod_{\substack{1\leq i\leq n \\ 1\leq j\leq \ell}}\frac{1}{1-x_{i}y_{j}}
\mand
\mathrm{char} \ \Lambda(\mathbb{C}^{n}\otimes\mathbb{C}^{\ell})=\prod_{\substack{1\leq i\leq n \\ 1\leq j\leq \ell}}(1+x_{i}y_{j}).
\]
The classical Cauchy identities then gives the decomposition of each
character in terms of the Schur functions. 
More precisely, recall that
the irreducible
finite-dimensional $\mathfrak{gl}_{n}$-modules are parametrised by
partitions of length $n$ (that is, nonincreasing sequences $\lambda=(\lambda_{1},\ldots,\lambda_{n})$
of nonnegative integers) and the character of the irreducible module parametrised by $\lambda$ is the Schur symmetric polynomial $s_{\lambda}(x)$ in the
variables $x_{1},\ldots,x_{n}$. We then have
\begin{align*}
\mathrm{char}\ S(\mathbb{C}^{n}\otimes\mathbb{C}^{\ell})  & =\prod_{\substack{1\leq i\leq n \\ 1\leq j\leq \ell}}\frac{1}{1-x_{i}y_{j}}=
\sum_{
\substack{
\lambda\text{ partition of} \\
\text{length } \min(n,\ell)
}}
s_{\lambda}(x)s_{\lambda}(y)
\mand
% \label{Cauchy1}
\\
\mathrm{char}\ \Lambda(\mathbb{C}^{n}\otimes\mathbb{C}^{\ell})  & =\prod_{\substack{1\leq i\leq n \\ 1\leq j\leq \ell}}(1+x_{i}y_{j})=
\sum_{
\substack{
\lambda\text{ contained in the } \\ \text{rectangle } n\times\ell
}}
s_{\lambda}(x)s_{\lambda^{\mathrm{tr}}}(y)\nonumber
\end{align*}
where $\lambda^{\mathrm{tr}}$ is the transpose of the partition $\lambda$. 
Both identities, which can be seen as a combinatorial version of Howe duality,
admit an elegant combinatorial proof based on the
Robinson-Schensted-Knuth (RSK) correspondence, see \cite{Fulton1997}. 
The idea is first to observe that
\[
\prod_{\substack{1\leq i\leq n \\ 1\leq j\leq \ell}}\frac{1}{1-x_{i}y_{j}}=\sum_{(m_{i,j})\in \sN}\prod_{\substack{1\leq i\leq n \\ 1\leq j\leq \ell}}(x_{i}y_{j})^{m_{i,j}}%
\]
where $\sN$ is the set of $n\times\ell$ matrices with nonnegative integer entries. Next, the RSK correspondence
yields a bijection between $\sN$ and the set of pairs
of semistandard tableaux with the same partition shape. The identity then
follows from the fact that $s_{\lambda}$ is the generating function of the set
of semistandard tableaux with shape $\lambda$ for the evaluation map on
tableaux. Similarly, one has%
\[
\prod_{\substack{1\leq i\leq n \\ 1\leq j\leq \ell}}(1+x_{i}y_{j})=\sum_{(m_{i,j})\in\sM}\prod_{\substack{1\leq i\leq n \\ 1\leq j\leq \ell}}(x_{i}y_{j})^{m_{i,j}}%
\]
where $\sM$ is the set of binary $n\times\ell$ matrices, and
the second Cauchy identity comes from an adapted version of the RSK correspondence.
This construction admits numerous extensions, notably based on
generalisations of the previous Cauchy identities due to Littlewood,
and involving the characters of simple modules corresponding to the
orthogonal or symplectic Lie algebras. 
We refer the reader to
\cite{FuLas2009} and the references therein for a more detailed presentation,
and for a generalisation to the Demazure characters. 
% It is also worth mentioning that the previous constructions have interesting applications not
% only in combinatorics and algebra but also in problems related to physics
% and percolation models \cite{BDJ1999}. 

\medskip

We now turn out to the main topic of this article. One can define two
commuting families of crystal operators (one for $\mathfrak{gl}_{n}$ and
another for $\mathfrak{gl}_{\ell}$) directly on the set of matrices
$\sN$ and $\sM$ yielding the structure of a
$(\mathfrak{gl}_{n}\times\mathfrak{gl}_{\ell})$-crystal, or type $(A_{n-1}\times A_{\ell-1})$-crystal.
This approach, explained in detail in \cite{VanLeeuwen2006}, see also \cite[Chapter 11]{Halacheva2016}, permits to overcome the RSK construction.
One gets the decomposition
\[
\sN=
\bigoplus_{\substack{
\lambda\text{ partition of} \\
\text{length } \min(n,\ell)
}}
B(\lambda)\otimes \dB(\lambda)
\mand
\sM=
\bigoplus_{
\substack{
\lambda\text{ contained in the } \\ \text{rectangle } n\times\ell
}}
B(\lambda)\otimes \dB(\lambda^{\mathrm{tr}})
\]
where $B(\lambda)$ (respectively $\dB(\lambda)$) is the crystal of the
$\mathfrak{gl}_{n}$-module (respectively $\mathfrak{gl}_{\ell}$-module) parametrised by $\lambda$. 
This immediately implies the Cauchy identities.
In the following sections, we shall present different
extensions of this construction of multicrystal structures on sets of binary
matrices (possibly infinite). 
They arise naturally from the notion of combinatorial Fock spaces $F(s)$, where $s$ is a integer.
The $A_{n-1}$ and $A_{\ell-1}$ crystal structures of $F(s)$ correspond to the $\mathfrak{gl}_{n}$ and $\mathfrak{gl}_{\ell}$ modules
\[
\bigoplus_{s_{1}+\ldots s_{\ell}=s}
\Lambda^{s_{1}}(\mathbb{C}^{n})\otimes\cdots\otimes\Lambda^{s_{\ell}%
}(\mathbb{C}^{n}),
\quad\text{ and }\quad
\bigoplus_{\dot{s}_{1}+\cdots\dot{s}_{n}=s}
\Lambda^{\dot{s}_{1}}(\mathbb{C}^{\ell})\otimes\cdots\otimes\Lambda^{\dot{s}_{n}}(\mathbb{C}^{\ell}).
\]
A similar phenomenon has been studied in the case where $\gl_n$ is replaced by a Lie superalgebra of type $A$ \cite{KwonPark2015}.
Note also that the notion of Fock space arises in different mathematical physics contexts.
% originally used to construct integrable representations of affine Lie algebras and representations of the Virasoro algebra. 
For instance, the corner transfer matrix method, used in the study of the Yang-Baxter equations, gives rise to the combinatorics of weighted paths. 
These, in turn, provide a realisation of the crystal base of the quantum group of $\widehat{sl_n}$. This approach has been extensively studied by the Kyoto group in the late 1980's and early 1990's, see in particular \cite{JMMO1991}. 
% More recently, the combinatorics of Fock spaces crystals has proved to be an important tool in the study of representations of Cherednik algebras.
% In another direction, examining the generating functions of weighted paths yield (generalisations of) the Rogers-Ramanujan identities in enumerative combinatorics.
More recently, the study of weighted paths has yielded (generalisations of) the Rogers-Ramanujan identities in enumerative combinatorics \cite{FodaWelsh2015}.
In another direction, in affine type $A$, there is a similar construction due to Uglov
\cite{Uglov1999} where the ordinary wedge products are replaced by their
thermodynamical limits. It plays a central role in the representation theory of
Cherednik algebras, see \cite{Shan2011}, and in the construction of some representations of the Virasoro algebra \cite{JMMO1991}. 
We get the structure of an 
$(\widehat{\mathfrak{sl}_{n}}\times \cH \times\widehat{\mathfrak{sl}_{\ell}})$-module 
where $ \cH $ is a Heisenberg algebra, and therefore a tricrystal structure on the affine Fock space.
This structure has been made completely explicit in \cite{GerberNorton2018}.

\medskip

A key tool in our approach is to exploit a combinatorial duality which permits to  easily switch between the different combinatorial
actions defined on $F(s)$. 
In finite type $A$, this coincides with the
transposition of binary matrices, and enables us to bypass the RSK correspondence.
This is particularly convenient because the insertion algorithm
(on which the RSK correspondence is based) in other types is more complicated and less well understood.
% In fact, we show how our approach can be made compatible with the affine case by considering a block transposition of relevant
% infinite matrices. This was not the case for the existing constructions that we thus unify and extend in this paper.
This point of view enables us to unify and extend the existing constructions in affine type, 
by considering a block transposition on appropriate infinite matrices.
Through the previous duality, each combinatorial object or Dynkin diagram automorphism for one structure admits a natural counterpart for the other one. 
For example, the cyclage operation introduced by Lascoux and Sch\"{u}tzenberger on the semistandard tableaux of type $A_{n-1}$ will
correspond to the promotion operator on the $A_{\ell-1}$-highest weight vertex in $F(s)$. 
This permits us to give a short proof of the equality between Kostka
polynomials and one-dimensional sums established by Nakayashiki and Yamada in
\cite{NY1995}. 
Note that one-dimensional sums appear in mathematical physics in the context of solvable lattice models and the corner transfer matrix method. 
Using the combinatorics of crystal bases and rigged configurations, one can show that they coincide with the fermionic formula via the Bethe Ansatz, thereby giving a proof of the $X=M$ conjecture in some cases, see \cite{Schilling2007}.

This observation also provides an algebraic interpretation of
the Pitman transform $M-2X$ introduced in \cite{Pit78} to obtain the law of a Brownian motion conditioned to stay positive. 
Historically, Brownian motions have strong connections with thermodynamics and molecular motions.
This is achieved in the spirit of \cite{BBO}, where the dual version of the Pitman transform 
is shown to map each Littelmann path on its associated highest path. 
It is also worth mentioning that the previous combinatorial constructions have interesting applications
in problems related to percolation models \cite{BDJ1999}. 
% Last passage percolation on an array : Given a family W_(i,j) of nonnegative independent identically distributed random variables of the n*l sites of a rectangle, the last passage percolation gives the law of
% the longest path from the site (1,1) to the site (n,l), staying a time W_(i,j) on each site (i,j) of the path.
Further, by considering subsets of the combinatorial Fock spaces
$F(s)$ invariant under Dynkin diagram automorphisms of type $A$ (affine or
not), we get various bicrystal (or tricrystal) structures of classical types on
sets of binary matrices.
In a connected direction, based on the results of
\cite{Lec2009}, we also establish that it is also possible to define a
combinatorial Fock space with an $(X_{\infty}\times A_{\ell-1})$-structure (with
$X$ of type $B,C$ or $D$) on some infinite binary matrices similar to the
finite type $A$ construction.
We expect that this bicrystal to be related
to the charge statistics defined for type $C$ in \cite{Lec2005a}, once the
appropriate duality relating both crystal structures is discovered.
Note that Howe-type dualities and a bicrystal structure involving type $C$ constructions
have been recently studied in \cite{Heo-Kwon2020} and \cite{Lee2019} respectively.
It would be interesting to link these results with that of the present paper.

Finally, as already mentioned, 
the combinatorial Fock spaces that we shall study can be defined in terms of (infinite) binary matrices (this corresponds to the second Cauchy identity). 
This is indeed the natural context corresponding to the existing mathematical material (such as Kashiwara-Nakashima columns tableaux and Uglov's Fock space),
but it would be interesting to get analogous results for infinite matrices with nonnegative integer coefficients.

\medskip

In the present paper, we have chosen to illustrate our results by numerous examples. Quite often, they can be established by adapting proofs existing in the literature to the unified formalism that we propose. We then give precise references rather than complete proofs. The next sections are organised as follows. Section 2 is devoted to a re-exposition of the type $(A_{n-1}\times A_{\ell-1})$-crystal structure on $\sM$.
This serves as a basis for the various generalisations of the subsequent sections.
In particular, we quickly reach a simple proof that the two crystals commute in \Cref{crystals_commute}, 
recovering the results of \cite{VanLeeuwen2006} and \cite{Halacheva2016}.
Also, thanks to the connection between the cyclage and the promotion operator, we are able to give a
short proof of the relation between the charge and the energy function originally proved in \cite{NY1995}.
In Section 3, the classical $(A_{n-1}\times A_{\ell-1})$-crystal structure on $\sM$ 
is made compatible with an affine similar construction given in \cite{Gerber2016}.
The highest weight vertices for the different possible
(simple, double an triple) actions are described.
Also, a new combinatorial interpretation of the $(e,\boldsymbol{s})$-cores introduced in
\cite{JaconLecouvey2019} for describing the blocks of cyclotomic Hecke algebras is proposed.
Section 4 describes a type $X_{\infty}\times A_{\ell-1}$ analogue to the
previous bicrystal.
This means that we define a type  $A_{\ell-1}$-crystal structure on products of $X_{\infty}$-columns. This is done directly in terms of sliding (Jeu de Taquin) operations. 
Nevertheless, due to the lack of a straightforward duality, the results of Section 2 are needed to prove that
this indeed yields the desired $A_{\ell-1}$-crystal structure.
The results of Section 5 focus on the vertices of the
combinatorial type $A$ (of both finite and affine type) Fock spaces 
fixed under the action of Dynkin diagram automorphisms. 
By using results of Naito and Sagaki \cite{NaitoSagaki2004}, they are proved to have various bicrystal (or tricrystal) structures
of classical types. 
Finally in Section 6, we relate Pitman's $2M-X$ transform
on the line to its dual version (the $X-2M$ transform) and the promotion
operator on tensor products of type $A_{1}^{(1)}$ Kirillov-Reshetikhin crystals. 
This permits notably to show that iterations of this transform on any trajectory will eventually tend to the trivial one
(that is, with all steps equal to $1$). 
Using this time $A_{\ell}^{(1)}$-crystals, a highest dimensional generalisation of the $2M-X$ transform which
shares the same convergence behavior is defined. Its probabilistic properties will be studied elsewhere.

\section{Finite type $A$ duality}\label{duality}

In the rest of the paper, fix $n,\ell\in\Z_{\geq2}$.

\subsection{Products of type $A$ columns}\label{columns_finite}

Let $P$ be the weight lattice for the 
Lie algebra $\sl_n$, with basis $\left\{ \om_1,\ldots, \om_{n-1}\right\}$,
where $\om_1,\ldots,\om_{n-1}$ are the fundamental weights for $\sl_n$.
Each partition $\la=(\la_1,\ldots,\la_n)\in\Z_{\geq 0}^n$ shall be identified with the $A_{n-1}$-dominant weight $\la = \sum_{i=1}^{n-1}a_i\om_i$ where for any $i=1,\ldots,n-1$ the integer $a_i$ is the number of columns of height $i$ in the Young diagram of the partition $\la$. Observe that the contribution of the columns of height $n$ is thus equal to zero and we have a one-to-one correspondence between the type $A_{n-1}$-dominant weights and the partitions with at most $n-1$ parts. In what follows, it thus makes sense to use the symbol $\la$ as a partition with at most $n$ parts or a dominant weight of type $A_{n-1}$.

\begin{exa} \Yboxdim{7pt}
Let $n=3$ and $\la=3\om_1+\om_2$. Then the corresponding partition is $(4,1,0)=\yng(4,1)$.
\end{exa}

A \textit{column} of  type $A_{n-1}$ is a subset $c$ of $\{1,\ldots,n\}$ such that $|c|\leq n$, which we identify with
the semistandard Young tableau of shape $\om_{|c|}=(1,\ldots, 1)$ containing the elements of $c$.
\begin{exa}
The set $\{1,3,4\}=\scriptsize\young(1,3,4)$ is a column of type $A_{3}$.
\end{exa}

\begin{defi} Let $c_1,\ldots,c_\ell$ be columns of type $A_{n-1}$. The symbol $b=c_\ell\otimes\cdots\otimes c_1$ is called
\begin{enumerate}
\item  a \textit{tableau}
if the top-aligned juxtaposition $c_1\cdots c_\ell$ yields a semistandard Young tableau.
\item an \textit{antitableau}
if the bottom-aligned juxtaposition $c_1\cdots c_\ell$ yields a semistandard skew Young tableau.
\end{enumerate}
The \textit{shape} of a tableau (respectively of an antitableau) is the partition $(|c_1|,\ldots, |c_\ell|)^\trans$
(respectively $(|c_\ell|,\ldots, |c_1|)^\trans$).
\end{defi}

\begin{exa}Let $\ell=3$ and $n=2$ 
\begin{enumerate}
\item The product $\scriptsize\young(2)\otimes\young(1)\otimes\young(1,3)$
is a tableau which we identify with $\scriptsize\young(112,3)$.
\item The product $\scriptsize\young(1,3)\otimes\young(1,2)\otimes\young(2)$ is an antitableau
which we identify with $\scriptsize\gyoung(:~11,223)$.
\item The product $\scriptsize\young(1,3)\otimes\young(1,2)$ is both a tableau and an antitableau, 
which we identify in with $\scriptsize\young(11,23)$.
\item The products $\scriptsize\young(1,2)\otimes\young(1)\otimes\young(2)$
and $\scriptsize\young(1,2)\otimes\young(1)\otimes\young(1,2)$
are neither tableaux nor antitableaux.
\end{enumerate}
\end{exa}

For the next definition, define first the \textit{word} $\textrm{w}(b)$ of a product $b=c_\ell \otimes\cdots \otimes c_1$
to be the concatenation of the elements of $c_\ell$ (in increasing order),
then $c_{\ell-1}$, and so on.

\begin{defi}
The element $b$ is called \textit{Yamanouchi}\footnote{
Note that the usual convention is to use suffixes instead of prefixes. 
In fact, our definition coincides with the notion of \textit{lattice word} in the literature.
}
if every prefix of $\textrm{w}(b)$ contains 
at least as many letters $i$ as $i+1$, for all $i=1,\ldots, n-1$.
\end{defi}

\begin{exa}
Let $\ell=4$ and $n=3$, and take $b=\scriptsize\young(1)\otimes\young(2)\otimes\young(1,3)\otimes\young(2)$.
Then $\textrm{w}(b)=12132$, and the different prefixes are $1$, $12$, $121$, $1213$, $12132$ and we see that
$b$ is Yamanouchi.
\end{exa}

Clearly, for all $\la\in P_+$, there is a unique Yamanouchi tableau (respectively antitableau) of shape $\la$.
\begin{exa} The Yamanouchi tableau and antitableau of shape $(4,3,1)$ are respectively given by
$${\scriptsize \young(1111,222,3)}\quad \mand\quad  {\scriptsize\gyoung(:~:~:~1,:~112,1223)}.$$
\end{exa}

\subsection{Crystal structures}\label{crystal_finite}

From now on, fix $s\in\Z_{\geq0}$.
Further, for $\bs=(s_1,\ldots,s_\ell)\in R(n,\ell)=\left\{0,\ldots,n\right\}^\ell$
and  $\dbs=(\ds_\ell,\ldots,\ds_1)\in R(\ell,n)=\left\{0,\ldots,\ell\right\}^{n}$,
denote $|\bs|=\sum_{j=1}^\ell s_j$ and $|\dbs|=\sum_{i=1}^n \ds_i$.
For $p=n,\ell$ and for all $A\subseteq\Z^p$, write 
Finally, we denote
$$\sS(s) =\left\{ \la=(\la_1,\ldots,\la_n) \vdash s \mid  \la_1\leq\ell \right\}.$$

The elements introduced in the previous section appear as vertices of certain
tensor products of crystal graphs, which we start by recalling.
For all $j=1,\ldots,\ell$, the crystal of the irreducible highest weight $\mathfrak{sl}_n$-module of highest weight $\om_{s_j}$ (with the convention $\om_{0}=\om_{n}=0$) can be realised using columns of height $s_j$ \cite[Chapter 7]{HongKang2002}. More precisely, $B(\om_{s_j})$ is the \textit{$A_{n-1}$-crystal} with vertices the columns of height $s_j$ and arrows $i$ from $c$ to $c'$ when $c'$ is obtained from $c$ by changing $i$ into $i+1$. Observe that the trivial crystal of highest weight $0$ can so be realised as the graph with a unique vertex: the empty column or the column containing all the integers $1, \ldots ,n$.

\begin{defi} Let $\bs=(s_1,\ldots,s_\ell)\in R(n,\ell)$.
The \textit{combinatorial Fock space} associated to $\bs$ is the $A_{n-1}$-crystal 
$$F(\bs)=B(\om_{s_\ell})\otimes\cdots\otimes B(\om_{s_1})$$ 
\end{defi}

By classical crystal theory, the elements of $F(\bs)$ can be realised 
as tensor products of $\ell$ columns with entries in $\{1,\ldots,n\}$.
Let us recall the rule for computing $F(\bs)$, following \cite[Section 4.4]{HongKang2002}.
Fix $i\in\{1,\ldots,n-1\}$ and let $b=c_\ell\otimes\cdots\otimes c_1\in F(\bs)$.
consider the subword $\textrm{w}_i(b)$ of $\textrm{w}(b)$ obtained by keeping only letters $i$ and $i+1$,
and encode each $i$ by a symbol $+$ and each $i+1$ by a symbol $-$.
Deleting all factors $+-$ recursively yields a word called the \textit{$i$-signature} of $b$.

\begin{thm}\label{thm_crystal}
The action of the $A_{n-1}$-crystal operators on $F(\bs)$ is given by the following rule
\begin{enumerate}
\item The raising crystal operator $e_i$ acts on $b\in F(\bs)$ by changing the entry $i+1$ corresponding
to the rightmost $-$ in the $i$-signature of $b$ into $i$ if it exists; and by $0$ otherwise.
\item The lowering crystal operator $f_i$ acts on $b\in F(\bs)$ by changing the entry $i$ corresponding
to the leftmost $+$ in the $i$-signature into $i+1$ if it exists; and by $0$ otherwise.
\end{enumerate}
\end{thm}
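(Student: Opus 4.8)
The plan is to reduce the statement to the standard tensor product rule for crystals, which the excerpt has already implicitly invoked by citing \cite[Section 4.4]{HongKang2002}. So the proof is really an unwinding of definitions plus an induction on $\ell$, together with a check that the single-column crystals $B(\om_{s_j})$ are indeed the ones described (columns of height $s_j$, arrows $i$ flipping an entry $i$ to $i+1$). First I would recall the tensor product rule: for $b = c_\ell \otimes \cdots \otimes c_1$ and a fixed $i$, one forms the concatenation of the $i$-signatures of the individual tensor factors $c_\ell, c_{\ell-1}, \dots, c_1$ (in that order, matching the convention that $\textrm{w}(b)$ reads $c_\ell$ first), then cancels $+-$ pairs recursively; $e_i$ acts on the factor $c_k$ carrying the rightmost surviving $-$, and $f_i$ on the factor carrying the leftmost surviving $+$, by the single-factor rule.

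Next I would observe that for a single column $c$ of height at most $n$, viewed as a subset of $\{1,\dots,n\}$, the $i$-signature has a very constrained form: $c$ contains $i$, or $i+1$, or both, or neither. If it contains both, the signature is $+-$ read in increasing order of entries, which cancels, giving the empty signature — consistent with the fact that $e_i = f_i = 0$ on such a column (flipping $i$ to $i+1$ is impossible since $i+1$ is already present, and symmetrically). If it contains only $i$, the signature is $+$, and $f_i$ replaces that $i$ by $i+1$ while $e_i = 0$; if only $i+1$, the signature is $-$, and $e_i$ replaces it by $i$ while $f_i=0$; if neither, both operators vanish. This matches exactly the description of $B(\om_{s_j})$ given just before the definition of $F(\bs)$, so the base case $\ell = 1$ holds, and the concatenation-of-signatures description of $\textrm{w}_i(b)$ coincides with what one gets by reading $\textrm{w}_i$ off the word $\textrm{w}(b)$ directly, since $\textrm{w}(b)$ is the concatenation $c_\ell c_{\ell-1}\cdots c_1$.

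From here the general case is a matter of checking that "locate the rightmost $-$ (resp. leftmost $+$) in the fully reduced $i$-signature of $\textrm{w}_i(b)$, act on the corresponding letter" is precisely what the iterated tensor product rule produces. This is the classical fact that signature reduction is associative: cancelling $+-$ pairs within blocks and then across block boundaries gives the same surviving signs as cancelling globally, and the surviving $-$'s and $+$'s are in bijection with specific letters of $\textrm{w}_i(b)$, hence with specific entries of specific columns $c_k$. I would spell this out by induction on $\ell$, splitting $b = c_\ell \otimes b'$ with $b' = c_{\ell-1}\otimes\cdots\otimes c_1$, applying the two-factor tensor rule together with the induction hypothesis for $b'$, and noting that the reduced signature of $b$ is obtained from that of $c_\ell$ followed by that of $b'$ by one more round of cancellation, after which the rightmost $-$ / leftmost $+$ is still attached to the same underlying letter.

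The main obstacle — really the only subtle point — is the bookkeeping in this last induction: one must verify carefully that the letter singled out by the global reduced signature is the \emph{same} letter singled out by the recursive tensor product procedure, i.e. that no cancellation "moves" the active sign to a different column. This is where a clean lemma on the stability of the reduced signature under further cancellation (the surviving signs form a subsequence, and their positions are determined) does all the work; once that is in place, the theorem follows immediately by matching the single-column action recalled above with the general rule. Since all of this is standard (see \cite[Section 4.4]{HongKang2002}), I would present the argument concisely, emphasising the single-column computation and the signature-stability lemma, and refer to the cited reference for the routine verification of associativity of signature reduction.
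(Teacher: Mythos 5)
Your proposal is correct and is essentially the argument the paper itself delegates to the cited reference \cite[Section 4.4]{HongKang2002}: the paper states this theorem as a recollection of the classical tensor product rule and gives no proof of its own. Your single-column analysis (a column contains each of $i$, $i+1$ at most once, so its signature is empty, $+$, $-$, or a cancelling $+-$) together with the induction on $\ell$ via associativity of signature cancellation is exactly the standard verification, so there is nothing to compare beyond noting that you have written out what the paper leaves implicit.
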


We define similarly the combinatorial Fock space $\dF(\dbs)=B(\om_{\ds_1})\otimes\cdots\otimes B(\om_{\ds_n})$ for $\dbs=(\ds_1,\ldots,\ds_n)\in\Z^n$, and
the rule for computing $\dF(\dbs)$ is the same as for $F(\bs)$, expect that the role of $n$ and $\ell$ have been swapped\footnote{
Note that the order in which the components of $\bs$ and $\dbs$ are enumerated is reversed. 
Though this seems artificial at this point, this will be crucial in \Cref{duality_def}}.

\begin{exa}
Let $\ell=4$, $n=3$, $\bs=(2,2,1,2)$, $b=\scriptsize\young(1,2)\otimes\young(1,3)\otimes\young(1)\otimes\young(1,2)=c_4\otimes c_3\otimes c_2\otimes c_1$
and choose $i=1$.
Then $\textrm{w}(b)=1213112$, so that $\textrm{w}_1(b)=121112$, which gives the encoded word $+-+++-$. 
Thus, the $i$-signature of $b$ is $++$,
whose leftmost $+$ corresponds to the entry $1$ of $c_3$.
Therefore, we have $f_1b=\scriptsize\young(1,2)\otimes\young(2,3)\otimes\young(1)\otimes\young(1,2)=c_4\otimes c_3\otimes c_2\otimes c_1$.
In other terms, in the crystal $F(\bs)$, we have an arrow
\begin{center}
\begin{tikzpicture}
\node (a) at (0,0) {$\scriptsize\young(1,2)\otimes\young(1,3)\otimes\young(1)\otimes\young(1,2)$};
\node (b) at (4.5,0) {$\scriptsize\young(1,2)\otimes\young(2,3)\otimes\young(1)\otimes\young(1,2).$};
\draw[->] (a) --  node[pos=0.5,above]{\tiny 1} (b);
\end{tikzpicture}\end{center}
\end{exa}

Let us now explain how the tableaux and Yamanouchi elements
naturally appear in the context of crystals.
In the following, we set $$F(s)=\bigoplus_{\bs\in R(n,\ell)(s)}F(\bs)\mand \dF(s)=\bigoplus_{\dbs\in R(\ell,n)(s)}\dF(\dbs).$$
The following results are well-known, see for instance \cite{Lothaire2002}.

\begin{thm}\label{thm_yam_tab}\
\begin{enumerate}
\item The set of tableaux in $F(s)$ is closed under the crystal operators,
and tableaux of a given shape $\la\in\sS(s)$ form a connected component of $F(s)$ denoted $B(\la)$.
Moreover, for any $b\in F(s)$, there is a unique tableau $P(b)\in F(s)$
such that the induced map $b\mapsto P(b)$ is an $A_{n-1}$-crystal isomorphism.
\item An element $b\in F(s)$ is a highest weight vertex in the crystal if and only if $b$ is Yamanouchi.
\end{enumerate}
\end{thm}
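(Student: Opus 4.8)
The plan is to prove both parts via the standard RSK/plactic machinery, adapted to the tensor-product-of-columns conventions set up above, treating the claim as a consequence of the crystal-theoretic characterisation of tableaux as canonical representatives of connected components.

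For part (1), I would first recall that $F(s)$, being a finite direct sum of tensor products of the $A_{n-1}$-crystals $B(\om_{s_j})$, decomposes into connected components, each of which is isomorphic (as an $A_{n-1}$-crystal, i.e.\ with its arrows and weights) to some $B(\la)$ with $\la$ a partition with at most $n$ parts. The constraint $\la_1 \le \ell$ comes from the fact that a tensor product of $\ell$ columns can have at most $\ell$ boxes in any row, so $\la \in \sS(s)$. Next I would show that the set of tableaux is stable under $e_i, f_i$: using \Cref{thm_crystal}, applying $f_i$ changes one entry $i$ into $i+1$ (the one attached to the leftmost unbracketed $+$), and one checks directly from the bracketing rule that the column $c_j$ containing this entry still has strictly increasing columns and the row-weak-increase with the column to its right is preserved — this is the classical verification that Kashiwara operators preserve semistandardness, e.g.\ \cite[Section 4.4]{HongKang2002} or \cite{Lothaire2002}. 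Then, since distinct tableaux of the same shape $\la$ are connected (the Yamanouchi tableau of shape $\la$ reaches every other tableau of that shape, as tableaux of shape $\la$ are exactly the vertices of a $B(\la)$-component), and since a crystal isomorphism between connected components sends highest weight vertex to highest weight vertex, the component containing $b$ is isomorphic to a unique $B(\la)$, and $P(b)$ is the image of $b$ under this isomorphism — equivalently, $P(b)$ is the unique tableau obtained by following in the tableau-realisation the same sequence of $e_i$'s and $f_i$'s that connects $b$ to the $\la$-component's origin. The well-definedness of $P(b)$ (independence of the chosen path) is exactly the statement that two reduced expressions in the crystal land at the same vertex, which follows from connectedness plus the fact that within a single $B(\la)$ there is a unique vertex of each weight with prescribed $\varphi_i,\varepsilon_i$; alternatively one invokes that $b \mapsto P(b)$ coincides with the classical RSK/jeu-de-taquin rectification, which is a crystal morphism by Lascoux--Schützenberger theory.

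For part (2), the cleanest route is: $b$ is a highest weight vertex $\iff e_i b = 0$ for all $i=1,\ldots,n-1$ $\iff$ the $i$-signature of $b$ has no $-$ for any $i$ $\iff$ in $\textrm{w}(b)$, reading left to right, every prefix has at least as many $i$'s as $(i{+}1)$'s for all $i$ $\iff$ $b$ is Yamanouchi, where the middle equivalence is the elementary observation that the recursive deletion of $+-$ factors leaves no $-$ precisely when the $+$-count dominates the $-$-count along every prefix (a ballot-sequence condition). I would spell out this last bracketing lemma briefly since it is the only genuinely computational point, and note that it is applied for each $i$ independently on the subword $\textrm{w}_i(b)$.

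The main obstacle, I expect, is not the crystal stability or the signature computation — those are routine — but rather pinning down cleanly that $P(b)$ is \emph{well-defined}, i.e.\ that the tableau attached to $b$ does not depend on how one navigates the crystal, and that the resulting map is genuinely a crystal isomorphism rather than merely a weight-preserving bijection on each component. The honest way to handle this is to cite the plactic-monoid / RSK foundation (\cite{Lothaire2002}, \cite{Fulton1997}): $P(b)$ is the semistandard tableau whose row-reading word is plactically equivalent to $\textrm{w}(b)$, this equivalence class is a connected component that is abstractly $B(\la)$, and Knuth moves commute with the Kashiwara operators. Everything else is bookkeeping with the column/antitableau conventions fixed in \Cref{columns_finite}. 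Accordingly, in the interest of brevity and since all ingredients are classical, I would present the argument as a short synthesis with precise pointers to \cite{HongKang2002}, \cite{Lothaire2002}, \cite{Fulton1997} rather than a from-scratch proof.
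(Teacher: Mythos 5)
Your proposal is correct and matches the paper's treatment: the paper gives no proof of \Cref{thm_yam_tab} at all, simply labelling it as well known with a pointer to \cite{Lothaire2002}, and your sketch is exactly the standard signature/plactic argument contained in those references (stability of tableaux under the operators, rectification as a crystal morphism for part (1), and the ballot-sequence reading of the $i$-signatures for part (2)). One small quibble: a vertex of $B(\la)$ is \emph{not} in general determined by its weight together with the data $\varepsilon_i,\varphi_i$, so that aside should be dropped — but your fallback justification of the well-definedness of $P(b)$ via jeu-de-taquin/Knuth equivalence commuting with the Kashiwara operators is the correct one and is what the cited literature actually provides.
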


\begin{rem}
In fact, for all $\la\in \sS(s)$, $B(\la)$ is the crystal of the irreducible highest weight module with highest weight $\la$.
\end{rem}

\Cref{thm_yam_tab} also holds for $\dF(s)$, replacing tableaux by antitableaux.
We denote similarly $\dF(\mu)$ the connected component of $\dF(s)$ consisting of
all antitableaux of shape $\mu\in\dot{\sS}(s)=\left\{ \mu\vdash s\mid \mu^\trans\in\sS(s) \right\}$.

\begin{exa}
Take $n=3$, $\ell=3$ and $\bs=(s_3,s_2,s_1)=(2,1,2)$.
Then one checks that $b=\scriptsize\young(1,2)\otimes\young(3)\otimes\young(1,2)\in F(\bs)$ is Yamanouchi.
We compute the connected component of $F(\bs)$ containing $b$:
\begin{center}
\begin{tikzpicture}
\node (a) at (0,0) {$\scriptsize\young(1,2)\otimes\young(3)\otimes\young(1,2)$};
\node (b) at (3,0) {$\scriptsize\young(1,2)\otimes\young(3)\otimes\young(1,3)$};
\node (c) at (6,0) {$\scriptsize\young(1,2)\otimes\young(3)\otimes\young(2,3).$};

\draw[->] (a) --  node[pos=0.5,above]{\tiny 2} (b);
\draw[->] (b) --  node[pos=0.5,above]{\tiny 1} (c);
\end{tikzpicture}\end{center}
This is isomorphic to the crystal $B(\om_2)$, which we can compute:
\begin{center}
\begin{tikzpicture}
\node (a) at (0,0) {$\scriptsize\young(11,22,3)$};
\node (b) at (2,0) {$\scriptsize\young(11,23,3)$};
\node (c) at (4,0) {$\scriptsize\young(12,23,3).$};

\draw[->] (a) --  node[pos=0.5,above]{\tiny 2} (b);
\draw[->] (b) --  node[pos=0.5,above]{\tiny 1} (c);
\end{tikzpicture}\end{center}
This means that $P(b)=\scriptsize\young(11,22,3)$, and so on.
Alternatively, we can use an isomorphic realisation of this crystal by antitableaux:
\begin{center}
\begin{tikzpicture}
\node (a) at (0,0) {$\scriptsize\gyoung(:~1,12,23)$};
\node (b) at (2,0) {$\scriptsize\gyoung(:~1,12,33)$};
\node (c) at (4,0) {$\scriptsize\gyoung(:~1,22,33).$};

\draw[->] (a) --  node[pos=0.5,above]{\tiny 2} (b);
\draw[->] (b) --  node[pos=0.5,above]{\tiny 1} (c);
\end{tikzpicture}\end{center}
\end{exa}

In general, $P(b)$ can be computed by carrying out one of the following procedures:
\begin{itemize}
\item performing Schensted's insertion on the word $\textrm{w}(b)$  \cite[Section 1.1]{Fulton1997},
\item performing the Jeu de Taquin on the skew Young tableau corresponding to $b$ \cite[Section 1.2]{Fulton1997},
\item applying a sequence of plactic relation to $\textrm{w}(b)$  \cite[Section 2.1]{Fulton1997}.
\end{itemize}

\begin{exa}\label{exa_jdt} Let $\ell=2$, $n=3$ and 
\Yvcentermath1
let $b=\scriptsize\young(1,2,3)\otimes\young(2)$.
Let us compute $P(b)$ by using the first two methods.
The word associated to $b$ is $\textrm{w}(b)=12413$, and Schensted's insertion
yields the following sequence of tableaux
$$
\scriptsize\young(1)\quad,\quad
\scriptsize\young(1,2)\quad,\quad
\scriptsize\young(1,2,3)\quad,\quad
\scriptsize\young(12,2,3)=P(b).$$
Now, the minimal skew Young tableau\footnote{
This skew tableau is minimal in the sense that its skew shape is minimal (for the
inclusion of skew shapes) among all the possible shapes of the skew tableaux associated to $b$.} associated to $b$ is
$${\scriptsize\gyoung(:~1,:~2,23)}$$
and the Jeu de Taquin corresponds to the following two sliding operations
\Yvcentermath1
$$
{\scriptsize\gyoung(:~1,:\bullet2,23)}
\to 
{\scriptsize\gyoung(:~1,22,:\bullet3)}
\to 
{\scriptsize\gyoung(:~1,22,3:\bullet)}
\quad,\quad
{\scriptsize\gyoung(:\bullet1,22,3)}
\to 
{\scriptsize\gyoung(1:\bullet,22,3)}
\to 
{\scriptsize\gyoung(12,2:\bullet,3)}=P(b).
$$
\end{exa}

\subsection{The duality.}\label{duality_def}

There is a duality
$$
\begin{array}{rcl}
F(s)
&
\longleftrightarrow
&
\dF(s)
\\
b
&
\longleftrightarrow
&
b^*
\end{array}
$$
defined as follows.
If $b=c_\ell\otimes\cdots\otimes c_1$ is a tensor product of columns, then
for each $i=1,\ldots,n$, let $d(i)$ be the column with letters in the set 
$$\{ j\in\{1,\ldots, \ell\} \mid  i\in c_j \}.$$
Then we set $b^*=d(1)\otimes\cdots\otimes d(n) \in \dF(s)$.

\Yvcentermath0

\begin{exa}\label{exa_duality}
Let $\ell=5$ and $n=4$.
Take 
$$b=\scriptsize\young(1,2,3)\otimes\young(4)\otimes\young(1,4)\otimes\young(1,2,4)\otimes\young(1,3).$$
Then 
$$ d(1)=\{1,2,3,5\}, \quad d(2)=\{2,5\}, \quad d(3)=\{1,5\}, \quad d(4)=\{2,3,4\},$$
so that
$$b^*=\scriptsize\young(1,2,3,5)\otimes\young(2,5)\otimes\young(1,5)\otimes\young(2,3,4).$$
\end{exa}

\begin{rem}\label{binary_mat}
As mentioned in the introduction, we can use binary matrices to represent elements in $F(s)$ and $\dF(s)$,
which yields an easy description of the duality $*$.
More precisely, encode $b=c_\ell\otimes\cdots\otimes c_1$ by the $n\times \ell$ matrix $M$
defined by 
$$M_{i,j}=\left\{ 
\begin{array}{ll}
1 & \text{ if } i\in c_j \\
0 & \text{ otherwise } \\ 
\end{array}
\right.
$$
Then $b^*$ is the element of $\dF(s)$ encoded by $M^\trans$, the transpose of $M$.
For instance, take $b$ as in \Cref{exa_duality}.
Then $b$ and $b^*$ are respectively encoded by the following matrices  
(remember that we read the columns of $b$ starting from the right)
$$
M =\begin{bmatrix}
  1&1&1&0&1\\
  0&1&0&0&1\\
  1&0&0&0&1\\
  0&1&1&1&0
  \end{bmatrix}
\mand
M^\trans=
\begin{bmatrix}
1&0&1&0\\
1&1&0&1\\
1&0&0&1\\
0&0&0&1\\
1&1&1&0
\end{bmatrix}.
$$
Therefore, we recover the crystal skew Howe duality of \cite[Section 11.2]{Halacheva2016}, also studied in \cite{VanLeeuwen2006}.
\end{rem}

The duality $*$ intertwines several classical notions. 
We already observe some occurences of this phenomenon now,
and will give more results in the upcoming sections.
For all map $\varphi:\dF(s)\to\dF(s)\sqcup \{0\}$, denote 
$\varphi^*:F(s)\to F(s)\sqcup \{0\}$ the map determined by the formula 
$$(\varphi^*(b))^*=\left\{
\begin{array}{cl}
\varphi(b^*) & \text{ if } \varphi(b^*)\in \dF(s)
\\
0 &  \text{ if } \varphi(b^*)=0.
\end{array}
\right.$$
that is, $\varphi^*$ is the conjugation of $\varphi$ by the duality $*$.
Similarly, for all map $\psi:F(s)\to F(s)\sqcup \{0\}$, denote ${}^*\psi:\dF(s)\to\dF(s)\sqcup \{0\}$ the map determined by the formula 
$${}^*\psi(b^*)=\left\{
\begin{array}{cl}
(\psi(b))^* & \text{ if } \psi(b)\in F(s)
\\
0 &  \text{ if } \psi(b)=0.
\end{array}
\right.
$$
Note that we have $\psi=\varphi^*$ if and only if ${}^*\psi=\varphi$.

\begin{prop}\label{duality_intertwines_Yam_tab}
The element $b$ is a tableau (respectively Yamanouchi) if and only if $b^*$ is Yamanouchi (respectively an antitableau).
\end{prop}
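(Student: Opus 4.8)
The cleanest route is to pass through the binary matrix description of \Cref{binary_mat} and translate both the tableau/antitableau condition and the Yamanouchi condition into conditions on the matrix $M$ (respectively $M^{\trans}$). First I would unpack what it means for $b = c_\ell \otimes \cdots \otimes c_1$ to be a tableau: the top-aligned juxtaposition $c_1 \cdots c_\ell$ is semistandard, which forces the column heights to be weakly decreasing, $|c_1| \geq \cdots \geq |c_\ell|$, and each row of the juxtaposition to be weakly increasing left to right. In matrix terms, writing $M$ with $M_{i,j} = 1 \iff i \in c_j$ (and columns of $b$ read from the right, so $c_1$ sits in the last matrix column), the column-strictness of each $c_j$ is automatic, the height condition says the column sums are weakly monotone, and semistandardness of the rows becomes a statement that the support of $M$ is ``left-justified'' in an appropriate sense. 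Dually, $b^*$ being Yamanouchi means every prefix of $\mathrm{w}(b^*)$ has at least as many $j$'s as $(j+1)$'s for each $j$; since $b^* = d(1) \otimes \cdots \otimes d(n)$ with $d(i) = \{j : M_{i,j} = 1\}$ and $\mathrm{w}(b^*)$ reads $d(n)$ first then $d(n-1)$, etc., this prefix condition is exactly a condition on partial column sums of $M^{\trans}$ read in a specific order, which I would check coincides with the left-justification condition above. So the two halves of the proposition become the single combinatorial identity: ``$M$ has weakly decreasing column sums and left-justified support'' $\iff$ ``the partial row-count inequalities defining Yamanouchi hold for $b^*$.''

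**Key steps, in order.** (1) Reduce to matrices via \Cref{binary_mat}, fixing conventions for which matrix entry corresponds to which column and the reading order of $\mathrm{w}$. (2) Prove: $b$ is a tableau $\iff$ for every $i \in \{1,\ldots,n\}$ and every threshold, the set $d(i) = \{j : i \in c_j\}$ is an \emph{initial segment} of $d(i-1)$ relative to the linear order on $\{1,\ldots,\ell\}$ — more precisely, $d(1) \supseteq d(2) \supseteq \cdots \supseteq d(n)$ as \emph{down-sets}, i.e.\ $j \in c_{j'}$ and $j'' \leq j'$ and $i' \leq i$ with $i' \in c_{j'}$ implies $i' \in c_{j''}$; I would phrase this carefully so that semistandardness of the top-aligned juxtaposition is manifestly equivalent. (3) Prove: $b^*$ is Yamanouchi $\iff$ the same nesting condition holds. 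For this direction, note $\mathrm{w}(b^*)$ lists the elements of $d(n)$ (increasing), then $d(n-1)$, ..., then $d(1)$; a prefix of $\mathrm{w}(b^*)$ up to reading $d(i)$ partially through value $j_0$ contains, as its count of the letter ``$i$'', the number of $j \in d(i)$ with $j \leq j_0$, and as its count of ``$i+1$'', all of $|d(i+1)|$ plus possibly a partial count if $i+1 > i$ — wait, reading order is $d(n), d(n-1), \ldots$, so larger values come first; I would set up the prefix bookkeeping accordingly and check the inequality ``\#$i \geq$ \#$(i+1)$ in every prefix'' reduces exactly to: for all $i$ and all $j_0$, $|\{j \in d(i) : j \leq j_0\}| \geq |\{j \in d(i+1) : j \leq j_0\}|$ once the earlier (larger-valued) rows are fully read, which is precisely $d(i) \supseteq d(i+1)$ as down-sets of $\{1,\ldots,\ell\}$. (4) The antitableau/Yamanouchi swap: run steps (2)–(3) with the roles of $M$ and $M^{\trans}$ interchanged and ``tableau'' replaced by ``antitableau'' (bottom-aligned juxtaposition), noting that transposition of $M$ and the reversal built into the definitions of $\mathrm{w}$ and the component order (flagged in the footnote to \Cref{thm_crystal}) conspire to make ``bottom-aligned, reversed order'' match ``top-aligned'' on the transpose. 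Alternatively — and perhaps more slickly — once the ``tableau $\iff$ $b^*$ Yamanouchi'' equivalence is proved for all $s$ and all $n,\ell$, apply it with $(n,\ell)$ swapped and $b$ replaced by $b^*$ to get ``$b^*$ tableau in $\dF(s) \iff (b^*)^* = b$ Yamanouchi in $F(s)$''; then observe a tableau in $\dF(s)$ is the same data as an antitableau under the order reversal, giving the second half for free.

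**Main obstacle.** The real work is bookkeeping the conventions: columns of $b$ are read right-to-left, the components of $\dbs$ are enumerated in reversed order relative to $\bs$ (explicitly warned about in the text), $\mathrm{w}$ concatenates starting from $c_\ell$, and ``tableau'' uses top-alignment while ``antitableau'' uses bottom-alignment. Getting all four reversals/transpositions to line up so that the single nesting condition on $\{0,1\}$-matrices genuinely captures both ``$b$ tableau'' and ``$b^*$ Yamanouchi'' simultaneously — rather than, say, a column-reversed or complemented variant — is where a sign-error-type mistake would creep in. I expect no deep idea is needed beyond this careful translation; the semistandardness $\leftrightarrow$ lattice-word equivalence is classical (it is essentially the statement that the Yamanouchi tableau of shape $\lambda$ is unique, already noted in \Cref{columns_finite}), so once the dictionary is set up correctly the proposition should fall out in a few lines. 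I would therefore budget most of the exposition for a clean statement of the matrix conditions and a single figure/small example tracking one entry through both interpretations.
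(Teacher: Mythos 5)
Your overall route --- translating both the tableau condition on $b$ and the Yamanouchi condition on $b^*$ into partial-sum conditions on the binary matrix --- is in substance the paper's own proof: the authors simply observe that the prefix of $\mathrm{w}(b^*)$ obtained by reading the first $k$ tensor factors of $b^*$ records the column heights of the subtableau of $b$ with letters $\le k$, so the Yamanouchi inequalities for $b^*$ say precisely that all these truncations have weakly decreasing column heights, i.e.\ that $b$ is a tableau. The problem is that the intermediate characterisation on which your step (2) rests is false: it is not true that $b$ is a tableau if and only if $d(1)\supseteq d(2)\supseteq\cdots\supseteq d(n)$ (as down-sets, initial segments, or otherwise). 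Take the paper's own example with $\ell=2$, $n=4$, $b=\{1,3,4\}\otimes\{1,2,4\}$, which is a tableau: then $d(1)=\{1,2\}$, $d(2)=\{1\}$, $d(3)=\{2\}$, $d(4)=\{1,2\}$, and $d(3)\not\subseteq d(2)$, $d(4)\not\subseteq d(3)$. The correct common condition is not a containment but the family of partial-count inequalities
\[
\bigl|c_j\cap\{1,\ldots,k\}\bigr|\;\ge\;\bigl|c_{j+1}\cap\{1,\ldots,k\}\bigr| \qquad \text{for all } j \text{ and } k,
\]
which, given that each $c_j$ is a set, is equivalent to semistandardness of the top-aligned juxtaposition $c_1\cdots c_\ell$, and which, rewritten via $i\in c_j\iff j\in d(i)$, is exactly the Yamanouchi inequality for the prefix of $\mathrm{w}(b^*)$ ending inside $d(k)$. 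A genuine nesting condition characterises the much smaller class of keys (compare \Cref{key_dual}), so step (2) as stated cannot be proved and the argument would collapse there.

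A second, smaller slip: $\mathrm{w}(b^*)$ reads the leftmost factor $d(1)$ first, then $d(2),\ldots,$ then $d(n)$, not $d(n)$ first --- check against $b^*=\{1,2\}\otimes\{1\}\otimes\{2\}\otimes\{1,2\}$ above, whose word $121212$ is Yamanouchi only in this reading order (the reversed reading gives $122112$, which fails at the prefix $122$); relatedly, the Yamanouchi condition for $b^*$ compares counts of the letters $j$ and $j+1$ (values in $\{1,\ldots,\ell\}$) across prefixes indexed by $i$, whereas your bookkeeping in step (3) has the two alphabets interchanged. These are exactly the convention errors you warned yourself about, and they did creep in. Once the nesting claim is replaced by the displayed inequalities and the reading order is fixed, your step (3) already contains the right computation, and both halves of the proposition (the antitableau half via the $(n,\ell)$-swap you suggest, which is legitimate) follow as in the paper's one-paragraph argument.
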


\begin{proof}
Let $b\in F(\bs)$ be a tableau. Then $s_j\geq s_{j+1}$ for all $j=1,\ldots,\ell-1$, which means that the number
of $j$'s in $b^*$ is greater than or equal to the number of $j+1$'s, which is a necessary condition to be Yamanouchi.
In fact, the $k$ first components of $b$ correspond by duality to the subtableau of $b$ with letters less or equal to $k$. Since all such subtableau is semistandard, the vertex $b^*$ has the Yamanouchi property. The converse holds by the same observation, and 
the analogue statement with antitableaux holds similarly.
\end{proof}

\Yvcentermath1

\begin{exa}\
\begin{enumerate}
\item Let $\ell=2$, $n=4$, and $b=\scriptsize\young(1,3,4)\otimes\young(1,2,4)$, so that $b$ is a tableau.
Then $b^*=\scriptsize\young(1,2)\otimes\young(1)\otimes\young(2)\otimes\young(1,2)$, which is Yamanouchi.
\item Let $\ell=5$, $n=3$, and $b=\scriptsize\young(1)\otimes\young(1)\otimes\young(1,2)\otimes\young(2)\otimes\young(1,2,3)$, so that $b$ is Yamanouchi.
Then $b^*=\scriptsize\young(1,3,4,5)\otimes\young(1,2,3)\otimes\young(1)$, which is an antitableau.
\end{enumerate}
\end{exa}

We have already recalled the Jeu de Taquin procedure for computing the tableau in \Cref{thm_yam_tab}.
Let us denote by $J_{j}$ the map $F(s)\to F(s)\sqcup \{0\}$ where $J_{j}(b)$ is obtained from $b$
by an elementary horizontal Jeu de Taquin slide from column $j+1$ to column $j$ if possible, and $J_{j}(b)=0$ otherwise.
Let us moreover recall that
there is a unique isomorphism of $A_{n-1}$-crystals
$B(\om_{i})\otimes B(\om_{i'})\overset{\sim}{\lra} B(\om_{i'})\otimes B(\om_{i})$
called the \textit{combinatorial $R$-matrix},
which we can compute by a simple combinatorial procedure, see \cite[Section 4.8]{Shimozono2005} and the references therein.
It induces an isomorphism 
$$R_{j,j'}:
F(s)
\lra
F(s)
$$
permuting the components $B(\om_{s_j})$ and $B(\om_{s_{j'}})$,
which is the composition of $R$-matrices of the form $R_{j,j+1}$, which we denote $R_j$ for simplicity.
Finally, recall that the Weyl group (here the symmetric groups on $n$ letters) acts on the crystal $F(s)$ 
by letting the Coxeter generators $\si_i$, $i=1\ldots, n-1$
act by reversing each $i$-string \cite[Definition 2.35]{BumpSchilling2017},
see also \cite[Section 2.1]{Shimozono2005}.

All of the above maps have counterparts defined on $\dF(s)$
(defined using antitableaux instead of tableaux when needed).
We now prove that the duality $*$ intertwines crystal operators with elementary Jeu de Taquin slides,
as well as the Weyl group action with the $R$-matrix.

\begin{thm}\label{kas-jdt_weyl-Rmat}For all $j=1,\ldots, \ell-1$, we have
$$\text{(1) }\quad \de^*_j  =J_{j} \quad \mand \quad  \text{(2) }\quad  \dot{\si_j}^*=R_{j}.$$
\end{thm}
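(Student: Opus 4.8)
The plan is to prove both identities by reducing them, via the binary-matrix description of the duality from \Cref{binary_mat}, to transparent statements about transposed matrices, and then to verify these statements column by column using the explicit combinatorial rules recalled in \Cref{thm_crystal} and the known description of the combinatorial $R$-matrix.

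\textbf{Step 1: set-up via matrices.} Fix $b=c_\ell\otimes\cdots\otimes c_1\in F(\bs)$ and let $M$ be the associated $n\times\ell$ binary matrix, so that $b^*\in\dF(\dbs)$ corresponds to $M^{\trans}$. I would first observe that an elementary horizontal Jeu de Taquin slide $J_j$ from column $j+1$ to column $j$ of the (skew) tableau attached to $b$ can be read off directly on $M$: it moves a single $1$ from column $j+1$ of $M$ into column $j$, at the lowest row $i$ where this is legal, and it is undefined precisely when no such row exists. Dually, on $M^{\trans}$ this is exactly the move of a single $1$ from row $j+1$ to row $j$ within one column. The key point is then to recognise this as the action of the crystal operator $\dot e_j$ on $\dF(s)$ as described by \Cref{thm_crystal} (with the roles of $n$ and $\ell$ swapped): in the $j$-signature computation for $\dF(s)$, an entry $j$ (resp. $j+1$) of a column of $b^*$ is encoded by $+$ (resp. $-$), and $\dot e_j$ changes into $j$ the $j{+}1$ corresponding to the rightmost unmatched $-$. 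So for (1), I must match ``leftmost legal JdT slide position in $M$'' with ``rightmost unmatched $-$ in the $j$-signature of $M^{\trans}$''.

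\textbf{Step 2: proof of (1).} This is the technical heart. I would translate the semistandard (skew) condition column-reading of $b$ into the statement that, reading the columns of $M$ from right ($j=1$) to left ($j=\ell$), rows $i$ and $i+1$ of $M$ form a word in $\{1,\ldots,n\}$-columns whose bracketing is governed by exactly the $+/-$ pattern appearing in the $j$-signature of $b^*$ for the index $j$ --- after taking the transpose, the $1$'s in row $j$ versus row $j+1$ of $M$ become the letters $j$ versus $j+1$ of $b^*$. The legality of an elementary JdT slide from column $j+1$ into column $j$ at row $i$ is: there is a $1$ in position $(i,j+1)$, a $0$ in position $(i,j)$, and the resulting configuration stays (skew-)semistandard; the lowest such $i$ corresponds precisely to the innermost unbracketed $+-$... — unwinding the signature cancellation shows this lowest legal slide row is the one whose $j{+}1$-entry (in $b^*$) is the rightmost unmatched $-$. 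Hence $J_j(b)$ corresponds to $\dot e_j(b^*)$, i.e. $(\de_j^*(b))^* = \dot e_j(b^*)$, which is exactly statement (1). I would double-check the boundary case ($J_j$ undefined $\iff$ all $-$'s bracketed $\iff$ $\dot e_j(b^*)=0$) and illustrate the matching on a small example, e.g. the one of \Cref{exa_jdt}.

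\textbf{Step 3: proof of (2).} Here I would use that the Weyl group action $\dot\si_j$ on $\dF(s)$ reverses each $j$-string, hence on the matrix side it acts within rows $j,j+1$ of $M^{\trans}$ — i.e. within columns $j,j+1$ of $M$ — by the ``column-swap'' symmetry sending the number of bracketed-off $+$'s and $-$'s to their reverse. On the $F(s)$ side, the combinatorial $R$-matrix $R_j$ is precisely the unique $A_{n-1}$-crystal isomorphism $B(\om_{s_j})\otimes B(\om_{s_{j+1}})\to B(\om_{s_{j+1}})\otimes B(\om_{s_j})$, and it is a standard fact (see \cite[Section 4.8]{Shimozono2005}) that on two columns it realises exactly this ``reversal of the $i$-pairing in every row pair'' rule — equivalently, it is computed by JdT/sliding between the two columns. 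So both $\dot\si_j^*$ and $R_j$ are the unique crystal automorphism of $F(s)$ that swaps the shapes $s_j\leftrightarrow s_{j+1}$ while being compatible with all $\de_i$; by uniqueness of the $R$-matrix they coincide. More carefully, I would argue that $\dot\si_j^*$ is an $A_{n-1}$-crystal morphism (it is the $*$-conjugate of the Weyl-group element $\dot\si_j$, which is an involutive isomorphism of $\dF(s)$, and $*$ is a bijection intertwining, by part (1) and the commutation of the two crystal structures proved in \Cref{crystals_commute}, the two sets of data), that it permutes the components $B(\om_{s_j})$ and $B(\om_{s_{j+1}})$, and that it restricts to the identity elsewhere; then \emph{uniqueness} of the combinatorial $R$-matrix forces $\dot\si_j^*=R_j$.

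\textbf{Main obstacle.} The genuinely delicate point is Step 2: pinning down the exact correspondence between the \emph{position} chosen by the JdT slide (lowest legal row) and the \emph{position} selected by $\dot e_j$ (rightmost unmatched $-$ in the reduced $j$-signature). Both are ``extremal'' choices, but with respect to different orders (rows of $M$ versus reading order of $\mathrm{w}(b^*)$), and one has to check that the semistandardness bracketing and the signature bracketing are literally the same bracketing after transposition. I expect this to require a careful, but ultimately routine, bookkeeping argument on a single pair of adjacent columns of $M$, after which the general case follows since $J_j$, $\dot e_j$ only involve columns $j,j+1$ of $M$ and the corresponding rows of $M^{\trans}$.
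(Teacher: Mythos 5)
Your overall strategy for part (1) is the same as the paper's: translate everything into the binary matrix $M$ and its transpose, and match the entry selected by the signature rule for $\de_j$ acting on $b^*$ with the entry moved by the elementary Jeu de Taquin slide $J_j$ on $b$. However, the one concrete claim you make about the slide is wrong as stated, and the step you defer as ``routine bookkeeping'' is the entire content of the proof. You assert that $J_j$ moves a $1$ from column $j+1$ of $M$ to column $j$ ``at the lowest row $i$ where this is legal.'' Take the paper's own example following the theorem: $c_j=\{1,4,5\}$, $c_{j+1}=\{1,2,3,6\}$. Rows $2$, $3$ and $6$ all carry a $1$ in column $j+1$ and a $0$ in column $j$, and moving any one of these three produces a pair of strictly increasing columns, so no local notion of legality singles out a row; yet the slide moves the entry $3$ (not $6$, the lowest, nor $2$). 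The correct selection rule is precisely the signature rule, and the argument one actually needs — the one the paper gives — is this: if $i$ indexes the rightmost unmatched $-$ in the $j$-signature of $b^*$, then every entry of $c_j$ that is $\le i$ is matched, in the signature cancellation, to an entry of $c_{j+1}$; this matching is exactly the pairing that steers the bullet down column $j$ during the slide, so the first entry to cross from $c_{j+1}$ into $c_j$ is $i$ itself (and $J_j(b)=0$ exactly when every $-$ is matched, i.e.\ $\de_jb^*=0$). Until you prove that the two ``bracketings'' coincide, part (1) is not established, and your description of where the slide acts would lead you astray if taken literally.

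For part (2) your route differs from the paper's and is salvageable. The paper argues directly: with $N=|c_{j+1}|-|c_j|$, the element $R_j(b)$ is obtained by $N$ elementary slides, so by part (1) its dual is $\de_j^N b^*$, and since $N=\eps_j(b^*)-\varphi_j(b^*)$ this is $\dot\si_j(b^*)$. Your uniqueness-of-the-$R$-matrix argument works because the decomposition of $B(\om_u)\otimes B(\om_v)$ is multiplicity-free, but it requires knowing that $\dot\si_j^*$ is an $A_{n-1}$-crystal morphism exchanging the factors $B(\om_{s_j})$ and $B(\om_{s_{j+1}})$; you get this from \Cref{crystals_commute}, which is proved later in the paper but only from part (1), so the deduction order (1) $\Rightarrow$ commutation $\Rightarrow$ (2) is not circular — just make that ordering explicit, and also justify that $N$ is constant on each $A_{n-1}$-connected component (it is, since the column heights are an $A_{n-1}$-weight-independent invariant), so that $\dot\si_j$ restricted to a component is a fixed power of $\de_j$ or $\df_j$.
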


\begin{proof}\
\begin{enumerate}
\item Assume first that $b^*$ is a highest weight vertex.
Then $\de_jb^*=0$, so we have by definition $\de_j^*b=0=J_{j}(b)$.
Note that this simply means that the Jeu de Taquin is not authorised for $b$, which makes sense
because $b$ is a tableau by \Cref{thm_yam_tab} and \Cref{duality_intertwines_Yam_tab}.
Now, assume that $b^*=d_1\otimes\cdots\otimes d_n$ is not a highest weight vertex, so that $\de_j b^*\in \dF(s)$.
Let $i$ denote the index of the column containing the entry of $b^*$ affected by $\de_j$,
see \Cref{thm_crystal}.
Then by definition of the duality $*$, $\de_j^*$ acts on $b=c_\ell\otimes\cdots\otimes c_1$ 
by sliding entry $i$ from $c_{j+1}$ to $c_j$.
Now, consider the Jeu de Taquin between columns $j$ and $j+1$ of $b$. 
The procedure used to determine $i$ ensures us that
all entries of $c_j$ that are smaller than or equal to $i$ are matched to an element of $c_{j+1}$.
Therefore, the first entry that slides from $c_{j+1}$ to $c_j$ is $i$. 
In other terms, $J_{j}(b)$ is obtained from $b$ by sliding $i$ from $c_{j+1}$ to $c_j$.
Thus $\de_j^*b=J_{j}(b)$.
\item Set $N=|c_{j+1}|-|c_j|$, where $b=c_\ell\otimes \cdots\otimes c_1$. 
If $N\geq0$, then the element $R_{j}(b)$ is obtained by
using $N$ Jeu de Taquin slides between columns $j$ and $j+1$ of $b$.
Therefore, by Part (1), $(R_{j}(b))^*=\de_j^N b^*$.
Now, by definition of the duality $*$, we have
$N=\eps_j(b^*)-\varphi_j(b^*)$, therefore $\de_j^N b^*=\dot{\si_j}(b^*)$
and the claim is proved.
The case $N<0$ is proved similarly by using the lowering operators $\df_j$ instead.
\end{enumerate}
\end{proof}

\begin{exa}
\begin{enumerate}
\item Take $n=6$ and $\ell=4$, and let 
$$b^*={\scriptsize\young(1,2,4)\otimes\young(2,3)\otimes\young(2,3,4)\otimes\young(1)\otimes\young(1,3)\otimes\young(2)},
\text{\quad so that \quad} 
b={\scriptsize\young(1,3)\otimes\young(2,3,5)\otimes\young(1,2,3,6)\otimes\young(1,4,5)}.$$
Then one can check that
$$\de_1 b^*={\scriptsize\young(1,2,4)\otimes\young(2,3)\otimes\young(1,3,4)\otimes\young(1)\otimes\young(1,3)\otimes\young(2)},
\text{\quad whose dual is \quad} 
b'={\scriptsize\young(1,3)\otimes\young(2,3,5)\otimes\young(1,2,6)\otimes\young(1,3,4,5)}.$$
On the other hand, the horizontal slide from column $2$ to column $1$ of $b$ is achieved as
$$
{\scriptsize\gyoung(:~1,:\bullet 2,13,46,5)}
\to
{\scriptsize\gyoung(:~1,12,:\bullet 3,46,5)}
\to
{\scriptsize\gyoung(:~1,12,3:\bullet,46,5)}
\to
{\scriptsize\gyoung(:~1,12,36,4:\bullet,5)}
=
{\scriptsize\gyoung(1,2,6)}\otimes 
{\scriptsize\gyoung(1,3,4,5)},
$$
and we see that we recover $b'$.
\item Take $n=5$, $\ell=4$, and 
$$
b^*={\scriptsize\young(1,3)\otimes\young(3,4)\otimes\young(1,2,3)\otimes\young(3)\otimes\young(1,2)}
\text{\quad so that \quad} 
b={\scriptsize\young(2)\otimes\young(1,2,3,4)\otimes\young(3,5)\otimes\young(1,3,5)}.
$$
Let us look at $j=2$. 
One checks that $\eps_j(b^*)=3$ and $\varphi_j(b^*)=1$, 
(i.e. there are $3$ incoming and $1$ outgoing arrows with color $2$ at vertex $b^*\in \dF(s)$), so that
$$
s_2 b^*={\scriptsize\young(1,3)\otimes\young(2,4)\otimes\young(1,2,3)\otimes\young(2)\otimes\young(1,2)},
\text{\quad whose dual is \quad} 
b'={\scriptsize\young(2)\otimes\young(1,3)\otimes\young(2,3,4,5)\otimes\young(1,3,5)}.
$$
Now, we can compute $R_{2}(b)$, by doing the following two Jeu de Taquin slides between columns $2$ and $3$
$$
{\scriptsize\gyoung(:~1,:~2,:\bullet 3,34,5)}
\to
{\scriptsize\gyoung(:~1,:~2,33,:\bullet4,5)}
\to
{\scriptsize\gyoung(:~1,:~2,33,4:\bullet,5)}
\quad, \quad 
{\scriptsize\gyoung(:~1,:\bullet2,33,4,5)}
\to
{\scriptsize\gyoung(:~1,2:\bullet,33,4,5)}
\to
{\scriptsize\gyoung(:~1,23,3:\bullet,4,5)}
=
{\scriptsize\gyoung(1,3)}\otimes 
{\scriptsize\gyoung(2,3,4,5)},
$$
from which we recover $b'$.
\end{enumerate}
\end{exa}

Of course, \Cref{kas-jdt_weyl-Rmat} also holds when switching $F(s)$ and $\dF(s)$
and taking the dual versions of the different maps.

\subsection{Bicrystal structure}\label{bicrystal}

We are ready to prove that
the $A_{n-1}$ and $A_{\ell-1}$ crystal structures commute (modulo the duality $*$).
This is best stated as follows.

\begin{thm}\label{crystals_commute}
For all $j=1,\ldots,\ell-1$, the restriction of $\df_j^*$ to any connected component of $F(s)$ is either $0$ or an $A_{n-1}$-crystal isomorphism.
Similarly, for all $i=1,\ldots,n-1$, the restriction of ${}^*f_i$ to any connected component of $\dF(s)$ is either $0$ or an $A_{\ell-1}$-crystal isomorphism.
\end{thm}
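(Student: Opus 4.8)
The plan is to deduce the theorem from \Cref{kas-jdt_weyl-Rmat}(1), which identifies $\df_j^*$ with the horizontal Jeu de Taquin slide $J_j$, together with the standard fact that Jeu de Taquin slides commute with the $A_{n-1}$-crystal operators. Concretely, fix $j\in\{1,\ldots,\ell-1\}$ and a connected component $B\subseteq F(s)$. By \Cref{thm_yam_tab}, $B=B(\la)$ for some $\la\in\sS(s)$, and it carries a highest weight vertex $b_\la$ which is the Yamanouchi tableau of shape $\la$. First I would observe that $\df_j^*=J_j$ is defined on all of $B$ or on none of it: indeed $J_j(b)=0$ exactly when the entries of $c_j$ that could receive a slide from $c_{j+1}$ are blocked, and whether this happens is detected on $b^*$ by the condition $\df_j b^* = 0$ (i.e. $\varphi_j(b^*)=0$), which in turn — via the commutation of the $A_{\ell-1}$-operators $\df_j$ with the $A_{n-1}$-crystal structure on $\dF(s)$, a fact one can take as the very content being proved, or bootstrap from the $R$-matrix description — is constant on the $A_{n-1}$-component $B^* = \{b^* : b\in B\}$. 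Actually the cleanest route avoids this subtlety: treat the two cases uniformly by showing the relevant diagram of partial maps commutes.

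The key step is then: for every $b\in B$ and every $i\in\{1,\ldots,n-1\}$,
\[
J_j(e_i b) = e_i\, J_j(b) \qquad\text{and}\qquad J_j(f_i b) = f_i\, J_j(b),
\]
with the convention that both sides are $0$ as soon as one factor is. This is the classical statement that Jeu de Taquin (here, a single elementary horizontal slide between two adjacent columns) is an isomorphism of $A_{n-1}$-crystals onto its image; one reference is the fact that Jeu de Taquin is a composition of elementary transformations each realising a crystal isomorphism, or alternatively \cite[Section 1.2]{Fulton1997} combined with the crystal-theoretic interpretation. Granting this, if $\df_j^*$ is nonzero somewhere on $B$ it is nonzero everywhere on $B$ (apply the commutation to a path of crystal operators from $b_\la$ to any $b$), and the displayed identities say precisely that the partial map $\df_j^*|_B$ intertwines the $e_i, f_i$; being injective on each $i$-string (a single slide does not collapse distinct vertices, as it is reversible by the opposite slide $J_j^{-1}$), it is an $A_{n-1}$-crystal isomorphism onto its image. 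Hence $\df_j^*|_B$ is either $0$ or an $A_{n-1}$-crystal isomorphism, which is the assertion. The statement for ${}^*f_i$ on $\dF(s)$ follows by the symmetric argument, swapping the roles of $n$ and $\ell$ and using the dual version of \Cref{kas-jdt_weyl-Rmat}(1) together with the remark that everything holds with $F(s)$ and $\dF(s)$ interchanged.

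I expect the main obstacle to be pinning down precisely the commutation of a single elementary horizontal Jeu de Taquin slide with the Kashiwara operators at the level of the combinatorial Fock space $F(s)$ — in particular handling the boundary cases where a slide is undefined and making sure "undefinedness" propagates coherently along crystal strings, so that one genuinely gets an isomorphism of components rather than just of highest-weight vertices. One clean way to finesse this is to reduce to $R$-matrices: since $J_j = \df_j^*$ and, by \Cref{kas-jdt_weyl-Rmat}(2), products of such slides assemble into $R_j = \dot{\si_j}^*$, and since the combinatorial $R$-matrix is by construction an $A_{n-1}$-crystal isomorphism while the Weyl group action on $\dF(s)$ permutes $A_{\ell-1}$-strings compatibly with the $A_{n-1}$-structure, the desired commutation of $\df_j^*$ with $e_i,f_i$ can be extracted by comparing the action on successive crystal strings. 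I would present the proof via the Jeu de Taquin formulation as the main line, and mention the $R$-matrix reduction as the way the boundary bookkeeping is absorbed.
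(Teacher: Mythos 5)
Your proposal is correct and follows essentially the same route as the paper: both reduce, via \Cref{kas-jdt_weyl-Rmat}(1), to the classical fact that an elementary horizontal Jeu de Taquin slide commutes with the $A_{n-1}$-crystal operators (the paper additionally records, via \Cref{duality_intertwines_Yam_tab} and \Cref{thm_yam_tab}, that highest weight vertices are sent to highest weight vertices of the same weight, which you absorb into the string-propagation argument). The only slip is directional: \Cref{kas-jdt_weyl-Rmat}(1) states $\de_j^* = J_j$, so the lowering operator satisfies $\df_j^* = J_j^{-1}$ rather than $J_j$; this does not affect the argument.
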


\begin{proof}
Note that by symmetry, both statements are equivalent, and it is enough to prove that
for all $j=1,\ldots,\ell-1$, the map $\df_j^*$,  is an $A_{n-1}$-crystal isomorphism.
Fix $j\in\{1,\ldots,\ell-1\}$.
First of all, if $b\in F(s)$ is a highest weight vertex, then $b$ is Yamanouchi
by \Cref{thm_yam_tab}(2), and $b^*$ is an antitableau by \Cref{duality_intertwines_Yam_tab}.
By \Cref{thm_yam_tab}(1), $\df_jb^*$ is again an antitableau, 
so $\df_j^*b$ is Yamanouchi, i.e. a highest weight vertex.
This shows that $\df_j^*$ maps highest weight vertices to highest weight vertices with the same weight. 
It remains to show that it commutes with the lowering crystal operators $f_i$.
We have, for all $i=1,\ldots,n-1$,
\begin{alignat*}{4}
\df_j^* f_i & = J_{j}^{-1} f_i && \text{ \quad by \Cref{kas-jdt_weyl-Rmat}(1)}
\\
& = f_i J_{j}^{-1} && \text{ \quad by \Cref{thm_yam_tab}(1)}
\\
&= f_i \df_j^*. &&
\end{alignat*}
\end{proof}

This yields an $(A_{n-1}\times A_{\ell-1})$-crystal structure on $F(s)$.
For $\underline{i}=(i_1,\ldots,i_r)\in \{1, \ldots, n \}^r$ 
(respectively $\underline{j}=(j_1,\ldots,j_t)\in \{1, \ldots, \ell \}^t$), 
denote $f_{\underline{i}}=f_{i_r}\cdots f_{i_1}$
(respectively $ \df_{\underline{j}}=\df_{j_t}\cdots \df_{j_1}$).
The following corollary is immediate from \Cref{duality_intertwines_Yam_tab} and \Cref{crystals_commute}.
For a given $\la\in P_+$, denote $b_\la$ the Yamanouchi tableau of shape $\la$.

\begin{cor}\label{sources_finite}
Each connected component of the $(A_{n-1}\times A_{\ell-1})$-crystal $F(s)$ has a unique source vertex.
The sources are exactly the Yamanouchi tableaux.
In other terms, we have
$$F(s)=\bigoplus_{\substack{\la=\om_{s_1}+\cdots+\om_{s_\ell}\\ s_1+\cdots+ s_\ell=s}} \df_{\underline{j}}^* f_{\underline{i}} b_\la,$$
where the sum runs over all possible $\underline{i}$ and $\underline{j}$.
\end{cor}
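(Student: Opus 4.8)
The plan is to deduce the statement of \Cref{sources_finite} directly from the bicrystal structure established in \Cref{crystals_commute}, together with the characterisation of highest weight vertices in \Cref{thm_yam_tab} and the intertwining property in \Cref{duality_intertwines_Yam_tab}. First I would argue that every connected component of the $(A_{n-1}\times A_{\ell-1})$-crystal $F(s)$ contains at least one source vertex, i.e.\ a vertex killed by every $e_i$ ($i=1,\dots,n-1$) and every $\de_j^*$ ($j=1,\dots,\ell-1$). This follows because $F(s)$ is finite (it is a finite union of tensor products of finite crystals $B(\om_k)$), so starting from any vertex and repeatedly applying the raising operators of both families must terminate; the only subtlety is to check that the process is well defined, but this is exactly what \Cref{crystals_commute} guarantees — the two crystal structures commute, so there is no obstruction to simultaneously raising in both directions.

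Next I would identify the source vertices explicitly. A vertex $b\in F(s)$ is an $A_{n-1}$-highest weight vertex if and only if $b$ is Yamanouchi, by \Cref{thm_yam_tab}(2). On the other hand, $b$ is an $A_{\ell-1}$-highest weight vertex exactly when $\de_j^* b=0$ for all $j$, which by the conjugation defining $\de_j^*$ means $\de_j b^*=0$ for all $j$, i.e.\ $b^*$ is an $A_{\ell-1}$-highest weight vertex, i.e.\ $b^*$ is Yamanouchi (applying \Cref{thm_yam_tab}(2) to $\dF(s)$). By \Cref{duality_intertwines_Yam_tab}, $b^*$ is Yamanouchi if and only if $b$ is a tableau. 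Hence $b$ is a source vertex for the bicrystal precisely when $b$ is simultaneously Yamanouchi and a tableau, which forces $b = b_\la$ for some $\la$; conversely for each $\la\in\sS(s)$ the Yamanouchi tableau $b_\la$ is such a source. This also pins down the shape: writing $b=c_\ell\otimes\cdots\otimes c_1$, being a tableau means $|c_1|\geq\cdots\geq|c_\ell|$, so with $s_j=|c_j|$ we have $\la$ corresponding to $\om_{s_1}+\cdots+\om_{s_\ell}$ with $s_1+\cdots+s_\ell=s$.

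To finish I would establish uniqueness of the source in each component, which is the place where the most care is needed. Here the argument is: if $b$ is a source, its connected component $C$ as an $(A_{n-1}\times A_{\ell-1})$-crystal decomposes, as an $A_{n-1}$-crystal, into copies of $B(\la)$ where $\la$ is the $A_{n-1}$-weight of $b$ (using \Cref{thm_yam_tab}(1)); since $\de_j^*$ is an $A_{n-1}$-crystal isomorphism on each component by \Cref{crystals_commute}, $C$ is obtained from the single copy $B(\la)$ containing $b$ by acting with the commuting family $\{\de_j^*\}$, and any two sources in $C$ would have the same $A_{n-1}$-weight $\la$ and the same $A_{\ell-1}$-weight; but the $A_{\ell-1}$-highest weight vertices of a connected $A_{\ell-1}$-crystal are unique, and applying this after restricting to the $A_{n-1}$-highest-weight "layer" (the Yamanouchi vertices of $C$, on which $\{\de_j^*\}$ act as the genuine operators $\{\df_j\}$ transported through $*$, forming a single connected $A_{\ell-1}$-crystal) gives uniqueness. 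Thus every component has exactly one source $b_\la$, and writing every vertex of $C$ as $\df_{\underline j}^* f_{\underline i}\, b_\la$ for suitable words $\underline i,\underline j$ yields the displayed decomposition, the sum being over the $\la$ of the stated form. The main obstacle is the uniqueness step: one must be careful that "connected component of the bicrystal" genuinely forces a unique joint highest weight vertex, which requires combining the $A_{n-1}$-connectedness coming from \Cref{thm_yam_tab}(1) with the fact — furnished by \Cref{crystals_commute} — that the $\de_j^*$ preserve the $A_{n-1}$-isomorphism type, so that the Yamanouchi vertices of a bicrystal component form a single $A_{\ell-1}$-crystal in the $\dF(s)$ picture.
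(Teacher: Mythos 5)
Your proof is correct and follows essentially the same route as the paper, which simply declares the corollary ``immediate from'' \Cref{duality_intertwines_Yam_tab} and \Cref{crystals_commute}; you have just filled in the details (sources are simultaneously Yamanouchi and tableaux, hence equal to some $b_\la$, with uniqueness coming from the fact that the $A_{n-1}$-highest weight vertices of a bicrystal component correspond under $*$ to a single connected $A_{\ell-1}$-crystal of antitableaux). No gaps.
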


\begin{rem}
Accordingly, the $(A_{n-1}\times A_{\ell-1})$-crystal structure can be considered on $\dF(s)$.
In this case, the sources are the Yamanouchi antitableaux.
\end{rem}

\Yvcentermath0

\begin{exa}\label{exa_source}
Take $\ell=4$, $n=3$. The element
$$\scriptsize\young(1)
\otimes
\young(1,2)
\otimes
\young(1,2)
\otimes
\young(1,2,3)
$$
is a Yamanouchi tableau, i.e. a source in the $(A_{n-1}\times A_{\ell-1})$-crystal.
Its dual is the following Yamanouchi antitableau
$$\scriptsize
\young(1,2,3,4)
\otimes
\young(1,2,3)
\otimes
\young(1).
$$
\end{exa}

\Yvcentermath1

By \Cref{sources_finite}, for all $b\in F(s)$, there is a unique Yamanouchi tableau $\overline{b}$
such that $b=\df^*_{\underline{j}} f_{\underline{i}} \overline{b}$
for some $\underline{i}$ and $\underline{j}$.
Set 
$$P(b)=f_{\underline{i}} \overline{b} \mand Q(b)=\df_{\underline{j}}\overline{b}^*.$$

\begin{thm}\label{rsk}
For all $b\in F(s)$, $P(b)$ is a tableau and $Q(b)$ is an antitableau of transpose shape.
Moreover,  the assignment
$$\Phi : b \longmapsto (P(b), Q(b)) $$
yields a bijection $F(s) \to \Phi(F(s))$ called the crystal RSK correspondence.
In particular, we have the decomposition
$$F(s) \simeq \bigoplus_{\la\in\sS(s)} B(\la)\otimes \dB(\la^\trans).$$
\end{thm}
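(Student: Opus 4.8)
The plan is to leverage the bicrystal structure established in \Cref{crystals_commute} and the source description in \Cref{sources_finite} rather than appealing directly to the classical RSK bijection. First I would verify that $P(b)$ and $Q(b)$ are well-defined independently of the choices of $\underline{i}$ and $\underline{j}$: since $F(s)$ is an $(A_{n-1}\times A_{\ell-1})$-crystal by \Cref{crystals_commute}, each connected component has a unique source $\overline{b}$ (a Yamanouchi tableau) by \Cref{sources_finite}, and the standard theory of connected crystals tells us that the $A_{n-1}$-crystal component $B(\la)$ has the tableau $f_{\underline{i}}\overline{b}$ determined by $b$ alone (applying $\df_j^*$ does not change the $A_{n-1}$-position, and conversely). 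Dually, $Q(b)=\df_{\underline{j}}\overline{b}^*$ is the well-defined $A_{\ell-1}$-position of $b^*$ inside $\dB(\la^\trans)$. By \Cref{thm_yam_tab}(1), $P(b)$ is a tableau; by \Cref{duality_intertwines_Yam_tab}, $\overline{b}^*$ is a Yamanouchi antitableau of shape $\la^\trans$, and by (the $\dF$-analogue of) \Cref{thm_yam_tab}(1) applied inside $\dF(s)$, $Q(b)$ is an antitableau of shape $\la^\trans$ — hence of transpose shape to $P(b)$.

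Next I would establish that $\Phi$ is injective. Suppose $\Phi(b)=\Phi(b')$. Then $P(b)=P(b')$ forces $b,b'$ to lie in the same $A_{n-1}$-component $B(\la)$ and, via the $A_{n-1}$-crystal isomorphism $B(\la)\overset{\sim}{\to}\{\text{tableaux of shape }\la\}$, $b$ and $b'$ have the same $A_{n-1}$-coordinate; concretely we may take the \emph{same} sequence $\underline{i}$ with $b=\df^*_{\underline{j}}f_{\underline{i}}\overline{b}$ and $b'=\df^*_{\underline{j}'}f_{\underline{i}}\overline{b}'$. Now $Q(b)=Q(b')$ means $\df_{\underline{j}}\overline{b}^*=\df_{\underline{j}'}\overline{b}'^*$; since these are antitableaux in $\dF(s)$, and antitableaux of a given shape form a single $A_{\ell-1}$-component $\dF(\mu)$ (the $\dF$-version of \Cref{thm_yam_tab}), the two sources agree, $\overline{b}=\overline{b}'$, and the images $\df_{\underline{j}}\overline{b}^*=\df_{\underline{j}'}\overline{b}^*$ coincide as vertices of $\dF(s)$. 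Using the commutation $\df^*_{\underline{j}}f_{\underline{i}}=f_{\underline{i}}\df^*_{\underline{j}}$ from \Cref{crystals_commute} together with the fact that $b\mapsto b^*$ is a bijection intertwining $\df^*_j$ on $F(s)$ with $\df_j$ on $\dF(s)$, we get $b^*=(\df^*_{\underline{j}}f_{\underline{i}}\overline{b})^* = f_{\underline{i}}^{\,*\!}\!(\df_{\underline{j}}\overline{b}^*)$ depends only on $\underline{i}$ and on $\df_{\underline{j}}\overline{b}^*$, hence $b^*=b'^*$ and so $b=b'$. Since $\Phi$ maps onto its image by construction, $\Phi:F(s)\to\Phi(F(s))$ is a bijection.

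For the final decomposition, I would argue that $\Phi$ identifies each connected component of the bicrystal $F(s)$ with a product $B(\la)\otimes\dB(\la^\trans)$. Indeed, the component through a source $b_\la$ consists exactly of the vertices $\df^*_{\underline{j}}f_{\underline{i}}b_\la$ by \Cref{sources_finite}; applying $\Phi$, the first coordinate $P(\df^*_{\underline{j}}f_{\underline{i}}b_\la)=f_{\underline{i}}b_\la$ ranges bijectively over the $A_{n-1}$-crystal $B(\la)$ (tableaux of shape $\la$), while the second coordinate $Q(\df^*_{\underline{j}}f_{\underline{i}}b_\la)=\df_{\underline{j}}b_\la^*$ ranges bijectively over the $A_{\ell-1}$-crystal generated by the Yamanouchi antitableau $b_\la^*$ of shape $\la^\trans$, namely $\dB(\la^\trans)$; moreover by \Cref{crystals_commute} the two crystal actions on the component are independent, so $\Phi$ restricts to an $(A_{n-1}\times A_{\ell-1})$-crystal isomorphism onto $B(\la)\otimes\dB(\la^\trans)$. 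Summing over the sources, which by \Cref{sources_finite} are indexed by $\la\in\sS(s)$, gives $F(s)\simeq\bigoplus_{\la\in\sS(s)}B(\la)\otimes\dB(\la^\trans)$. The main obstacle I anticipate is the bookkeeping in the injectivity step: one must be careful that ``same $A_{n-1}$-coordinate'' can genuinely be encoded by a common word $\underline{i}$ and that the duality $*$ transports the operator identities correctly, since $*$ swaps the roles of the two crystal structures; this is where the precise compatibility in \Cref{kas-jdt_weyl-Rmat} and the remark that $\psi=\varphi^*\iff{}^*\psi=\varphi$ do the real work.
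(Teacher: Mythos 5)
Your proof is correct and follows essentially the same route as the paper: both arguments rest on the commutation of the two crystal structures (\Cref{crystals_commute}), the description of the sources (\Cref{sources_finite}), and the duality statement (\Cref{duality_intertwines_Yam_tab}). The only cosmetic differences are that you deduce that $P(b)$ is a tableau from the closure of tableaux under the operators $f_i$ (\Cref{thm_yam_tab}(1)), whereas the paper argues that $P(b)$ remains an $A_{\ell-1}$-highest weight vertex and then applies \Cref{duality_intertwines_Yam_tab}, and that you spell out the well-definedness and injectivity steps which the paper treats as clear.
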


\begin{proof}
By \Cref{crystals_commute}, the two crystals commute, so 
since $\overline{b}$ is a highest weight vertex for the $A_{\ell-1}$-crystal,
$P(b)=f_{\underline{i}} \overline{b}$ is also a highest weight vertex for the $A_{\ell-1}$-crystal,
i.e. $P(b)^*$ is Yamanouchi. By \Cref{duality_intertwines_Yam_tab},
$P(b)$ is a tableau.
Similarly, we get that $Q(b)$ is an antitableau.
Moreover, $\Phi$ is clearly injective, so that $\Phi : F(s) \to \Phi(F(s))$ is a bijection.
\end{proof}

Accordingly, for $b^*\in \dF(s)$, we will write $P(b^*)=Q(b)$ and $Q(b^*)=P(b)$.

\subsection{Crystal structure on self-dual elements}\label{selfdual_finite}

An element $b=c_\ell \otimes \cdots\otimes c_1 \in F(s)$ is called \textit{self-dual}
if $b^*=c_1\otimes \cdots\otimes c_\ell$. In particular, self-dual elements exist only if $n=\ell$.
Alternatively, if $M$ denotes the binary matrix associated to $b$, then $b$ is self-dual if and only if $M^\trans=M$.
We denote $F(s)^*$ the set of self-dual elements of $F(s)$.
Now, for all $i=1,\ldots, n-1$, set
$$f_i^* = \df_i^*f_i.$$
The operators $f_i^*, i=1,\ldots, n-1$, induce an $A_{n-1}$-crystal structure on $F(s)^*$. More precisely, we get the decomposition
$$F(s)^*\simeq \bigoplus_{\substack{\la\in\sS(s)\\ \la^\trans=\la}}B(\la).$$

\subsection{Keys and bikeys}\label{bikeys}

Keys are certain tableaux introduced by Lascoux and Sch\"utzenberger \cite{LS1990}
that are used to compute Demazure crystals \cite{BumpSchilling2017}. In this section, we generalise
the notion of keys using the bicrystal structure of \Cref{bicrystal}.

\begin{defi}
An element $b=c_\ell\otimes\cdots\otimes c_1 \in F(s)$ is called a \textit{key}
(respectively an \textit{antikey}) if $b$ is a tableau (respectively an antitableau) and if
$c_\ell \subseteq\cdots\subseteq c_1$ (respectively $c_1 \subseteq\cdots\subseteq c_\ell)$.
\end{defi}

For $b\in F(s)$, let $\cO_{S_n}(b)$ be the orbit of $b$ under the action of the Weyl group $S_n$.
The following proposition is easy to establish by induction on the length of the elements of $S_n$.

\begin{prop}\label{key_orbit}
The set of all keys of given shape $\la\in\sS(s)$ is equal to $\cO_{S_n}(b_\la)$.
\end{prop}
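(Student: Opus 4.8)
The plan is to prove both inclusions of $\cO_{S_n}(b_\la) = \{\text{keys of shape }\la\}$ separately, using the realisation of the Weyl group action on $F(s)$ by string reversals (recalled before \Cref{kas-jdt_weyl-Rmat}) together with the characterisation of tableaux as connected components $B(\la)$ from \Cref{thm_yam_tab}. First I would observe that $b_\la$, the Yamanouchi tableau of shape $\la$, is manifestly a key: in $b_\la = c_\ell\otimes\cdots\otimes c_1$, each column $c_j$ consists of the \emph{smallest} $|c_j|$ integers in $\{1,\ldots,n\}$ (since the Yamanouchi condition forces each row of the tableau to be constant), and since $|c_\ell|\leq\cdots\leq|c_1|$ for a tableau of shape $\la$, we get $c_\ell\subseteq\cdots\subseteq c_1$. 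Since the Weyl group acts within each connected component $B(\la)$ (string reversals do not change the shape, as $B(\la)$ is closed under the crystal operators by \Cref{thm_yam_tab}(1)), every element of $\cO_{S_n}(b_\la)$ is a tableau of shape $\la$; it remains to see it is a key.

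For the inclusion $\cO_{S_n}(b_\la)\subseteq\{\text{keys}\}$, I would argue by induction on the length $\ell(w)$ of $w\in S_n$ that $w\cdot b_\la$ is a key. The base case $w=e$ is the observation above. For the inductive step, write $w = \si_i w'$ with $\ell(w) = \ell(w')+1$, and set $b' = w'\cdot b_\la$, a key of shape $\la$ by induction; I must show $\si_i \cdot b'$ is again a key. Here $\si_i$ reverses each $i$-string through $b'$. The key point is that for a key $b' = c_\ell\otimes\cdots\otimes c_1$ with $c_\ell\subseteq\cdots\subseteq c_1$, the action of $\si_i$ has an explicit combinatorial description: it sends $b'$ to the tableau whose columns are obtained by, in each column $c_j$, swapping $i\leftrightarrow i+1$ whenever exactly one of them is present — equivalently, it is the tableau of shape $\la$ obtained by applying the transposition $(i,\,i+1)$ entrywise and re-sorting columns, which for a key amounts to a well-defined operation on the nested sets preserving the nesting up to relabelling. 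Concretely, because the columns are nested, the set $\{j : i\in c_j,\ i+1\notin c_j\}$ is an initial (top) segment of the column indices and $\{j: i\notin c_j,\ i+1\in c_j\}$ is empty or also a segment, so the $i$-signature computation is transparent and $\si_i\cdot b'$ has columns $c_\ell'\subseteq\cdots\subseteq c_1'$ obtained from the $c_j$ by possibly exchanging the roles of $i$ and $i+1$ in a nesting-compatible way. This shows $\si_i\cdot b'$ is a key.

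For the reverse inclusion $\{\text{keys of shape }\la\}\subseteq\cO_{S_n}(b_\la)$, I would use that each connected component $B(\la)$ of the $A_{n-1}$-crystal contains a unique highest-weight vertex, namely $b_\la$, and that $b_\la$ lies in $\cO_{S_n}(b_\la)$ trivially; then given any key $k$ of shape $\la$, it suffices to produce $w\in S_n$ with $w\cdot k = b_\la$. The content weight of a key $k$ with nested columns is a rearrangement of the weight of $b_\la$ (both have shape $\la$, and a key is determined by its content since the columns are forced once one knows how many columns contain each value), so there is $w$ with $\mathrm{wt}(w\cdot k) = \mathrm{wt}(b_\la) = \la$; and $w\cdot k$ is a key by the forward inclusion already proved, hence a tableau in $B(\la)$ of dominant weight $\la$, hence equal to $b_\la$ (the unique vertex of weight $\la$ in $B(\la)$). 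Thus $k = w^{-1}\cdot b_\la \in \cO_{S_n}(b_\la)$.

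I expect the main obstacle to be the inductive step in the forward inclusion: verifying cleanly that string reversal $\si_i$ preserves the key property. The nesting of columns makes the $i$-signature degenerate in a controllable way, but one must check carefully that after the reversal the columns are still semistandard \emph{and} still nested — the potential trap is a column that contains neither $i$ nor $i+1$ sitting between columns that do, which nesting rules out but which should be explicitly dispatched. An alternative, possibly cleaner, route for the whole proposition is to bypass string reversals and instead use the known fact (e.g. from \cite{LS1990}, or from the Demazure-crystal description in \cite{BumpSchilling2017}) that the keys of shape $\la$ are precisely the extremal weight vectors of $B(\la)$, i.e. the $S_n$-orbit of the highest weight vector; if one is willing to cite this, \Cref{key_orbit} is immediate, and the induction is only needed if one wants a self-contained argument.
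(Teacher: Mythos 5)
Your proof is correct and follows exactly the route the paper indicates (the paper only remarks that the statement "is easy to establish by induction on the length of the elements of $S_n$", which is precisely your forward inclusion); your verification that $\si_i$ acts on a key by exchanging the roles of $i$ and $i+1$ in the nested columns, and that nesting plus weakly decreasing column heights automatically gives semistandardness, correctly dispatches the one delicate point. The reverse inclusion via the observation that a key's weight is a permutation of $\la$ and that $b_\la$ is the unique vertex of weight $\la$ in $B(\la)$ is also sound.
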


Therefore, we have the following characterisation of keys (which could also be proved
directly by using the definition of $*$).

\begin{prop}\label{key_dual}
Let $b\in F(s)$.
Then $b$ is a key if and only if $b^*$ is a product of Yamanouchi columns.
\end{prop}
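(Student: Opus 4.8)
The plan is to connect the key condition on $b$ with a Yamanouchi-column condition on $b^*$ via the bicrystal machinery already established, specifically \Cref{key_orbit}, \Cref{kas-jdt_weyl-Rmat}(2), and \Cref{duality_intertwines_Yam_tab}. First I would unwind the definitions: a product of columns $d(1)\otimes\cdots\otimes d(n)$ is \emph{Yamanouchi} (in the $\dF(s)$ sense) precisely when every prefix of its word contains at least as many $j$'s as $(j{+}1)$'s for every $j$; reading off the word of $b^*$ from the matrix $M^\trans$, one checks that this amounts to asking that each individual column $d(i)$, viewed as a subset of $\{1,\ldots,\ell\}$, is an initial segment $\{1,2,\ldots,r_i\}$ — equivalently, each $d(i)$ is itself a Yamanouchi column. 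So the statement to prove is: $b$ is a key $\iff$ every $d(i)$ is an initial segment of $\{1,\ldots,\ell\}$.

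The forward direction is the geometric heart. By \Cref{key_orbit}, $b$ is a key of shape $\la$ iff $b\in\cO_{S_n}(b_\la)$, i.e. $b = \si_{i_r}\cdots\si_{i_1} b_\la$ for some Coxeter word. Now translate through the duality: by \Cref{kas-jdt_weyl-Rmat}(2), $\dot{\si_j}^*=R_j$, and by \Cref{duality_intertwines_Yam_tab}, $b_\la^* $ is the Yamanouchi antitableau of shape $\la^\trans$ — whose columns, read in the antitableau order, are nested initial segments, hence each is a Yamanouchi column. The point is then that the combinatorial $R$-matrices $R_j$ permute the tensor factors of an element of $\dF(s)$ \emph{as sets} (an $R$-matrix is a crystal isomorphism between $B(\om_i)\otimes B(\om_{i'})$ and $B(\om_{i'})\otimes B(\om_i)$, and for the nested/Yamanouchi situation the underlying column contents are simply transposed), so applying any sequence of $R_j$'s to a product of Yamanouchi columns again yields a product of Yamanouchi columns, just in a possibly different order. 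Dually, $b = (\text{product of } R_j\text{'s applied to } b^*)$ means $b^*$ is obtained from $b_\la^*$ by $R$-matrices, hence $b^*$ is a product of Yamanouchi columns. The converse direction: if $b^*$ is a product of Yamanouchi columns, then in particular $b^*$ is semistandard when the columns are arranged in weakly decreasing order of size, i.e. $b^*$ lies in the $S_n$-orbit — equivalently $R$-matrix orbit — of the Yamanouchi antitableau of the appropriate shape; pulling back through $*$ via \Cref{kas-jdt_weyl-Rmat}(2) and \Cref{duality_intertwines_Yam_tab} gives that $b$ lies in $\cO_{S_n}(b_\la)$, so $b$ is a key by \Cref{key_orbit}.

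The main obstacle I anticipate is making precise the claim that an $R$-matrix acts on a product of \emph{nested} (Yamanouchi) columns simply by swapping the two columns' contents — i.e. that $R_j$ sends $d(i)\otimes d(i{+}1)$ to $d(i{+}1)\otimes d(i)$ when $d(i), d(i{+}1)$ are both initial segments. This should follow from the explicit combinatorial description of the $R$-matrix (via Jeu de Taquin, as in \Cref{kas-jdt_weyl-Rmat}(1)–(2) and \cite[Section 4.8]{Shimozono2005}): when the two columns are nested initial segments the skew tableau is already straight, or the slides merely re-sort sizes, producing no change in content. Alternatively — and this is probably cleaner — one avoids $R$-matrices entirely and argues directly from the definition of $*$: write out $M$ for a key $b$ (whose columns $c_\ell\subseteq\cdots\subseteq c_1$ make $M$ a "staircase" pattern once rows are sorted) and observe the transpose $M^\trans$ has each row an initial segment, hence each $d(i)$ is an initial segment; the converse is the same computation read backwards. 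I would present the proof via \Cref{key_orbit} and the duality for conceptual clarity, relegating the direct matrix verification to a parenthetical remark, since the statement already advertises that a direct proof "using the definition of $*$" is available.

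\begin{proof}
Recall from \Cref{binary_mat} that if $M$ is the $n\times\ell$ binary matrix encoding $b=c_\ell\otimes\cdots\otimes c_1$, then $b^*$ is encoded by $M^\trans$; explicitly, $b^*=d(1)\otimes\cdots\otimes d(n)$ with $d(i)=\{\,j\mid i\in c_j\,\}$. A column $d\subseteq\{1,\ldots,\ell\}$ is a Yamanouchi column of type $A_{\ell-1}$ if and only if $d$ is an initial segment $\{1,\ldots,r\}$ of $\{1,\ldots,\ell\}$.

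Suppose first that $b$ is a key, of shape $\la\in\sS(s)$. By \Cref{key_orbit}, $b\in\cO_{S_n}(b_\la)$, so $b=\si_{i_r}\cdots\si_{i_1}b_\la$ for some $i_1,\ldots,i_r$. Applying the duality and using \Cref{kas-jdt_weyl-Rmat}(2), we get $b^*=R_{i_r}\cdots R_{i_1}(b_\la^*)$. By \Cref{duality_intertwines_Yam_tab}, $b_\la^*$ is the Yamanouchi antitableau of shape $\la^\trans$, all of whose tensor factors are initial segments of $\{1,\ldots,\ell\}$, hence Yamanouchi columns. Each combinatorial $R$-matrix $R_j$ is a crystal isomorphism exchanging two adjacent tensor factors; on a product of nested initial segments it merely permutes the underlying column contents (this follows from the Jeu de Taquin description of the $R$-matrix recalled before \Cref{kas-jdt_weyl-Rmat}, the slides introducing no change of content when the two columns are already comparable initial segments). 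Therefore $b^*=R_{i_r}\cdots R_{i_1}(b_\la^*)$ is again a product of Yamanouchi columns.

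Conversely, suppose $b^*=d(1)\otimes\cdots\otimes d(n)$ is a product of Yamanouchi columns, i.e. each $d(i)$ is an initial segment of $\{1,\ldots,\ell\}$. Let $\mu$ be the partition obtained by sorting the sizes $|d(1)|,\ldots,|d(n)|$ into weakly decreasing order, and let $\la=\mu^\trans$. Permuting the tensor factors of $b^*$ by a suitable composition of $R$-matrices so that their sizes are weakly decreasing produces, since each factor is an initial segment, the Yamanouchi antitableau of shape $\mu$, which equals $b_\la^*$ by \Cref{duality_intertwines_Yam_tab}. Hence $b^*=R_{i_r}\cdots R_{i_1}(b_\la^*)$ for some sequence of indices, and applying \Cref{kas-jdt_weyl-Rmat}(2) in reverse gives $b=\si_{i_r}\cdots\si_{i_1}b_\la\in\cO_{S_n}(b_\la)$. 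By \Cref{key_orbit}, $b$ is a key.

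(Alternatively, both implications follow by a direct inspection of $M$ and $M^\trans$: the columns of a key satisfy $c_\ell\subseteq\cdots\subseteq c_1$, so after sorting the rows of $M$ the matrix becomes a staircase of $1$'s, whence every row of $M^\trans$ is an initial segment, i.e. every $d(i)$ is a Yamanouchi column; reading this equivalence backwards proves the converse.)
\end{proof}
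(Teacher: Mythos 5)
Your proof is correct and follows essentially the same route as the paper's: both reduce to \Cref{key_orbit}, transport the $S_n$-orbit of $b_\la$ through the duality via \Cref{kas-jdt_weyl-Rmat}(2), identify $b_\la^*$ as the Yamanouchi antitableau with nested columns, and observe that the $R$-matrices merely permute nested columns. You spell out the two implications and the identification of Yamanouchi columns with initial segments more explicitly than the paper's one-line argument, and your parenthetical direct matrix verification is exactly the alternative the paper alludes to before the proposition.
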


\begin{proof}
We know that $b_\la^*$ is the Yamanouchi antitableau of shape $\la^\trans$.
In particular, $b_\la^*$ is an antikey.
Now by \Cref{kas-jdt_weyl-Rmat}(2) and \Cref{key_orbit}, $b$ is a key if and only if $b^*$ is obtained
from $b_\la^*$ by applying a sequence of $R_j$'s, which simply permute the columns since 
each column of $b_\la^*$ is included in the next.
\end{proof}

\begin{exa}
Take $n=3$, $\ell=2$.
Then the crystal of highest weight $\la=\om_1+\om_2$ 
is realised by tableaux as follows
\begin{center}
\begin{tikzpicture}
\node (a) at (0,0) {$\scriptsize\young(11,2)$};
\node (b1) at (-3,-1.5) {$\scriptsize\young(12,2)$};
\node (b2) at (3,-1.5) {$\scriptsize\young(11,3)$};
\node (c1) at (-1,-3) {$\scriptsize\young(13,2)$};
\node (c2) at (1,-3) {$\scriptsize\young(12,3)$};
\node (d1) at (1,-4.5) {$\scriptsize\young(13,3)$};
\node (d2) at (-1,-4.5) {$\scriptsize\young(22,3)$};
\node (e) at (0,-6) {$\scriptsize\young(23,3)$};

\draw[->] (a) --  node[pos=0.5,above]{\tiny 1} (b1);
\draw[->] (a) --  node[pos=0.5,above]{\tiny 2} (b2);
\draw[->] (b1) --  node[pos=0.5,above]{\tiny 2} (c1);
\draw[->] (b2) --  node[pos=0.5,above]{\tiny 1} (c2);
\draw[->] (c1) --  node[pos=0.3,above]{\tiny 2} (d1);
\draw[->] (c2) --  node[pos=0.3,above]{\tiny 1} (d2);
\draw[->] (d1) --  node[pos=0.5,above]{\tiny 1} (e);
\draw[->] (d2) --  node[pos=0.5,above]{\tiny 2} (e);
\end{tikzpicture}\end{center}
The keys of shape $\la$ are obtained by keeping all vertices
at the extremity of each $i$-string (for $i=1,2$), namely
$$
\scriptsize\young(11,2)\quad,\quad 
\scriptsize\young(12,2)\quad,\quad 
\scriptsize\young(11,3)\quad,\quad 
\scriptsize\young(13,3)\quad,\quad 
\scriptsize\young(22,3)\quad,\quad 
\scriptsize\young(23,3)\quad,
$$
whose respective dual are the following products of Yamanouchi columns
$$
\scriptsize\young(1,2)\otimes\young(2)\otimes\emptyset\quad,\quad 
\scriptsize\young(2)\otimes\young(1,2)\otimes\emptyset\quad,\quad 
\scriptsize\young(1,2)\otimes\emptyset\otimes\young(2)\quad,\quad 
\scriptsize\young(2)\otimes\emptyset\otimes\young(1,2)\quad,\quad 
\scriptsize\emptyset\otimes\young(1,2)\otimes\young(2)\quad,\quad 
\scriptsize\emptyset\otimes\young(2)\otimes\young(1,2)\quad .
$$
\end{exa}

We are ready to introduce the generalised notion of key.

\begin{defi}
An element $b=c_{1}\otimes\cdots\otimes c_{\ell}\in F(s)$ is called a \textit{bikey} 
if there exists $\dot{w}\in S_{\ell}$ such that 
$c_{\dot{w}(1)}\subseteq\cdots\subseteq c_{\dot{w}(\ell)}$.
\end{defi}

Write $\cK$ for the set of bikeys in $F(s)$. 
Observe that for bikeys, the distinction between tableaux and antitableaux disappears.
More precisely, 
when $b$ is a bikey, for any $j=1,\ldots,\ell-1$ we have the inclusion
$c_{j}\subseteq c_{j+1}$ or $c_{j+1}\subseteq c_{j}$. It thus follows from \Cref{kas-jdt_weyl-Rmat}(2) that 
$\dot{w}^*(b)=c_{\dot{w}(1)}\otimes\cdots\otimes c_{\dot{w}(\ell)}$.
Moreover, $\dot{w}^*(b)$ is a tableau (and an antitableau), thus for a 
bikey $b$ we must have $P(b)=\dot{w}^*(b)$.
For all $\la\in\sS(s)$, write
$$\cK(\lambda)=\{ b\in\cK \mid P(b) \text{ has shape } \lambda \}.$$
The next result justifies the terminology of the previous definition.

\begin{prop}\label{bikeys_biorbit} For all $\la\in\sS(s)$, we have 
$\cK(\la)= \cO_{S_n\times S_\ell}(b_\la).$
\end{prop}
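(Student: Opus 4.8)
The plan is to prove the two inclusions separately, exploiting the duality $*$ and \Cref{key_dual} to reduce everything to a statement about products of Yamanouchi columns. First I would establish $\cO_{S_n\times S_\ell}(b_\la)\subseteq\cK(\la)$. Starting from the Yamanouchi tableau $b_\la$, which is a key, I apply elements of $S_n$ and $S_\ell$. By \Cref{key_orbit} the $S_n$-orbit of $b_\la$ consists precisely of keys of shape $\la$, so it suffices to check that the (dualised) $S_\ell$-action preserves the bikey property and the shape. By \Cref{kas-jdt_weyl-Rmat}(2) the operator $\dot\si_j^*$ equals the combinatorial $R$-matrix $R_j$, which permutes adjacent components $c_j$ and $c_{j+1}$ whenever they are comparable for inclusion; since a key has totally ordered columns, every $R_j$ just transposes two nested columns, so the set of columns (with multiplicity) is unchanged, only their order. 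Hence any element of $\cO_{S_n\times S_\ell}(b_\la)$ still has columns that become totally ordered after a permutation, i.e. is a bikey, and since the $R$-matrix and the Weyl group action are crystal (iso)morphisms for both structures, the shape of $P(b)$ is preserved.

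For the reverse inclusion $\cK(\la)\subseteq\cO_{S_n\times S_\ell}(b_\la)$, let $b$ be a bikey with $P(b)$ of shape $\la$. By the remark preceding the statement, there is $\dot w\in S_\ell$ with $\dot w^*(b)=c_{\dot w(1)}\otimes\cdots\otimes c_{\dot w(\ell)}$ a tableau of shape $\la$ whose columns are nested, i.e. a key of shape $\la$. By \Cref{key_orbit}, $\dot w^*(b)=\si\cdot b_\la$ for some $\si\in S_n$. Since $\dot w^*$ is invertible with inverse $(\dot w^{-1})^*$ (it is an $R$-matrix, hence a bijection on $F(s)$), we get $b=(\dot w^{-1})^*\si\cdot b_\la$. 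Because the $S_n$-action and the dualised $S_\ell$-action commute on $F(s)$ — this is exactly the commutation of the two crystal structures from \Cref{crystals_commute}, applied to the Weyl group actions which are built from the $i$-string reversals — we may rewrite this as $\si\,(\dot w^{-1})^*\cdot b_\la\in\cO_{S_n\times S_\ell}(b_\la)$, as desired.

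The main obstacle I expect is the bookkeeping around which element $\dot w^*(b)$ really is a \emph{key} and not merely a tableau: one must verify that when all columns of $b$ are pairwise comparable, sorting them by inclusion and juxtaposing top-aligned indeed produces a semistandard tableau (so that \Cref{key_orbit} applies) — this is the content of the sentence ``Moreover, $\dot{w}^*(b)$ is a tableau (and an antitableau)'' above, and it rests on the elementary fact that a column of $\{1,\dots,n\}$-subsets that is totally ordered by inclusion is automatically both a semistandard tableau and a semistandard skew tableau once suitably aligned. A secondary technical point is confirming that the $S_n$- and $S_\ell^*$-actions commute; this follows formally from \Cref{crystals_commute} since $\si_i$ is a word in the $f_i$'s and $e_i$'s restricted to strings and $\dot\si_j^*=R_j$ commutes with the $A_{n-1}$-crystal operators, but it should be stated explicitly. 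Everything else is routine.
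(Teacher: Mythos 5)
Your proposal is correct and follows essentially the same route as the paper: both inclusions are reduced to \Cref{key_orbit} via the observation that for a bikey the adjacent columns are nested, so the $R$-matrices (the dualised $S_\ell$-action) merely permute the columns, together with the commutation of the two Weyl group actions. The extra care you take about why $\dot w^*(b)$ is genuinely a key and about the preservation of the shape of $P(b)$ is sound but matches what the paper treats implicitly in the remark preceding the statement.
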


\begin{proof}
We have already observed that $\dot{w}^*(b)$ is a tableau.
Since $c_{\dot{w}(1)}\subseteq\cdots\subseteq c_{\dot{w}(\ell)}$, it is a key of shape $\lambda$. 
Thus, there exists $w\in S_{n}$ such that $w\dot{w}^*(b)=b_\la$ which proves that $\cK(\lambda)\subseteq \cO_{S_n\times S_\ell}(b_\la)$. 
Now for any $w\in S_{n}$ and $\dot{w}\in S_{n}$, the vertex
$b=w\dot{w}^*(b_\la)=\dot{w}^*w(b_\la)$ is a bikey since
$w(b_\la)$ is a key on which the action of $\dot{w}^*$ 
(which is a combination of $R$-matrices) reduces to a permutation of the columns. Hence we get $\cK(\lambda)= \cO_{S_n\times S_\ell}(b_\la)$.
\end{proof}

Finally, we prove that the duality $*$ maps bikeys to bikeys.
For this, consider the set $\dot{\cK}$ of bikeys in $\dF(s)$,
and for $\mu\vdash s$ let
$$\dot{\cK}(\mu)=\{ \dot{b}\in\dot{\cK} \mid P(\dot{b}) \text{ has shape } \mu \}.$$

\begin{prop}\label{bikeys_biorbit_2} 
	We have $b\in\cK(\la)$ if and only if $b^{\ast}\in\dot{\cK}(\la^\trans)$.
\end{prop}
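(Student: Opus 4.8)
The plan is to combine \Cref{bikeys_biorbit} with the behaviour of the duality $*$ under the Weyl group actions, which is already captured by \Cref{kas-jdt_weyl-Rmat}(2). First I would record the key fact that the duality $*$ is an $(S_n\times S_\ell)$-equivariant bijection between the $(A_{n-1}\times A_{\ell-1})$-crystals $F(s)$ and $\dF(s)$, where the factor $S_n$ acting on $F(s)$ corresponds to the factor $S_\ell$ acting on $\dF(s)$ and vice versa. Concretely, for $w\in S_n$ we have $(w\cdot b)^\ast = {}^\ast w(b^\ast)$ where ${}^\ast w$ denotes the corresponding Weyl group element for the $A_{\ell-1}$-structure on $\dF(s)$; this is just the statement that $\sigma_i^\ast = \dot\sigma_i$ combined with \Cref{kas-jdt_weyl-Rmat}(2) (the latter identifies $\dot\sigma_j^\ast$ with $R_j$, which is precisely the $S_\ell$-action on the column indices), so conjugating by $*$ swaps the two Weyl group actions. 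Similarly $(\dot w\cdot b)^\ast = {}^\ast\dot w(b^\ast)$ for $\dot w\in S_\ell$.

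Next I would use \Cref{duality_intertwines_Yam_tab}: if $b_\la\in F(\bs)$ is the Yamanouchi tableau of shape $\la$, then $b_\la^\ast$ is the Yamanouchi antitableau of shape $\la^\trans$, which is exactly the source vertex $\dot b_{\la^\trans}\in\dF(s)$ of the corresponding $(A_{\ell-1}\times A_{n-1})$-crystal component. Both of these are bikeys (a Yamanouchi tableau is a key, hence a bikey), so $b_\la\in\cK(\la)$ and $\dot b_{\la^\trans}\in\dot\cK(\la^\trans)$. Now \Cref{bikeys_biorbit} gives $\cK(\la)=\cO_{S_n\times S_\ell}(b_\la)$ and the analogous statement for $\dF(s)$ gives $\dot\cK(\la^\trans)=\cO_{S_\ell\times S_n}(\dot b_{\la^\trans})$. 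Applying the equivariance from the first paragraph, the image under $*$ of the orbit $\cO_{S_n\times S_\ell}(b_\la)$ is precisely $\cO_{S_\ell\times S_n}(b_\la^\ast)=\cO_{S_\ell\times S_n}(\dot b_{\la^\trans})$. Therefore $b\in\cK(\la)$ if and only if $b^\ast$ lies in this orbit, i.e.\ if and only if $b^\ast\in\dot\cK(\la^\trans)$, which is the claim.

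The main obstacle I anticipate is making the equivariance statement precise and airtight, since \Cref{kas-jdt_weyl-Rmat}(2) is phrased as $\dot\sigma_j^\ast = R_j$ rather than directly as a statement about two Weyl group actions being interchanged. One has to be careful that the $R$-matrix $R_j$ on $F(s)$ really does implement the transposition $(j,j+1)$ on the column indices $1,\dots,\ell$ — which is true for bikeys by the remark preceding \Cref{bikeys_biorbit} (for a bikey the $R$-matrix reduces to permuting columns, because consecutive columns are nested) — and that the $S_n$-action $\sigma_i$ on $F(s)$ corresponds under $*$ to the $S_n$-action on the row indices of the transposed matrix, i.e.\ to $\dot\sigma_i$ on $\dF(s)$. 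Once this dictionary is set up (it is essentially the matrix-transpose picture of \Cref{binary_mat}), the rest is a formal orbit-chasing argument. An alternative, perhaps cleaner, route that avoids invoking $R$-matrices on general elements: argue entirely within bikeys, noting that if $c_{\dot w(1)}\subseteq\cdots\subseteq c_{\dot w(\ell)}$ then the dual $b^\ast$ has columns $d(1),\dots,d(n)$ which are automatically totally ordered by inclusion after reindexing (each $d(i)=\{\,j : i\in c_j\,\}$, and the nesting of the $c_j$'s forces the $d(i)$'s to be comparable), so $b^\ast$ is a bikey directly from the definition; then identify its $P$-shape as $\la^\trans$ using $P(b^\ast)=Q(b)$ together with the fact that $Q(b)$ has transpose shape to $P(b)$ by \Cref{rsk}. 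I would present the orbit argument as the main proof and mention the direct verification as a remark.
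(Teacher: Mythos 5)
Your proposal is correct and is essentially the paper's own proof: the paper likewise uses \Cref{bikeys_biorbit} to write $b=w\dot{w}^{*}(b_\la)$ and conjugates by the duality to get $b^{*}={}^{*}w\dot{w}(b_\la^{*})={}^{*}w\dot{w}(b_{\la^\trans})\in\dot{\cK}(\la^\trans)$. The only imprecision is terminological: for $w\in S_n$ the conjugate ${}^{*}w$ acts on $\dF(s)$ by combinatorial $R$-matrices (which on bikeys reduce to column permutations), not as an element of the $A_{\ell-1}$ Weyl group, but this does not affect your orbit-chasing argument.
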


\begin{proof} 
It suffices to observe that for any $b\in\cK$, 
there exists a partition $\lambda$ and $(w,\dot{w})\in S_{n}\times S_{\ell}$ 
such that $b=w\dot{w}^*(b_\la)$. 
Then $b^{\ast}={}^*w\dot{w}(b_\la^{\ast})={}^*w\dot{w}(b_{\la^\trans})$ 
belongs to $\dot{\cK}$.
\end{proof}

\medskip

Let $\sK$ be the set of binary matrices corresponding to the
bikeys in $\cK$. For any $\lambda\in\sS(s)$, recall
that the binary matrix $M_{\lambda}$ corresponding to $b_\la$ is the
$n\times\ell$ matrix whose entries equal to $1$ form a pattern corresponding
to the Young diagram of $\lambda$. Also, we have a right $S_{\ell}$-action 
(respectively a left $ S_{n}$-action) on the set of $n\times\ell$
matrices where the action of $\dot{w}\in S_{\ell}$ (respectively of
$w\in S_{n}$) is given by the right (respectively left) multiplication by
the permutation matrix $P_{\dot{w}}$ (respectively $P_{w}$) associated to $\dot{w}$
(respectively $w$). Since permuting the columns of $b$ (respectively of $b^{\ast}$) is
equivalent to permuting the columns (respectively the rows) of the corresponding
binary matrix, we get the following corollary.

\begin{cor}
	We have
	\[
	\sK=%
	%TCIMACRO{\tbigsqcup \limits_{(w,\dot{w})\in S_{n}\times
	% S_{\ell}}}%
	%BeginExpansion
	{\textstyle\bigsqcup\limits_{(w,\dot{w})\in S_{n}\times
			 S_{\ell}}}
	%EndExpansion
	P_{w}M_{\lambda}P_{\dot{w}}%
	\]
	and
	\[
	\sum_{(m_{i,j})\in\sK}\prod_{\substack{1\leq i\leq n \\ 1\leq j\leq \ell}}(x_{i}%
	y_{j})^{m_{i,j}}=
	\sum_{\la\in\sS(s)}
	m_{\lambda}(x)m_{\lambda^\trans}(y).
	\]
	
\end{cor}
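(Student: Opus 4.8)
The plan is to deduce both displayed formulas from \Cref{bikeys_biorbit} together with the dictionary between bikeys and binary matrices established just before the statement. I will argue the two equalities essentially in parallel.

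\smallskip

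First, for the set-theoretic description of $\sK$, I start from \Cref{bikeys_biorbit}, which gives $\cK=\bigsqcup_{\la\in\sS(s)}\cO_{S_n\times S_\ell}(b_\la)$. Passing to binary matrices and using the observation recorded above — that the right $S_\ell$-action on $b$ corresponds to right multiplication by a permutation matrix $P_{\dot w}$, and the left $S_n$-action corresponds to left multiplication by $P_w$ — the orbit $\cO_{S_n\times S_\ell}(b_\la)$ becomes $\{P_w M_\la P_{\dot w}\mid (w,\dot w)\in S_n\times S_\ell\}$, where $M_\la$ is the matrix whose $1$'s trace out the Young diagram of $\la$. Taking the union over $\la\in\sS(s)$ yields the first formula; the $\bigsqcup$ is justified because distinct $\la$ give bikeys whose $P(b)$ have distinct shapes, hence disjoint orbits.

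\smallskip

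Second, for the generating function identity, I rewrite the left-hand sum as $\sum_{\la\in\sS(s)}\sum_{M\in\cO_{S_n\times S_\ell}(M_\la)}\prod_{i,j}(x_iy_j)^{M_{i,j}}$ using the first part. For a fixed $\la$, a matrix $M=P_wM_\la P_{\dot w}$ has a $1$ in position $(i,j)$ exactly when the corresponding cell of the Young diagram of $\la$ sits in row $w^{-1}(i)$ and column $\dot w^{-1}(j)$; hence the monomial attached to $M$ is $\prod_{k} x_{w(?)}\cdots$ — more cleanly, $\prod_{i,j}x_i^{M_{i,j}}\prod_{i,j}y_j^{M_{i,j}}=x^{\text{(row sums of }M)}y^{\text{(column sums of }M)}$, and the row-sum vector of $P_wM_\la P_{\dot w}$ is $w$ applied to the row-sum vector of $M_\la$, namely $w\cdot\la$, while the column-sum vector is $\dot w\cdot\la^\trans$ (the column sums of $M_\la$ being the parts of $\la^\trans$). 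Summing the resulting monomial $x^{w\cdot\la}\,y^{\dot w\cdot\la^\trans}$ over all $(w,\dot w)$ and dividing out the appropriate stabiliser overcounting gives precisely $m_\la(x)\,m_{\la^\trans}(y)$, since the monomial symmetric function $m_\nu$ is the sum of the distinct monomials $x^{\sigma\cdot\nu}$. Summing over $\la\in\sS(s)$ finishes it.

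\smallskip

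The step I expect to require the most care is the bookkeeping in the last paragraph: the map $(w,\dot w)\mapsto P_wM_\la P_{\dot w}$ is generally not injective (when $\la$ has repeated parts, or $\la^\trans$ does), so I must confirm that the induced map from $S_n\times S_\ell$ onto the orbit of monomials matches the overcounting built into the standard formula $m_\nu(x)=\frac{1}{\#\{\text{stabiliser}\}}\sum_{\sigma\in S}x^{\sigma\cdot\nu}$ on both the $x$- and $y$-sides simultaneously — i.e. that the fibres of the monomial map are exactly products of the two stabilisers. This is the reason the argument is ``easy to establish by induction'' flavoured but still deserves a sentence: the stabiliser of $M_\la$ under the $S_n\times S_\ell$-action is the product of the Young (row-permutation) subgroup fixing $\la$ and the one fixing $\la^\trans$, which is exactly what makes the two $m$-functions factor out cleanly.
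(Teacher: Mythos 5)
Your proposal is correct and follows exactly the route the paper intends: the corollary is stated without proof as an immediate consequence of \Cref{bikeys_biorbit} together with the observation that the $S_n\times S_\ell$-action on bikeys corresponds to row and column permutations of the binary matrices, and your derivation simply fills in those details (including the right point that the stabiliser of $M_\lambda$ is the product of the Young subgroups of $\lambda$ and $\lambda^\trans$, which is what makes the orbit biject with pairs of rearrangements and hence with the monomials of $m_\lambda(x)m_{\lambda^\trans}(y)$). No gaps.
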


\subsection{Cyclage and promotion}\label{cyclage_promotion}

In this section, we show that the duality $*$ intertwines two important maps, namely  the
cyclage and the promotion operators. This will be used in \Cref{charge_energy} to study the relationship between charge and energy.

\begin{defi}\label{def_cyclage_prom}\
\begin{enumerate}
 \item Let $b=c_\ell\otimes\cdots\otimes c_1\in F(s)$.
The \textit{cyclage} of $b$ is the element of $F(s)$ defined by
$$\xi(b)=c_{\ell-1}\otimes\cdots\otimes c_1\otimes c_\ell.$$
\item Let $b^*=d_1\otimes\cdots\otimes d_n\in \dF(s)$.
The \textit{promotion} of $b^*$ is the element of $\dF(s)$ defined by
$\pr(b^*)= \pr(d_1)\otimes \cdots\otimes \pr(d_n)$
where, for all $i=1,\ldots,n$, $$\pr(d_i)=\{ k+1\mod \ell \,;\, k\in d_i \}$$
\end{enumerate}
\end{defi}

\begin{rem}
The promotion operator permits to endow $\dF(s)$ with the structure of an affine type $A^{(1)}_{\ell-1}$-crystal that we shall denote by $\dF(s)^{\mathrm{aff}}$. This structure is obtained by considering the additional crystal operators $\df_{0}=\pr^{-1}\df_{1}\circ\pr$ and $\de_{0}=\pr^{-1}\de_{1}\circ\pr$. The crystal $\dF(s)^{\mathrm{aff}}$ splits into affine connected components. Each such component is isomorphic 
to a tensor product of $n$ column Kirillov-Reshetikhin crystals. Observe that these components are not highest weight crystals.
\end{rem}

The following proposition
is immediate from the definitions.

\begin{prop}\label{duality_intertwines_cyc_prom}\
\begin{enumerate}
	\item For all $b\in F(s)$, we have $  \xi(b)^* = \pr(b^*)$.
	\item For all $b^*\in\dF(s)$, we have $ \pr\circ R(b^*)=R \circ\pr(b^*) $ 
\end{enumerate}
\end{prop}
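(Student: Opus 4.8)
The plan is to prove each of the two statements directly from the definitions in \Cref{def_cyclage_prom}, since both really are bookkeeping once one unpacks how the duality $*$, the cyclage $\xi$, the promotion $\pr$ and the $R$-matrix act on columns.

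For part (1), I would write $b=c_\ell\otimes\cdots\otimes c_1\in F(s)$ and $b^*=d_1\otimes\cdots\otimes d_n$, where by definition $d_i$ is the column whose letters are $\{j\in\{1,\ldots,\ell\}\mid i\in c_j\}$. Now $\xi(b)=c_{\ell-1}\otimes\cdots\otimes c_1\otimes c_\ell$, and the key point is that in the indexing convention used for the duality the columns of $b$ are numbered from the right, so cyclically moving $c_\ell$ to the bottom of the tensor product is exactly the cyclic relabelling $j\mapsto j+1 \bmod \ell$ of the column positions. Concretely, if we set $b'=\xi(b)=c'_\ell\otimes\cdots\otimes c'_1$ with $c'_j=c_{j-1}$ for $j\geq 2$ and $c'_1=c_\ell$, then $(b')^*=d'_1\otimes\cdots\otimes d'_n$ where $d'_i=\{j\mid i\in c'_j\}=\{k+1\bmod\ell\mid k\in d_i\}=\pr(d_i)$. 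Hence $(b')^*=\pr(d_1)\otimes\cdots\otimes\pr(d_n)=\pr(b^*)$, which is exactly $\xi(b)^*=\pr(b^*)$. The only care needed is matching the ``mod $\ell$'' convention (which sends $\ell$ to $\ell$, i.e. $0$ is identified with $\ell$) to the positions in the tensor product; this is routine but is the one place where an off-by-one slip is possible, so I would spell it out on a small example as is done elsewhere in the paper.

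For part (2), I would argue that $\pr$ commutes with each $A_{\ell-1}$-crystal operator $\df_1,\ldots,\df_{\ell-1}$ restricted to a single column $d_i$ \emph{up to} the cyclic shift, and more relevantly that $\pr$ as defined componentwise on $b^*=d_1\otimes\cdots\otimes d_n$ commutes with the combinatorial $R$-matrix $R$ permuting the $B(\om_{\ds_i})$ factors. The clean way: recall (\Cref{kas-jdt_weyl-Rmat} and the remark after \Cref{def_cyclage_prom}) that $R$ is built from the $A_{\ell-1}$-crystal isomorphisms $B(\om_{\ds_i})\otimes B(\om_{\ds_{i'}})\cong B(\om_{\ds_{i'}})\otimes B(\om_{\ds_i})$, which are characterised as the unique $A_{\ell-1}$-crystal isomorphisms between these connected components; and recall that $\pr$ is, by construction, an $A^{(1)}_{\ell-1}$-crystal automorphism in the sense that $\df_0=\pr^{-1}\df_1\pr$, so in particular conjugation by $\pr$ permutes the finite crystal operators $\df_1,\ldots,\df_{\ell-1},\df_0$ cyclically. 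Since $\pr$ acts diagonally on the tensor factors and $R$ only rearranges and relates the tensor factors via an $A_{\ell-1}$-isomorphism, the map $\pr^{-1}\circ R\circ\pr$ is again an $A_{\ell-1}$-crystal isomorphism permuting the same factors in the same way; by uniqueness of the combinatorial $R$-matrix it must equal $R$, giving $\pr\circ R=R\circ\pr$.

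The main obstacle, as the authors themselves signal by calling the statement ``immediate from the definitions'', is not conceptual depth but getting the conventions exactly right: which columns of $b$ are read from the right, how the cyclic shift ``$k+1\bmod\ell$'' lands on tensor positions, and the fact that in part (2) one is implicitly claiming $\pr$ is a bona fide symmetry of the finite $A_{\ell-1}$-structure (so that uniqueness of $R$ can be invoked) rather than merely of the affinized structure. I would therefore present part (1) with the explicit relabelling computation above, and for part (2) lean on the uniqueness of the combinatorial $R$-matrix together with the fact that $\pr$ commutes with all $\df_i$, $i=1,\ldots,\ell-1$ — the latter being exactly what the affine crystal structure on $\dF(s)^{\mathrm{aff}}$ encodes.
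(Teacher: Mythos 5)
Part (1) of your proposal is correct and complete: identifying the letter $j$ of $d_i$ with the position of the column $c_j$, the cyclage moves $c_j$ from position $j$ to position $j+1\bmod\ell$, which is exactly the entrywise shift defining $\pr$. This is precisely the bookkeeping the authors summarise as ``immediate from the definitions''.

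Part (2) has a genuine gap, and it sits exactly at the point you flag as the main obstacle but then resolve incorrectly. Your closing claim that ``$\pr$ commutes with all $\df_i$, $i=1,\ldots,\ell-1$'' is false and contradicts what you correctly wrote a few lines earlier: since $\df_0=\pr^{-1}\df_1\circ\pr$, conjugation by $\pr$ sends $\df_j$ to $\df_{j+1\bmod\ell}$, so it fixes no finite operator and, crucially, sends $\df_{\ell-1}$ to $\df_0$, which is not a finite-type operator. Consequently the verification that $\pr^{-1}\circ R\circ\pr$ is an $A_{\ell-1}$-crystal isomorphism breaks down at $j=\ell-1$: one gets $\pr^{-1}R\pr\,\df_{\ell-1}=\pr^{-1}R\,\df_0\,\pr$, and to proceed one needs $R\df_0=\df_0R$. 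That commutation is not supplied by the paper's characterisation of $R$ as the unique \emph{finite-type} isomorphism $B(\om_a)\otimes B(\om_b)\simeq B(\om_b)\otimes B(\om_a)$; it is the affine equivariance of $R$, which is essentially the content of the statement you are proving, so the finite-type uniqueness argument is circular at the boundary node.

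Two ways to close the gap. Either (a) use the explicit combinatorial rule for $R$ on a pair of columns from \cite{Shimozono2005} (cited in \Cref{kas-jdt_weyl-Rmat}): the matching procedure there is defined cyclically on the alphabet $\{1,\ldots,\ell\}$ and is manifestly equivariant under the simultaneous relabelling $k\mapsto k+1\bmod\ell$ of all entries, which yields $\pr\circ R=R\circ\pr$ directly from the definitions; or (b) upgrade your uniqueness argument to the affine setting: $R$ is the combinatorial $R$-matrix of Kirillov--Reshetikhin crystals, hence commutes with $\df_0$ as well, and $B(\om_a)\otimes B(\om_b)$ is connected as an $A^{(1)}_{\ell-1}$-crystal, so the affine isomorphism is unique; since conjugation by $\pr$ permutes the full set $\{\df_0,\ldots,\df_{\ell-1}\}$, the map $\pr^{-1}R\pr$ is again an affine isomorphism and equals $R$. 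Either route works, but both require input beyond the finite-type uniqueness you invoke.
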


\Yvcentermath0

\tikzcdset{row sep/normal=1cm}

\begin{exa}
Take the same values as in \Cref{exa_duality}.
Then we have
\begin{center}
\begin{tikzpicture}
\node (11) at (0,0) {$\scriptsize\young(1,2,3)\otimes\young(4)\otimes\young(1,4)\otimes\young(1,2,4)\otimes\young(1,3)$};
\node (12) at (6,0) {$\scriptsize\young(1,2,3,5)\otimes\young(2,5)\otimes\young(1,5)\otimes\young(2,3,4)$};
\node (21) at (0,-3) {$\scriptsize\young(4)\otimes\young(1,4)\otimes\young(1,2,4)\otimes\young(1,3)\otimes \young(1,2,3)$};
\node (22) at (6,-3) {$\scriptsize\young(1,2,3,4)\otimes\young(1,3)\otimes\young(1,2)\otimes\young(3,4,5)$};

\draw[|->] (11) --  node[pos=0.5,above]{$*$} (12);
\draw[|->] (11) --  node[pos=0.5,left]{$\xi$} (21);
\draw[|->] (21) --  node[pos=0.5,above]{$*$} (22);
\draw[|->] (12) --  node[pos=0.5,left]{$\pr$} (22);

\end{tikzpicture}\end{center}
\end{exa}

In order for the cyclage to be relevant for the description of the Kostka polynomials, we need to 
restrict to some cases, which we call \textit{authorised}.

\begin{defi} Let $b=c_\ell\otimes\cdots\otimes c_1\in F(s)$.
\begin{enumerate}
\item We say that the cyclage is \textit{authorised} for $b$ if either
\begin{enumerate}
\item $b$ is of \textit{dominant evaluation}, i.e. 
$b$ has no more $k+1$ than $k$ for all $k=1,\ldots,n-1$, and 
$1\notin c_\ell$.
\item $b$ is a tableau and there is no $k\in\{ 1,\ldots, n \}$ appearing in every column $c_j$.
\end{enumerate}
\item  Suppose that there exists $k\in\{1,\ldots,n\}$ such that $k\in c_j$ for all $j=1\ldots,\ell$.
In particular, $b$ is not authorised.
Let $k_0$ be minimal with this property. The \textit{reduction} of $b$ is 
the element $\red(b)$ obtained by deleting all occurences of $k_0$ in $b$, and
replacing each $k>k_0$ by $k-1$.
\end{enumerate}
\end{defi}

\begin{rem}
In particular, if $b$ verifies (2), there exists $m$ such that the cyclage is authorised for the tableau $\red^m(b)$.
We denote $\overline{\xi} : b \mapsto \xi(\red^m(b))$.  
\end{rem}

\begin{exa}
Take $\ell=2, n=3$ and 
$b=\scriptsize\young(1,2,3)\otimes\young(2,3)$.
We have $k_0=2$ and $\red(b)=\scriptsize\young(1,2)\otimes\young(2)$.
\end{exa}

It can be read off the dual if the cyclage is authorised, and how reduction acts.

\begin{prop} \
\begin{enumerate}
\item Let $b\in F(s)$ and write $b^*=d_1\otimes\cdots\otimes d_n$.
The cyclage is authorised for $b$ if and only if either
\begin{enumerate}
\item $|d_1|\geq \cdots \geq |d_n|$ and $\ell\notin d_1$, or
\item $b^*$ is Yamanouchi and for all $i=1,\ldots,n$, $d_i\neq \{1,\ldots, \ell\}$.
\end{enumerate}
\item $\red(b)^*$ is obtained from $\red(b)$ be deleting the leftmost column $\{1,\ldots,\ell\}$ in $b^*$
\end{enumerate}
\end{prop}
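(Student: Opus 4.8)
The plan is to observe that both assertions are direct translations of the defining data through the duality $*$ (\Cref{duality_def}), read off the column/matrix description of \Cref{binary_mat}. Recall that if $b=c_\ell\otimes\cdots\otimes c_1$ then $b^*=d_1\otimes\cdots\otimes d_n$ with $d_i=\{\,j\in\{1,\dots,\ell\}\mid i\in c_j\,\}$. Since each $c_j$ is a \emph{subset} of $\{1,\dots,n\}$, the letter $i$ appears at most once in $c_j$, so $|d_i|$ is exactly the number of occurrences of the letter $i$ in $b$; likewise $i\in c_j$ if and only if $j\in d_i$, for all $i,j$.

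For part (1), I would proceed as follows. The condition that $b$ has dominant evaluation says that $b$ has no more letters $k+1$ than letters $k$ for every $k=1,\dots,n-1$, which by the previous remark reads $|d_{k+1}|\le|d_k|$, i.e. $|d_1|\ge\cdots\ge|d_n|$; and $1\in c_\ell$ is equivalent to $\ell\in d_1$, so ``$1\notin c_\ell$'' becomes ``$\ell\notin d_1$''. This gives the equivalence with (a). For (b), ``$b$ is a tableau'' is equivalent to ``$b^*$ is Yamanouchi'' by \Cref{duality_intertwines_Yam_tab}, and ``some $k\in\{1,\dots,n\}$ lies in every column $c_j$'' is equivalent to ``$d_k=\{1,\dots,\ell\}$ for some $k$''; negating the latter yields ``$d_i\neq\{1,\dots,\ell\}$ for all $i$''. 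Assembling these equivalences proves part (1).

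For part (2), let $k_0$ be minimal with $k_0\in c_j$ for all $j=1,\dots,\ell$; by the translation above this means $d_{k_0}=\{1,\dots,\ell\}$ while $d_i\subsetneq\{1,\dots,\ell\}$ for every $i<k_0$, i.e. $d_{k_0}$ is the leftmost column of $b^*$ equal to $\{1,\dots,\ell\}$ in the ordering $b^*=d_1\otimes\cdots\otimes d_n$. Writing $\red(b)=\red(c_\ell)\otimes\cdots\otimes\red(c_1)$ and computing the $i$-th column of $\red(b)^*$, one finds it equals $d_i$ when $i<k_0$ and $d_{i+1}$ when $k_0\le i\le n-1$, since a letter $i\ge k_0$ of $\red(b)$ arises precisely from a letter $i+1$ of $b$. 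Hence $\red(b)^*=d_1\otimes\cdots\otimes d_{k_0-1}\otimes d_{k_0+1}\otimes\cdots\otimes d_n$, which is exactly $b^*$ with that leftmost full column removed.

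I expect no genuine difficulty here: everything reduces to bookkeeping the correspondence between ``columns of $b$ containing a given letter'' and ``a given column of $b^*$'', together with the already-available \Cref{duality_intertwines_Yam_tab}. The only step deserving slight care is matching the word ``leftmost'' with the tensor ordering $b^*=d_1\otimes\cdots\otimes d_n$, i.e. confirming that minimality of $k_0$ really corresponds to the column that comes first in this ordering.
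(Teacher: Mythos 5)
Your proposal is correct and follows the same route as the paper's (much terser) proof: translate "dominant evaluation" and "$1\notin c_\ell$" through the dictionary $i\in c_j\Leftrightarrow j\in d_i$ to get (a), invoke \Cref{duality_intertwines_Yam_tab} for (b), and read off part (2) directly from the definition of $*$. The extra bookkeeping you supply (identifying $|d_i|$ with the multiplicity of the letter $i$, and checking that minimality of $k_0$ matches "leftmost" in $d_1\otimes\cdots\otimes d_n$) is exactly what the paper leaves implicit.
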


\begin{proof}
For (1), one checks from the definition that an element $b$ is of dominant evaluation if and only if
its dual $b^*$ has columns of non-increasing size, and (a) follows.
Point (b) is a direct consequence of \Cref{duality_intertwines_Yam_tab}.
Point (2) is clear from the definition of $*$.
\end{proof}

\subsection{Charge and energy}\label{charge_energy}

Charge and energy are two classic statistics defined on $F(s)$ and $\dF(s)$.
The goal of this section is to establish that the duality $*$ intertwines these two notions.

\begin{lem}\label{lem_cyclage_column}
For all tableau $b\in F(s)$, there exists $m>1$ such that
$P(\overline{\xi}^m(b))$ is a column.
\end{lem}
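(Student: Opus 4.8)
The plan is to analyse the effect of the authorised cyclage $\overline{\xi}$ on the dual side, where it is cleaner. By \Cref{duality_intertwines_cyc_prom}(1), $\xi(b)^* = \pr(b^*)$, and reduction $\red$ corresponds on the dual to deleting a leftmost full column $\{1,\ldots,\ell\}$ (as recorded in the proposition just above). So $\overline{\xi}$ on $F(s)$ is conjugate via $*$ to the operator on $\dF(s)$ that applies $\pr$ and then strips off leftmost columns equal to $\{1,\dots,\ell\}$. Since $b$ is a tableau, $b^*$ is Yamanouchi by \Cref{duality_intertwines_Yam_tab}, and my claim becomes: iterating "promote, then delete full columns" on a Yamanouchi antitableau $b^* = d_1\otimes\cdots\otimes d_n$ eventually reaches an element whose dual is a single column, i.e. an element $d_1'\otimes\cdots\otimes d_n'$ in which exactly one $d_i'$ is nonempty.

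The key observation I would use is a monovariant. Write $b^* = d_1\otimes\cdots\otimes d_n$. Consider the quantity $N(b^*) = \sum_{i=1}^n |d_i|$ minus something, or better, track the largest index $i_0$ with $d_{i_0}\neq\emptyset$, together with the total number of boxes. Promotion $\pr$ shifts entries cyclically mod $\ell$; on a Yamanouchi antitableau the first column $d_1$ is the longest, and after enough promotions an entry $\ell$ appears in $d_1$, so $d_1$ eventually becomes (or contains a subset forcing) the full column $\{1,\dots,\ell\}$, which then gets stripped by $\red$. Each such stripping strictly decreases the number of columns of the associated tableau $P(b)$ — equivalently strictly decreases $|d_1|$ or the support size — so the process terminates. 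The point is that the authorised cyclage, read through $*$, is a promotion-and-truncation dynamic that strictly shrinks a well-founded statistic (number of distinct row-lengths, or $\sum |d_i|$) at each "round," and the only fixed points of the relevant dynamic are the columns. First I would make precise that for a tableau $b$ which is not already such that $P(b)$ is a column, the authorised cyclage followed by reduction strictly decreases the shape's first part $\lambda_1$ (the number of columns of $P(b)$), which by \Cref{kas-jdt_weyl-Rmat}/\Cref{duality_intertwines_Yam_tab} equals $|d_1|$ on the dual; since $\overline{\xi}$ preserves $P(b)$ up to this controlled truncation and the number of boxes is bounded, after finitely many steps $P$ of the result has first part $1$, i.e. is a single column.

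Concretely the steps are: (i) translate the statement via $*$ using \Cref{duality_intertwines_cyc_prom}(1) and the preceding proposition describing $\red$ on the dual; (ii) recall that for a tableau $b$, $P(\overline{\xi}^k(b))$ has the same total number of boxes $s$ for all $k$ (cyclage and reduction only rearrange/remove saturated columns, and on the tableau side $\overline{\xi}$ is a known Lascoux–Schützenberger operation preserving the content up to the reduction bookkeeping; alternatively invoke that $\xi$ permutes columns and $P$ is a crystal isomorphism); (iii) show $\lambda_1(P(\overline{\xi}(b))) < \lambda_1(P(b))$ whenever $P(b)$ is not a column, by the dual description — one promotion cyclically advances the content of the columns $d_i$, and the Yamanouchi condition guarantees that within finitely many promotions a full column $\{1,\dots,\ell\}$ is created and removed by $\red$; (iv) conclude by well-foundedness of $\lambda_1 \in \{1,\dots,s\}$.

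\textbf{Main obstacle.} The delicate point is step (iii): showing that iterating promotion on the Yamanouchi antitableau $b^*$ genuinely forces a full column $\{1,\dots,\ell\}$ to appear in finitely many steps, rather than the promotion orbit avoiding this forever. This needs the Yamanouchi (lattice-word) property of $b^*$ together with the antitableau (column-inclusion up to JdT) structure, and probably a careful induction on $n$ or on $\ell$ using that $d_1$ is the longest column. I would expect to need the description of affine connected components as tensor products of column Kirillov–Reshetikhin crystals (the remark after \Cref{def_cyclage_prom}) to control the promotion orbit, or to reduce directly to the classical statement that the cyclage on semistandard tableaux of dominant evaluation is nilpotent onto columns, which is standard in the Lascoux–Schützenberger theory — in which case (iii) is a citation plus the dictionary established in this section.
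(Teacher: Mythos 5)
The paper does not actually prove \Cref{lem_cyclage_column}: it is the classical nilpotence of the Lascoux--Sch\"utzenberger cyclage, quoted as a known fact. So the only question is whether your plan would constitute a proof, and as written it would not, because both monovariants you propose fail. First, the claimed strict decrease of $\lambda_1(P(\cdot))$ under one authorised cyclage is false: take $n=3$, $\ell=2$ and the tableau $b={\scriptsize\young(2)}\otimes{\scriptsize\young(1,3)}={\scriptsize\young(12,3)}$ (dominant evaluation and $1\notin c_2$, so the cyclage is authorised with no reduction). Then $\xi(b)={\scriptsize\young(1,3)}\otimes{\scriptsize\young(2)}$ has word $132$, which is Knuth-equivalent to $312$, so $P(\xi(b))={\scriptsize\young(13,2)}$ --- again of shape $(2,1)$: the number of columns does not drop. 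Second, the ``promote, then strip full columns'' mechanism cannot be what drives termination: the evaluation $(|d_1|,\dots,|d_n|)$ is preserved by $\xi$, by $\pr$ and by $P$, so a full column $\{1,\dots,\ell\}$ appears on the dual side only if some letter already occurs in all $\ell$ columns of $b$. For a standard filling (evaluation $(1^s)$, $\ell\geq2$) this never happens, no reduction ever fires, the number of boxes is constant, and yet the iteration does terminate, e.g. ${\scriptsize\young(123)}\to{\scriptsize\young(12,3)}\to{\scriptsize\young(13,2)}\to{\scriptsize\young(1,2,3)}$. Two smaller slips: the dual of a column tableau is not ``exactly one nonempty $d_i$'' but the element all of whose nonempty columns equal $\{1\}$; and $|d_1|$ is the number of letters $1$ in $b$ (the first part of the evaluation), not the number of columns of $P(b)$.

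The upshot is that the shape only weakly decreases in dominance order along the cyclage sequence, so a well-founded argument needs a strictly decreasing statistic defined independently of the termination one is trying to prove --- in practice the Lascoux--Sch\"utzenberger charge/cocharge computed directly on words via indices, together with the fact that an authorised cyclage lowers it by $|c_\ell|\geq1$ (equivalently, the grading of the cyclage poset). Your closing fallback, ``cite the classical nilpotence of the cyclage,'' is indeed what the paper implicitly does, but within your plan it replaces the proof rather than completing step (iii); and if you do invoke it, the entire dualisation to promotion becomes unnecessary.
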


\begin{defi}\label{def_charge} The \textit{charge} is defined as follows. 
Let $b=c_\ell\otimes\cdots\otimes c_1\in F(s)$.
\begin{enumerate}
\item If $b$ is a column, set 
$\ch(b)=0.$
\item If $b$ is a tableau, define $\ch(b)$ by induction by setting 
$$\ch(b)=\ch(\xi(b))+|c_\ell| \mand \ch(\red(b))=\ch(b)$$
and using \Cref{lem_cyclage_column}.
\item In general, set $\ch(b)=\ch(P(b))$.
\end{enumerate}
 \end{defi}

We have seen in \ref{crystal_finite} how one can associate to any product $b^*=d_1\otimes d_2\in \dF(s)$ of two columns a unique skew tableau $T$ of minimal shape. Write $\mathrm{leg}(b^*)$ for the number of boxes in the left column of $T$ having no box to their right. 
The following definition of the \textit{local energy} $\H$ is slightly different but equivalent to the original one 
(see \cite{NY1995} and also \cite{Shimozono2005} for an equivalent definition in the case of rows and affine crystals). 
\begin{defi}\label{def_local_energy}
Let $\ell=2$ and $b^*=d_1\otimes d_2\in \dF(s)$. 
$$\H(b^*)=\left\{
\begin{array}{cl}
\mathrm{leg}(b^*) & \quad \text{ if $|d_1|\geq|d_2|$}
\\
\H(R_1(b^*)) & \quad \text{ if $|d_1|<|d_2|$.
}
\end{array}\right.
$$
\end{defi}

\Yvcentermath1
\begin{exa}
The minimal skew tableau corresponding to 
$$b^*= {\scriptsize\young(1,3,4,6)\otimes\young(1,7,8)}
\text{ \quad is \quad }
{
\scriptsize\gyoung(:~1,:~3,:~4,16,7,8)
}
\text{ , \quad therefore we get }
\H(b^*)=2.$$
\end{exa}

Let $b^*=d_1\otimes \cdots\otimes d_n\in \dF(s)$.
For all $1\leq i < j\leq n$, denote 
$$d_1\otimes \cdots\otimes d_{i}\otimes d_{i+1}^{(j)}\otimes \cdots\otimes d_n$$
the element obtained from $b^*$ by applying sucessively the $R$-matrices $R_{j-1},\ldots, R_{i+1}$.

\begin{defi}\label{def_energy}
The energy $\D$ is defined as follows.
For $b^*=d_1\otimes \cdots\otimes d_n\in \dF(s)$, we set 
$$\D(b^*) = \sum_{1\leq i<j\leq n} \H (d_i\otimes d_{i+1}^{(j)}).$$
\end{defi}

\begin{rem}
If $\ds_1=\cdots=\ds_n$, then the $R$-matrices act as the identity, thus
$$\D(b^*) = \sum_{1\leq i \leq n-1} (n-i) \H (d_i\otimes d_{i+1}).$$
\end{rem}

\begin{lem}\label{lem_energy_1}
Let $b^*=d_1\otimes\cdots\otimes d_n\in\dF(s)$ be Yamanouchi such that $d_i = \{1,\ldots, \ell\}$ for some $i=1,\ldots,n$.
Then $\D(b^*) = \D(b^\times),$
where $b^\times$ is the element obtained from $b^*$ by deleting its leftmost
column $\{ 1,\ldots, \ell \}$.
\end{lem}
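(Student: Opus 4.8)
The plan is to exploit that the full column $\{1,\dots,\ell\}$ is the unique vertex of the \emph{trivial} $A_{\ell-1}$-crystal $B(\om_\ell)=B(0)$, hence behaves inertly under every operation occurring in \Cref{def_energy}.

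First I would isolate two structural facts. \emph{(a) The combinatorial $R$-matrix moving a full column is a pure transposition.} For any $k$, the unique $A_{\ell-1}$-crystal isomorphism $B(0)\otimes B(\om_k)\xrightarrow{\sim}B(\om_k)\otimes B(0)$ (and its inverse) sends $e\otimes c\mapsto c\otimes e$, where $e$ is the unique vertex of $B(0)$, realised here as the column $\{1,\dots,\ell\}$. In particular, when a column $c$ passes a full column under an $R$-matrix, $c$ is left unchanged. \emph{(b) Local energies involving a full column vanish:} $\H(\{1,\dots,\ell\}\otimes d)=\H(d\otimes\{1,\dots,\ell\})=0$ for every column $d$. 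For the first equality, since $|\{1,\dots,\ell\}|=\ell\ge|d|$, \Cref{def_local_energy} gives $\H(\{1,\dots,\ell\}\otimes d)=\mathrm{leg}(\{1,\dots,\ell\}\otimes d)$; writing $d=\{a_1<\dots<a_k\}$ and using $a_t\le\ell-k+t$, the minimal skew tableau places $d$ so that each of its boxes has a right neighbour, whence $\mathrm{leg}=0$. The second equality follows from \Cref{def_local_energy} together with (a): if $|d|<\ell$ then $R_1(d\otimes\{1,\dots,\ell\})=\{1,\dots,\ell\}\otimes d$, and if $|d|=\ell$ then $d=\{1,\dots,\ell\}$ and the two columns overlap exactly.

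Next, let $p$ be the index with $d_p=\{1,\dots,\ell\}$. I would split $\D(b^*)=\sum_{1\le i<j\le n}\H(d_i\otimes d_{i+1}^{(j)})$ according to the position of $p$. If $j=p$, then computing $d_{i+1}^{(p)}$ means applying $R_{p-1},\dots,R_{i+1}$, which by (a) are all pure transpositions, so $d_{i+1}^{(p)}=\{1,\dots,\ell\}$ and the term is $0$ by (b). If $i=p$, the term is $\H(\{1,\dots,\ell\}\otimes d_{p+1}^{(j)})=0$ by (b). For a pair with $i\ne p\ne j$: when one applies $R_{j-1},\dots,R_{i+1}$ to move $d_j$ to position $i+1$, the full column $d_p$, if it lies strictly between positions $i$ and $j$, is crossed by exactly one $R$-matrix, which by (a) is a pure transposition leaving the traversing column unchanged; consequently the whole sequence reduces to the analogous one performed in $b^\times$ (with $d_p$ removed), and $d_{i+1}^{(j)}$ computed in $b^*$ coincides with the corresponding column computed in $b^\times$. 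Matching indices — $(i,j)$ with $i,j<p$ to $(i,j)$; with $i<p<j$ to $(i,j-1)$; with $p<i<j$ to $(i-1,j-1)$ — gives a bijection between the surviving index pairs of $\D(b^*)$ and all index pairs of $\D(b^\times)$, under which the summands agree. Hence $\D(b^*)=\D(b^\times)$.

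I expect the main obstacle to be the bookkeeping in the last step: carefully checking that the pure transposition crossing the full column really leaves the other columns untouched, so that the length-$n$ computation of each $d_{i+1}^{(j)}$ collapses to the length-$(n-1)$ one in $b^\times$, and verifying that the three index shifts together hit every term of $\D(b^\times)$ exactly once. I would also remark that the Yamanouchi hypothesis is not strictly needed for the identity itself, but it guarantees that $b^\times$ is again Yamanouchi (removing the block $1\,2\cdots\ell$ preserves the lattice-word property), which is what allows the lemma to be iterated.
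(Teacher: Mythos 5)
Your proof is correct and rests on the same two facts as the paper's: the local energy $\H$ vanishes on any pair containing the column $\{1,\ldots,\ell\}$, and that column is inert under the combinatorial $R$-matrices because it is the unique vertex of the trivial crystal $B(\dot\om_\ell)$. The only difference is one of packaging: the paper first invokes the invariance of $\D$ under $R$-matrices to normalise $d_1=\{1,\ldots,\ell\}$, after which the decomposition $\D(b^*)=\D(b^\times)+\sum\H(d_1\otimes\,\cdot\,)$ is immediate, whereas you keep the full column at its original position $p$ and carry out the index bookkeeping explicitly -- a slightly longer but equally valid route.
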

\begin{proof}
	Since $D$ is invariant under the action of the combinatorial $R$-matrices, we can assume that $d_1 = \{1,\ldots, \ell\}$. Then 
	$$\D(b^*) = \D(b^\times)+\sum_{i=1}^{n-1} (n-i) \H (d_1\otimes d_{i+1}).$$
	But by definition of $\H$, we have $\H (d_1\otimes d_{i+1})=0$ for any $i=1,  \ldots, n-1$.
\end{proof}

We are now going to prove the following proposition.

\begin{prop}
	\label{prop_energy_3} Let $b^{\ast}=d_{1}\otimes\cdots\otimes d_{n}\in\dF(s)$
	be Yamanouchi such that $d_{i}\neq\{1,\ldots,\ell\}$ for all $i=1,\ldots,n$.
	Then $D(\pr(b^{\ast}))=D(b^{\ast})-N,$ where $N$ is the number of
% 	Then $\Delta(\pr(b^{\ast}))=\Delta(b^{\ast})-N,$ where $N$ is the number of
	$\ell$ in $b^{\ast}$.
\end{prop}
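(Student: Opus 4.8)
The plan is to reduce everything to the two-column local energy $\H$ and to track carefully what the promotion operator $\pr$ does to each pair of columns. Recall $\D(b^*) = \sum_{1\le i<j\le n}\H(d_i\otimes d_{i+1}^{(j)})$, and by \Cref{duality_intertwines_cyc_prom}(2) the promotion commutes with the combinatorial $R$-matrices, so $\pr$ of the reordered element $d_i\otimes d_{i+1}^{(j)}$ equals the reordering of $\pr(b^*)$ in the corresponding slots. Hence it suffices to understand, for a single pair $d\otimes d'$ of columns (with $|d|\ge|d'|$, which we may assume after applying an $R$-matrix), how $\H(\pr(d)\otimes\pr(d'))$ compares to $\H(d\otimes d')$. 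Since $b^*$ is Yamanouchi and each $d_i\neq\{1,\ldots,\ell\}$, we can control which letters $\ell$ occur and where.

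First I would analyse the effect of $\pr$ on a single column $d\subseteq\{1,\ldots,\ell\}$: if $\ell\notin d$, then $\pr(d)=\{k+1 : k\in d\}$ is just a shift, so $\ell\in\pr(d)$ and $1\notin\pr(d)$; if $\ell\in d$, then $\ell$ gets sent to $1$ and the rest shifts up. The key combinatorial observation I expect to need is a formula for $\mathrm{leg}(d\otimes d')$ — the number of boxes in the left column of the minimal skew tableau with no box to their right — in terms of the cardinalities and the order relation between the entries of $d$ and $d'$, and then to see how this quantity changes under the simultaneous shift-with-wraparound on both columns. Because $b^*$ is Yamanouchi, the columns satisfy a containment-like pattern that should make the minimal skew shape behave predictably; in particular I would use \Cref{duality_intertwines_Yam_tab} and the description of Yamanouchi duals to pin down exactly how many of the pairs $d_i\otimes d_{i+1}^{(j)}$ contain an $\ell$ in a position that contributes to the leg.

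Then I would assemble the count: each occurrence of $\ell$ in some column $d_i$, when promoted, becomes a $1$ and this wraparound decreases the relevant leg contributions. Carefully bookkeeping over all pairs $(i,j)$ with the weights $(n-i)$ (or the analogous weights coming from the $R$-matrix reorderings) should produce exactly $-N$, where $N$ is the total number of letters $\ell$ in $b^*$. I would also want to invoke a statement like \Cref{lem_energy_1} in spirit — that columns equal to the full set $\{1,\ldots,\ell\}$ contribute nothing — to justify that we lose no contributions from degenerate pairs; but here the hypothesis $d_i\neq\{1,\ldots,\ell\}$ is precisely what guarantees we are in the generic situation.

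The main obstacle I anticipate is the bookkeeping in the last step: proving that the net change in $\D$ under $\pr$ is exactly $-N$ and not merely $\le 0$ or $\equiv -N$ up to some correction. This requires a precise lemma on how $\mathrm{leg}$ (equivalently $\H$) transforms under the cyclic shift, together with a clean argument — probably by induction on $n$ using $\pr(b^*)$'s structure, or by directly decomposing $b^*$ via its reduction $\red$ and the authorised-cyclage description — that the contributions telescope correctly. I would likely isolate the single-pair statement as a separate lemma before proving the proposition, since that is where the real content lies.
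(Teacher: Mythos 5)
Your outline coincides with the paper's strategy: commute $\pr$ past the combinatorial $R$-matrices, isolate a two-column lemma describing how the local energy changes under promotion, and sum over the pairs appearing in \Cref{def_energy}. The two-column statement you propose to isolate is exactly the paper's \Cref{lem_energy_2}: writing $\Delta_2(d_1\otimes d_2)=D(d_1\otimes d_2)-D(\pr(d_1\otimes d_2))$ with $h(d_1)\geq h(d_2)$, one has $\Delta_2=-1$ if $\ell\in d_1$ and $\ell\notin d_2$, $\Delta_2=+1$ if $\ell\notin d_1$ and $\ell\in d_2$, and $\Delta_2=0$ otherwise.

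There is, however, a genuine gap at precisely the step you flag as the anticipated obstacle, and the heuristic you offer for closing it is not correct as stated. A letter $\ell$ in a given column does not simply ``decrease the relevant leg contributions'': by the two-column lemma it contributes $+1$ to the pairs in which its column sits on the right (when the left column has no $\ell$) and $-1$ to the pairs in which it sits on the left (when the right column has no $\ell$), so a naive weighted count over all pairs $(i,j)$ does not produce $-N$; the exact value only emerges after a cancellation that must be organized. The paper organizes it by a double induction: first
$\Delta_n(b^{\ast})=\Delta_{n-1}(d_1\otimes\cdots\otimes d_{n-1})+\sum_{i=1}^{n-1}\Delta_2(d_i\otimes d_{i+1}^{(n)})$,
using that $\pr$ commutes with the $R$-matrices and that the truncation of a Yamanouchi vertex is Yamanouchi, and then a second induction showing that the residual sum $S_n(b^{\ast})=\sum_{i=1}^{n-1}\Delta_2(d_i\otimes d_{i+1}^{(n)})$ equals $1$ if $\ell\in d_n$ and $0$ otherwise, by a case analysis on whether $\ell$ lies in $d_{n-1}$ and in $d_n$. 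Moreover, the hypothesis $d_i\neq\{1,\ldots,\ell\}$, which you invoke only to say one is ``in the generic situation,'' is used at a specific point: the Yamanouchi property forces the leftmost column (after sorting by height) to be an initial segment $\{1,\ldots,k\}$, so $\ell\in d_1$ would force $d_1=\{1,\ldots,\ell\}$; excluding this is what makes the base case $n=2$ yield exactly the number of letters $\ell$ rather than picking up a $-1$. Without the recursion, the cancellation argument, and this use of the hypothesis, the claim that the total change is exactly $-N$ (rather than merely bounded) remains unproved.
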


To do this, let us set for any $b^{\ast}=d_{1}\otimes\cdots\otimes d_{n}%
\in\dF(s)$%
\[
\Delta_{n}(b^{\ast})=D(b^{\ast})-D(\pr(b^{\ast})).
\]
When $n=2$, one can use \Cref{def_local_energy} to compute $\Delta_{2}$. This
immediately gives:

\begin{lem}
	\label{lem_energy_2} Assume $b^{\ast}=d_{1}\otimes d_{2}$ with $h(d_{1})\geq
	h(d_{2})$. Then%
	\[
	\Delta_{2}(d_{1}\otimes d_{2})=\left\{
	\begin{array}{cl}
	-1 & \text{ if }\ell\in d_{1}\text{ and }\ell\notin d_{2},\\
	1 & \text{ if }\ell\notin d_{1}\text{ and }\ell\in d_{2},\\
	0& \text{ otherwise.}%
	\end{array}
	\right.
	\]
	
\end{lem}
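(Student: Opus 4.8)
The plan is to compute $\Delta_2(d_1\otimes d_2)=\H(d_1\otimes d_2)-\H(\pr(d_1)\otimes \pr(d_2))$ directly from \Cref{def_local_energy}, using the hypothesis $h(d_1)\geq h(d_2)$ together with the behavior of $\mathrm{leg}$ under the column promotion $\pr(d)=\{k+1\bmod \ell : k\in d\}$. First I would record the effect of $\pr$ on a single column of $\dF(s)$: since $\pr$ is the cyclic shift $k\mapsto k+1\bmod\ell$ on $\{1,\dots,\ell\}$, it is order-preserving on the set $\{1,\dots,\ell-1\}$ and sends $\ell\mapsto 1$; thus $\pr(d)$ is obtained from $d$ by shifting every entry up by one, except that if $\ell\in d$ then $\ell$ is replaced by $1$ (which becomes the new smallest entry of $\pr(d)$). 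In all cases $h(\pr(d_i))=h(d_i)$, so $h(\pr(d_1))\geq h(\pr(d_2))$ and the ``$|d_1|\geq|d_2|$'' branch of \Cref{def_local_energy} applies to both $b^*$ and $\pr(b^*)$; hence $\Delta_2 = \mathrm{leg}(d_1\otimes d_2)-\mathrm{leg}(\pr(d_1)\otimes\pr(d_2))$.

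Next I would analyze $\mathrm{leg}(d_1\otimes d_2)$. Recall this counts the boxes in the left column of the minimal skew tableau $T$ built from $d_1\otimes d_2$ that have no box to their right; equivalently, writing $d_1=\{a_1<\cdots<a_p\}$ and $d_2=\{b_1<\cdots<b_q\}$ with $p\geq q$, the number of indices $m\in\{1,\dots,q\}$ for which the $m$-th entry from the bottom of $d_1$ gets matched to the $m$-th entry of $d_2$ in the jeu-de-taquin rectification, so that $\mathrm{leg}=p-(\text{number of such matched rows})$. The key combinatorial observation is that this matching count is a monotone function of the two sets that depends only on the relative order of the $a_i$'s and $b_j$'s, i.e. it is invariant under any common order-preserving relabeling of $\{1,\dots,\ell\}$. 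Therefore, as long as applying $\pr$ to both columns is order-preserving on their union — which happens precisely when $\ell\notin d_1$ and $\ell\notin d_2$, or when $\ell\in d_1$ and $\ell\in d_2$ (then $\ell\mapsto 1$ in both, and since $1$ was in neither the relative order is unchanged) — we get $\mathrm{leg}(\pr(d_1)\otimes\pr(d_2))=\mathrm{leg}(d_1\otimes d_2)$, hence $\Delta_2=0$, covering the ``otherwise'' case.

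It remains to treat the two asymmetric cases, which is where the only real work lies. Suppose $\ell\in d_1$, $\ell\notin d_2$. Then $\pr(d_2)$ is just $d_2$ shifted up by one, while $\pr(d_1)$ has its top entry $\ell$ replaced by $1$. I would compare the two rectifications box by box: removing $\ell$ from the top of $d_1$ and reinserting $1$ at the bottom changes, by exactly one, the number of rows of $d_1$ that fail to be matched to $d_2$ (the new bottom box $1$ of $\pr(d_1)$ sits below everything in $\pr(d_2)$, so it is unmatched, while the row that was occupied by $\ell$ in $d_1$ — which was unmatched since $\ell$ exceeds every entry of $d_2$, recall $\ell\notin d_2$ — is gone; meanwhile all intermediate rows shift but keep their matching status by order-preservation). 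A careful bookkeeping of this shows $\mathrm{leg}$ drops by $1$, giving $\Delta_2=-1$. The case $\ell\notin d_1$, $\ell\in d_2$ is handled by the mirror-image argument, yielding $\Delta_2=+1$; alternatively one can deduce it from the previous case by the symmetry $\H(R_1(d_1\otimes d_2))=\H(d_1\otimes d_2)$ applied after a promotion, since $\pr$ commutes with $R_1$ by \Cref{duality_intertwines_cyc_prom}(2). The main obstacle is this last case analysis: making the ``$\mathrm{leg}$ changes by exactly one'' claim rigorous requires either an explicit description of which rows of the skew tableau are matched (in terms of the increasing enumerations of $d_1$ and $d_2$) or an appeal to the standard combinatorics of the local energy function for two columns, e.g. the formula $\H(d_1\otimes d_2)=\sum_i(\text{something})$ from \cite{NY1995}; I would isolate this as a short sublemma on $\mathrm{leg}$ and $\pr$ and prove it by induction on $h(d_1)-h(d_2)$ or directly from the winding/lattice-path description of the rectification.
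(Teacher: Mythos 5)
Your overall strategy --- reducing $\Delta_2$ to a comparison of $\mathrm{leg}$ before and after promotion and splitting into cases according to which of $d_1,d_2$ contains $\ell$ --- is the natural one, and it is essentially all the paper offers (the authors simply assert that the computation is immediate from \Cref{def_local_energy}). But two of your cases are not actually established, and the crucial one fails. First, the case $\ell\in d_1\cap d_2$: promotion is \emph{never} order-preserving on $d_1\cup d_2$ here, since it sends the largest element $\ell$ to the smallest element $1$, and nothing in the hypotheses prevents $1$ from already lying in $d_1$ or $d_2$, so your appeal to invariance under a common order-preserving relabeling does not apply. The conclusion $\Delta_2=0$ is still correct, but it must be argued directly: writing $d_1=\{a_1<\dots<a_p\}$, $d_2=\{b_1<\dots<b_q\}$, the maximal overlap $t$ is the largest $t\le q$ with $b_i\le a_{p-t+i}$ for $1\le i\le t$, and $\mathrm{leg}=q-t$ (note: $q-t$, not $p-t$ as you wrote --- $\mathrm{leg}$ counts boxes of the \emph{left}, i.e.\ shorter, column; this slip cancels in the difference but signals a left/right confusion). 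The point is that the inequality involving $a_p=\ell$ is vacuous and the inequality involving the new entry $b_1'=1$ is vacuous, so after an index shift the two systems are equivalent.

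Second, and more seriously, your treatment of the case $\ell\in d_1$, $\ell\notin d_2$ rests on a false claim. You assert that the box $\ell$ at the bottom of $d_1$ ``was unmatched since $\ell$ exceeds every entry of $d_2$''; but the semistandardness condition in an overlap row is (left entry)\,$\le$\,(right entry), so a bottom-right entry equal to $\ell$ makes that row the \emph{easiest} to match, not impossible. Indeed $\mathrm{leg}$ need not change at all: take $\ell=3$, $d_1=\{2,3\}$, $d_2=\{1\}$; the minimal skew tableau has full overlap and $\mathrm{leg}=0$, and after promotion $\mathrm{pr}(d_1)\otimes\mathrm{pr}(d_2)=\{1,3\}\otimes\{2\}$ again has $\mathrm{leg}=0$, so $\Delta_2=0$ rather than $-1$. (Note also the sign: if $\mathrm{leg}$ really dropped by $1$ under promotion you would get $\Delta_2=D(b^*)-D(\mathrm{pr}(b^*))=+1$, not $-1$.) The index-shift computation actually shows that the maximal overlap decreases by exactly one \emph{unless} $d_2$ can already be matched entirely against $d_1\setminus\{\ell\}$, in which case $\Delta_2=0$; so this case needs an additional hypothesis excluding such configurations (in the paper it is only ever invoked for pairs of columns extracted from a Yamanouchi vertex in \Cref{prop_energy_3}). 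No mirror-image or $R$-matrix symmetry can repair this, precisely because the opposite case $\ell\notin d_1$, $\ell\in d_2$ \emph{does} hold unconditionally (there the new top inequality $b_1'=1\le a_{p-t+1}'$ is free, and a prior full overlap $t=q$ is impossible since it would force $\ell=b_q\le a_p<\ell$): the two asymmetric cases genuinely behave differently, which is exactly what your argument misses.
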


\begin{proof}
	[Proof of Proposition \ref{prop_energy_3}]Since the combinatorial $R$-matrices
	preserve the energy, we can and shall assume that $h(d_{1})\geq\cdots\geq
	h(d_{n})$. For any Yamanouchi vertex $b^{\ast}=d_{1}\otimes\cdots\otimes
	d_{n}\in\dF(s)$, we have
	\[
	\Delta_{n}(b^{\ast})=\Delta_{n}(d_{1}\otimes\cdots\otimes d_{n})=\Delta
	_{n-1}(d_{1}\otimes\cdots\otimes d_{n-1})+\sum_{i=1}^{n-1}\Delta_{2}%
	(d_{i}\otimes d_{i+1}^{(n)}).
	\]
	because the promotion operator commutes with the $R$-matrices. Since $b^{\ast}$ is Yamanouchi, the vertex $d_{1}\otimes\cdots\otimes
	d_{n-1}$ is also Yamanouchi. By induction on $n$, we can assume that
	$\Delta_{n-1}(d_{1}\otimes\cdots\otimes d_{n-1})$ is equal to the number of
	letters $\ell$ in $d_{1}\otimes\cdots\otimes d_{n-1}$. This shows that the
	proposition is in fact equivalent to the assertion%
	\[
	S_{n}(b^{\ast}):=\sum_{i=1}^{n-1}\Delta_{2}(d_{i}\otimes d_{i+1}%
	^{(n)})=\left\{
	\begin{array}
	{cl}%
	1&\text{ if }\ell\in d_{n},\\
	0&\text{ otherwise.}%
	\end{array}
	\right.
	\]
	for any Yamanouchi vertex $b^{\ast}=d_{1}\otimes\cdots\otimes d_{n}\in\dF(s)$.
	Here again, we proceed by induction on $n$. When $n=2$, this follows from
	Lemma \ref{lem_energy_2}. Note that we cannot have $\ell\in d_{1}$ in this case  because we have assumed that $d_{1}\neq\{1,\ldots,\ell\}$. When $n\geq3$, set%
	\[
	b^{\ast}(n-1)=d_{1}\otimes\cdots\otimes d_{n-2}\otimes d_{n-1}^{(n)}.
	\]
	Then $b^{\ast}(n-1)$ is Yamanouchi and
	\[
	S_{n}(b^{\ast})=S_{n-1}(b^{\ast}(n-1))+\Delta_{2}(d_{n-1}\otimes d_{n}).
	\]
	Assume first $\ell\notin d_{n}$ and $\ell\notin d_{n-1}$. Then, we have
	$\ell\notin d_{n-1}^{(n)}$ and by the induction hypothesis $S_{n-1}(b^{\ast
	}(n-1))=0$.\ Also, $\Delta_{2}(d_{n-1}\otimes d_{n})=0$ and we thus get
	$S_{n}(b^{\ast})=0$ as desired. The arguments are similar for the three
	remaining cases : $\ell\in d_{n}$ and $\ell\notin d_{n-1}$, $\ell\notin d_{n}$
	and $\ell\in d_{n-1},$ $\ell\in d_{n}$ and $\ell\in d_{n-1}$.
\end{proof}

Combining \Cref{def_charge}, 
the combinatorial description of the duality $*$, 
\Cref{lem_energy_1} and \Cref{prop_energy_3}, we get the expected following theorem.

\begin{thm}\label{duality_intertwines_charge_energy}
For all $b\in F(s)$, we have
$\ch(b)=\D(b^*).$
\end{thm}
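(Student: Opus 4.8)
The plan is to prove $\ch(b)=\D(b^*)$ by induction, mirroring exactly the recursive structure of \Cref{def_charge} through the duality $*$. Recall from \Cref{duality_intertwines_Yam_tab} that $b$ is a tableau iff $b^*$ is Yamanouchi, and from \Cref{def_charge}(3) that $\ch(b)=\ch(P(b))$; since $P$ is a crystal isomorphism and the energy $\D$ is an invariant of $A_{\ell-1}$-connected components (it is built from $R$-matrices, which commute with the crystal operators), both sides only depend on the connected component of $b$. Hence we may and shall assume throughout that $b$ is a tableau, so that $b^*$ is a Yamanouchi antitableau. The induction is on $s$, and for fixed $s$ on the number $m$ of iterations of $\overline{\xi}$ needed to reach a column, as guaranteed by \Cref{lem_cyclage_column}.

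First I would treat the base case: if $b$ is a column, then $\ch(b)=0$ by \Cref{def_charge}(1), and $b^*$ is a Yamanouchi antitableau consisting of a single row, i.e. $b^*=d_1\otimes\cdots\otimes d_n$ with each $|d_i|\le 1$; by the Yamanouchi condition $b^*$ is a product of columns $\{1\},\{1\},\ldots$ possibly followed by empty columns, and every local energy $\H(d_i\otimes d_{i+1})$ vanishes (one checks directly from \Cref{def_local_energy} that $\H$ is zero when both columns have size $\le 1$), so $\D(b^*)=0$. For the inductive step I distinguish the two ways $\ch$ can recurse. If $b$ has a letter $k$ appearing in every column, then by the reduction part of \Cref{def_charge}(2), $\ch(\red(b))=\ch(b)$; dually, $\red(b)^*$ is obtained from $b^*$ by deleting its leftmost column $\{1,\ldots,\ell\}$ (this is exactly the content of the last proposition before \Cref{charge_energy}), and since $b^*$ is Yamanouchi with some $d_i=\{1,\ldots,\ell\}$, \Cref{lem_energy_1} gives $\D(b^*)=\D(\red(b)^*)$. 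So this case is absorbed by induction. Otherwise the cyclage is authorised, $b$ is a tableau with no common letter, and \Cref{def_charge}(2) gives $\ch(b)=\ch(\xi(b))+|c_\ell|$.

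The heart of the argument is then this last case. By \Cref{duality_intertwines_cyc_prom}(1), $\xi(b)^*=\pr(b^*)$, and since the cyclage is authorised with no common letter, the dual statement (the proposition just before \Cref{charge_energy}) tells us $b^*$ is a Yamanouchi antitableau with $d_i\neq\{1,\ldots,\ell\}$ for all $i$, which is precisely the hypothesis of \Cref{prop_energy_3}. That proposition yields $\D(\pr(b^*))=\D(b^*)-N$ where $N$ is the number of letters $\ell$ in $b^*$. It remains only to identify $N$ with $|c_\ell|$: by the combinatorial description of $*$ in terms of binary matrices (\Cref{binary_mat}), the column $c_\ell$ of $b$ corresponds to the last column of the matrix $M$, hence to the last row of $M^\trans$, i.e. to the set of $i$ with $\ell\in d_i$; thus the number of letters $\ell$ occurring across $d_1,\ldots,d_n$ equals $|c_\ell|$. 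Combining, $\D(b^*)=\D(\pr(b^*))+|c_\ell|=\D(\xi(b)^*)+|c_\ell|=\ch(\xi(b))+|c_\ell|=\ch(b)$, using the induction hypothesis applied to $\xi(b)$ (which needs $m-1$ cyclages to reach a column). I expect the main obstacle to be purely bookkeeping rather than conceptual: checking that $\overline{\xi}$, which interleaves cyclage with possible reductions, induces exactly the recursion above on the dual side, and verifying that the inductive parameter (number of $\overline{\xi}$-steps to a column) is genuinely well-founded and decreases under both $\red$ and $\xi$; all the analytic work on the energy has already been done in \Cref{lem_energy_1} and \Cref{prop_energy_3}.
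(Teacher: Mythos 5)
Your proof is correct and is essentially the paper's own argument: the paper proves this theorem in one sentence by citing exactly the ingredients you assemble (the recursive \Cref{def_charge}, the matrix description of $*$, \Cref{lem_energy_1} for the reduction step, \Cref{prop_energy_3} together with $N=|c_\ell|$ and \Cref{duality_intertwines_cyc_prom} for the cyclage step, and the vanishing of $\D$ on single columns as base case), so you have simply supplied the details the authors omit. One small imprecision: the claim that ``both sides only depend on the connected component of $b$'' is not literally true (both $\ch$ and $\D\circ *$ vary with the weight inside an $A_{n-1}$-component); what the reduction to tableaux actually requires --- and what your own justification delivers --- is that $b^*=\df_{\underline{j}}\bigl(P(b)^*\bigr)$ for some $\underline{j}$, so that $b^*$ and $P(b)^*$ lie in the same classical $A_{\ell-1}$-component, on which $\D$ is constant.
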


\section{Affine type $A$ duality}\label{duality_affine}

The study of the combinatorics of highest weight representations for the quantum groups of affine type $A$
has been initiated in \cite{JMMO1991} in the context of solvable lattice models.
There, the affine Fock space plays a crucial role. It has been subsequently considered in various settings, ranging
from mathematical physics to symmetric functions or finite group representation theory, see \cite{Leclerc2008}
for a good review.
In this section, we examine the crystal combinatorics of the affine Fock space, 
and show that the results of \Cref{duality} generalise to this setting.

\subsection{Infinite columns}

In this affine setting, columns are semi-infinite, i.e. of the form
$$c=\{x_1,\ldots,x_k\}\sqcup \{ x\in\Z\mid x<x_1\} \text{\quad and represented as } c=\scriptsize\gyoung(<x_1>,|\sesqui\vdts,<x_k>),$$
for some $k\in\Z_{\geq1}$ and some $x_1<\cdots <x_k\in \Z$.
Note that this expression is not unique.
For instance, the values $k=2$, $x_1=1, x_2=3$ and $k=3$, $x_1=0,x_2=1,x_3=3$ both yield the semi-infinite column
$\{\ldots,-2, -1,0,1,3\}$.
If we impose that $x_1$ is minimal in $\Z$ with the condition that $x_1+1\notin c$, then this expression becomes unique:
we call it the \textit{standard form} of $c$.
Finally, we set
$$s(c)=x_1+k-1.$$

\subsection{Affine combinatorial Fock space}\label{affine_setting}

\Yvcentermath1

Following \cite[Chapter 6]{GeckJacon2011}, highest weight crystals of type $A^{(1)}_{n-1}$ and level $\ell$ can be realised by using the combinatorics of symbols (or equivalently of abaci).
By analogy with type $A$, we identify symbols with tensor products of columns
(and abaci with semi-infinite binary matrices as in \Cref{binary_mat}).

\begin{defi} Let $\bs=(s_1,\ldots,s_\ell)\in\Z^\ell$. The \textit{affine combinatorial Fock space} of type $A^{(1)}_{n-1}$ and level $\ell$ associated to $\bs$ is
$$\widehat{F}(\bs)=\{b=c_\ell\otimes \cdots\otimes c_1 \mid s(c_j)=s_j \text{ for all } j=1,\ldots, \ell \}.$$
For all $b\in\widehat{F}(\bs)$, the tuple $\bs$ is called the \textit{shape} of $b$\footnote{
The element $\bs$ is usually called the (multi)charge in the literature, see \cite{JMMO1991}.
We choose this alternative terminology for two reasons : by analogy with the finite case (\Cref{columns_finite}),
since $\bs$ plays a similar role to the partition $\la$ (in fact, $\bs$ can itself be represented by a partition
as in the original approach \cite[Definition 3.1]{JMMO1991});
and because ``charge'' has already been introduced with a different meaning in \Cref{charge_energy}.
}.
\end{defi}

We say that $b\in \widehat{F}(\bs)$ is in \textit{standard form}
if all columns have the same smallest entry and (at least) one of them is in standard form.
For convenience, we represent negative entries $-x$ by $\overline{x}$.

\Yvcentermath0

\begin{exa}
The representation $$\scriptsize\young(<\overline{1}>,0,1)\otimes\young(<\overline{1}>,2)\otimes\young(<\overline{1}>,0,3)$$
is the standard form of the element $b=c_3\otimes c_2\otimes c_1$ where
$c_3=\{\ldots, -2,-1,0,1\}$, $c_2=\{\ldots, -2,-1,2\}$, $c_1=\{\ldots, -2,-1,0,3\}$.
\end{exa}

\Yvcentermath1

\medskip

We now define affine analogues of tableaux in a natural way, by
requesting semistandardness on a cylinder. 
We use the following notation: if $c=\scriptsize\gyoung(<x_1>,|\sesqui\vdts,<x_k>)$ is a column, set 
$c^+={\Yboxdimx{38pt}\scriptsize\gyoung(<x_1>,|\sesqui\vdts,<x_1+n-1>,<x_1+n>,|\sesqui\vdts,<x_k+n>)}.$

\begin{defi}\label{def_cyl}
An element $b=c_\ell\otimes \cdots\otimes c_1\in \widehat{F}(\bs)$ is called 
\begin{enumerate}
\item  \textit{cylindric} if the top-aligned juxtaposition
$c_\ell^+ c_1\cdots c_\ell$ is a semistandard Young tableau (with entries in $\Z$).
\item \textit{anticylindric} if the bottom-aligned juxtaposition
$c_1\cdots c_\ell c_1^+ $ is a semistandard skew Young tableau (with entries in $\Z$).
\end{enumerate}
\end{defi}

\begin{exa} Take $\ell=3$, $n=2$.
\begin{enumerate}
\item The element $$\scriptsize \young(<\overline{1}>,1)\otimes\young(<\overline{1}>,1,4)\otimes \young(<\overline{1}>,0,2)$$ is cylindric because
$$\scriptsize\young(<\overline{1}><\overline{1}><\overline{1}><\overline{1}>,0011,124,3)$$
is a semistandard Young tableau.
\item The element $$\scriptsize \young(<\overline{2}>,<\overline{1}>,0,1,3)\otimes\young(<\overline{2}>,0,1,3)\otimes \young(<\overline{2}>,<\overline{1}>,0,2)$$ is anticylindric because
$$\scriptsize\gyoung(:~:~:~<\overline{2}>,:~:~<\overline{2}><\overline{1}>,<\overline{2}><\overline{2}><\overline{1}>0,<\overline{1}>001,0112,2334)$$
is a semistandard skew Young tableau.
\end{enumerate}
\end{exa}

\begin{rem}
Note that the elements of $\widehat{F}(\bs)$ can alternatively be described as \textit{charged multipartitions}:
they form the standard basis of the (algebraic) Fock space, 
see \cite[Chapter 6]{GeckJacon2011}, \cite{Uglov1999}.
More precisely, tensor products of columns are the representation of charged multipartitions
by their $\be$-sets, also known as Lusztig's symbols \cite[Chapter 5]{GeckJacon2011}.
In turn, tensor products that are cylindric correspond to \textit{cylindric multipartitions},
which are important objects in algebraic combinatorics.
They were first introduced by Gessel and Krattenthaler \cite{GesselKrattenthaler1997},
and have since then been used in combinatorics, representation theory, and mathematical physics,
see \cite{FodaWelsh2015} and \cite{Gerber2018} for more details.
\end{rem}

Further, we give an affine analogue of the Yamanouchi property.
For this, given an element $b=c_\ell\otimes \cdots\otimes c_1\in \widehat{F}(\bs)$, we define
the first $n$-period of $b$ to be the sequence $P_1=(x,x-1,\ldots, x-n+1)$ of entries of $b$
such that $x$ is maximal in $b$ and such that for all $i=0,\ldots, n-1$, $x-i\in c_{k_i}$ where
$k_0\leq\cdots\leq k_{n-1}$ with $k_i$ maximal.
One then defines the $r$-th  $n$-period of $b$ by induction, by setting $P_r$ to be the first period of
the element $b^{(r-1)}$ obtained from $b$ by removing $P_1, \ldots, P_{r-1}$ if they all exist.

\begin{defi}
An element $b\in \widehat{F}(\bs)$ is called \textit{totally $n$-periodic}
if the $r$-th $n$-period of $b$ exists for all $r\in\Z_{\geq 1}$.
\end{defi}

\definecolor{myyellow}{RGB}{252,255,141}
\newcommand{\myyellow}{\Yfillcolour{myyellow}}
\definecolor{myred}{RGB}{255,183,183}
\newcommand{\myred}{\Yfillcolour{myred}}
\definecolor{mygreen}{RGB}{179,255,195}
\newcommand{\mygreen}{\Yfillcolour{mygreen}}
\definecolor{mypurple}{RGB}{234,183,255}
\newcommand{\mypurple}{\Yfillcolour{mypurple}}
\definecolor{myblue}{RGB}{188,222,255}
\newcommand{\myblue}{\Yfillcolour{myblue}}

\Yvcentermath0

\begin{exa}
Take $n=3$ and $\ell=4$. Then the following element is totally periodic
$$\scriptsize 
\gyoung(!\myyellow0,!\myyellow1,!\myblue2,!\myblue3)
\otimes
\gyoung(0,1)
\otimes 
\gyoung(0,!\mypurple2,!\myblue4)
\otimes
\gyoung(0,!\myyellow2,!\mypurple3,!\mypurple4),$$
where we have highlighted the first period in blue, the second in purple and the third in yellow
(and the periods $P_r$ for $r>3$ then obviously exist).
\end{exa}

\subsection{Affine crystal structures}\label{affine_crystal}

In the following, 
for $\bs=(s_\ell,\ldots, s_1)\in\Z^\ell$ and $\dbs=(\ds_1,\ldots, \ds_n)\in\Z^n$, denote 
$$b_\bs= {\scriptsize\young(<s_\ell>)\otimes\cdots\otimes\young(<s_1>)}
\mand
\dot{b}_{\dbs}= {\scriptsize\young(<\ds_1>)\otimes\cdots\otimes\young(<\ds_n>)}.$$
Moreover, define analogues of the sets $\sS(s)$ and $\dot{\sS}(s)$ of \Cref{duality} by considering
$$\sD(s)=\left\{ (s_\ell,\ldots, s_1)\in\Z^\ell(s) \mid  s_\ell\leq\cdots\leq s_1\leq s_\ell+n\right\}
\text{ and }
\dot{\sD}(s)=\left\{ (\ds_1,\ldots, \ds_n)\in\Z^\ell(s) \mid  \ds_1\leq\cdots\leq \ds_n\leq \ds_1+\ell\right\},$$
and set 
$$\sD=\bigsqcup_{s\in \Z}\sD(s)
\mand
\dot{\sD}=\bigsqcup_{s\in \Z}\dot{\sD}(s).
$$
As in \Cref{crystal_finite}, the combinatorial Fock space $\widehat{F}(\bs)$ can be endowed with the structure of an $A^{(1)}_{n-1}$-crystal, 
where the action of the lowering crystal operators $f_i$ is given by changing 
a certain entry $x$ such that $x \mod n=i$ to $x+1$, see for instance \cite[Chapter 6]{GeckJacon2011}.
Also, the connected component containing $b_\bs$ is isomorphic to the crystal  $\hB(\bs)$ of the 
irreducible highest weight module of highest weight $\om_{\bs}=\om_{s_\ell}+\cdots+\om_{s_1}$ where $\om_{0},\ldots,\om_{n-1}$ are the fundamental weights of type $A_{n-1}^{(1)}$. Let us recall the crystal structure on $\widehat{F}(\bs)$, due to \cite{JMMO1991}, \cite{FLOTW1999}, \cite{Uglov1999}.
We construct the word $\textrm{w}_i(b)$ as in \Cref{crystal_finite}, by the following procedure:
\begin{enumerate}
\item For all $k\in\Z$,
let $\textrm{w}_i^{(k)}(b)$ be the word obtained by reading the entries $i+kn$ and $i+kn+1$ in $b$
from left to right (i.e., going through the columns $c_\ell,\ldots,c_1$).
\item Set $\textrm{w}_i(b)$ to be the concatenation of the words $\textrm{w}_i^{(k)}$ ordered by decreasing
values of $k$.
\end{enumerate}

\begin{rem}
Because of the semi-infinite form of each column, it is enough to consider only the integers $k$ such that 
$i+kn$ appears in the standard form of $b$. That way, we construct $\textrm{w}_i(b)$ from finitely many subwords $\textrm{w}_i^{(k)}$.
\end{rem}

\begin{exa}
For $n=4$, $i=3$, $\ell=3$ and 
$b=\scriptsize\young(1,<\boldsymbol{3}>)\otimes\young(1,<\boldsymbol{3}>,<\boldsymbol{4}>,<\boldsymbol{7}>)
\otimes\young(1,2,<\boldsymbol{4}>,6,<\boldsymbol{7}>)=c_3\otimes c_2\otimes c_1$, where we 
highlighted the entries with relevant residues with boldface, we have
$\textrm{w}_3^{(1)}(b)=77$
and 
$\textrm{w}_3^{(0)}(b)=3344,$
therefore $\textrm{w}_3(b)=773344$.
\end{exa}
As in type $A$, this induces a word in the symbols $+$ and $-$ by encoding each 
$i+kn$ by $+$ if and each $i+kn+1$ by $-$.
Deleting all factors $+-$ recursively yields a word called the \textit{$i$-signature} of $b$.
With these conventions, one checks that 
the following result is equivalent to \cite[Theorem 6.2.12]{GeckJacon2011}.

\begin{thm}\label{fock_crystal} 
The set $\widehat{F}(\bs)$ is endowed with an $A^{(1)}_{n-1}$-crystal structure, where
\begin{enumerate}
\item The lowering crystal operators $f_i$ act on $b\in \widehat{F}(\bs)$ 
by changing the entry $x$ corresponding to the leftmost $+$ in the 
$i$-signature of $b$ into $x+1$ if it exists; and by $0$ otherwise.
\item The raising crystal operators $f_i$ act on $b\in \widehat{F}(\bs)$ 
by changing the entry $x+1$ corresponding to the rightmost $-$ in the 
$i$-signature of $b$ into $x$ if it exists; and by $0$ otherwise.
\end{enumerate}
\end{thm}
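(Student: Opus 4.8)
The plan is to reduce the statement to the already-established finite type $A$ picture by exploiting the semi-infinite structure of the columns. The key observation is that although each column $c_j$ is semi-infinite, the crystal operators $f_i$, $e_i$ only affect entries $x$ with $x \equiv i$ or $i+1 \pmod n$, and, because each column is cofinite in $\{x \in \Z \mid x < x_1\}$, only finitely many such entries are ``movable'': below the standard-form threshold $x_1$, every residue class is fully present, so a $+$ coming from an entry $i+kn$ far below $x_1$ is always immediately cancelled by the $-$ coming from $i+kn+1$ sitting right above it in the same column. Thus in the signature word $\textrm{w}_i(b)$ all but finitely many symbols cancel in pairs, and the $i$-signature is a finite word. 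This is precisely the content of the \Cref{rem}[remark] following the construction, and I would make it the first lemma: \emph{for each $b$ and each $i$, only the subwords $\textrm{w}_i^{(k)}(b)$ with $i+kn$ appearing in the standard form of $b$ contribute to the reduced $i$-signature.}

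Second, I would verify that the maps $f_i, e_i$ so defined genuinely land in $\widehat{F}(\bs)$ (or give $0$): changing a single entry $x \mapsto x+1$ where $x+1 \notin c_j$ produces again a valid semi-infinite column, and since $x$ lies above the saturated tail, the invariant $s(c_j) = s_j$ is unchanged because $s(c_j)$ depends only on the standard form, which is not altered below the threshold; hence the shape $\bs$ is preserved. One must also check $f_i$ and $e_i$ are mutually inverse on their domains and that $\eps_i(b), \varphi_i(b)$ (the number of unmatched $-$'s, resp. $+$'s) behave additively, i.e. $\varphi_i(b) - \eps_i(b) = \langle \mathrm{wt}(b), \alpha_i^\vee\rangle$ — this is the usual signature-rule bookkeeping and is formal once the word is finite.

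Third — and this is where I would invoke an external result rather than reprove it — I would identify this combinatorial model with the symbol/abacus realisation of $\hB(\bs)$ and its Fock space of \cite[Chapter 6]{GeckJacon2011}. Concretely, translating a tensor product of columns into the corresponding $\ell$-abacus, the crystal operators in \cite[Theorem 6.2.12]{GeckJacon2011} are described by exactly the same signature rule on beads and gaps with residue $i$; matching conventions (the reading order of columns $c_\ell, \dots, c_1$ left to right, the encoding $i+kn \mapsto +$, $i+kn+1 \mapsto -$, and the ordering of the $\textrm{w}_i^{(k)}$ by decreasing $k$) shows the two rules coincide verbatim. Since \cite{GeckJacon2011} establishes that this yields an $A_{n-1}^{(1)}$-crystal whose component through $b_\bs$ is $\hB(\om_\bs)$, the theorem follows.

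The main obstacle is not conceptual but a matter of \emph{convention reconciliation}: the abacus/symbol formalism of \cite{GeckJacon2011}, the wedge-space formalism of \cite{Uglov1999}, and the charged-multipartition formalism of \cite{JMMO1991, FLOTW1999} each order beads, columns, and residues slightly differently, and the sign of the shift (whether $f_i$ increases or decreases an entry, left-to-right versus right-to-left reading) must be pinned down carefully so that the arrows point the correct way and the decreasing-$k$ ordering of the subwords is the one that makes the theorem true rather than its mirror image. Once a single small example (say $n=2$, $\ell=1$, reproducing the well-known structure of the level-one Fock space) is checked against the literature, the general identification is routine.
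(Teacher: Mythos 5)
Your proposal is correct and follows essentially the same route as the paper: the paper gives no independent proof but simply observes that, with the stated conventions (reading order, the $+/-$ encoding, the decreasing-$k$ ordering of the subwords $\textrm{w}_i^{(k)}$), the statement is equivalent to \cite[Theorem 6.2.12]{GeckJacon2011}, which is exactly your third step. Your preliminary lemmas on finiteness of the signature and preservation of the shape $\bs$ correspond to material the paper relegates to the surrounding remarks, so they are welcome but not a departure from the paper's argument.
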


\begin{exa}
In the previous example, we first get the sequence $++++--$,
and the signature is $++$.
The leftmost $+$ corresponds to the $7$ in $c_2$, and therefore $f_3$
acts on $b$ by changing that $7$ into an $8$, i.e.
$$f_3 b=\scriptsize\young(1,3)\otimes\young(3,4,5,<\boldsymbol{8}>)
\otimes\young(1,4,6,7).$$
\end{exa}

\begin{exa}
Take $\ell=3$, $n=2$, $\bs=(s_3,s_2,s_1)=(7,4,6)$ and $b=c_3\otimes c_2\otimes c_1={\scriptsize\young(2,3,4,5,6,8)\otimes\young(2,4,5)\otimes\young(2,3,5,6,7)}$.
One checks that $e_0b=e_1b=e_2b=0$, i.e. $b$ is a highest weight vertex, and that the beginning of the connected component 
of the $A^{(1)}_{n-1}$-crystal containing $b$ is equal to
\begin{center}
\begin{tikzpicture}
\node (a) at (0,0) {$\scriptsize\young(2,3,4,5,6,8)\otimes\young(2,4,5)\otimes\young(2,3,5,6,7)$};
\node (b1) at (-3,-4) {$\scriptsize\young(2,3,4,5,6,9)\otimes\young(2,4,5)\otimes\young(2,3,5,6,7)$};
\node (b2) at (3,-4) {$\scriptsize\young(2,3,4,5,6,8)\otimes\young(2,4,6)\otimes\young(2,3,5,6,7)$};
\node (c1) at (-5,-8) {$\scriptsize\young(1,2,3,4,5,6,9)\otimes\young(1,3,4,5)\otimes\young(1,2,3,5,6,7)$};
\node (c2) at (-1,-8) {$\scriptsize\young(1,2,3,4,5,6,<10>)\otimes\young(1,2,4,5)\otimes\young(1,2,3,5,6,7)$};
\node (c3) at (3,-8) {$\scriptsize\young(1,2,3,4,5,6,9)\otimes\young(1,2,4,6)\otimes\young(1,2,3,5,6,7)$};

\node (d1) at (-5,-11) {$\vspace{3mm}\vdots$};
\node (d2) at (-2,-11) {$\vspace{3mm}\vdots$};
\node (d3) at (0,-11) {$\vspace{3mm}\vdots$};
\node (d4) at (2,-11) {$\vspace{3mm}\vdots$};
\node (d5) at (4,-11) {$\vspace{3mm}\vdots$};

\draw[->] (a) --  node[pos=0.5,above]{\tiny 0} (b1);
\draw[->] (a) --  node[pos=0.5,above]{\tiny 1} (b2);
\draw[->] (b1) --  node[pos=0.5,left]{\tiny 0} (c1);
\draw[->] (b1) --  node[pos=0.5,left]{\tiny 1} (c2);
\draw[->] (b2) --  node[pos=0.5,left]{\tiny 0} (c3);
\draw[->] (c1) --  node[pos=0.5,left]{\tiny 1} (d1);
\draw[->] (c2) --  node[pos=0.5,left]{\tiny 0} (d2);
\draw[->] (c2) --  node[pos=0.5,left]{\tiny 1} (d3);
\draw[->] (c3) --  node[pos=0.5,left]{\tiny 0} (d4);
\draw[->] (c3) --  node[pos=0.5,left]{\tiny 1} (d5);
\end{tikzpicture}\end{center}
\end{exa}

\begin{rem}
If $n$ is sufficiently large, any column (in standard form) has at most one entry with 
residue $i$ or $i+1$, and therefore one recovers the finite type $A$ crystal rule (forgetting the arrows indexed by $0$).
\end{rem}

In fact, we have
$$\widehat{F}(\bs)\simeq \widehat{F}((s_\ell))\otimes \cdots\otimes  \widehat{F}((s_1)).$$

\begin{rem}
Note that the rule used to compute the affine crystal above is not the tensor product rule,
which means that the above isomorphism is not an equality in general.
However, we recover the tensor product rule (and hence the equality) if
the differences $s_{j+1}-s_j$ are sufficiently large.
\end{rem}

\newcommand{\dotw}{\dot{\textrm{w}}}

Similarly, for any $n$-tuple of integers $\dbs=(\dot{s}_1,\ldots,\dot{s}_{n})$, there is a level $n$ $A^{(1)}_{\ell-1}$-crystal structure on the set of tensor products of $n$ columns 
$$\dot{\widehat{F}}(\dbs)=\{a=d_1\otimes \cdots\otimes d_n \mid s(d_i)=\ds_i \text{ for all } i\}.$$
In order for these crystals structures to commute (see \Cref{tricrystal}), we need to use a slightly different rule for computing the $A^{(1)}_{\ell-1}$-crystal.
More precisely, we construct a word $\dotw_j(a)$ by the following procedure:
\begin{enumerate}
\item For all $k\in\Z$,
let $\dotw_j^{(k)}(a)$ be the word obtained by reading the entries $j+k\ell$ and $j+k\ell+1$ in $a$
from left to right  (i.e., going through the columns $d_1,\ldots, d_n$).
\item Set $\dotw_j(a)$ to be the concatenation of the words $\dotw_j^{(k)}$ ordered by increasing
values of $k$.
\end{enumerate}
Now, we compute the $j$-signature by the exact same procedure as for the $A^{(1)}_{n-1}$-crystal, and the action of 
the crystal operators $\df_j$ and $\de_j$ is given by the same rule as in \Cref{fock_crystal}.
We obtain again
$$\dot{\widehat{F}}(\bs)\simeq \dot{\widehat{F}}(({\ds_1}))\otimes \cdots\otimes\dot{\widehat{F}}(({\ds_n})).$$

\begin{exa}
For $\ell=3$, $j=0$, $n=4$ and  
$a=\scriptsize
\young(<\overline{1}>,<\boldsymbol{0}>,<\boldsymbol{1}>,2,<\boldsymbol{3}>)
\otimes\young(<\overline{1}>,<\boldsymbol{0}>,<\boldsymbol{1}>,<\boldsymbol{4}>)
\otimes\young(<\overline{1}>,<\boldsymbol{0}>,2,<\boldsymbol{3}>,<\boldsymbol{4}>,5)
\otimes\young(<\overline{1}>,<\boldsymbol{0}>,<\boldsymbol{1}>,2)
=d_1\otimes d_2\otimes d_3\otimes d_4$, where we 
highlighted the entries with relevant residues with boldface, we have
$\dotw_0^{(0)}(a)=0101001$ and $\dotw_0^{(1)}(a)=3434$,
therefore $\dotw_0(a)=01010013434$.
This yields the sequence $+-+-++-+-+-$, so the $0$-signature is $+$,
in which the leftmost $+$ corresponds to the $0$ in $d_3$.
Therefore, $\df_0$ acts on $a$ by changing that $0$ into a $1$, i.e.
$$\df_0 a=
\scriptsize\young(<\overline{1}>,0,1,2,3)
\otimes\young(<\overline{1}>,0,1,4)
\otimes\young(<\overline{1}>,<\boldsymbol{1}>,2,3,4,5)
\otimes\young(<\overline{1}>,0,1,2).$$
In other terms, in the crystal $\widehat{F}(s)$, we have an arrow
\begin{center}
\begin{tikzpicture}
\node (a) at (0,0) 
{$\scriptsize\young(<\overline{1}>,<{0}>,<{1}>,2,<{3}>)
\otimes\young(<\overline{1}>,<{0}>,<{1}>,<{4}>)
\otimes\young(<\overline{1}>,<{0}>,2,<{3}>,<{4}>,5)
\otimes\young(<\overline{1}>,<{0}>,<{1}>,2)$};
\node (b) at (4.5,0) 
{$\scriptsize\young(<\overline{1}>,0,1,2,3)
\otimes\young(<\overline{1}>,0,1,4)
\otimes\young(<\overline{1}>,<{1}>,2,3,4,5)
\otimes\young(<\overline{1}>,0,1,2).$};
\draw[->] (a) --  node[pos=0.5,above]{\tiny 0} (b);
\end{tikzpicture}\end{center}
\end{exa}

Let us set $\widehat{F}(s)=\bigoplus_{\bs\in\Z^\ell(s)}\widehat{F}(\bs)$ and $\dot{\widehat{F}}(s)=\bigoplus_{\dbs\in\Z^n(s)}\dot{\widehat{F}}(\dbs)$.
We can now state an analogue of \Cref{thm_yam_tab}.

\begin{thm}\label{thm_cylindric_totperiodic}\
\begin{enumerate}
\item The set of cylindric elements of $\widehat{F}(\bs)$ is stable under the crystal operators $f_i$, $e_i$, $i=0,\ldots, n-1$.
Moreover, for any $b\in \widehat{F}(\bs)$, there is a cylindric element $\sP(b)\in \widehat{F}(\bs)$
such that $\sP(b)$ and $b$ appear at the same place in two isomorphic connected components of $\widehat{F}(\bs)$.
\item An element $b\in \widehat{F}(s)$ is a highest weight vertex in the $A^{(1)}_{n-1}$-crystal if and only if $b$ is totally periodic.
\end{enumerate}
\end{thm}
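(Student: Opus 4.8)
The plan is to transpose the proof of the finite-type statement \Cref{thm_yam_tab}, with cylindric elements playing the role of tableaux and total $n$-periodicity playing the role of the Yamanouchi condition, and to import the two genuinely affine ingredients — the behaviour of $f_0,e_0$ and the classification of sources — from the combinatorics of affine Fock space crystals (\cite{FLOTW1999}, \cite{Uglov1999}, \cite{GeckJacon2011}) together with the description of highest weight vertices by $(n,\bs)$-cores (\cite{JaconLecouvey2019}).

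For part (2), I would first rephrase the highest weight condition via signatures: $b$ is a source iff $e_ib=0$ for all $i=0,\dots,n-1$, which by \Cref{fock_crystal} means that in each $i$-signature every symbol $-$ (an entry $\equiv i+1 \bmod n$) is cancelled by a symbol $+$ (an entry $\equiv i\bmod n$) read to its left. I would then argue by peeling off $n$-periods. If $b$ is totally $n$-periodic and $P_r=(x,x-1,\dots,x-n+1)$ is a period with $x-j\in c_{k_j}$ and $k_0\le\cdots\le k_{n-1}$, then for each $j$ the entry $x-j$ (a $-$) sits in column $c_{k_j}$ while the entry $x-j-1$ (the matching $+$, at the same level of the relevant signature) sits in $c_{k_{j+1}}$ with $k_{j+1}\ge k_j$, hence is read before $x-j$ and cancels it; running over all periods and all residues yields an order-preserving injection of every $-$ into the set of $+$'s, so every $e_i$ annihilates $b$. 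Conversely, if the period $P_R$ fails to exist, then in $b$ with $P_1,\dots,P_{R-1}$ removed (these being self-cancelling in every signature, they may be ignored) one finds, near the top, an entry whose predecessor is missing or lies strictly to the left of where it is needed; this produces an uncancelled $-$, hence $e_ib\neq0$ for the corresponding $i$. Alternatively, and more in the spirit of the paper, one identifies the set of sources of $\widehat F(s)$ with the set of $(n,\bs)$-core multipartitions using \Cref{fock_crystal}, \cite[Theorem 6.2.12]{GeckJacon2011} and \cite{JaconLecouvey2019}, and checks through the abacus/$\beta$-set dictionary of \Cref{binary_mat} that ``$(n,\bs)$-core'' is literally ``totally $n$-periodic''.

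For part (1), closure of the set of cylindric elements under $f_i,e_i$ is established as in the finite case \Cref{thm_yam_tab}(1): one verifies that the entry singled out by the $i$-signature rule is exactly the one whose modification keeps the cylindric juxtaposition $c_\ell^+c_1\cdots c_\ell$ semistandard, the only potentially violated relations being the horizontal ones involving the changed value and its neighbours, which the matching controls. For $i\neq0$ this is a routine adaptation of the finite-type bookkeeping to semi-infinite columns; the case $i=0$ is then reduced to, say, $i=1$ by means of the global shift adding $1$ to every entry, which cyclically permutes residues, conjugates $e_0,f_0$ into $e_1,f_1$, and preserves cylindricity by the very definition of $c^+$ in \Cref{def_cyl}. (One can equivalently phrase cylindric elements as cylindric multipartitions and invoke the compatibility of the FLOTW/Uglov crystal with the cylindric structure, cf.\ \cite{FodaWelsh2015}, \cite{Gerber2018}.)

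Finally, the existence of $\sP(b)$ uses that $\widehat F(s)$ is a disjoint union of highest weight $A^{(1)}_{n-1}$-crystals: via the isomorphism $\widehat F(\bs)\simeq\widehat F((s_\ell))\otimes\cdots\otimes\widehat F((s_1))$ it is a tensor product of (connected, integrable highest weight) level $1$ Fock space crystals, hence a disjoint union of such. Thus every connected component $C$ has a unique source, which by part (2) is totally $n$-periodic; and one checks — the affine counterpart of ``there is a unique Yamanouchi tableau of each shape'' — that in the isomorphism class of $C$ there is a distinguished \emph{cylindric} source. By part (1) its component is entirely cylindric, and being an integrable highest weight crystal with the same highest weight as $C$ it is isomorphic to $C$; taking $\sP$ to be this isomorphism yields the desired cylindric element occurring at the same place as $b$. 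I expect part (2) to be the main obstacle: in the direct approach the converse implication requires a careful account of how the removed periods interact with the signatures, and in the ``core'' approach the work is merely repackaged as the delicate (but routine) matching of the two combinatorial conditions; a secondary subtlety is the $i=0$ case of part (1), since the affine duality that would trivialise it is only developed later in this section.
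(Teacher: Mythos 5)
Your strategy is genuinely different from the paper's: the paper proves this theorem by checking that \Cref{def_cyl} and total $n$-periodicity match the definitions used in earlier work and then citing \cite[Section 3]{Tingley2008}, \cite[Proposition 4.10 and Section 4.3]{Gerber2015} and \cite[Theorem 5.9]{JaconLecouvey2012}, whereas you attempt a self-contained argument. The parts of your sketch that do work are the implication ``totally periodic $\Rightarrow$ highest weight'' --- pairing, inside each period, the unique entry of residue $i+1$ with the unique entry of residue $i$ and checking (including the wrap-around case, where the two entries sit in \emph{different} levels $k$ and the level-ordering of the subwords $\textrm{w}_i^{(k)}$ is what saves you) that the $+$ is always read before the $-$ --- and the reduction of the $i=0$ case of part (1) to $i=1$ by the global shift of all entries.

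There are, however, three genuine gaps. First, the converse of part (2): the claim that the removed periods $P_1,\dots,P_{R-1}$ ``may be ignored'' because they are self-cancelling is false for arbitrary matched pairs --- deleting a $+$ and a later $-$ from a word can orphan a $-$ lying between them (e.g.\ $+_P\,-\,+\,-_P$ is fully cancelled but becomes $-\,+$ after deletion) --- so one must exploit the maximality conditions in the definition of a period to show the deletion is harmless; this is precisely the content of \cite[Theorem 5.9]{JaconLecouvey2012} and cannot be waved through. Your proposed shortcut via $(n,\bs)$-cores is moreover based on a false identification: the cores are the affine keys, i.e.\ the extremal vertices of the single component $\hB(\bs)$, not the set of all $A^{(1)}_{n-1}$-sources of $\widehat{F}(s)$ (for $n=2$ the column encoding the partition $(1,1)$ is totally $2$-periodic, hence a source of its component, but $(1,1)$ is not a $2$-core). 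Second, the existence of $\sP(b)$: asserting that ``one checks'' that each isomorphism class of components contains a cylindric source is circular, since that assertion \emph{is} the second statement of part (1); it does not follow formally from closure under the operators plus part (2), and it requires the explicit reduction procedure of \cite[Section 4.3]{Gerber2015} (note also that the paper remarks that $\sP(b)$ is \emph{not} unique, which sits uneasily with your ``distinguished'' cylindric source). Third, even for $i\neq 0$ the closure of cylindric elements under $f_i,e_i$ is not the finite-type computation: the extra column $c_\ell^+$ in \Cref{def_cyl} couples $c_\ell$ to $c_1$, so the entry selected by the affine signature must be shown compatible with the wrap-around semistandardness, which again uses the level-ordered word rather than the plain type $A$ one.
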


\begin{proof}
One checks that \Cref{def_cyl} is equivalent to the definition of cylindricity used in \cite{Tingley2008} and \cite{Gerber2015}.
Therefore, the first statement of (1) translates to \cite[Section 3]{Tingley2008}, see also \cite[Proposition 4.10]{Gerber2015}.
A constructive proof of the second statement of (1) can be derived from \cite[Section 4.3]{Gerber2015}.
Part (2) was obtained in \cite[Theorem 5.9]{JaconLecouvey2012}.
\end{proof}

\begin{rem}
Note that $\sP(b)$ is not unique, unlike the tableau $P(b)$ of \Cref{thm_yam_tab}.
There is uniqueness by putting some extra constraints on $\sP(b)$, see \cite{Gerber2015} for more details.
\end{rem}

\subsection{Uglov's duality}\label{affine duality}

There is a duality
$$
\begin{array}{rcl}
\widehat{F}(s)
&
\longleftrightarrow
&
\dot{\widehat{F}}(s)
\\
b
&
\longleftrightarrow
&
b^\star
\end{array}
$$
defined in \cite[Remark 4.2]{Uglov1999}, see also \cite[Section 1.1.5]{Yvonne2005}.
It is best explained in terms of binary matrices.
We first encode $b=c_\ell\otimes\cdots\otimes c_1\in \widehat{F}(s)$ by the $\infty\times \ell$ 
matrix $M$ defined by 
$$M_{i,j}=\left\{ 
\begin{array}{ll}
1 & \text{ if } i\in c_j \\
0 & \text{ otherwise } \\ 
\end{array}
\right.
$$
Now, for all $k\in\Z$, consider the submatrices $M^{(k)}$
of size $n\times\ell$ of $M$ defined by, for all $1\leq i\leq n$,
$$M^{(k)}_{i,j}=M_{(k-1)n+i,j}.$$
Set $N^{(k)}={M^{(k)}}^\trans$. Then the $\infty\times n$ matrix $N$ 
defined by, for all $j\in\Z$, say $(k-1)\ell+1\leq j\leq k\ell$,
$$N_{j,i}=N^{(k)}_{j-(k-1)\ell,i}$$
decodes to an element $b^\star$ of $\dot{\widehat{F}}(s)$.

\begin{exa}
Take $\ell=2, n=3$, and 
$b=\scriptsize
\gyoung(<\overline{2}>,0,1,3)
\otimes
\gyoung(<\overline{2}>,<\overline{1}>,2)
$.
Then we have
$$
M=
\begin{array}{ccl}
\vdots&\vdots&
\\
\begin{array}{c} 
\text{\scriptsize $-5$}\\ \text{\scriptsize $-4$}\\ \text{\scriptsize $-3$}
\end{array}
&
\begin{bmatrix}
1&1\\
1&1\\
1&1
\end{bmatrix}
&
M^{(-1)}
\\
\begin{array}{c}
\text{\scriptsize $-2$}\\ \text{\scriptsize $-1$}\\ \text{\scriptsize $0$}
\end{array}
&
\begin{bmatrix}
1&1\\
1&0\\
0&1
\end{bmatrix}
&
M^{(0)}
\\
\begin{array}{c}
\text{\scriptsize $1$}\\ \text{\scriptsize $2$}\\ \text{\scriptsize $3$}
\end{array}
&
\begin{bmatrix}
0&1\\
1&0\\
0&1
\end{bmatrix}
&
M^{(1)}
\\
\begin{array}{c}
\text{\scriptsize $4$}\\ \text{\scriptsize $5$}\\ \text{\scriptsize $6$}
\end{array}
&
\begin{bmatrix}
0&0\\
0&0\\
0&0
\end{bmatrix}
&
M^{(2)}
\\
\vdots&\vdots&
\\
&
\begin{array}{cc}
\text{\scriptsize $1$}&\text{\scriptsize $2$}
\end{array}
&
\end{array}
\text{\quad and we get \quad }
N=
\begin{array}{ccl}
\vdots&\vdots&
\\
\begin{array}{c}
\text{\scriptsize $-3$}\\ \text{\scriptsize $-2$}
\end{array}
&
\begin{bmatrix}
1&1&1\\
1&1&1
\end{bmatrix}
&
{M^{(-1)}}^\trans=N^{(-1)}
\\
\begin{array}{c}
\text{\scriptsize $-1$}\\ \text{\scriptsize $0$}
\end{array}
&
\begin{bmatrix}
1&1&0\\
1&0&1
\end{bmatrix}
&
{M^{(0)}}^\trans =N^{(0)}
\\
\begin{array}{c}
\text{\scriptsize $1$}\\ \text{\scriptsize $2$}
\end{array}
&
\begin{bmatrix}
0&1&0\\
1&0&1
\end{bmatrix}
&
{M^{(1)}}^\trans=N^{(1)}
\\
\begin{array}{c}
\text{\scriptsize $3$}\\ \text{\scriptsize $4$}
\end{array}
&
\begin{bmatrix}
0&0&0\\
0&0&0
\end{bmatrix}
&
{M^{(2)}}^\trans=N^{(2)}
\\
\vdots&\vdots&
\\
&
\begin{array}{ccc}
\text{\scriptsize $1$}&\text{\scriptsize $2$}&\text{\scriptsize $3$}
\end{array}
&
\end{array},
$$
where we indicated the rows and column indices. This gives
$b^\star=\scriptsize
\gyoung(<\overline{2}>,<\overline{1}>,0,2)
\otimes
\gyoung(<\overline{2}>,<\overline{1}>,1)
\otimes
\gyoung(<\overline{2}>,0,2)$.
\end{exa}

\begin{rem}\label{analogy_duality}
Assume that each column $c=\{x_1,\ldots,x_k\}$ of $b$
written in standard form verifies $0\leq x_1<\cdots< x_k\leq n$.
\begin{enumerate}
\item Then $b$ is determined by $M^{(1)}$,
and the duality $\star$ is just transposing this single matrix.
We recover precisely the duality $*$, see \Cref{binary_mat}.
For instance, for 
$\ell=4$, $n=3$ and
$b=\scriptsize\gyoung(0,2,3)\otimes\gyoung(0,3)\otimes\gyoung(0,1)\otimes\gyoung(0,1,3).$
The corresponding matrix is
$$
\begin{bmatrix}
1&1&0&0 \\
0&0&0&1 \\
1&0&1&1
\end{bmatrix},
\text{\quad whose transpose is \quad}
\begin{bmatrix}
1&0&1 \\
1&0&0 \\
0&0&1 \\
0&1&1
\end{bmatrix},
$$
so we get
$b^\star=
\scriptsize
\gyoung(0,1,2)\otimes\gyoung(0,4)\otimes\gyoung(0,1,3,4).$
\item Moreover, the orders $\prec$ and $\precdot$ used to compute the crystals coincide, 
in analogy with the finite case again.
\end{enumerate}
\end{rem}

The first result concerning the duality $\star$ is an analogue of 
\Cref{duality_intertwines_Yam_tab}.
The following result can be checked purely combinatorially. We omit its proof because it is essentially equivalent to \cite[Theorem 3.3]{Gerber2018} in the convention of the present paper.

\begin{prop}\label{totper_anticylindric}
Let $b\in \widehat{F}(\bs)$. Then $b$ is totally periodic if and only if $b^\star$ is anticylindric.
\end{prop}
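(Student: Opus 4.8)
The plan is to translate both conditions into combinatorial statements about the $\Z\times\ell$ binary matrix $M$ attached to $b$, and to exploit the block structure defining $\star$. Writing $M_{i,j}=1$ iff $i\in c_j$, the definition of $\star$ yields at once the dictionary
\[
(m-1)\ell+j\in d_i\iff(m-1)n+i\in c_j\qquad(m\in\Z,\ 1\le i\le n,\ 1\le j\le\ell),
\]
where $b^{\star}=d_1\otimes\cdots\otimes d_n$; informally, one transposes $M$ block by block, rescaling $n$ to $\ell$. First I would use this to unwind anticylindricity of $b^{\star}$: the array $d_1\cdots d_n d_1^{+}$ is a semistandard skew tableau iff each of its rows is weakly increasing (its columns being automatically strictly increasing), and through the dictionary each such inequality becomes a local comparison of two consecutive rows of $M$ inside a single $n\times\ell$ block $M^{(m)}$ -- the role of $d_1^{+}$ being precisely to let two adjacent blocks $M^{(m-1)}$ and $M^{(m)}$ interact, which is why the shift by $n$ appears on the $\widehat{F}$ side and by $\ell$ on the $\dot{\widehat{F}}$ side. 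Second I would unwind total periodicity of $b$: the first $n$-period $P_1$ is a greedy, weakly increasing selection of $n$ ones lying in $n$ consecutive rows of $M$, namely the entries $M_{x-i,k_i}=1$ for $i=0,\dots,n-1$ with $k_0\le\cdots\le k_{n-1}$ and each $k_i$ maximal; and $b$ is totally periodic iff iterating this procedure exhausts all the ones of $M$ lying above its all-ones tail.

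The core of the argument is then a lemma matching these two structures: applying $\star$ carries the $r$-th $n$-period $P_r$ of $b$, when it exists, to the $r$-th border strip of $b^{\star}$ inside $d_1\cdots d_n d_1^{+}$, and the condition that $k_0\le\cdots\le k_{n-1}$ with each $k_i$ maximal translates exactly into the condition that removing that strip leaves a skew shape and that its entries weakly increase along rows. I would establish the case $r=1$ directly, and then observe that deleting $P_1$ from $b$ corresponds, again block by block through the dictionary, to deleting the corresponding strip from $b^{\star}$, so that the dual of $b$ with its first period removed equals $b^{\star}$ with its first strip peeled off. An induction on $r$ then shows that all the $P_r$ exist precisely when all the strips can be successively removed while keeping semistandardness, i.e.\ precisely when $d_1\cdots d_n d_1^{+}$ is a semistandard skew tableau. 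There is no convergence issue with this infinite family of conditions, since for $v\ll0$ the relevant rows of $M$ are all-ones (so the conditions hold automatically) and for $v\gg0$ they are all-zero; and since one argues only with the equivalence class of $M$, nothing depends on the choice of a standard form, consistently with the non-uniqueness of $\sP(b)$ noted after \Cref{thm_cylindric_totperiodic}.

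The hard part will be this lemma: the block transposition reshuffles which entries of $b$ become horizontally adjacent in $b^{\star}$, so one must track carefully how a diagonal strip of $n$ ones spanning $n$ consecutive rows of $M$ is carried to a border strip of the dual matrix, and in particular match the greedy/maximal choice of the columns $k_0\le\cdots\le k_{n-1}$ with semistandardness of the resulting skew tableau, paying special attention to the wrap-around encoded by $c^{+}$ and $d_1^{+}$, where the two periodicities (shift by $n$ on $\widehat{F}$, by $\ell$ on $\dot{\widehat{F}}$) meet. Two alternatives are worth recording. When every column $c$ of $b$ in standard form has entries in $\{0,1,\dots,n\}$, the dictionary collapses to transposing the single block $M^{(1)}$, $\star$ reduces to the finite duality $*$ as in \Cref{analogy_duality}, and the statement becomes the half of \Cref{duality_intertwines_Yam_tab} asserting that $b$ is Yamanouchi iff $b^{*}$ is an antitableau; one may then try to bootstrap the general case by a windowing argument applied to overlapping $n$-row slices of $M$. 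And, as the authors point out, once \Cref{def_cyl} and the definition of total periodicity are put in the conventions of \cite{Gerber2018}, the statement is \cite[Theorem 3.3]{Gerber2018}; the combinatorial route above is nonetheless self-contained.
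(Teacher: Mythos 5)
Your overall strategy --- encode $b$ as a $\Z\times\ell$ binary matrix, read off the block-transposition dictionary $(m-1)\ell+j\in d_i\iff(m-1)n+i\in c_j$, and match the $r$-th $n$-period of $b$ with the $r$-th row of the bottom-aligned skew array $d_1\cdots d_n d_1^{+}$ --- is the right one, and it is the same purely combinatorial route the paper has in mind (the paper itself omits the argument, deferring to \cite[Theorem 3.3]{Gerber2018}). So there is no divergence of approach to report; the issue is completeness. As written, your proposal announces the pivotal lemma but does not prove it, and that lemma is where essentially all of the content of the proposition lives. You flag it yourself as ``the hard part,'' which is honest, but it means the submission is an outline rather than a proof.

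Concretely, three things still have to be verified. First, the greedy/maximal choice of columns $k_0\le\cdots\le k_{n-1}$ in the definition of a period must be shown equivalent to the existence of \emph{some} weakly increasing transversal: you need an exchange argument showing that if any valid selection of one entry per residue class (one per column $d_i$ of $b^{\star}$) forms a weakly increasing row of the skew shape, then the greedy one does, and conversely that failure of the greedy choice forces a violation of semistandardness. Second, the direction ``totally periodic $\Rightarrow$ anticylindric'' also requires the \emph{vertical} condition: the entry of the $r$-th period landing in $d_i$ must be strictly larger than that of the $(r+1)$-th period in $d_i$, so that successive periods stack into the columns of a genuine skew tableau; this does not follow formally from the row conditions and needs its own check. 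Third, the wrap-around bookkeeping is more delicate than your sketch suggests: a period contributes $n$ boxes, one per column $d_1,\dots,d_n$, while a row of $d_1\cdots d_n d_1^{+}$ may involve $n+1$ boxes, the last being an entry of $d_1$ shifted by $\ell$ belonging to a \emph{different} period; you must pin down exactly which period it belongs to for the inequality between the $d_n$-entry and the $d_1^{+}$-entry to translate back into the maximality condition on the $k_i$. Until these points are carried out (or the statement is simply quoted from \cite[Theorem 3.3]{Gerber2018}, as the paper does), the proof is not complete.
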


\begin{rem} Similarly, there is a notion of antiperiods that yield a characterisation of 
highest weight vertices in $\dot{\widehat{F}}(\dbs)$.
We get that $b$ is cylindric if and only if $b^\star$ is totally antiperiodic, completing the previous proposition
\end{rem}

Now, we would like to establish an analogue of \Cref{kas-jdt_weyl-Rmat}.
Let $\widehat{S_n}$ be the affine symmetric group on $n$ elements, which is the Weyl group
of type $A^{(1)}_{n-1}$. We denote by $\si_i$, $i=0,\ldots, n-1$ the involutions generating $\widehat{S_n}$,
subject to the usual braid relations modulo $n$.
The group $\widehat{S_n}$ acts on the crystal $\widehat{F}(s)$ as in finite type.
Similarly, we denote $\dot{\si_j}$, $j=0,\ldots, \ell-1$ the generators of $\widehat{S_\ell}$.
First of all, we focus on the maps $\de_j$ and $\dot{\si_j}$ but only in the case $j=1,\ldots, \ell-1$.
The case $j=0$ will be treated in upcoming \Cref{affine_cyclage_promotion} using the promotion operator.
As in \Cref{duality_def}, for $j=1,\ldots, \ell-1$, we denote 
by $J_j$ the elementary Jeu de Taquin map on $\widehat{F}(s)$ between columns $j$ and $j+1$, and
by $R_{j}$ the combinatorial $R$-matrix on $\widehat{F}(s)$ realising the isomorphism
of crystals $$F((s_1,\ldots,s_j,s_{j+1},\ldots,s_\ell)) \overset{\sim}{\lra} F((s_1,\ldots,s_{j+1},s_j,\ldots,s_\ell)).$$

\begin{thm}\label{affine_kas-jdt_weyl-Rmat}For all $j=1,\ldots, \ell-1$, we have
$$\text{(1) }\quad \de^\star_j  =J_{j} \quad \mand \quad  \text{(2)} \quad  \dot{\si_j}^\star=R_{j}.$$
\end{thm}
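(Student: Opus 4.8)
The plan is to reduce the affine statement to its finite counterpart, \Cref{kas-jdt_weyl-Rmat}, by exploiting the fact that for $j=1,\ldots,\ell-1$ the operators $\de_j$, $J_j$ and $R_j$ only involve the pair of adjacent columns $c_{j+1},c_j$ (together with the residue bookkeeping along the full word for $\de_j$), never the "wraparound" index $0$. So I would first record the obvious compatibility between the affine duality $\star$ and the finite duality $*$: by \Cref{analogy_duality}(1), if all columns of $b$ have their standard-form entries in a window of length $n$ lying in $\{0,\ldots,n\}$, then $\star$ coincides with $*$ on that block, and the affine crystal rule of \Cref{fock_crystal} restricts to the finite rule of \Cref{thm_crystal} (this is the content of the \enquote{$n$ sufficiently large} remark following \Cref{fock_crystal}). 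The key observation is that whether $b^\star$ is affected by $\de_j$, and which entry it changes, is governed entirely by the $i$-signature computation, which is \emph{local} in the relevant residue class; after a global shift of all entries by a multiple of $n$ (which commutes with everything in sight) one can always assume the entries involved in one application of $\de_j$ sit inside a single block $M^{(k)}$.

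Concretely, for part (1), I would argue exactly as in the proof of \Cref{kas-jdt_weyl-Rmat}(1). If $b^\star$ is a highest weight vertex for $\de_j$ — equivalently, by the \enquote{$j$-signature} reformulation, $\de_j b^\star=0$ — then $\de_j^\star b=0=J_j(b)$ by definition, and this is consistent because (by \Cref{totper_anticylindric} and \Cref{thm_cylindric_totperiodic}) $b$ is then cylindric, so no horizontal slide from column $j+1$ into column $j$ is possible. Otherwise, let $x$ be the entry of $b^\star=d_1\otimes\cdots\otimes d_n$ modified by $\de_j$, sitting in some row; pushing it into one block $M^{(k)}$ by a global shift, the definition of $\star$ via block transposition shows that $\de_j^\star$ acts on $b$ by sliding the entry in position $(k-1)n+i$ (where $i$ is the index of the column of $b^\star$ containing $x$) from $c_{j+1}$ to $c_j$. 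The selection rule producing $i$ in the $i$-signature guarantees, exactly as in the finite case, that every entry of $c_j$ that is $\leq$ this value is matched with an entry of $c_{j+1}$, so this entry is precisely the first one that the elementary Jeu de Taquin slide $J_j$ moves; hence $\de_j^\star b = J_j(b)$. Part (2) then follows formally as before: writing $N=|c_{j+1}|-|c_j|$ and using that $R_j$ is realised by $|N|$ successive elementary slides between columns $j$ and $j+1$ together with $N=\eps_j(b^\star)-\varphi_j(b^\star)$, part (1) gives $(R_j(b))^\star = \de_j^{N} b^\star = \dot{\si_j}(b^\star)$ when $N\geq 0$, and symmetrically with $\df_j$ when $N<0$.

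The main obstacle, and the point needing genuine care rather than citation, is the \emph{locality} claim: that one application of $\de_j$ to $b^\star$, and the matching single Jeu de Taquin slide on $b$, can be confined to one block $M^{(k)}$ after a harmless global shift, so that the finite-type argument transfers verbatim. This requires checking that (i) a global shift of all entries by $n$ is an isomorphism of $A^{(1)}_{n-1}$-crystals commuting with $\star$ and with $J_j,R_j$ — straightforward from the definitions and the block structure of $\star$ — and (ii) the $i$-signature of $b^\star$ restricted to the relevant columns is unaffected by entries lying in other blocks, which follows because those entries either have the wrong residue or are separated in the signature word by the decreasing-$k$ (resp.\ increasing-$k$) concatenation convention, so cancellation never crosses block boundaries in a way that changes which $-$ is rightmost (resp.\ which $+$ is leftmost) among the two columns in question. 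Once locality is in hand, everything else is a transcription of \Cref{kas-jdt_weyl-Rmat}.
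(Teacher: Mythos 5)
Your overall strategy --- transcribe the finite-type proof of \Cref{kas-jdt_weyl-Rmat} --- is exactly the paper's (its own proof is essentially one line for (1) plus a citation for (2)), and your matching argument for part (1) is sound in substance. However, two of your supporting devices do not hold up as stated. First, the ``confine everything to one block $M^{(k)}$ after a global shift'' reduction is not how the signature computation works: since the columns are semi-infinite, the word $\dotw_j(b^\star)$ is a concatenation over \emph{all} blocks (all but finitely many contributing cancelling $+-$ pairs), and cancellations genuinely do cross block boundaries --- a $+$ arising from block $k$ can absorb a $-$ arising from block $k+1$, and this can change which symbol survives. The correct observation, which renders the locality discussion unnecessary, is that the full concatenated word is precisely the two-column comparison word of the semi-infinite columns $c_j$ and $c_{j+1}$ read in increasing order of entries, so the finite-type matching argument (every entry of $c_j$ below the selected one is matched into $c_{j+1}$, hence the selected entry is the first one moved by the elementary slide) applies verbatim to semi-infinite columns.

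Second, and more seriously, your part (2) rests on the unproved assertion that the affine combinatorial $R$-matrix $R_j$ is realised by $|N|$ elementary Jeu de Taquin slides. In finite type this is a standard fact; in the affine setting $R_j$ is defined as the isomorphism of $A^{(1)}_{n-1}$-crystals interchanging two Fock-space factors, and it is not automatic that this isomorphism is still computed by slides. One must either quote the explicit formula of \cite[Proposition 5.2.2]{JaconLecouvey2010} and check that it agrees with the finite-type formula (this is exactly what the paper does), or argue that $J_j^{\pm N}$ commutes with all affine operators $f_i,e_i$ \emph{including} $i=0$ (which requires the commutation of the two affine crystal structures, cf.\ \Cref{tricrystal_commute}, or a direct check of the $0$-arrows) and then invoke uniqueness of the combinatorial $R$-matrix. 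Without one of these inputs the step $(R_j(b))^\star=\de_j^{N} b^\star$ is unsupported precisely where the affine case differs from the finite one.
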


\begin{proof}
The proof is analogous to that ot \Cref{kas-jdt_weyl-Rmat}.
In particular, (1) is immediate.
For (2), an explicit formula for $R_j$ was given in \cite[Proposition 5.2.2]{JaconLecouvey2010}.
This formula coincides with the formula for the $R$-matrix in finite type 
which can be found in \cite[Example 4.10]{Shimozono2005}.
Therefore, we conclude using \Cref{kas-jdt_weyl-Rmat}.
\end{proof}

\subsection{Affine cyclage and promotion}\label{affine_cyclage_promotion}

We now show that the results of \Cref{cyclage_promotion} generalise to the affine case.
Consider the \textit{affine cyclage} map 
$$
\begin{array}{cccc}
\widehat{\xi} : & \widehat{F}(s) & \lra & F(s+n)
\\
& c_\ell\otimes\cdots\otimes c_1 & \longmapsto & c_{\ell-1}\otimes \cdots \otimes c_1 \otimes c_\ell^+
\end{array}
$$
where $c_\ell^+$ at the beginning of \ref{affine_setting}. By \cite[Proposition 5.2.1]{JaconLecouvey2010}, $\widehat{\xi}$ 
is an isomorphism of $A^{(1)}_{n-1}$-crystals.
Now, consider the promotion map
$$
\begin{array}{cccc}
\pr : & \dot{\widehat{F}}(s) & \lra & \dot{\widehat{F}}(s+n)
\\
& d_1\otimes\cdots\otimes d_n & \longmapsto & \pr(d_1)\otimes \cdots \otimes  \pr(d_n)
\end{array}
$$
where $\pr(d_i)=\left\{ k+1 \,;\, k\in d_i\right\}$ for all $i=1,\ldots,n$.
Exactly as in \Cref{duality_intertwines_cyc_prom}, we have the following result,
which is immediate by definition of $\star$.
\begin{prop}\label{affine_duality_intertwines_cyc_prom}
For all $b\in \widehat{F}(s)$, $$\widehat{\xi}(b)^\star = \pr(b^\star).$$
\end{prop}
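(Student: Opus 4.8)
The plan is to unwind the binary-matrix description of the duality $\star$ from \Cref{affine duality}, in the same spirit as the finite-type statement \Cref{duality_intertwines_cyc_prom}(1). First I would encode $b=c_\ell\otimes\cdots\otimes c_1$ by its $\infty\times\ell$ binary matrix $M$ (so $M_{i,j}=1$ iff $i\in c_j$) and translate $\widehat{\xi}$ into an operation on $M$. Reading the standard form off the definition of $c^+$ in \Cref{affine_setting} one gets $c^+=\{y+n\mid y\in c\}$ as a subset of $\Z$; hence $\widehat{\xi}$ cyclically permutes the columns of $M$ (column $j$ to position $j+1$, column $\ell$ to position $1$) and then moves the content of the new first column down by exactly $n$ rows. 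Call the resulting $\infty\times\ell$ matrix $M'$. On the other side, $\pr$ acts on the $\infty\times n$ matrix $N$ encoding $b^\star$ simply by moving the content of every row down by one (immediate from $\pr(d_i)=\{k+1\mid k\in d_i\}$); call this matrix $N'$.

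It then remains to check that applying the $\star$-recipe to $M'$ --- cut $M'$ into height-$n$ blocks ${M'}^{(k)}$, transpose each to get ${N'}^{(k)}$, restack with block-height $\ell$ --- produces exactly $N'$. The one computation to carry out is the interaction of the cyclic column shift with the height-$n$ blocking: since the wrapped column of $M'$ has been shifted by exactly one block, the block ${M'}^{(k)}$ is the last column of $M^{(k-1)}$ followed by the first $\ell-1$ columns of $M^{(k)}$. Transposing, ${N'}^{(k)}$ has first row the last row of $N^{(k-1)}$ and remaining rows the first $\ell-1$ rows of $N^{(k)}$; restacking the ${N'}^{(k)}$ in blocks of height $\ell$ then telescopes to ``$N$ with the content of every row moved down by one'', which is precisely $N'$, the matrix of $\pr(b^\star)$. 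Hence $\widehat{\xi}(b)^\star=\pr(b^\star)$.

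I do not expect a real obstacle here; the work is entirely in the block-index bookkeeping of the previous paragraph, where one must check that the extra $n$-row shift on the wrapped column is exactly what makes a height-$n$ block of $M'$ straddle two consecutive height-$n$ blocks of $M$ in the way that, after transposition and restacking by $\ell$, globalises to a single-row shift. The only genuine input from outside this computation is the identity $c^+=c+n$ as subsets of $\Z$, read directly off the standard-form picture. Everything else is transcription, and indeed this is the affine counterpart of the trivial observation that a cyclic shift of the columns of $M$ becomes, under transposition, a cyclic shift of the rows of $M^{\trans}$ --- which is exactly how \Cref{duality_intertwines_cyc_prom}(1) is proved in finite type.
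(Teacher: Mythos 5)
Your proof is correct, and it is essentially the paper's argument: the paper simply declares the identity ``immediate by definition of $\star$'', and your block-index computation (the wrapped column shifted by one height-$n$ block of $M$ becoming, after transposition and restacking by $\ell$, a uniform one-row shift of $N$) is exactly the verification that makes it immediate. The key input $c^+=c+n$ and the bookkeeping for ${M'}^{(k)}$ straddling $M^{(k-1)}$ and $M^{(k)}$ both check out.
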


As a direct consequence, we get a description of the action of the maps $\de_0$
and $\dot{\si_0}$ directly on $\widehat{F}(s)$. This completes the statement of \Cref{affine_kas-jdt_weyl-Rmat}.

\begin{cor}\label{kas_weyl_0}
$$\text{(1) }\quad \de_0^\star=\widehat{\xi}^{-1}\circ J_1 \circ \widehat{\xi} 
\quad \mand \quad 
\text{(2)} \quad \dot{\si_0}^\star=\widehat{\xi}^{-1}\circ R_1\circ \widehat{\xi}.$$
\end{cor}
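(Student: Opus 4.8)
The plan is to conjugate both identities through the Uglov duality $\star$ and reduce them to the index-$1$ statements of \Cref{affine_kas-jdt_weyl-Rmat}, using \Cref{affine_duality_intertwines_cyc_prom} to trade the promotion $\pr$ on the side of $\dot{\widehat{F}}(s)$ for the affine cyclage $\widehat{\xi}$ on the side of $\widehat{F}(s)$. The skeleton of the computation is the chain of equalities already used in the proof of \Cref{kas-jdt_weyl-Rmat}: start from $(\de_0^\star(b))^\star=\de_0(b^\star)$, rewrite $\de_0$ on $\dot{\widehat{F}}(s)$ in terms of $\de_1$ and $\pr$, push $\pr$ back across $\star$, and recognise the result.

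First I would record the affine analogue of the relations recalled after \Cref{def_cyclage_prom}, namely
$$\de_0=\pr^{-1}\circ\de_1\circ\pr \mand \dot{\si_0}=\pr^{-1}\circ\dot{\si_1}\circ\pr$$
as maps on $\dot{\widehat{F}}(s)$, where $\pr\colon\dot{\widehat{F}}(s)\lra\dot{\widehat{F}}(s+n)$. This holds because $\pr$ raises every entry by $1$, hence sends an entry of residue $j$ mod $\ell$ to one of residue $j+1$; inspecting the construction of $\dotw_j$ and of the $j$-signature (with the ``increasing $k$'' prescription that was fixed precisely for $\dot{\widehat{F}}$) shows that $\pr$ carries the $j$-signature of $a$ to the $(j+1)$-signature of $\pr(a)$, whence $\pr\circ\de_j=\de_{j+1}\circ\pr$ and $\pr\circ\dot{\si_j}=\dot{\si_{j+1}}\circ\pr$ with indices read modulo $\ell$; taking $j=0$ gives the two displayed relations. (Equivalently, one may invoke the standard fact that promotion acts on a type $A$ crystal as a rotation of the Dynkin diagram.)

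Granting this, the computation for (1) runs as follows: for $b\in\widehat{F}(s)$,
$$(\de_0^\star(b))^\star=\de_0(b^\star)=\pr^{-1}\bigl(\de_1(\pr(b^\star))\bigr)=\pr^{-1}\bigl(\de_1\bigl((\widehat{\xi}(b))^\star\bigr)\bigr),$$
the last step by \Cref{affine_duality_intertwines_cyc_prom}; then \Cref{affine_kas-jdt_weyl-Rmat}(1) applied over $\widehat{F}(s+n)$ gives $\de_1\bigl((\widehat{\xi}(b))^\star\bigr)=\bigl(J_1(\widehat{\xi}(b))\bigr)^\star$, and a second application of \Cref{affine_duality_intertwines_cyc_prom}, this time to $\widehat{\xi}^{-1}(J_1(\widehat{\xi}(b)))$, absorbs the remaining $\pr^{-1}$, leaving $(\de_0^\star(b))^\star=\bigl((\widehat{\xi}^{-1}\circ J_1\circ\widehat{\xi})(b)\bigr)^\star$; since $\star$ is injective this is (1) (the case where one of the maps outputs $0$ being covered by the conventions defining the conjugation $(-)^\star$). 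Statement (2) is obtained verbatim with $\de$ replaced by $\dot{\si}$, $J_1$ by $R_1$, and part (1) of \Cref{affine_kas-jdt_weyl-Rmat} by part (2). The only genuinely new input is the promotion/label-rotation identity of the second paragraph; so the main (mild) obstacle is to verify that the $k$-indexing conventions for $\dotw_j$ are compatible with the entry-shift of $\pr$ across the wrap-around $j=\ell-1\leftrightarrow 0$ --- which is exactly what those conventions were designed for --- together with the routine checks that $J_1$ and $R_1$ preserve $\widehat{F}(s+n)$ and that $\widehat{\xi}$ is a bijection onto $\widehat{F}(s+n)$ by \cite[Proposition 5.2.1]{JaconLecouvey2010}, so that all composites above make sense.
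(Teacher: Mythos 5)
Your proof is correct and follows exactly the route the paper intends: the paper states the corollary as a ``direct consequence'' of \Cref{affine_duality_intertwines_cyc_prom} and \Cref{affine_kas-jdt_weyl-Rmat} without writing out the chain of equalities, and your computation is precisely that chain. You also correctly identify and justify the one ingredient the paper leaves implicit, namely the rotation identity $\de_0=\pr^{-1}\circ\de_1\circ\pr$ (and its Weyl-group analogue) on $\dot{\widehat{F}}(s)$, which indeed follows from the ``increasing $k$'' reading convention for $\dotw_j$.
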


By analogy with \Cref{affine_kas-jdt_weyl-Rmat}, we denote $R_0=\dot{\si}_0^\star$.
If $c_j$ is a column, denote by $c_j^-$ the column such that $(c_j^-)^+=c_j$.
Since $\widehat{\xi}^{-1}(c_\ell\otimes\cdots\otimes c_1)=c_1^{-1}\otimes c_\ell\otimes \cdots\otimes c_2$,
we have
$$R_0(c_\ell\otimes \cdots\otimes c_1)=c_1^-\otimes c_{\ell-1}\otimes\cdots\otimes c_2\otimes c_\ell^+.$$

\subsection{Triple crystal structure}\label{tricrystal}

The affine duality $\star$ is more complex than its finite type counterpart $*$, and this is related to
the existence of a third crystal structure on $\widehat{F}(s)$ \cite{Gerber2016}. 
This arises from the action of a Heisenberg algebra $\cH$,
and the resulting crystal is of type $A_\infty$.
More precisely, each $b\in\widehat{F}(s)$ writes uniquely on the form  $b=a_{\ka}(\overline{b})$ for some partition $\ka$ 
and some highest weight vertex $\overline{b}$ for the $\cH$-structure. Here  $a_{\ka}$ is the so-called \textit{Heisenberg crystal operator} corresponding to the partition $\ka$.
The explicit action of $a_\ka$ is described in \cite{Gerber2017} in terms of translating (generalised) $n$-periods.
By writing $\ka(b)=\ka$, the map $b \mapsto \ka(b)$
yields a bijection between the connected component containing $b$ and the Young lattice $\cY$. 
Now, the Young lattice carries a crystal structure corresponding to the the basic representation of type $A_\infty$. 
More precisely, there is an arrow $\la\overset{k}{\lra}\mu$ in $\cY$ with $k\in \Z$
if and only if $\mu / \la$ has only one box of content $k$. 
This endows $\widehat{F}(s)$ with the structure of an $A_\infty$-crystal.
The following result is \cite[Theorem 6.17]{Gerber2016}.

\begin{thm}\label{tricrystal_commute} 
The three crystals commute\footnote{
Note that in \cite{Gerber2016}, the commutation was proved 
for a twisted version of this duality. 
This is accounted for here by enumerating columns in the reverse order
in $\dot{\widehat{F}}(s)$, and using the different ordering. 
}, i.e. for all $i=1,\ldots,n-1$, for all $j=1,\ldots,\ell-1$ and for all partition $\ka$, we have
$$ f_i \df_j^\star =\df_j^\star f_i, \quad a_\ka f_i = f_i a_\ka, \quad \text{ and } \quad \df_j^\star a_\ka = a_\ka \df_j^\star.$$
\end{thm}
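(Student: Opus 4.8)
The plan is to establish the three commutation relations of \Cref{tricrystal_commute} one at a time, reducing each to results already available in the excerpt or in the cited literature. The overall strategy is that the $\cH$-structure is, by construction, compatible with both affine type $A$ structures, so the genuinely new content is the commutation $f_i\df_j^\star=\df_j^\star f_i$ of the two affine crystals; the other two identities are comparatively soft.

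First I would treat $f_i\df_j^\star=\df_j^\star f_i$ for $i=1,\ldots,n-1$ and $j=1,\ldots,\ell-1$ by mimicking the proof of \Cref{crystals_commute}. The key input is \Cref{affine_kas-jdt_weyl-Rmat}(1), which gives $\de_j^\star=J_j$, hence $\df_j^\star=J_j^{-1}$ as a partial map on $\widehat{F}(s)$. So it suffices to show that the $A^{(1)}_{n-1}$-crystal operators $f_i$ commute with the elementary horizontal Jeu de Taquin slide $J_j$ between columns $j$ and $j+1$. This is exactly the affine analogue of the ``$f_i$ commutes with Jeu de Taquin'' fact used in \Cref{crystals_commute}; it can be obtained either directly by a local analysis of the $i$-signature under a single slide (the slide only moves entries between two adjacent columns and does not change the multiset of residues relevant to $f_i$, so the bracketing computing the $i$-signature is unaffected in a way that commutes with the slide), or, more cleanly, by invoking \Cref{thm_cylindric_totperiodic}: highest weight vertices for the $A^{(1)}_{n-1}$-crystal are the totally periodic $b$, whose duals $b^\star$ are anticylindric by \Cref{totper_anticylindric}, and the set of anticylindric elements is stable under the $\df_j$'s (the $j\neq 0$ part of \Cref{thm_cylindric_totperiodic}(1) applied to $\dot{\widehat{F}}$), so $\df_j^\star$ sends $A^{(1)}_{n-1}$-highest weight vertices to $A^{(1)}_{n-1}$-highest weight vertices of the same weight; combined with the Jeu de Taquin description this forces commutation with all $f_i$, $i=1,\ldots,n-1$, on each connected component. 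This reproduces the argument of \Cref{crystals_commute} verbatim in the affine setting.

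Next, for $a_\ka f_i=f_i a_\ka$: this is essentially built into the definition of the Heisenberg crystal operators. Each $b\in\widehat{F}(s)$ decomposes uniquely as $b=a_\ka(\overline b)$ with $\overline b$ an $\cH$-highest weight vertex, and $b\mapsto\ka(b)$ identifies the connected component of $b$ with the Young lattice $\cY$ carrying its $A_\infty$-crystal structure; the $A^{(1)}_{n-1}$-crystal operators $f_i$ act within a fixed $\cH$-component (they preserve the $\cH$-highest weight vertex underlying $b$, since $\cH$ commutes with $\widehat{\mathfrak{sl}_n}$ on the Fock space), hence commute with the $a_\ka$'s. I would simply cite the relevant part of \cite{Gerber2016} (and \cite{Gerber2017} for the explicit description of $a_\ka$ via translation of $n$-periods) for this, as it is part of the construction that endows $\widehat{F}(s)$ with its $A_\infty$-structure in the first place.

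Finally, for $\df_j^\star a_\ka=a_\ka\df_j^\star$: here I would transport the previous identity through the duality $\star$. By \Cref{affine_kas-jdt_weyl-Rmat}(1), $\df_j^\star$ is (partial) inverse Jeu de Taquin on $\widehat{F}(s)$, equivalently $\df_j$ acting on $\dot{\widehat{F}}(s)$ conjugated by $\star$; and the $\cH$-structure on $\widehat{F}(s)$ corresponds under $\star$ to an $\cH$-structure on $\dot{\widehat{F}}(s)$. So the statement $\df_j^\star a_\ka=a_\ka\df_j^\star$ on $\widehat{F}(s)$ is equivalent, via $\star$, to the commutation of the $A^{(1)}_{\ell-1}$-operators $\df_j$ with the (dual) Heisenberg operators on $\dot{\widehat{F}}(s)$, which is the same statement as the second identity with the roles of $n$ and $\ell$ exchanged. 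The footnote in the statement about the twisted duality and the reversed column ordering is precisely what makes this transport clean, so I would invoke \Cref{tricrystal_commute} as stated in \cite[Theorem 6.17]{Gerber2016}, explaining that our conventions (reverse enumeration of columns in $\dot{\widehat{F}}(s)$, modified order $\precdot$) absorb the twist.

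The main obstacle is the first identity, $f_i\df_j^\star=f_j^\star$\,---\,more precisely, establishing cleanly that the affine $A^{(1)}_{n-1}$-crystal operators commute with an elementary horizontal Jeu de Taquin slide. In finite type this rested on the insertion/sliding theory of semistandard tableaux; in the affine (cylindric) setting one must either redo this local analysis carefully on the cylinder, keeping track of residues modulo $n$ and of the semi-infinite tails of the columns, or route everything through \Cref{thm_cylindric_totperiodic} and \Cref{totper_anticylindric} as sketched above. I expect the cleanest write-up to be the latter: reduce to the statement that $\df_j^\star$ preserves $A^{(1)}_{n-1}$-highest weight vertices and their weights, then use that $J_j=\de_j^\star$ automatically commutes with the $f_i$ on highest-weight-generated components once this is known, exactly as in the proof of \Cref{crystals_commute}. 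Everything else is either definitional or a citation to \cite{Gerber2016,Gerber2017}.
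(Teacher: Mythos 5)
The paper does not actually prove this theorem: it is quoted verbatim as \cite[Theorem 6.17]{Gerber2016}, with the footnote only reconciling conventions (reversed column order, the modified reading order). Your proposal is therefore more ambitious than the paper. Your treatment of the second and third identities ultimately reduces to citing \cite{Gerber2016} for the construction of the $a_\ka$ and for the compatibility of the Heisenberg structure with $\star$, which matches what the paper does for the whole statement and is unobjectionable. The problem is the first identity, where you attempt a genuine argument and where there is a real gap.

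Concretely, your ``cleaner'' route --- show via \Cref{totper_anticylindric} and \Cref{thm_cylindric_totperiodic} that $\df_j^\star$ sends $A^{(1)}_{n-1}$-highest weight vertices to highest weight vertices of the same weight, and conclude that this ``forces commutation with all $f_i$'' --- is not valid. A map sending the highest weight vertex of a component to the highest weight vertex of an isomorphic component need not be the crystal isomorphism between them; commutation with the $f_i$ is exactly what must be proved and cannot be deduced from preservation of highest weight vertices and weights alone. In the finite-type proof of \Cref{crystals_commute}, the highest-weight observation is only a preliminary; the substantive step is the separate identity $J_j^{-1}f_i=f_iJ_j^{-1}$, justified there by the plactic/Knuth-equivalence theory behind \Cref{thm_yam_tab}(1). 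Your fallback --- a direct local analysis claiming that an elementary slide ``does not change the multiset of residues\dots\ so the bracketing computing the $i$-signature is unaffected'' --- is also not correct as stated: a slide moves an entry from column $j+1$ to column $j$, which changes its position in the reading word $\textrm{w}_i(b)$ and therefore can change the $+/-$ bracketing; the invariance of the reduced signature under this move is precisely the nontrivial point, and in the affine setting it must moreover be tracked across the residue blocks $\textrm{w}_i^{(k)}$. That verification is essentially the content of \cite[Theorem 6.17]{Gerber2016} (or of the affine plactic results of \cite{Gerber2015} and \cite{JaconLecouvey2010}); you must either carry it out or cite it, and as written the first identity is not established.
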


Recall that we have introduced the sets $\sD(s), \dot{\sD}(s)$ and 
the notation $b_\bs, \dot{b}_{\dot{\bs}}$ for $\ba\in\sD(s),\dbs\in\dot{\sD}(s)$ in \Cref{affine_crystal}.
Write respectively $\dot{\om}_{0},\ldots,\dot{\om}_{\ell-1}$ and $\dot{\delta}$ the fundamental weights and the null root for the root system of type $A_{\ell-1}^{(1)}$. The definition of $\star$ implies the following important property.

\begin{lem}\label{duality_empty} Let $b\in \widehat{F}(s)$. We have
$b=b_\bs$ for some $\bs\in\sD(s)$ if and only if $b^\star=\dot{b}_{\bs^\star}$ for some $\bs^\star\in\dot{\sD}(s)$. 
Moreover, the $A_{\ell-1}^{(1)}$-dominant weight corresponding to $\dot{b}_{\bs^\star}$ has the form
\[
(n-s_{1}+s_{\ell})\dot{\omega}_{0}+\sum_{j=1}^{\ell-1}
(s_{j}-s_{j+1})\dot{\omega}_{j}+k\dot{\delta}
\]
where $k$ is an integer.
\end{lem}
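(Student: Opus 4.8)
\medskip
\noindent\emph{Proof plan.}
The plan is to work entirely with the $\infty\times\ell$ and $\infty\times n$ binary matrices $M$ and $N$ of \Cref{affine duality}. Observe first that $b=b_\bs$ for some $\bs=(s_\ell,\ldots,s_1)\in\Z^\ell$ if and only if the $j$-th column of $M$ is the indicator of $\{r\in\Z\mid r\le s_j\}$ for every $j$, and likewise $b^\star=\dot{b}_{\bs^\star}$ for some $\bs^\star=(\ds_1,\ldots,\ds_n)\in\Z^n$ if and only if the $i$-th column of $N$ is the indicator of $\{r\in\Z\mid r\le\ds_i\}$ for every $i$. Since $\star$ is readily seen from its definition to be an involution interchanging the roles of $n$ and $\ell$, and maps $\widehat{F}(s)$ to $\dot{\widehat{F}}(s)$ (so the charge sum $s$ is preserved), it suffices to prove the forward implication---that $\bs\in\sD(s)$ implies $b_\bs^{\star}=\dot{b}_{\bs^\star}$ with $\bs^\star\in\dot{\sD}(s)$---together with the weight formula; the converse then follows by applying this implication in the dual setting to $b^\star$ and using $(b^\star)^\star=b$.

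For the forward implication, I would compute the blocks directly. From ``column $j$ of $M$ $=$ indicator of $\{r\le s_j\}$'' one gets $M^{(k)}_{i,j}=1\Leftrightarrow i\le s_j-(k-1)n$, hence $N^{(k)}_{j,i}=1\Leftrightarrow s_j\ge(k-1)n+i$, and therefore, for a row of $N$ lying in the $k$-th block,
\[
N_{(k-1)\ell+j,\,i}=1\quad\Longleftrightarrow\quad s_j\ge(k-1)n+i\qquad(1\le j\le\ell,\ 1\le i\le n).
\]
Fixing $i$, one must show that the $i$-th column of $N$ is a full semi-infinite column, i.e.\ that $N_{r,i}=0$ forces $N_{r+1,i}=0$. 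Writing $r=(k-1)\ell+j$: if $j<\ell$ then $s_j<(k-1)n+i$ and $s_{j+1}\le s_j$ (monotonicity) give $N_{r+1,i}=0$; if $j=\ell$ then $r+1$ is the first row of block $k+1$ and $s_\ell<(k-1)n+i$ together with $s_1\le s_\ell+n$ (the window condition) give $s_1<kn+i$, so again $N_{r+1,i}=0$. Both $s_1\ge\cdots\ge s_\ell$ and $s_1-s_\ell\le n$ are precisely the conditions defining $\sD(s)$, and the block-boundary case $j=\ell$ is the one genuinely delicate point.

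It then remains to identify $\bs^\star$ and compute the weight. The displayed equivalence gives $\ds_i=\max\{(k-1)\ell+j\mid s_j\ge(k-1)n+i\}$; since $s_j$ is weakly decreasing this maximum is attained at the largest $k$ with $s_1\ge(k-1)n+i$, which yields a closed formula for $\ds_i$ showing the $\ds_i$ to be weakly monotone with spread at most $\ell$, so that $\bs^\star\in\dot{\sD}(s)$ (alternatively, this last point drops out of the forward implication applied to $\dot{b}_{\bs^\star}$, whose dual $b_\bs$ is again empty). For the weight, $\mathrm{wt}(\dot{b}_{\bs^\star})$ is the sum over the columns of their individual weights (weight being additive over tensor factors), and the weight of the full column of charge $m$ lies in $\dot{\om}_{\overline{m}}+\Z\dot{\delta}$, where $\overline{m}\in\{0,\ldots,\ell-1\}$ is the class of $m$ mod $\ell$; hence the coefficient of $\dot{\om}_j$ equals $\#\{i\mid\ds_i\equiv j\ (\mathrm{mod}\ \ell)\}$. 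From the closed formula, $\overline{\ds_i}$ is the class mod $\ell$ of $\#\{j'\mid s_1-s_{j'}\le(s_1-i)\bmod n\}$, and as $i$ ranges over $\{1,\ldots,n\}$ the quantity $(s_1-i)\bmod n$ ranges bijectively over $\{0,\ldots,n-1\}$; counting those residues $\rho$ with $s_1-s_j\le\rho<s_1-s_{j+1}$ then gives $s_j-s_{j+1}$ copies of $\dot{\om}_j$ for $j=1,\ldots,\ell-1$ and $n-(s_1-s_\ell)$ copies of $\dot{\om}_0$, which is the asserted weight up to the remaining integer multiple of $\dot{\delta}$.

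The only real obstacle is bookkeeping: carrying the two block decompositions (rows grouped by $n$ for $M$, by $\ell$ for $N$) and the reversed column order between the two Fock spaces recalled in \Cref{affine_crystal} correctly through the block-boundary step and the final residue count. No ingredient beyond \Cref{affine duality} and the window conditions is needed.
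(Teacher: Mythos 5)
Your proposal is correct and takes exactly the route the paper intends: the paper offers no argument beyond ``the definition of $\star$ implies this,'' and your block-by-block computation of $N$ from $M$, including the delicate block-boundary step where the window condition $s_1\le s_\ell+n$ is used, together with the residue count giving the multiplicities $n-s_1+s_\ell$ and $s_j-s_{j+1}$, is the verification the paper leaves implicit. (Your conclusion that the $\ds_i$ are weakly \emph{decreasing} with spread at most $\ell$ matches the paper's own example $\bs^\star=(1,-1,-3)$ rather than the literal inequalities in its definition of $\dot{\sD}(s)$, which appear to contain a typo; this is not a defect of your argument.)
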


\Yvcentermath0

\begin{exa}
Take $\ell=4$, $n=3$ and $\bs=(-2,-1,-1,1)\in\sD(-3)$.
Then $$b_\bs={\scriptsize\young(<\overline{2}>)\otimes\young(<\overline{1}>)\otimes\young(<\overline{2}>)\otimes\young(<1>)}
\quad , \quad \text{i.e.} \quad 
b_\bs=
\scriptsize
\gyoung(<\overline{2}>)
\otimes
\gyoung(<\overline{2}>,<\overline{1}>)
\otimes
\gyoung(<\overline{2}>,<\overline{1}>)
\otimes
\gyoung(<\overline{2}>,<\overline{1}>,<0>,<1>).
$$
The corresponding matrix is
$$
M=
\begin{array}{cl}
\vdots&
\\
\begin{bmatrix}
1&1&1&1\\
1&1&1&0\\
1&0&0&0
\end{bmatrix}
&
M^{(0)}
\\
\begin{bmatrix}
1&0&0&0\\
0&0&0&0\\
0&0&0&0\\
\end{bmatrix}
&
M^{(1)}
\\
\vdots&
\end{array}
\text{\quad which gives \quad }
M^\star=
\begin{array}{cl}
\vdots&
\\
\begin{bmatrix}
1&1&1\\
1&1&0\\
1&1&0\\
1&0&0
\end{bmatrix}
&
{M^{(0)}}^\trans
\\
\begin{bmatrix}
1&0&0\\
0&0&0\\
0&0&0\\
0&0&0
\end{bmatrix}
&
{M^{(1)}}^\trans
\\
\vdots&
\end{array},
$$
therefore
$$b_\bs^\star=
{\scriptsize
\gyoung(<\overline{3}>,<\overline{2}>,<\overline{1}>,<0>,<1>)
\otimes
\gyoung(<\overline{3}>,<\overline{2}>,<\overline{1}>)
\otimes
\gyoung(<\overline{3}>)
}
=\dot{b}_{\bs^\star}
\quad  \text{ with } \bs^\star=(1,-1,-3)\in\dot{\sD}(-3).$$
\end{exa}

\begin{rem}
We see that the map $\bs\mapsto \bs^\star$ is an analogue of the transposition of partitions used in \Cref{duality}.
\end{rem}

The commutation of the three crystals in \Cref{tricrystal_commute} induces
an $(A^{(1)}_{n-1}\times A_\infty\times A^{(1)}_{\ell-1})$-crystal structure on $\widehat{F}(s)$,
and as in the finite case (see \Cref{sources_finite}), 
each connected component of the tricrystal of $\widehat{F}(s)$ has a unique source vertex.
The following corollary is the translation of \cite[Theorem 6.19]{Gerber2016} in our terminology.

\begin{cor}\label{sources_affine} Let $b\in \widehat{F}(s)$.
Then $b$ is a highest weight vertex in the $(A^{(1)}_{n-1}\times A_\infty\times A^{(1)}_{\ell-1})$-crystal
if and only if $b=b_\bs$ for some $\bs\in\sD(s)$.
\end{cor}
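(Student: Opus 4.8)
The plan is to separate the three highest weight conditions, rewrite each of them combinatorially through the duality $\star$, and then read off that their simultaneous solutions are exactly the $b_{\bs}$ with $\bs\in\sD(s)$.

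Since the three crystal structures commute by \Cref{tricrystal_commute}, every connected component of the $(A^{(1)}_{n-1}\times A_\infty\times A^{(1)}_{\ell-1})$-crystal has a unique source, and $b\in\widehat F(s)$ is this source if and only if it is highest weight for each of the three structures separately. By \Cref{thm_cylindric_totperiodic}(2), $b$ is $A^{(1)}_{n-1}$-highest weight if and only if it is totally periodic. For the $A^{(1)}_{\ell-1}$-structure given by the operators $\de^\star_j$, $j=0,\ldots,\ell-1$: by definition of the $\star$-conjugate operators (with \Cref{kas_weyl_0} supplying the index $j=0$), $b$ is killed by all $\de^\star_j$ if and only if $b^\star$ is killed by all $\de_j$, i.e. $b^\star$ is a highest weight vertex of $\dot{\widehat F}(s)$, i.e. $b^\star$ is totally antiperiodic (the dotted analogue of \Cref{thm_cylindric_totperiodic}(2), as mentioned in the remark following \Cref{totper_anticylindric}), which by that same remark means that $b$ is cylindric. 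Finally, $b$ is $A_\infty$-highest weight if and only if $\ka(b)=\emptyset$, since $\emptyset$ is the source of the Young lattice $\cY$. Hence the tricrystal sources are exactly the elements of $\widehat F(s)$ that are at once totally periodic, cylindric and $\cH$-highest weight.

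For the inclusion $\supseteq$, fix $\bs\in\sD(s)$. By the description recalled in \Cref{affine_crystal}, $b_{\bs}$ is the highest weight vertex of the component $\hB(\bs)$, hence is totally periodic. By \Cref{duality_empty}, $b_{\bs}^\star=\dot{b}_{\bs^\star}$ with $\bs^\star\in\dot{\sD}(s)$, and $\dot{b}_{\bs^\star}$ is likewise the highest weight vertex of its $A^{(1)}_{\ell-1}$-component, hence totally antiperiodic; by the remark following \Cref{totper_anticylindric}, $b_{\bs}$ is therefore cylindric. It remains to see that $\ka(b_{\bs})=\emptyset$: using the explicit description of the Heisenberg crystal operators in \cite{Gerber2017} in terms of translating $n$-periods, and since in $b_{\bs}$ all periods already sit in the topmost available positions, no operator $a_{\nu}^{-1}$ with $\nu\neq\emptyset$ can be applied to $b_{\bs}$, so $b_{\bs}$ is $\cH$-highest weight. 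Thus $b_{\bs}$ is a tricrystal source.

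For the inclusion $\subseteq$ it suffices, by the first step, to show that a totally periodic and cylindric element $b=c_\ell\otimes\cdots\otimes c_1$ of $\widehat F(s)$ has the form $b_{\bs}$ with $\bs=(s(c_\ell),\ldots,s(c_1))\in\sD(s)$. One peels off the $n$-periods $P_1,P_2,\ldots$ of $b$ (they all exist, $b$ being totally periodic) and argues, using the cylindric semistandardness of $c_\ell^+c_1\cdots c_\ell$, that each $P_r$ consists of the $n$ largest remaining entries arranged in a weakly increasing staircase occupying the tops of the columns it meets, with no entry missing below; iterating shows that no column of $b$ has a hole, i.e. $c_j=\{x\in\Z\mid x\le s(c_j)\}$ for every $j$, which is exactly $b=b_{\bs}$. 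Cylindricity of $b_{\bs}$ then reads $s(c_\ell)\le\cdots\le s(c_1)$ (from the top-aligned semistandard tableau $c_1\cdots c_\ell$) and $s(c_1)\le s(c_\ell)+n$ (from the wrap-around column $c_\ell^+$), so $\bs\in\sD(s)$. I expect this last combinatorial step — reconciling the cyclic $n$-period bookkeeping with cylindric semistandardness so as to exclude holes and extract the chamber inequalities — to be the main obstacle; it can be carried out using the constructive descriptions of periods and cylindricity in \cite{JaconLecouvey2012} and \cite{Gerber2015}, or one may simply invoke \cite[Theorem 6.19]{Gerber2016}, of which the present corollary is a reformulation.
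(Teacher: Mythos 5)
Your reduction in the first paragraph is correct and matches the intended structure: the tricrystal sources are exactly the elements that are simultaneously totally periodic (highest weight for $A^{(1)}_{n-1}$), cylindric (highest weight for $A^{(1)}_{\ell-1}$ via $\star$), and $\cH$-highest weight ($\ka(b)=\emptyset$). The $\supseteq$ direction is also essentially fine. The problem is in your $\subseteq$ direction, where you silently drop the third condition: you claim that totally periodic and cylindric alone force $b=b_\bs$. This is false. By the paper's own \Cref{hwv_affine_bicrystals}(1), totally periodic and cylindric characterises the $(A^{(1)}_{n-1}\times A^{(1)}_{\ell-1})$-\emph{bi}crystal highest weight vertices, and by \Cref{tricrystal_commute} every $a_\ka b_\bs$ (for an arbitrary partition $\ka$) is such a vertex, since $a_\ka$ commutes with both families of crystal operators. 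These elements form a copy of the Young lattice inside each tricrystal component, and only the one with $\ka=\emptyset$ equals $b_\bs$. Concretely, translating an $n$-period of $b_\bs$ upward produces a column with a ``hole'' that is still totally periodic and still cylindric, so your proposed peeling argument (``iterating shows that no column of $b$ has a hole'') cannot succeed from these two hypotheses alone; any correct argument must use $\ka(b)=\emptyset$ in an essential way.

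Two smaller points. First, your verification that $\ka(b_\bs)=\emptyset$ is asserted rather than proved (``all periods already sit in the topmost available positions''); since the entire content of the corollary beyond \Cref{hwv_affine_bicrystals}(1) is precisely the interaction with the Heisenberg structure, this step deserves more than a gesture at \cite{Gerber2017}. Second, your closing remark that one may ``simply invoke \cite[Theorem 6.19]{Gerber2016}'' is in fact exactly what the paper does — it offers no independent proof and presents the corollary as a translation of that theorem — so that fallback is legitimate, but it is not a repair of your combinatorial argument, which as written proves a false statement.
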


To be complete, we give a characterisation of the highest weight vertices in
the different bicrystals.
In order to do this, we consider the vertices of $\hB(\bs)$, the connected component of the $A^{(1)}_{n-1}$-crystal containing $b_\bs$, where $\bs\in\sD(s)$.
These are called \textit{$n$-FLOTW} elements, and the have an explicit combinatorial description, see \cite[Definition 5.7.8]{GeckJacon2011}\footnote{ 
Note that FLOTW elements are originally defined for a more constrained condition on $\bs$, but it is easy to see that our condition induces the same
combinatorial characterisation
}.
In particular, FLOTW elements are cylindric, which is immediate from \Cref{thm_cylindric_totperiodic} because $b_\bs$ is.

\begin{thm}\label{hwv_affine_bicrystals}\
\begin{enumerate}
\item $b$ is a highest weight vertex in the $(A^{(1)}_{n-1}\times A^{(1)}_{\ell-1})$-crystal if and only if $b$ is cylindric and totally periodic.
\item $b$ is a highest weight vertex in the $(A^{(1)}_{n-1}\times A_\infty)$-crystal if and only if $b^\star$ is $\ell$-FLOTW.
\end{enumerate}
\end{thm}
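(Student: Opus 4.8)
The plan is to reduce each of the two equivalences to results that are already available by carefully using the commutation of the three crystal structures (\Cref{tricrystal_commute}) together with the characterisation of sources (\Cref{sources_affine}) and the various ``dual'' characterisations of highest weight vertices. For part~(1), the key observation is that since the three crystals commute, a vertex $b$ is a highest weight vertex for the $(A^{(1)}_{n-1}\times A^{(1)}_{\ell-1})$-bicrystal if and only if it is a highest weight vertex for $A^{(1)}_{n-1}$ \emph{and} for $A^{(1)}_{\ell-1}$ simultaneously (the $A^{(1)}_{\ell-1}$-action being the $\star$-conjugate of the $A^{(1)}_{\ell-1}$-action on $\dot{\widehat{F}}(s)$ by \Cref{tricrystal_commute}). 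The first condition says $b$ is totally periodic by \Cref{thm_cylindric_totperiodic}(2). The second condition says $b^\star$ is a highest weight vertex in the $A^{(1)}_{\ell-1}$-crystal $\dot{\widehat{F}}(s)$, i.e.\ (by the dotted analogue of \Cref{thm_cylindric_totperiodic}(2)) that $b^\star$ is totally periodic in $\dot{\widehat{F}}(s)$. By \Cref{totper_anticylindric} applied on the $\dot{\widehat{F}}(s)$ side (or, equivalently, by the remark following it), $b^\star$ is totally periodic if and only if $b$ is cylindric. Combining the two conditions gives exactly ``$b$ is cylindric and totally periodic''.

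For part~(2), I would argue similarly: $b$ is a highest weight vertex in the $(A^{(1)}_{n-1}\times A_\infty)$-bicrystal if and only if it is simultaneously $A^{(1)}_{n-1}$-highest and $A_\infty$-highest (the $A_\infty$-action being the Heisenberg crystal of \Cref{tricrystal}). By \Cref{sources_affine}, a vertex that is highest for all three structures is precisely some $b_\bs$ with $\bs\in\sD(s)$; so it suffices to show that the $(A^{(1)}_{n-1}\times A_\infty)$-highest vertices are exactly those whose $\star$-image is $\ell$-FLOTW. Here the natural route is: $b$ is $A^{(1)}_{n-1}$-highest and $A_\infty$-highest $\iff$ $b^\star$ is $A^{(1)}_{\ell-1}$-highest (via the commuting $\star$-conjugate actions and the fact that the Heisenberg structure is itself intertwined by $\star$, as in \Cref{tricrystal_commute}), plus the extra datum coming from $A_\infty$. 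Since $\hB(\bs)$ is, by definition, the connected component of the $A^{(1)}_{n-1}$-crystal containing $b_\bs$, and its vertices are exactly the $n$-FLOTW elements \cite[Definition 5.7.8]{GeckJacon2011}, the dotted analogue on $\dot{\widehat{F}}(s)$ says that the $A^{(1)}_{\ell-1}$-crystal component of $\dot b_{\bs^\star}$ has vertices the $\ell$-FLOTW elements. One then checks that the condition ``$b$ is $A^{(1)}_{n-1}$- and $A_\infty$-highest'' translates under $\star$, via \Cref{tricrystal_commute} and \Cref{duality_empty}, precisely into ``$b^\star$ lies in such a component and is $A^{(1)}_{\ell-1}$-highest'', which by the dotted FLOTW description is the statement that $b^\star$ is $\ell$-FLOTW.

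The step I expect to be the main obstacle is keeping the bookkeeping of the three structures and their $\star$-conjugates straight, in particular making precise the claim that ``highest for the bicrystal $=$ highest for each factor separately''. This is where \Cref{tricrystal_commute} is essential: commutation of the crystal operators guarantees that each connected component of a bicrystal decomposes as a product, so its source is the common source, and one does genuinely need the explicit intertwining $f_i\df_j^\star=\df_j^\star f_i$ (and $f_i a_\ka = a_\ka f_i$) rather than just abstract commutation. A secondary technical point is checking that the $\star$-image of an $A^{(1)}_{\ell-1}$-highest-weight component of $\dot{\widehat{F}}(s)$ is indeed described by $\ell$-FLOTW combinatorics under our conventions (reversed column order, modified signature rule); this is a direct translation of \cite[Definition 5.7.8]{GeckJacon2011} but must be done with care, analogous to the footnote remarks in \Cref{affine_crystal} and \Cref{tricrystal_commute}. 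The rest is routine once these dictionaries are set up.
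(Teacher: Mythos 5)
Your part (1) is essentially the paper's own argument: the paper simply combines \Cref{thm_cylindric_totperiodic}(2) (total periodicity characterises $A^{(1)}_{n-1}$-highest weight vertices) with \Cref{totper_anticylindric} and the remark following it ($b$ is cylindric iff $b^\star$ is totally \emph{anti}periodic, which is the characterisation of $A^{(1)}_{\ell-1}$-highest weight vertices on the dotted side), together with the commutation of the two structures. One small slip: the relevant notion on $\dot{\widehat{F}}(s)$ is total antiperiodicity, not total periodicity --- the reading conventions are reversed on the dotted side --- but since you also invoke the remark, this is only a matter of wording.

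For part (2) the paper gives no argument at all (it cites \cite{Gerber2016}), so you are supplying a proof the paper omits; the skeleton is right but one step is wrong as stated. The equivalence ``$b$ is $A^{(1)}_{n-1}$-highest and $A_\infty$-highest $\iff$ $b^\star$ is $A^{(1)}_{\ell-1}$-highest'' is false: take $b=\df_j^\star b_\bs$ with $\df_j\dot{b}_{\bs^\star}\neq 0$; then $b$ is highest for $A^{(1)}_{n-1}$ and $A_\infty$ (the operators commute and $b_\bs$ is triple-highest), but $b^\star=\df_j\dot{b}_{\bs^\star}$ is not $A^{(1)}_{\ell-1}$-highest. The same confusion reappears when you translate the condition into ``$b^\star$ lies in such a component \emph{and is $A^{(1)}_{\ell-1}$-highest}'': with that extra clause $b^\star$ would be forced to equal $\dot{b}_{\bs^\star}$, i.e.\ $b$ would be triple-highest, which is far too restrictive. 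The correct translation uses the unique factorisation $b=\df_{\underline{j}}^\star a_\ka f_{\underline{i}}b_\bs$ from \Cref{tricrystal}: highestness for $A^{(1)}_{n-1}$ and $A_\infty$ means exactly that $\underline{i}$ is empty and $\ka=\emptyset$, i.e.\ $b=\df_{\underline{j}}^\star b_\bs$, i.e.\ $b^\star=\df_{\underline{j}}\dot{b}_{\bs^\star}$ lies in the connected $A^{(1)}_{\ell-1}$-component of $\dot{b}_{\bs^\star}$ --- with no further highest-weight requirement on $b^\star$. Since being $\ell$-FLOTW means precisely being a vertex of such a component $\dhB(\bs^\star)$, this gives the statement; your argument goes through once the spurious clause is deleted.
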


\begin{proof}
Assertion (1) follows from the characterisation of the highest weight vertices in the 
$A^{(1)}_{n-1}$-crystal and the $A^{(1)}_{\ell-1}$-crystal given in \Cref{thm_cylindric_totperiodic}(2) and \Cref{totper_anticylindric}.
Assertion (2) is proved in \cite{Gerber2016}.
\end{proof}

\begin{rem}
Note that it is more challenging to give a simple description of the $(A^{(1)}_{n-1}\times A_\infty)$-highest weight vertices. Nevertheless, a  characterisation of the $ A_\infty$-highest weight vertices has been given in \cite[Theorem 5.1]{GerberNorton2018}, 
but this does not yield to an analogue of the previous theorem.
\end{rem}

By \Cref{tricrystal_commute} and \Cref{sources_affine}, 
for each $b\in \widehat{F}(s)$, there is a unique $\bs\in\sD(s)$
such that
$b=\df^\star_{\underline{j}} a_\ka f_{\underline{i}} b_\bs$
for some $\underline{i}$, $\underline{j}$ and some partition $\ka$.
Set 
$$\cP(b)=f_{\underline{i}} b_\bs, \quad \ka(b)=\ka, \mand \cQ(b)=\df_{\underline{j}}b_\bs^\star.$$
Note that the elements $\cP(b)$ and $\cQ(b)$ are FLOTW by definition.
Therefore, we get an analogue of \Cref{rsk}, yielding an affine crystal version of the RSK correspondence.

\begin{thm}
The assignment $$\widehat{\Phi}: b \longmapsto (\cP(b),\ka(b), \cQ(b)) $$
yields a bijection $\widehat{F}(s) \to \widehat{\Phi}(\widehat{F}(s))$.
In particular, we have the decomposition 
$$\widehat{F}(s)\simeq \bigoplus_{\bs\in\sD(s)} \hB(\bs)\otimes 
\cY
\otimes \dhB(\bs^\star).$$
\end{thm}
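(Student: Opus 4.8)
The plan is to transpose the proof of \Cref{rsk} to the affine triple setting, using the commutation of the three crystals (\Cref{tricrystal_commute}) in place of the commutation of two (\Cref{crystals_commute}), and the description of the sources (\Cref{sources_affine}) in place of \Cref{sources_finite}. The first task is to check that $\widehat\Phi$ is well defined, i.e.\ that $\cP(b)$, $\ka(b)$ and $\cQ(b)$ depend only on $b$. For $\ka(b)$ this is the uniqueness of the expression $b=a_\ka(\overline b)$ built into the Heisenberg crystal structure. For $\cP(b)$, I would argue that $f_{\underline i}b_\bs$ is exactly the vertex obtained from $b$ by first passing to the ($\cH$-)highest weight vertex $\overline b$ of its $\cH$-component, then to the $A^{(1)}_{\ell-1}$-highest weight vertex of the $A^{(1)}_{\ell-1}$-component of $\overline b$; both operations are canonical, the second because the identification $\widehat F(\bs)\simeq\widehat F((s_\ell))\otimes\cdots\otimes\widehat F((s_1))$ (and its $\df^\star$-analogue) shows that every connected component of the $A^{(1)}_{\ell-1}$-crystal on $\widehat F(s)$ is a highest weight crystal, hence has a unique source. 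That this two-step procedure returns $f_{\underline i}b_\bs$ follows by commuting the raising operators past $a_\ka$ and $f_{\underline i}$ and using that $b_\bs$ is a global source: $\de^\star_j(a_\ka f_{\underline i}b_\bs)=a_\ka f_{\underline i}\de^\star_j b_\bs=0$, and similarly for the raising Heisenberg operators. The argument for $\cQ(b)$ is symmetric, using \Cref{affine_duality_intertwines_cyc_prom} together with \Cref{duality_empty} to locate the $A^{(1)}_{\ell-1}$-component of $b^\star$.

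I would then record the membership statements. Since $b_\bs$ is a source for all three structures, $\cP(b)=f_{\underline i}b_\bs$ lies in $\hB(\bs)$ and is a highest weight vertex for the $\cH$- and $A^{(1)}_{\ell-1}$-structures, hence is $n$-FLOTW; dually $\cQ(b)=\df_{\underline j}b_\bs^\star=\df_{\underline j}\dot b_{\bs^\star}\in\dhB(\bs^\star)$ by \Cref{duality_empty}, and is $\ell$-FLOTW. Moreover, because \Cref{tricrystal_commute} forces each $f_i$ to be $0$ or an isomorphism on every $A^{(1)}_{\ell-1}$-component and on every $\cH$-component, applying the $f_i$ does not change the $A^{(1)}_{\ell-1}$-weight nor the $A_\infty$-weight; hence the $A^{(1)}_{\ell-1}$-component of $\cP(b)$ is again $\cong\dhB(\bs^\star)$ and its $\cH$-coordinate is $\emptyset$. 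In particular the three components of $\widehat\Phi(b)$ live in $\hB(\bs)$, $\cY$ and $\dhB(\bs^\star)$ for one and the same $\bs\in\sD(s)$, recovered as the shape of the source of the $A^{(1)}_{n-1}$-component of $\cP(b)$.

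To see that $\widehat\Phi$ is a bijection onto its image and to obtain the decomposition, I would, for each $\bs\in\sD(s)$, define $\Theta_\bs:\hB(\bs)\otimes\cY\otimes\dhB(\bs^\star)\to\widehat F(s)$ by $\Theta_\bs(v_1,\ka,v_3)=\df^\star_{\underline j}a_\ka f_{\underline i}b_\bs$, where $\underline i$ is chosen with $f_{\underline i}$ realising $v_1$ in $\hB(\bs)$ and $\underline j$ with $\df_{\underline j}$ realising $v_3$ in $\dhB(\bs^\star)$. \Cref{tricrystal_commute} makes this independent of the choices and an isomorphism of $(A^{(1)}_{n-1}\times A_\infty\times A^{(1)}_{\ell-1})$-crystals onto its image. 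Its image is the connected component of $\widehat F(s)$ with source $b_\bs$: every vertex $b$ of that component satisfies $b=a_{\ka(b)}(\overline b)$ with $\overline b$ reached from $b_\bs$ by $A^{(1)}_{n-1}$- and $A^{(1)}_{\ell-1}$-operators, which commutation lets one regroup as $\df^\star_{\underline j}f_{\underline i}b_\bs$, and then $b=\df^\star_{\underline j}a_{\ka(b)}f_{\underline i}b_\bs=\Theta_\bs(v_1,\ka(b),v_3)$. Injectivity of $\Theta_\bs$ follows from the uniqueness in the Heisenberg decomposition (to recover $\ka$), then from uniqueness of highest weight vertices in $A^{(1)}_{\ell-1}$-components (to recover $v_1$, after raising), then from the highest weight structure of $\dhB(\bs^\star)$ (to recover $v_3$). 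Assembling these isomorphisms over $\bs\in\sD(s)$, and invoking \Cref{sources_affine} to know these are all the components, gives $\widehat F(s)\simeq\bigoplus_{\bs\in\sD(s)}\hB(\bs)\otimes\cY\otimes\dhB(\bs^\star)$, and $\widehat\Phi$ is the induced coordinate bijection.

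The main obstacle is the very last point: showing that each connected component of the tricrystal is genuinely the external tensor product of its three one-structure components, i.e.\ that the $A^{(1)}_{n-1}$-, $A_\infty$- and $A^{(1)}_{\ell-1}$-``coordinates'' of a vertex vary independently. The commutation \Cref{tricrystal_commute} is the substance of this, but it has to be combined carefully with the fact that each single structure already splits $\widehat F(s)$ into highest weight crystals (respectively copies of $\cY$), and here one must be cautious because the affine crystal rule on $\widehat F(\bs)$ is \emph{not} the naive tensor product rule, so that splitting is only transparent through the identification $\widehat F(\bs)\simeq\widehat F((s_\ell))\otimes\cdots\otimes\widehat F((s_1))$ and its $\df^\star$-counterpart; all the bookkeeping above ultimately rests on these isomorphisms.
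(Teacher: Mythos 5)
Your argument is correct and follows exactly the route the paper intends: the theorem is stated there without a written proof, as an immediate consequence of the commutation of the three crystal structures (\Cref{tricrystal_commute}) and the identification of the tricrystal sources with the $b_{\bs}$, $\bs\in\sD(s)$ (\Cref{sources_affine}), mirroring the finite-type argument of \Cref{rsk}. Your writeup simply supplies the bookkeeping (well-definedness of $\cP,\ka,\cQ$, the inverse maps $\Theta_{\bs}$, and the tensor-product structure of each component) that the paper leaves implicit.
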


\begin{rem}\label{rem_affine_rsk}
Another way to express the affine crystal RSK correspondence is to consider the bijection
$$b\longleftrightarrow (\sP(b), \cQ(b))$$
where  $\cQ(b)$ is defined as before, and $\sP(b)=a_\ka f_{\underline{i}}b_\bs$ (in particular $\sP(b)$ is cylindric).
In a symmetric fashion, one can establish a bijection 
$$b\longleftrightarrow (\cP(b), \sQ(b))$$ where $\sQ(b)$ is anticylindric.
It would be interesting to compare this with the results of \cite{Langer2012}.
\end{rem}

\subsection{Bicrystal structure on self-dual elements}

As in \Cref{selfdual_finite}, let us consider the set of self-dual elements $\widehat{F}(s)^\star$, that is,
the set of all $b=c_1\otimes \cdots\otimes c_\ell \in F(s)$ such that 
$b^\star=c_\ell \otimes \cdots\otimes c_1$. Again, self-dual elements exist only if $n=\ell$.
Moreover, by \cite[Proposition 5.2]{Gerber2016}, for all $b\in\widehat{F}(s)$, we have
$\ka(b^\star)=\ka(b)^\trans$, therefore if $b\in \widehat{F}(s)^\star$,
we have
$$\ka(b)=\ka(b)^\trans.$$
Now, by \cite{Kashiwara1996}, self-transpose partitions realise the crystal graph $\widetilde{\cY}$
of the basic representation of type $B_\infty$,
by setting 
$$\ka\overset{k}{\lra}\ka' \text{ in } \widetilde{\cY}
\text{ \quad  if and only if \quad } 
\ka\overset{k}{\lra}\ka''\overset{-k}{\lra}\ka'\text{ in } \cY.$$
If one prefers, one can also consider the vertices of $\widetilde{\cY}$ as shifted Young diagrams since self-conjugate partitions are in bijection with strict partitions. Further, for all $i=1,\ldots, n-1$, set again
$$f_i^\star = \df_i^\star f_i.$$
Therefore, the operators $f_i^\star$ for $i=1,\ldots, n-1$ and $a_\ka$ for $\ka=\ka^\trans$ induce an 
$(A_{n-1}^{(1)}\times B_\infty)$-crystal structure on $\widehat{F}(s)^\star$. 
More precisely, we get the decomposition
$$\widehat{F}(s)^\star \simeq \bigoplus_{\substack{\bs\in\sD(s)\\ \bs^\star=\bs}}\widehat{B}(\bs)\otimes 
\widetilde{\cY}.$$
Other multicrystal structures on fixed points sets will be studied in \Cref{fixed_points_mullineux}.

\subsection{Affine keys and bikeys}\label{bikeys_aff}

We now construct affine analogues of keys and bikeys as defined in \Cref{bikeys}.

\begin{defi}\label{affine_key}
An element $b=c_\ell\otimes\cdots\otimes c_1\in \widehat{F}(s)$ is called an \textit{affine key}
if $b$ is cylindric and $c_\ell\subseteq \cdots\subseteq c_1\subseteq c_\ell^+$.
\end{defi}

\begin{rem}
	Note that \cite{JaconLecouvey2019} uses the terminology \textit{$(n,\bs)$-cores} instead of affine bikeys.
	Indeed, it is observed that if $b=c_\ell\otimes\cdots\otimes c_1$ is an affine key, then
	each $c_j$ is in particular the beta-set of a partition which is an $n$-core.
\end{rem}

In particular, if $b\in \widehat{F}(s)$ is an affine key, $b$ is cylindric and 
therefore $s(b)\in\sD(s)$.
Now, for an element $b\in \widehat{F}(s)$, denote $\cO_{\widehat{S_n}}(b)$ the orbit of $b$ under the action of $\widehat{S_n}$.
The following result is an analogue of \Cref{key_orbit} and is a reformulation of \cite[Proposition 5.14]{JaconLecouvey2019}.

\begin{prop}\label{affine_keys_orbit} Let $\bs\in\sD(s)$.
The set of all affine keys in $\widehat{F}(\bs)$ is equal to $\cO_{\widehat{S_n}}(b_\bs)$.
\end{prop}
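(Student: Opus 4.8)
The statement to prove is \Cref{affine_keys_orbit}: for $\bs\in\sD(s)$, the set of affine keys in $\widehat{F}(\bs)$ equals the $\widehat{S_n}$-orbit $\cO_{\widehat{S_n}}(b_\bs)$. Since this is advertised as a reformulation of \cite[Proposition 5.14]{JaconLecouvey2019}, the plan is to reduce it to that result by matching the two combinatorial languages, and to spell out the direct argument in our own formalism. The key point is that the $\widehat{S_n}$-action on $\widehat{F}(s)$ (as recalled before \Cref{affine_kas-jdt_weyl-Rmat}) is, on each $i$-string, the reversal of that string, and the generators $\si_i$ for $i=1,\dots,n-1$ act essentially as in finite type while $\si_0$ can be understood through the affine cyclage via \Cref{kas_weyl_0} and \Cref{affine_duality_intertwines_cyc_prom}.

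First I would show $\cO_{\widehat{S_n}}(b_\bs)\subseteq\{\text{affine keys}\}$. The base point $b_\bs={\scriptsize\young(<s_\ell>)\otimes\cdots\otimes\young(<s_1>)}$ is cylindric because $\bs\in\sD(s)$ means $s_\ell\leq\cdots\leq s_1\leq s_\ell+n$, so the column inclusions $c_\ell\subseteq\cdots\subseteq c_1\subseteq c_\ell^+$ hold trivially (each $c_j$ is the ``staircase'' column $\{x\in\Z\mid x\leq s_j\}$ shifted appropriately, and these are nested). Thus $b_\bs$ is an affine key. For the inductive step, I would argue that if $b=c_\ell\otimes\cdots\otimes c_1$ is an affine key, then $\si_i b$ is again an affine key for every $i=0,\dots,n-1$. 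For $i=1,\dots,n-1$ this is the finite-type computation: reversing the $i$-string of a cylindric element whose columns are totally nested permutes, within each column, the roles of $i$ and $i+1$ in a way that preserves both cylindricity and the nesting (here one uses that $b_\bs$ is itself a column-nested cylindric element and that $\si_i$ sends keys to keys, as in \Cref{key_orbit}, together with the fact that the affine crystal rule of \Cref{fock_crystal} restricts to the finite one on columns in standard form when $n$ is large — but in general one invokes \Cref{thm_cylindric_totperiodic}(1) that cylindric elements are stable under $e_i,f_i$, hence under $\si_i$). For $i=0$, I would use \Cref{kas_weyl_0}(2): $\dot\si_0^\star$ corresponds under the duality to $\widehat\xi^{-1}\circ R_1\circ\widehat\xi$; dually, the $\si_0\in\widehat{S_n}$ action is conjugated by the affine cyclage of \ref{affine_cyclage_promotion} to the $\si_1$-type action, and since $\widehat\xi$ (being an $A^{(1)}_{n-1}$-crystal isomorphism by \cite[Proposition 5.2.1]{JaconLecouvey2010}) sends affine keys to keys and back, stability under $\si_0$ follows from stability under $\si_1$.

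Next I would show the reverse inclusion: every affine key lies in $\cO_{\widehat{S_n}}(b_\bs)$ where $\bs=s(b)$. Given an affine key $b$, it is cylindric, so $b$ and $\sP(b)$ lie at the same place in isomorphic connected components by \Cref{thm_cylindric_totperiodic}(1); I would instead argue directly that the $\widehat{S_n}$-orbit of any vertex meets the set of affine keys in at most one element on each string, and that repeatedly applying the appropriate $\si_i$ to ``unscramble'' the entries drives $b$ to $b_\bs$. Concretely, order the residues and, mimicking the finite-type proof of \Cref{key_orbit} (induction on the length of $w\in\widehat{S_n}$), show that an affine key $b\neq b_{s(b)}$ admits some $i$ with $\ell(\si_i w)<\ell(w)$ when $b=w\,b_{s(b)}$, using the string-reversal description of the action. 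The length/induction bookkeeping is exactly what \cite[Proposition 5.14]{JaconLecouvey2019} carries out in the language of $(n,\bs)$-cores, so at this point I would cite it, having established the dictionary: affine keys $\leftrightarrow$ $(n,\bs)$-cores (each $c_j$ being the $\beta$-set of an $n$-core, as noted in the remark following \Cref{affine_key}).

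\textbf{The main obstacle} will be the $i=0$ case of stability (and the corresponding step in the unscrambling argument): unlike the generators $\si_1,\dots,\si_{n-1}$, the action of $\si_0$ is not visibly an operation on a single pair of adjacent rows of the abacus, and the affine crystal rule of \Cref{fock_crystal} is genuinely not the tensor-product rule, so one cannot naively import the finite-type computation. The clean way around this is precisely the conjugation by the affine cyclage $\widehat\xi$ from \ref{affine_cyclage_promotion}: since $\widehat\xi$ is an isomorphism of $A^{(1)}_{n-1}$-crystals intertwining the dualities (\Cref{affine_duality_intertwines_cyc_prom}) and manifestly sends cylindric elements with nested columns to ones of the same type (it moves $c_\ell^+$ to the front, and $c_\ell\subseteq\cdots\subseteq c_1\subseteq c_\ell^+$ becomes $c_{\ell-1}\subseteq\cdots\subseteq c_1\subseteq c_\ell^+\subseteq c_{\ell-1}^+$ after re-standardising), the $i=0$ statement is transported to the already-handled $i=1$ statement. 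I expect this reduction, rather than the length-function combinatorics, to be the delicate point requiring care in the write-up.
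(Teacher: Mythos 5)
The paper offers no argument for this proposition beyond the citation of \cite[Proposition 5.14]{JaconLecouvey2019}, so your overall plan --- translate affine keys into $(n,\bs)$-cores and defer the length induction to that reference --- is consistent with what the authors actually do. The scaffolding you build around the citation, however, contains a step that fails: the treatment of $\si_0$. You propose to deduce stability of affine keys under $\si_0\in\widehat{S_n}$ from stability under $\si_1$ by conjugating with the affine cyclage $\widehat{\xi}$, citing \Cref{kas_weyl_0}(2). But \Cref{kas_weyl_0}(2) concerns $\dot{\si_0}^\star=R_0$, the generator of the \emph{other} Weyl group $\widehat{S_\ell}$ transported to $\widehat{F}(s)$ (an operation permuting and shifting columns), not the $0$-string reversal $\si_0$ of the $A^{(1)}_{n-1}$-structure; you are conflating the two commuting actions. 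Worse, the very property you invoke to justify the reduction --- that $\widehat{\xi}$ is an isomorphism of $A^{(1)}_{n-1}$-crystals --- is exactly what makes it useless here: such an isomorphism commutes with each $\si_i$ separately, so $\widehat{\xi}^{-1}\circ\si_1\circ\widehat{\xi}=\si_1$, and conjugation by $\widehat{\xi}$ can never turn $\si_1$ into $\si_0$.

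The repair is to conjugate instead by the map that rotates the $A^{(1)}_{n-1}$ Dynkin diagram, namely the global shift $\mathrm{sh}\colon b\mapsto b+1$ adding $1$ to every entry of every column of $b\in\widehat{F}(s)$ (equivalently, the image under $\star$ of the affine cyclage on $\dot{\widehat{F}}(s)$, by the dual form of \Cref{affine_duality_intertwines_cyc_prom}). The signature rule of \Cref{fock_crystal} gives $f_i=\mathrm{sh}^{-1}\circ f_{i+1}\circ\mathrm{sh}$ for all $i$ modulo $n$, hence $\si_0=\mathrm{sh}^{-1}\circ\si_1\circ\mathrm{sh}$, and $\mathrm{sh}$ visibly preserves cylindricity and the chain $c_\ell\subseteq\cdots\subseteq c_1\subseteq c_\ell^+$ (recall $c^+=c+n$ as sets), which settles the $i=0$ case. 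Note also that your justification for $i=1,\ldots,n-1$ (``$\si_i$ sends keys to keys, as in \Cref{key_orbit}'') is circular --- closure of affine keys under the generators is precisely the statement being proved --- and your unscrambling step already presupposes $b=w\,b_{s(b)}$; so the genuine combinatorial content does sit entirely in \cite[Proposition 5.14]{JaconLecouvey2019}, as you anticipate, and the sketch should be honest about that rather than presenting these reductions as independent checks.
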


\begin{exa}\label{exa_aff_key} Take $\ell=n=2$ and $\bs=(r_1,r_2)=(1,0)$, so that 
$b_\bs=\scriptsize
\gyoung(0)
\otimes
\gyoung(0,1)
$.
Let us compute the beginning of the connected component of $\widehat{F}(\bs)$ containing $b_\bs$.
\begin{center}
\begin{tikzpicture}
\node (a) at (0,0) {$\scriptsize\young(<\overline{1}>,0)\otimes\young(<\overline{1}>,0,1)$};
\node (b1) at (-3,-1.5) {$\scriptsize\young(<\overline{1}>,1)\otimes\young(<\overline{1}>,0,1)$};
\node (b2) at (3,-1.5) {$\scriptsize\young(<\overline{1}>,0)\otimes\young(<\overline{1}>,0,2)$};
\node (c1) at (-3,-4) {$\scriptsize\young(<\overline{1}>,2)\otimes\young(<\overline{1}>,0,1)$};
\node (c2) at (3,-4) {$\scriptsize\young(<\overline{1}>,0)\otimes\young(<\overline{1}>,0,3)$};
\node (d1) at (-4.5,-6.5) {$\scriptsize\young(<\overline{1}>,3)\otimes\young(<\overline{1}>,0,1)$};
\node (d2) at (-1.5,-6.5) {$\scriptsize\young(<\overline{1}>,2)\otimes\young(<\overline{1}>,0,2)$};
\node (d3) at (1.5,-6.5) {$\scriptsize\young(<\overline{1}>,1)\otimes\young(<\overline{1}>,0,3)$};
\node (d4) at (4.5,-6.5) {$\scriptsize\young(<\overline{1}>,0)\otimes\young(<\overline{1}>,0,4)$};
\node (e1) at (-4.5,-8) {$\vspace{3mm}\vdots$};
\node (e2) at (-2,-8) {$\vspace{3mm}\vdots$};
\node (e3) at (-1,-8) {$\vspace{3mm}\vdots$};
\node (e4) at (1,-8) {$\vspace{3mm}\vdots$};
\node (e5) at (2,-8) {$\vspace{3mm}\vdots$};
\node (e6) at (4.5,-8) {$\vspace{3mm}\vdots$};

\draw[->] (a) --  node[pos=0.5,above]{\tiny 0} (b1);
\draw[->] (a) --  node[pos=0.5,above]{\tiny 1} (b2);
\draw[->] (b1) --  node[pos=0.5,left]{\tiny 1} (c1);
\draw[->] (b2) --  node[pos=0.5,left]{\tiny 0} (c2);
\draw[->] (c1) --  node[pos=0.5,above]{\tiny 0} (d1);
\draw[->] (c1) --  node[pos=0.5,above]{\tiny 1} (d2);
\draw[->] (c2) --  node[pos=0.5,above]{\tiny 0} (d3);
\draw[->] (c2) --  node[pos=0.5,above]{\tiny 1} (d4);
\draw[->] (d1) --  node[pos=0.5,left]{\tiny 1} (e1);
\draw[->] (d2) --  node[pos=0.5,left]{\tiny 0} (e2);
\draw[->] (d2) --  node[pos=0.5,right]{\tiny 1} (e3);
\draw[->] (d3) --  node[pos=0.5,left]{\tiny 0} (e4);
\draw[->] (d3) --  node[pos=0.5,right]{\tiny 1} (e5);
\draw[->] (d4) --  node[pos=0.5,left]{\tiny 0} (e6);
\end{tikzpicture}
\end{center}

Note that $b_\bs$ is cylindric, and therefore all vertices appearing are cylindric.
In the picture, only
$${\scriptsize b_\bs=\young(<\overline{1}>,0)\otimes\young(<\overline{1}>,0,1)}
\quad , \quad 
{\scriptsize\young(<\overline{1}>,1)\otimes\young(<\overline{1}>,0,1)}
\quad \mand\quad
{\scriptsize\young(<\overline{1}>,0)\otimes\young(<\overline{1}>,0,2)}$$
are in $\cO_{\widehat{S_n}}(b_\bs)$. 
One checks that these are affine keys in the sense of \Cref{affine_key}, and that the others are not,
which illustrates \Cref{affine_keys_orbit}.
\end{exa}

We can now give a characterisation of the dual of an affine key, in the spirit of \Cref{key_dual}.
It will be convenient to consider the action of $\widehat{S_n}$ on $\Z^n$ determined by the formulas
$$w(z_1,\ldots,z_n)=(z_{w(1)}, \ldots, z_{w(n)}) \text{ for } w\in S_n \mand
\si_0(z_1,\ldots,z_n)=(z_1-\ell,z_2,\ldots,z_{n-1},z_n+\ell).$$
Observe that the set $\dot{\sD(s)}$ is a fundamental domain for this action.

\begin{prop}
Let $b\in \widehat{F}(s)$. Then $b$ is an affine key if and only if 
$b^\star=\dot{b}_{\dot{\bs}}
$ 
for some $\dot{\bs}\in\Z^n(s)$.
\end{prop}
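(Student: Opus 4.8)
The strategy is to read off both implications from three facts already available in the excerpt. First, by \Cref{affine_keys_orbit} the affine keys of shape $\bs$ are precisely the vertices of $\cO_{\widehat{S_n}}(b_\bs)$; moreover every affine key is cylindric, hence of some shape $\bs\in\sD(s)$, so the affine keys are exactly $\bigsqcup_{\bs\in\sD(s)}\cO_{\widehat{S_n}}(b_\bs)$. Second, by \Cref{duality_empty} (applied with $n$ and $\ell$ interchanged) we have $b_\bs^\star=\dot b_{\bs^\star}$ with $\bs\mapsto\bs^\star$ a bijection $\sD(s)\to\dot{\sD}(s)$. Third, the duality $\star$ intertwines the two affine Weyl group actions: by \Cref{affine_kas-jdt_weyl-Rmat}(2) and \Cref{kas_weyl_0}(2) (again with $n$ and $\ell$ swapped), ${}^\star\si_i=\dot R_i$ is the combinatorial $R$-matrix on $\dot{\widehat F}(s)$ permuting the $i$-th and $(i+1)$-th columns for $i=1,\dots,n-1$, while ${}^\star\si_0=\dot R_0$ is its conjugate by the dual affine cyclage, for which the end of \Cref{affine_cyclage_promotion} provides a closed formula.

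The core of the argument is one elementary computation: the set $E:=\{\dot b_{\dbs}\mid\dbs\in\Z^n(s)\}$ of ``empty'' vertices is stable under every ${}^\star\si_i$, $i=0,\dots,n-1$, and under the bijection $\dot b_{\dbs}\leftrightarrow\dbs$ these operators induce precisely the action of $\widehat{S_n}$ on $\Z^n(s)$ written just before the statement. Indeed, each column occurring in $\dot b_{\dbs}$ is, as a subset of $\Z$, an interval of the form $\{y\in\Z\mid y\le\ds_i\}$; any two such columns are nested, so the combinatorial $R$-matrix between them is simply their transposition (as in the proof of \Cref{key_dual}, via the $R$-matrix formula of \cite{JaconLecouvey2010} and \cite{Shimozono2005}). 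This gives ${}^\star\si_i(\dot b_{\dbs})=\dot b_{\si_i(\dbs)}$ for $i=1,\dots,n-1$. For $i=0$ one also uses that the ``$+$'' of the empty column of charge $a$ is the empty column of charge $a+\ell$ (and the ``$-$'' the one of charge $a-\ell$); feeding this into the formula for $\dot R_0$ yields $\dot b_{\si_0(\dbs)}$ for the generator $\si_0$ of the displayed action. Hence $E$ is a union of ${}^\star\widehat{S_n}$-orbits, and since $\dot{\sD}(s)$ is a fundamental domain for the action (as noted just before the statement), each such orbit contains exactly one vertex of $\{b_\bs^\star\mid\bs\in\sD(s)\}=\{\dot b_{\bs^\star}\mid\bs\in\sD(s)\}$.

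With this, both directions are immediate. If $b$ is an affine key, then its shape $\bs$ lies in $\sD(s)$ and $b=w(b_\bs)$ for some $w\in\widehat{S_n}$ by \Cref{affine_keys_orbit}; applying $\star$ gives $b^\star={}^\star w(b_\bs^\star)={}^\star w(\dot b_{\bs^\star})\in E$, so $b^\star=\dot b_{\dbs}$ for some $\dbs\in\Z^n(s)$. Conversely, given $\dbs\in\Z^n(s)$, choose $w\in\widehat{S_n}$ with $w^{-1}\cdot\dbs\in\dot{\sD}(s)$ using the fundamental domain property, write $w^{-1}\cdot\dbs=\bs^\star$ with $\bs\in\sD(s)$ by \Cref{duality_empty}, and get $\dot b_{\dbs}={}^\star w(\dot b_{\bs^\star})={}^\star w(b_\bs^\star)=\bigl(w(b_\bs)\bigr)^\star$; thus $b:=w(b_\bs)$ is an affine key with $b^\star=\dot b_{\dbs}$. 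Since $\star$ is a bijection, this shows that the affine keys are exactly the elements $b$ with $b^\star\in E$, which is the claim. The main obstacle is the bookkeeping in the middle paragraph: one has to check that $\dot R_0$ together with the $\pm$ operations on empty columns reproduces the exact normalization of the generator $\si_0$ fixed before the statement (the $\mp\ell$ shifts on the first and last coordinates), and that for semi-infinite columns the $R$-matrix between two nested columns really is the bare column transposition.
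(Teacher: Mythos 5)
Your proof is correct and follows essentially the same route as the paper's: reduce to the orbit characterisation of affine keys via \Cref{affine_keys_orbit}, use \Cref{duality_empty} to identify $b_\bs^\star$ with $\dot b_{\bs^\star}$, check that ${}^\star\si_i(\dot b_{\dbs})=\dot b_{\si_i(\dbs)}$ for all $i$ (nested columns for $i\neq 0$, the shift formula from \Cref{kas_weyl_0} for $i=0$), and conclude with the fundamental domain property of $\dot{\sD}(s)$. Your write-up is somewhat more explicit than the paper's on the last step, but the argument is the same.
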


\begin{proof}
It suffices to observe that $b_\bs^\star = \dot{b}_{\dot{\bs}}$ for some $\dot{\bs}\in\sD(s)$.
The rest of the proof is analogous to that of \Cref{key_dual}. The maps $R_i, i=1,\ldots, n-1$
simply permute the columns of $\dot{b}_{\dot{\bs}}$, and by \Cref{kas_weyl_0}
$
{}^\star\si_0({\scriptsize \young(<\dr_1>)\otimes\cdots\otimes\young(<\dr_n>))=
\Yboxdimx{25pt}\young(<\dr_1-\ell>)\otimes\young(<\dr_2>)\otimes\cdots\otimes\young(<\dr_{n-1}>)\otimes\young(<\dr_n+\ell>)},
$
i.e. ${}^\star\si_0(\dot{b}_{\dot{\bs}}) = \dot{b}_{\si_0(\dot{\bs})}$.
Thus, for all $w\in\widehat{S_n}$, we have ${}^\star w( \dot{b}_{\dot{\bs}} ) = \dot{b}_{w(\dot{\bs})}.$
Now, we have
\begin{align*}
b \text{ is an affine key} 
& \eq  b\in\cO_{\widehat{S_n}}(b_\bs) \text{ for some } \bs\in\sD(s)
\\
& \eq b^\star \in\cO_{\widehat{S_n}}(\dot{b}_{\dot{\bs}}) \text{ for some } \dot{\bs}\in\dot{\sD}(s)
\\
& \eq \text{ there exists } w\in\widehat{S_n} \text{ such that } b^\star={}^\star w (\dot{b}_{\dot{\bs}})
\\
& \eq \text{ there exists } w\in\widehat{S_n} \text{ such that } b^\star=\dot{b}_{w(\dot{\bs})}
\end{align*}
\end{proof}

Let $\bs\in\sD(s)$.
By analogy with \Cref{bikeys_biorbit}, we say that $b\in \widehat{F}(s)$ is an \textit{affine bikey}
of shape $\bs$ if  $b$ in is the orbit of $b_\bs$ under the action of $\widehat{S_n}\times \widehat{S_\ell}$.
We denote $\widehat{\cK}(\bs)$ the set of bikeys of shape $\bs$.
Similarly, given $\dot{\bs}\in\dot{\sD(s)}$,
let $\dot{\widehat{\cK}}(\dot{\bs})\subseteq \dot{\widehat{F}}(\dot{\bs})$ be the set of bikeys of shape $\dot{\bs}$.
By \Cref{duality_empty}, for all $\bs\in\sD(s)$, there exists $\dot{\bs}\in\dot{\sD(s)}$ such that $b_\bs^\star=\dot{b}_{\dot{\bs}}$.
Therefore, $b\in\widehat{\cK}(\bs)$ if and only if  $b^\star\in \widehat{\cK}(\dot{\bs})$,
which gives an analogue to \Cref{bikeys_biorbit_2}.

\medskip

Finally, we can describe affine bikeys directly, in a slightly less explicit way than in finite type.
First, as in \Cref{bikeys} the maps $R_j$ for $j=1,\ldots, \ell$ act on affine keys by permuting their columns.
Therefore, the elements of $\widehat{\cK}(\bs)$ are obtained from affine keys by combining permutations of their columns
and applications of $R_0$.

\begin{exa} Take $\ell=n=2$ and $\bs=(r_1,r_2)=(1,0)$ as in \Cref{exa_aff_key}.
The orbit of $b_\bs$ under the action of $\widehat{S_\ell}$ consists of the following elements
$$b_\bs=\scriptsize
\gyoung(0)
\otimes
\gyoung(0,1)
\quad,\quad
\gyoung(0,1)
\otimes
\gyoung(0)
\quad,\quad
\gyoung(<\overline{1}>)
\otimes
\gyoung(<\overline{1}>,0,1,2)
\quad,\quad
\gyoung(<\overline{1}>,0,1,2)
\otimes
\gyoung(<\overline{1}>)
\quad,\quad
\gyoung(<\overline{2}>)
\otimes
\gyoung(<\overline{2}>,<\overline{1}>,0,1,2,3)
\quad,\quad
\gyoung(<\overline{2}>,<\overline{1}>,0,1,2,3)
\otimes
\gyoung(<\overline{2}>)
\quad,\quad
\cdots
$$
The same procedure applied to each affine key in $\widehat{F}(\bs)$ yields $\widehat{\cK}(\bs)$.
\end{exa}

\section{Bicrystal structures involving infinite rank classical root systems}

In this section, we denote by $X_{\infty}$ any infinite Dynkin diagram of type
$A_{+\infty},B_{\infty},C_{\infty}$ or $D_{\infty}$. We refer to
\cite{Lec2009} and the references therein for the notations and definition used
in this section. In particular, the nodes of the Dynkin diagrams of types
$B_{\infty},C_{\infty}$ or $D_{\infty}$ are parametrised by the integers of
$\mathbb{Z}_{\geq0}$ so that one gets a Dynkin diagram of type $A_{+\infty}$
by removing the $0$-node. Our aim in this section is to explain how the
$A_{n-1}\times A_{\ell-1}$ bicrystal structure described in \Cref{bicrystal} can be generalised to obtain $X_{\infty}\times A_{\ell-1}$
bicrystal structures.
% Note that other extremal weight crystal

\subsection{Recollection on extremal crystals of type $X_{\infty}$}

\label{Subsec_TabXinfinity}By \cite{Lec2009}, one can associate to each
partition $\lambda$ an extremal weight crystal\footnote{The crystal $B_{\infty
	}(\lambda)$ is not a highest weight crystal in types $C_{\infty},B_{\infty
	},D_{\infty}.$} $B_{\infty}(\lambda)$ of type $X_{\infty}$ which can
be regarded as the direct limit of the finite type $X_{n}$-crystal
$B_{n}(\lambda)$ associated to the dominant weight $\lambda$ when $n$ tends to
infinity.\ To be more precise let introduce the infinite alphabets%
\[
\mathcal{A}_{X_{\infty}}=\left\{
\begin{array}
[c]{l}%
\{1<\cdots<n<\cdots\}\text{ for }X=A,\\
\{\cdots<\overline{n}<\cdots<\overline{1}<0<1<\cdots<n<\cdots\}\text{ for
}X=B,\\
\{\cdots<\overline{n}<\cdots<\overline{1}<1<\cdots<n<\cdots\}\text{ for
}X=C,\\
\{\cdots<\overline{n}<\cdots<\overline{2}<%
\overline{1},1
<2<\cdots<n<\cdots\}\text{ for }X=D.
\end{array}
\right.
\]
The crystal $B_{\infty}(\lambda)$ admits a convenient realisation in terms of
Kashiwara-Nakashima tableaux of type $X_{\infty}$ defined exactly as their
finite rank counterpart by relaxing the admissibility condition of the
columns. The general picture is identical to the finite type $A$. First, the
crystals $B_{\infty}(1)$ for each type $A_{+\infty},B_{\infty},C_{\infty}$ and
$D_{\infty}$ are respectively
\begin{center}
\begin{tikzpicture}
\node (a) at (0,0) {$\scriptsize\young(1)$};
\node (b) at (1.5,0) {$\scriptsize\young(2)$};
\node (c) at (3,0) {$\cdots$};
\node (d) at (4.5,0) {$\scriptsize\Yboxdimx{25pt}\young(<n-1>)$};
\node (e) at (6,0) {$\scriptsize\young(n)$};
\node (f) at (7.5,0) {$\cdots$};

\draw[->] (a) --  node[pos=0.5,above]{\tiny 1} (b);
\draw[->] (b) --  node[pos=0.5,above]{\tiny 2} (c);
\draw[->] (c) --  node[pos=0.5,above]{\tiny n-2} (d);
\draw[->] (d) --  node[pos=0.5,above]{\tiny n-1} (e);
\draw[->] (e) --  node[pos=0.5,above]{\tiny n} (f);
\end{tikzpicture}
\end{center}
\begin{center}
\begin{tikzpicture}
\node (a) at (0,0) {$\cdots$};
\node (b) at (1.5,0) {$\scriptsize\young(<\overline{n}>)$};
\node (c) at (3,0) {$\Yboxdimx{25pt}\scriptsize\young(<\overline{n-1}>)$};
\node (d) at (4.5,0) {$\cdots$};
\node (e) at (6,0) {$\scriptsize\young(<\overline{2}>)$};
\node (f) at (7.5,0) {$\scriptsize\young(<\overline{1}>)$};
\node (g) at (9,0) {$\scriptsize\young(0)$};
\node (h) at (10.5,0) {$\scriptsize\young(1)$};
\node (i) at (12,0) {$\scriptsize\young(2)$};
\node (j) at (13.5,0) {$\cdots$};
\node (k) at (15,0) {$\Yboxdimx{25pt}\scriptsize\young(<n-1>)$};
\node (l) at (16.5,0) {$\scriptsize\young(n)$};
\node (m) at (18,0) {$\cdots$};

\draw[->] (a) --  node[pos=0.5,above]{\tiny n} (b);
\draw[->] (b) --  node[pos=0.5,above]{\tiny n-1} (c);
\draw[->] (c) --  node[pos=0.5,above]{\tiny n-2} (d);
\draw[->] (d) --  node[pos=0.5,above]{\tiny 2} (e);
\draw[->] (e) --  node[pos=0.5,above]{\tiny 1} (f);
\draw[->] (f) --  node[pos=0.5,above]{\tiny 0} (g);
\draw[->] (g) --  node[pos=0.5,above]{\tiny 0} (h);
\draw[->] (h) --  node[pos=0.5,above]{\tiny 1} (i);
\draw[->] (i) --  node[pos=0.5,above]{\tiny 2} (j);
\draw[->] (j) --  node[pos=0.5,above]{\tiny n-2} (k);
\draw[->] (k) --  node[pos=0.5,above]{\tiny n-1} (l);
\draw[->] (l) --  node[pos=0.5,above]{\tiny n} (m);
\end{tikzpicture}
\end{center}
\begin{center}
\begin{tikzpicture}
\node (a) at (0,0) {$\cdots$};
\node (b) at (1.5,0) {$\scriptsize\young(<\overline{n}>)$};
\node (c) at (3,0) {$\Yboxdimx{25pt}\scriptsize\young(<\overline{n-1}>)$};
\node (d) at (4.5,0) {$\cdots$};
\node (e) at (6,0) {$\scriptsize\young(<\overline{2}>)$};
\node (f) at (7.5,0) {$\scriptsize\young(<\overline{1}>)$};
\node (h) at (9,0) {$\scriptsize\young(<1>)$};
\node (i) at (10.5,0) {$\scriptsize\young(<2>)$};
\node (j) at (12,0) {$\cdots$};
\node (k) at (13.5,0) {$\Yboxdimx{25pt}\scriptsize\young(< n-1>)$};
\node (l) at (15,0) {$\scriptsize\young(<n>)$};
\node (m) at (16.5,0) {$\cdots$};

\draw[->] (a) --  node[pos=0.5,above]{\tiny n} (b);
\draw[->] (b) --  node[pos=0.5,above]{\tiny n-1} (c);
\draw[->] (c) --  node[pos=0.5,above]{\tiny n-2} (d);
\draw[->] (d) --  node[pos=0.5,above]{\tiny 2} (e);
\draw[->] (e) --  node[pos=0.5,above]{\tiny 1} (f);
\draw[->] (f) --  node[pos=0.5,above]{\tiny 0} (h);
\draw[->] (h) --  node[pos=0.5,above]{\tiny 1} (i);
\draw[->] (i) --  node[pos=0.5,above]{\tiny 2} (j);
\draw[->] (j) --  node[pos=0.5,above]{\tiny n-2} (k);
\draw[->] (k) --  node[pos=0.5,above]{\tiny n-1} (l);
\draw[->] (l) --  node[pos=0.5,above]{\tiny n} (m);
\end{tikzpicture}
\end{center}
\begin{center}
\begin{tikzpicture}
\node (a) at (0,0) {$\cdots$};
\node (b) at (1.5,0) {$\scriptsize\young(<\overline{n}>)$};
\node (c) at (3,0) {$\Yboxdimx{25pt}\scriptsize\young(<\overline{n-1}>)$};
\node (d) at (4.5,0) {$\cdots$};
\node (e) at (6,0) {$\scriptsize\young(<\overline{2}>)$};
\node (f) at (7.5,-1) {$\scriptsize\young(<\overline{1}>)$};
\node (h) at (7.5,1) {$\scriptsize\young(<1>)$};
\node (i) at (9,0) {$\scriptsize\young(<2>)$};
\node (j) at (10.5,0) {$\cdots$};
\node (k) at (12,0) {$\Yboxdimx{25pt}\scriptsize\young(<n-1>)$};
\node (l) at (13.5,0) {$\scriptsize\young(<n>)$};
\node (m) at (15,0) {$\cdots$};

\draw[->] (a) --  node[pos=0.5,above]{\tiny n} (b);
\draw[->] (b) --  node[pos=0.5,above]{\tiny n-1} (c);
\draw[->] (c) --  node[pos=0.5,above]{\tiny n-2} (d);
\draw[->] (d) --  node[pos=0.5,above]{\tiny 2} (e);
\draw[->] (e) --  node[pos=0.5,above]{\tiny 0} (f);
\draw[->] (e) --  node[pos=0.5,above]{\tiny 1} (h);
\draw[->] (f) --  node[pos=0.5,above]{\tiny 1} (i);
\draw[->] (h) --  node[pos=0.5,above]{\tiny 0} (i);
\draw[->] (i) --  node[pos=0.5,above]{\tiny 2} (j);
\draw[->] (j) --  node[pos=0.5,above]{\tiny n-2} (k);
\draw[->] (k) --  node[pos=0.5,above]{\tiny n-1} (l);
\draw[->] (l) --  node[pos=0.5,above]{\tiny n} (m);
\end{tikzpicture}
\end{center}

Next, one introduces for any $k\geq1$, column tableaux $c$ of height $k$
as the vertices of the crystal $B_{\infty}(1)^{\otimes k}$ with extremal vertex the
column $c(k)=1\otimes\cdots\otimes k$. Finally, for any partition $\lambda$
with $\ell$-columns of heights $k_{1} \geq \cdots \geq k_{\ell}$ the tableaux of shape
$\lambda$ are identified with the vertices $b=c_{\ell}\otimes\cdots\otimes
c_{1}$ in the connected component of the crystal $B_{\infty}(1^{k_{\ell}}%
)\otimes\cdots\otimes B_{\infty}(1^{k_{1}})$ with extremal vertex $c(k_{\ell}%
)\otimes\cdots\otimes c(k_{1})$.

In type $C_{\infty}$, one can show that a tableau $T=c_{1}\cdots c_{l}$ of shape $\lambda$ is a filling of the Young diagram $\lambda$ by letters of
$\mathcal{A}_{C_{\infty}}$ so that the filling of each column $c_{i}$ of
$\lambda$ is strictly increasing from top to bottom and the split form of $T$
is semistandard. The split form of $T$ is obtained by splitting each column
$c_{i}$ according to the following procedure. Given a column $c$, its split
form is the pair $(\mathrm{l}c,\mathrm{r}c)$ of columns containing no pair of
letters $(z,\overline{z})$ with $z$ unbarred defined as follows. Let $I=\{z_{1}<\cdot\cdot\cdot<z_{r}\}$ the set of unbarred letters
$z$ such that the pair $(z,\overline{z})$ occurs in $c$. Define the set
$J=\{t_{1}<\cdot\cdot\cdot<t_{r}\}$ of unbarred letters such that:

\begin{itemize}
	\item $t_{1}$ is the lowest unbarred letter satisfying: $t_{1}>z_{1}%
	,t_{1}\notin c$ and $\overline{t_{1}}\notin c,$
	
	\item for $i=2,...,r$, $t_{i}$ is the lowest unbarred letter satisfying:
	$t_{i}>\min(t_{i-1,}z_{i}),$ $t_{i}\notin c$ and $\overline{t_{i}}\notin c.$
\end{itemize}

\noindent Then write:

\noindent$\mathrm{r}c$ for the column obtained by changing in $c,$
$\overline{z}_{i}$ into $\overline{t}_{i}$ for each letter $z_{i}\in I$ and by
reordering if necessary,

\noindent$\mathrm{l}c$ for the column obtained by changing in $c,$ $z_{i}$
into $t_{i}$ for each letter $z_{i}\in I$ and by reordering if necessary.

\noindent Now $T$ is a tableau of type $C_{\infty}$ when its split form
$\mathrm{spl}(T)=\mathrm{l}c_{1}\mathrm{r}c_{1}\cdots\mathrm{l}c_{l}%
\mathrm{r}c_{l}$ is semistandard. Observe this implies that $T$ itself is
semistandard but a semistandard tableau on $\mathcal{A}_{C_{\infty}}$ is not
always of type $C_{\infty}$.\ Also, the tableaux of type $A_{+\infty}$ which
are the semistandard tableaux on the alphabet $\mathcal{A}_{A_{+\infty}}$
coincide with the tableaux of type $C_{\infty}$ with no barred letter.

\Yvcentermath1

\begin{exa}
For 
$$
T={\scriptsize
\young(<\overline{2}><\overline{1}>2,12,23)}
\text{
\quad one gets \quad
}
\mathrm{spl}(T)=
{\scriptsize
\young(<\overline{3}><\overline{2}><\overline{1}><\overline{1}>22,1122,2333)}
$$
therefore $T$ is a tableau of type $C_{\infty}$, but
$$
T'={\scriptsize
\young(<\overline{2}><\overline{2}>2,12,23)}
\text{
\quad with \quad
}
\mathrm{spl}(T')=
{\scriptsize
\young(<\overline{3}><\overline{2}><\overline{4}><\overline{2}>22,1122,2334)}
$$
is not, although it is semistandard on $\mathcal{A}_{C_{\infty}}$.
\end{exa}

The tableaux of types $B_{\infty}$ and $D_{\infty}$ can be described in a
similar way, through a splitting operation on their columns. Nevertheless, in
type $B_{\infty}$ (respectively $D_{\infty}$), blocks of the form $\scriptsize\young(0)$ 
(respectively $\scriptsize\young(<\overline{1}>,1)$) can appear in the same column. 
This slightly modifies the procedure for computing the splitting form of the tableaux. A juxtaposition $T=c_{1}\cdots
c_{l}$ of columns with decreasing heights will be a tableau of type
$B_{\infty}$ (respectively $D_{\infty}$) if its split form $\mathrm{spl}%
(T)=\mathrm{l}c_{1}\mathrm{r}c_{1}\cdots\mathrm{l}c_{l}\mathrm{r}c_{l}$ is
semistandard (respectively is semistandard and each two columns tableau $\mathrm{r}c_{i}\mathrm{l}c_{i+1}$ avoids a particular pattern $\pi_{D} $). We refer the reader to
\cite{Lec2007} for a complete description.

There exists a convenient notion of weight on the crystals
$B_{\infty}(1)^{\otimes m},m\geq0$ (and thus also on any tensor product
$B_{\infty}(1^{k_{\ell}})\otimes\cdots\otimes B_{\infty}(1^{k_{1}})$). For any $b=x_{1}%
\otimes\cdots\otimes x_{m}$ in $B(1)^{\otimes m}$, the weight $\mathrm{wt}(b)$ is
the sequence $(\gamma_{i})_{0\leq}$ where for any $0\leq i$ the
integer $\gamma_{i}$ is equal to the number of letters $\overline{i}$ in $b$
minus its number of letters $i$.

\medskip

Given $\lambda$ and $\mu$ two partitions, it was proved in \cite{Lec2009} that
the crystal $B_{\infty}(\lambda)\otimes B_{\infty}(\mu)$ decomposes as the
disjoint union of extremal weight crystals and this decomposition is
independent of the type considered.

\begin{thm}
	\label{Th_Lec_crystal}For the type $X_{\infty}$ extremal crystals $B_{\infty
	}(\lambda)$ and $B_{\infty}(\mu)$ we have
	\[
	B_{\infty}(\lambda)\otimes B_{\infty}(\mu)\simeq
	{\textstyle\bigoplus\limits_{\nu}}
	B_{\infty}(\nu)^{c_{\lambda,\mu}^{\nu}}
	\]
	where $c_{\lambda,\mu}^{\nu}$ is the usual Littlewood-Richardson coefficients
	associated to $\lambda,\mu$ and $\nu$.
\end{thm}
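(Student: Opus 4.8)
The plan is to reduce the statement, for each of the classical series, to the combinatorics of $X_{\infty}$-Kashiwara--Nakashima tableaux together with a Schensted-type insertion, and then to read off that the multiplicities are in every case the ordinary Littlewood--Richardson coefficients, which in particular yields the asserted type independence. For $X=A_{+\infty}$ this is essentially classical: $B_{\infty}(\lambda)$ is then a genuine $\mathfrak{gl}_{\infty}$ highest weight crystal whose character is a Schur function in infinitely many variables, and $B_{\infty}(\lambda)\otimes B_{\infty}(\mu)\simeq\bigoplus_{\nu}B_{\infty}(\nu)^{c_{\lambda,\mu}^{\nu}}$ is the crystal incarnation of $s_{\lambda}s_{\mu}=\sum_{\nu}c_{\lambda,\mu}^{\nu}s_{\nu}$. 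The real content is in types $B_{\infty},C_{\infty},D_{\infty}$.

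A naive approach there --- take the limit over $n$ of the finite-rank decompositions of $B_{n}(\lambda)\otimes B_{n}(\mu)$, using $B_{\infty}(\nu)=\varinjlim_{n}B_{n}(\nu)$ --- fails, because in finite rank there are ``contraction'' components (for instance the trivial component in $B_{n}(1)^{\otimes 2}$, responsible for the discrepancy between $\chi^{X_{n}}_{\lambda}\chi^{X_{n}}_{\mu}$ and $s_{\lambda}s_{\mu}$) which merge into larger components as the rank grows; the connected-component structure is thus not stable along the embeddings $B_{n}\hookrightarrow B_{n+1}$. Showing that all such components have been absorbed in the limit is the main obstacle, and is what forces a combinatorial argument in place of a naive limit.

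I would therefore argue as follows. Realise $B_{\infty}(\mu)$ by $X_{\infty}$-tableaux of shape $\mu$ and write each as the product $c_{\ell}\otimes\cdots\otimes c_{1}$ of its columns, so that a vertex of $B_{\infty}(\lambda)\otimes B_{\infty}(\mu)$ is a product $T\otimes c_{\ell}\otimes\cdots\otimes c_{1}$ with $T$ of shape $\lambda$. The core step is to set up an insertion scheme for $X_{\infty}$-tableaux --- generalising the column insertion and column splitting of the finite-rank types in \cite{Lec2007} --- and to prove that inserting $c_{1},\dots,c_{\ell}$ successively into $T$ produces a tableau $P$ of some shape $\nu\supseteq\lambda$ together with a skew recording tableau of shape $\nu/\lambda$ and content $\mu$, in a way that commutes with all $\tilde e_{i},\tilde f_{i}$. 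One argues at the level of the plactic monoid of type $X_{\infty}$: every bumping step, every splitting operation, and every $\tilde e_{i},\tilde f_{i}$ with $i$ bounded involves only finitely many letters of $\mathcal{A}_{X_{\infty}}$, so each verification localises to the finite rank $X_{n}$-setting with $n\gg 0$, where the analogous facts are available. Since the insertion is then a morphism of $X_{\infty}$-crystals, its fibres --- the recording tableaux --- index the connected components, giving $B_{\infty}(\lambda)\otimes B_{\infty}(\mu)\simeq\bigoplus_{\nu}B_{\infty}(\nu)^{\oplus m^{\nu}}$ with $m^{\nu}$ the number of recording tableaux of shape $\nu/\lambda$ and content $\mu$ that arise.

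Finally I would identify $m^{\nu}$ with $c_{\lambda,\mu}^{\nu}$: up to the crystal action only the jeu de taquin rectification class of the recording tableau matters, and the admissible recording tableaux turn out to be exactly the Littlewood--Richardson tableaux of shape $\nu/\lambda$ and content $\mu$. Here the type only constrains the \emph{entries} of the tableaux, through the splitting conditions, and not the shape-and-multiplicity bookkeeping, which is governed by the lattice (Yamanouchi) condition alone; and since $c_{\lambda,\mu}^{\nu}$ is a purely combinatorial quantity, the decomposition obtained is literally the same for $A_{+\infty},B_{\infty},C_{\infty},D_{\infty}$, which is the type independence in the statement.
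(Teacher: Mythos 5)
First, note that the paper does not prove this statement: it is recalled verbatim from \cite{Lec2009}, so there is no internal proof to compare yours against. Judged on its own terms, your proposal correctly identifies the main obstruction (the instability of the connected-component structure along the embeddings $B_{n}\hookrightarrow B_{n+1}$ caused by contraction components), but the step you use to overcome it is exactly where the argument breaks. You justify the key property of your insertion scheme --- that inserting the columns of a shape-$\mu$ tableau into $T$ of shape $\lambda$ always yields a tableau of shape $\nu$ with $|\nu|=|\lambda|+|\mu|$ and a recording tableau of content $\mu$, compatibly with all $\tilde e_{i},\tilde f_{i}$ --- by localising each bumping and splitting step to finite rank $X_{n}$ with $n\gg0$, ``where the analogous facts are available.'' They are not: in finite rank the type $B_{n},C_{n},D_{n}$ insertion of \cite{Lec2007} genuinely contracts (pairs $(z,\overline{z})$ can render a column non-admissible, after which boxes are deleted), and this contraction is precisely what produces the components with $|\nu|<|\lambda|+|\mu|$ that you need to rule out. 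So the finite-rank statement you want to import is false as stated, while the infinite-rank statement (no contraction ever occurs because column admissibility is vacuous over an unbounded alphabet) is exactly the point that does not reduce to finite rank and must be argued directly. The final identification of the multiplicities with $c_{\lambda,\mu}^{\nu}$ is likewise asserted (``turn out to be exactly the Littlewood--Richardson tableaux'') rather than proved.

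For comparison, the argument of \cite{Lec2009} avoids insertion at this stage: every connected component of $B_{\infty}(1)^{\otimes m}$ contains a unique distinguished extremal vertex consisting of unbarred letters whose word is Yamanouchi (this is \Cref{RemarkExt}(2) in the paper), and on unbarred letters all four types restrict to the same $A_{+\infty}$ combinatorics. Counting these distinguished vertices inside $B_{\infty}(\lambda)\otimes B_{\infty}(\mu)$ is then the classical type $A$ Littlewood--Richardson computation, which yields both the multiplicities $c_{\lambda,\mu}^{\nu}$ and the type-independence in one stroke. If you want to salvage your route, you would need to prove directly, not by reduction to finite rank, that the infinite-rank insertion never contracts and commutes with the crystal operators.
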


\begin{rem}
	\ \label{RemarkExt}
	
	\begin{enumerate}
		\item The theorem extends to the tensor products $B_{\infty}(\lambda
		^{1})\otimes B_{\infty}(\lambda^{2})\otimes\cdots\otimes B_{\infty}%
		(\lambda^{\ell})$ associated to any sequence $(\lambda^{1},\ldots,\lambda
		^{\ell})$ of partitions.\ This tensor product contains a unique component
		isomorphic to $B_{\infty}(\lambda^{1}+\cdots+\lambda^{\ell})$ that we will refer
		as the principal component (here $\lambda^{1}+\cdots+\lambda^{\ell}$ is the
		partition whose $k$-th part is the sum of the $k$-th part of each $\lambda
		^{a},a=1,\ldots,\ell$).
		
		\item For any $m\geq0$, any connected component $B$ of $B(1)^{\otimes m}$
		contains a unique \emph{distinguished extremal vertex}, namely a vertex $b_{\mathrm{ext}}%
		=x_{1}\otimes\cdots\otimes x_{m}$ such that $x_{1},\ldots,x_{m}$ are unbarred
		letters and $x_{1}\cdots x_{m}$ is a Yamanouchi word. When $b_{\mathrm{ext}}=b_\la$
		is the tableau of shape $\lambda$ whose $i$-th row contains only letters $i$,
		$B(b_\la)$ coincides with the set of tableaux of shape $\lambda$. In the
		general case, we get that $\mathrm{wt}(b)=\lambda$ is a partition and there is
		a unique crystal isomorphism from $B(b_{\mathrm{ext}})=B$ to $B(b_\la)$ sending
		$b_{\mathrm{ext}}$ on $b_\la$.
	\end{enumerate}
\end{rem}

\subsection{Jeu de Taquin on two columns and $A_{1}$-crystal structure}

For any integer $u\geq0$, write $B_{\infty}(\omega_{u})$ for the type
$X_{\infty}$-crystal extremal associated to the partition $1^{u}$. It contains
exactly the column tableaux of type $X_{\infty}$ and shape $1^{u}$.
More generally, we
identify the partition $\lambda$ with $a_{u}$ columns of height $u$ with the
formal weight $\lambda=\sum_{u}a_{u}\omega_{u}$.\ By \Cref{Th_Lec_crystal}, we get for any integers $u,v\geq1$
\begin{multline*}
B_{\infty}(\omega_{u})\otimes B_{\infty}(\omega_{v})\simeq%
{\textstyle\bigoplus\limits_{t=0}^{\min(u,v)}}
B_{\infty}(\omega_{t}+\omega_{u+v-t})=
B_{\infty}(\omega_{u+v})\oplus B_{\infty}(\omega_{1}+\omega_{u+v-1})\oplus\cdots\oplus
B_{\infty}(\omega_{\min(u,v)}+\omega_{\max(u,v)}).
\end{multline*}
When $u\leq v$, the principal component $B_{\infty}(\omega_{u}+\omega_{v})$ of
$B_{\infty}(\omega_{u})\otimes B_{\infty}(\omega_{v})$ contains exactly the
tableaux of type $X_{\infty}$ and shape $\omega_{u}+\omega_{v}$. When $u>v$,
the vertices of the principal component $B_{\infty}(\omega_{v}+\omega_{u})$ of
$B_{\infty}(\omega_{u})\otimes B_{\infty}(\omega_{v})$ are called antitableaux
of type $X_{\infty}.\ $As in \Cref{bicrystal}, they can be easily
described by using the splitting operation given in
\Cref{Subsec_TabXinfinity}.

Consider a vertex $c_{2}\otimes c_{1}$ in $B_{\infty}(\omega_{u})\otimes
B_{\infty}(\omega_{v})$ which is not a tableau. This means we have either
$u>v$, or $u\leq v$ and the connected component $B_{\infty}(c_{2}\otimes
c_{1})$ containing $c_{2}\otimes c_{1}$ is isomorphic to a crystal
$B_{\infty}(\omega_{t}+\omega_{u+v-t})$ with $t\leq u-1$.\ In both cases the
above crystal decomposition implies that the crystal $B_{\infty}(\omega
_{u-1})\otimes B_{\infty}(\omega_{v+1})$ contains a unique component
isomorphic to $B_{\infty}(c_{2}\otimes c_{1})$. Write $\dot{e}(c_{2}\otimes
c_{1})$ for the vertex in $B_{\infty}(\omega_{u-1})\otimes B_{\infty}%
(\omega_{v+1})$ matched with $c_{2}\otimes c_{1}$ by this isomorphism. When
$c_{2}\otimes c_{1}$ is a tableau, we set $\dot{e}(c_{2}\otimes c_{1})=0$.

\noindent Similarly, for any vertex $c_{2}\otimes c_{1}$ in $B_{\infty}%
(\omega_{u})\otimes B_{\infty}(\omega_{v})$ which is not an antitableau, there
exists a unique vertex $\dot{f}(c_{2}\otimes c_{1})$ in $B_{\infty}%
(\omega_{u+1})\otimes B_{\infty}(\omega_{v-1})$ such that the components
$B_{\infty}(c_{2}\otimes c_{1})$ and $B_{\infty}(\dot{f}(c_{2}\otimes c_{1}))$
are isomorphic and $\dot{f}(c_{2}\otimes c_{1})$ is matched with $c_{2}\otimes
c_{1}$ by this isomorphism. When $c_{2}\otimes c_{1}$ is an antitableau, we
set $\dot{f}(c_{2}\otimes c_{1})=0$.

For any $c_{2}\otimes c_{1}$ in $B_{\infty}(\omega_{u})\otimes B_{\infty
}(\omega_{v})$, write $\dot{B}_{\infty}(c_{2}\otimes c_{1})$ for the set obtained
by applying operators $\dot{e}$ and $\dot{f}$ to $c_{2}\otimes c_{1} $.\ We
get immediately the following proposition.

\begin{prop}
	The set $\dot{B}_{\infty}(c_{2}\otimes c_{1})$ has the structure of a
	highest weight $A_{1}$-crystal .
\end{prop}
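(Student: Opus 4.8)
The plan is to show directly that $\dot{B}_{\infty}(c_{2}\otimes c_{1})$ is a single finite string, which is precisely the shape of the $A_{1}$-crystal $B(m\omega_{1})$ of highest weight $m\omega_{1}$, and hence is a highest weight $A_{1}$-crystal. First I would apply \Cref{Th_Lec_crystal}, more precisely the explicit two-column decomposition recalled just above the proposition: the connected component $B_{\infty}(c_{2}\otimes c_{1})$ sitting inside $B_{\infty}(\omega_{u})\otimes B_{\infty}(\omega_{v})$ is isomorphic to $B_{\infty}(\omega_{a}+\omega_{b})$ for a unique pair $0\leq a\leq b$ with $a+b=u+v$ and $a\leq\min(u,v)$, and moreover $B_{\infty}(\omega_{a}+\omega_{b})$ occurs, always with multiplicity at most one, in $B_{\infty}(\omega_{u'})\otimes B_{\infty}(\omega_{v'})$ exactly when $u'+v'=u+v$ and $\min(u',v')\geq a$. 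In such a tensor product, the vertex of that component matched with $c_{2}\otimes c_{1}$ lies in the principal component precisely when $\min(u',v')=a$, hence it is a tableau if and only if $u'=a$, and dually it is an antitableau if and only if $u'=b$; for $a<u'<b$ it is neither.

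Next I would write $x_{u'}$ for the vertex of $B_{\infty}(\omega_{u'})\otimes B_{\infty}(\omega_{u+v-u'})$ matched with $c_{2}\otimes c_{1}$ through the isomorphism of their respective components, for $a\leq u'\leq b$. By the definitions of $\dot{e}$ and $\dot{f}$ and the occurrence range above, this gives $\dot{e}(x_{u'})=x_{u'-1}$ for $a<u'$ and $\dot{e}(x_{a})=0$, while $\dot{f}(x_{u'})=x_{u'+1}$ for $u'<b$ and $\dot{f}(x_{b})=0$. Hence $\dot{B}_{\infty}(c_{2}\otimes c_{1})=\{x_{a},x_{a+1},\ldots,x_{b}\}$ is a finite chain of length $b-a+1$, with source the unique tableau $x_{a}$ and sink the unique antitableau $x_{b}$. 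Finally I would equip this chain with $\varepsilon(x_{u'})=u'-a$, $\varphi(x_{u'})=b-u'$ and $\mathrm{wt}(x_{u'})=(a+b-2u')\omega_{1}$, and check that these data satisfy the $A_{1}$-crystal axioms and realise $\dot{B}_{\infty}(c_{2}\otimes c_{1})$ as $B((b-a)\omega_{1})$, with highest weight vertex $x_{a}$; in particular it is a highest weight $A_{1}$-crystal.

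The one point requiring care is the coherence of the component identifications along the chain, namely that $\dot{f}\circ\dot{e}$ and $\dot{e}\circ\dot{f}$ are the identity wherever defined, so that the vertices $x_{u'}$ are consistently obtained by iterating $\dot{e}$ and $\dot{f}$ starting from $c_{2}\otimes c_{1}$. This reduces to the uniqueness of a crystal isomorphism between two connected crystals once their distinguished extremal vertices are matched, which is exactly the content of \Cref{RemarkExt}; given this, everything else is bookkeeping with the decomposition of \Cref{Th_Lec_crystal}. As an alternative to this last step, one may realise $\dot{e}$ and $\dot{f}$ concretely as the elementary horizontal Jeu de Taquin slides between the two split columns, from which the string structure and the coherence of the identifications become transparent; this is also the point of view that makes the result \emph{immediate}, as asserted.
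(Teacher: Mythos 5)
Your proof is correct and follows exactly the route the paper intends: the paper offers no written proof (it asserts the proposition follows ``immediately'' from the two-column case of \Cref{Th_Lec_crystal} and the multiplicity-one occurrence of $B_{\infty}(\omega_a+\omega_b)$ in each $B_{\infty}(\omega_{u'})\otimes B_{\infty}(\omega_{v'})$ with $u'+v'=a+b$ and $\min(u',v')\geq a$), and your argument is simply a careful write-up of that chain structure, including the coherence of the identifications via the uniqueness of isomorphisms matching distinguished extremal vertices from \Cref{RemarkExt}. Nothing is missing.
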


The previous proposition can be regarded as an analogue of the Jeu de Taquin
procedure on skew tableaux of type $A$ with two columns. In fact, such a
notion of skew tableaux also exists in type $B_{\infty},C_{\infty}$ and
$D_{\infty}$. A skew tableau of type $X_{\infty}$ is defined as the filling of a skew Young diagram by letters
of $\mathcal{A}_{X_{\infty}}$ whose duplicated form is semistandard (and avoid
the pattern $\pi_{D}$ in type $D_{\infty}$).\ To any skew tableau
$c_{1}\cdots c_{\ell}$, one associates the tensor product of columns $c_{\ell
}\otimes\cdots\otimes c_{1}$.\ 
Conversely, any tensor product $c_{2}\otimes c_{1}$ of two columns can be
regarded as a minimal skew tableau as in \Cref{exa_jdt}.
The operators $\dot{e}$ and $\dot{f}$ can
then be interpreted as horizontal sliding operations on this skew tableau.
In type $A_{+\infty},$ one so recovers the usual Jeu de Taquin procedure, see \Cref{exa_jdt},
in types $B_{\infty}$ and $C_{\infty}$ this corresponds to the sliding
operations described in \cite{Lec2007}.

\Yvcentermath1

\begin{exa}
In type $C_{\infty}$, for
$$
c_1\otimes c_2=
{\scriptsize
\young(<\overline{2}>,<\overline{1}>,2,5)\otimes 
\young(<\overline{4}>,<\overline{2}>,3,5)
}
,
\text{\quad the corresponding minimal skew tableau is \quad }
{
\scriptsize\gyoung(:~<\overline{2}>,<\overline{4}><\overline{1}>,<\overline{2}>2,35,5)
}.
$$
By using the sliding procedure in type $C_{\infty}$ or the previous crystal
isomorphisms, one gets
$$
\de\left(
{
\scriptsize\gyoung(:~<\overline{2}>,<\overline{4}><\overline{1}>,<\overline{2}>2,35,5)
}
\right)
=
{
\scriptsize\gyoung(<\overline{4}><\overline{2}>,<\overline{1}><\overline{1}>,15,3,5)
}
\mand
\df\left(
{
\scriptsize\gyoung(:~<\overline{2}>,<\overline{4}><\overline{1}>,<\overline{2}>2,35,5)
}
\right)
=
{
\scriptsize\gyoung(:~<\overline{2}>,:~<\overline{1}>,<\overline{4}>2,<\overline{2}>3,55)
}.
$$
\end{exa}

\subsection{Bicrystal structure on product of columns of type $X_{\infty}$}

Consider $n,\ell\in\mathbb{Z}_{\geq2},\boldsymbol{s}=(s_{1},\ldots,s_{\ell
})\in\mathbb{Z}_{\geq0}^{\ell}$ and $s\in\mathbb{Z}_{\geq0}$. With the
notation of the previous paragraph, write $B_{\infty}(\boldsymbol{s}%
)=B_{\infty}(\omega_{s_{1}})\otimes\cdots\otimes B_{\infty}(\omega_{s_{l}})$
and set $B_{\infty}(s)=\oplus_{\left\vert \boldsymbol{s}\right\vert
	=s}B_{\infty}(\boldsymbol{s})$. By definition $B_{\infty}(s)$ is a type
$X_{\infty}$-crystal. We are going to show that it admits in fact the structure of
a type $X_{\infty}\times A_{\ell-1}$-crystal. First, for any $j=1,\ldots
,\ell-1$ define the operators $\dot{e}_{j}$ and $\dot{f}_{j} $ on each
$B_{\infty}(\boldsymbol{s})$ as the action of $\dot{e}$ and $\dot{f}$ on its
$j$ and $j+1$-th components, that is for any $c_{1}\otimes\cdots\otimes
c_{\ell}\in B_{\infty}(\boldsymbol{s})$%
\[
\dot{e}_{j}=c_{1}\otimes\cdots\otimes\dot{e}(c_{j}\otimes c_{j+1}%
)\otimes\cdots c_{\ell}\text{ and }\dot{f}_{j}=c_{1}\otimes\cdots\otimes
\dot{f}(c_{j}\otimes c_{j+1})\otimes\cdots c_{\ell}.
\]
By definition, the operators $\dot{e}$ and $\dot{f}$ commute with the action
of the $X_{\infty}$-crystal operators, therefore $\dot{e}_{j}$ and $\dot
{f}_{j}$ also commute with the operators $ e_{i}$ and $ f_{i}$
with $i\in X_{\infty}$.

\begin{prop}
	For any $b=c_{1}\otimes\cdots\otimes c_{\ell}\in B_{\infty}(\boldsymbol{s})$, the colored and oriented graph $\dot{B}(b)$ defined by applying the operators
	$\dot{e}_{j}$ and $\dot{f}_{j}$ to $b$ has the structure of a type $A_{\ell
		-1}$ crystal.
\end{prop}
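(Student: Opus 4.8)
The plan is to reduce the statement to the finite type $A$ bicrystal of \Cref{bicrystal}, exploiting that $\dot{e}_{j},\dot{f}_{j}$ commute with the $X_{\infty}$-crystal operators together with the distinguished extremal vertices of \Cref{RemarkExt}.

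First I would record that, by their definition through the decomposition of \Cref{Th_Lec_crystal}, the operators $\dot{e}_{j}$ and $\dot{f}_{j}$ commute with the operators $e_{i},f_{i}$ ($i\in X_{\infty}$) and, when nonzero, induce crystal isomorphisms between $X_{\infty}$-connected components of $B_{\infty}(s)$. Hence they permute the set $E$ of distinguished extremal vertices, and the map $\pi$ sending a vertex to the distinguished extremal vertex of its $X_{\infty}$-component commutes with $\dot{e}_{j}$ and $\dot{f}_{j}$. A transport-of-structure argument would then show that, for any $b$, the restriction of $\pi$ to $\dot{B}(b)$ is an isomorphism of colored oriented graphs onto $\dot{B}(\pi(b))$: writing $b=g(\pi(b))$ with $g$ a word in the $X_{\infty}$-operators, one uses that $g$ commutes with all the $\dot{e}_{j},\dot{f}_{j}$ and carries $\dot{B}(\pi(b))$ bijectively onto $\dot{B}(b)$ (it does so on the two distinguished extremal vertices and commutes with every operator in sight). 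Thus it would suffice to prove the proposition when $b=\pi(b)\in E$.

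Now $E$ is exactly the set of products $c_{1}\otimes\cdots\otimes c_{\ell}$ of \emph{unbarred} columns whose word is Yamanouchi, that is, the set of Yamanouchi vertices of the type $A$ combinatorial Fock space $F(s)$ of \Cref{crystal_finite}, where one may fix $n$ large enough to contain all the letters occurring in $\pi(b)$, harmlessly, since Jeu de Taquin slides introduce no new letter, so $\dot{B}(\pi(b))\subseteq E$ involves only finitely many of them. On such vertices, by the description of $\dot{e},\dot{f}$ as horizontal sliding operations and the fact that in type $A_{+\infty}$ these recover the usual Jeu de Taquin (see \Cref{exa_jdt}), the operators $\dot{e}_{j},\dot{f}_{j}$ are the elementary Jeu de Taquin slides $J_{j}^{\pm1}$ of \Cref{duality_def}. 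By \Cref{kas-jdt_weyl-Rmat}(1) one has $J_{j}=\df_{j}^{*}$, and by \Cref{crystals_commute} the maps $\df_{j}^{*},\de_{j}^{*}$ send the Yamanouchi locus of $F(s)$ to itself (a crystal isomorphism maps highest weight vertices to highest weight vertices), and this locus is a union of $A_{\ell-1}$-connected components for the $(A_{n-1}\times A_{\ell-1})$-structure of \Cref{bicrystal} precisely because the two structures commute. Therefore $\dot{B}(b)=\dot{B}(\pi(b))$ is a connected component of this $A_{\ell-1}$-crystal, hence is itself an $A_{\ell-1}$-crystal, and transporting the crystal data back along $\pi$ yields the claim for arbitrary $b$.

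I expect the main obstacle to be this reduction/transport step rather than anything on the type $A$ side: one must check that $\dot{e}_{j},\dot{f}_{j}$ are genuinely compatible with the identification of $X_{\infty}$-components with their distinguished extremal vertices (in particular that they stabilise $E$ and that $\dot{B}(b)$ is a faithful copy of $\dot{B}(\pi(b))$), and that the type $A$ operators really restrict to a subcrystal on the Yamanouchi locus of $F(s)$. Both are routine given \Cref{Th_Lec_crystal}, \Cref{RemarkExt} and the results of \Cref{duality}, but they demand care with conventions — the column ordering, and whether $\dot{e}_{j}$ corresponds to $\df_{j}^{*}$ or $\de_{j}^{*}$ under the identification with Jeu de Taquin — which is the only genuinely delicate aspect.
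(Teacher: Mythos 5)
Your proof is correct and follows essentially the same route as the paper: reduce to the distinguished extremal (unbarred, Yamanouchi) vertex using that $\dot{e}_{j},\dot{f}_{j}$ commute with the $X_{\infty}$-crystal operators, then invoke the finite type $A$ bicrystal results around \Cref{crystals_commute}. The only quibble is the slip $J_{j}=\df_{j}^{*}$ where \Cref{kas-jdt_weyl-Rmat}(1) gives $J_{j}=\de_{j}^{*}$ — a convention issue you yourself flag and which does not affect the argument.
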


\begin{proof}
	Assume first that $b$ only contains unbarred letters. Then the result follows
	from \Cref{crystals_commute}.\ In the general case, there exists a path
	in $B_{\infty}(\boldsymbol{s})$ from $b$ to the distinguished extremal
	vertex $b_{\mathrm{ext}}$ defined in \Cref{RemarkExt}. This path corresponds to a
	composition of operators $ e_{i}$ and $ f_{i},i\in X_{\infty}$
	which commute with the operators $\dot{e}_{j}$ and $\dot{f}_{j}%
	,j\in\{1,\ldots,\ell\}$.\ Therefore the graph $\dot{B}(b)$ and $\dot{B}%
	(b_{\mathrm{ext}})$ are isomorphic and we are done because $b_{\mathrm{ext}}$ only contains unbarred letters.
\end{proof}

\medskip

Denote by $\dot{f}_{\underline{j}}$ any finite composition of operators
$\dot{f}_{j}$ with $j\in\{1,\ldots,\ell\}$ and by $ k_{\underline{i}}$ any finite composition of operators $ e_{i}, f_{i}$ with
$i\geq0$.

\begin{thm}
	\ \label{Th_BiAXinfinity}
	
	\begin{enumerate}
		\item The crystal $B_{\infty}(s)$ has the structure of an $(X_{\infty}\times
		A_{\ell-1})$-crystal. This bicrystal is extremal for the $X_{\infty}
		$-structure and of highest weight for the $A_{\ell-1}$-structure.
		
		\item The tableaux $b_\la$ associated to the partitions of rank $s$ are
		the unique vertices in $B_{\infty}(s)$ both distinguished extremal for
		$X_{\infty}$ and of highest weight for $A_{\ell-1}$.
		
		\item We have the decomposition 
		$$B_{\infty}(s)=
		{\textstyle\bigoplus\limits_{\left\vert \lambda\right\vert =s}}
			\dot{f}_{\underline{j}} k_{\underline{j}}b_\la$$
		where the sum runs over all the possible $\underline{i}$ and $\underline{j}$. In particular
		\[
		B_{\infty}(s)\simeq{\textstyle\bigoplus\limits_{\left\vert \lambda\right\vert =s}}
		B_{\infty}(\lambda)\otimes B(\lambda^{\mathrm{tr}})
		\]
		
	\end{enumerate}
\end{thm}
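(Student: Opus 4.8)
The plan is to follow the finite type $A$ development of \Cref{bicrystal} almost verbatim, the key device being the passage to the distinguished extremal vertex of \Cref{RemarkExt}(2), which reduces every statement to the unbarred alphabet, where the operators $\dot e_j,\dot f_j$ are the horizontal Jeu de Taquin maps $J_j^{\pm1}$ and hence, by \Cref{kas-jdt_weyl-Rmat}, the finite type $A$ operators $\de_j^*,\df_j^*$ on $F(s)$.

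\textbf{Part (1).} The $X_\infty$-crystal structure is built into the definition of $B_\infty(s)$ as a direct sum of tensor products of copies of $B_\infty(1)$, and by \Cref{Th_Lec_crystal} together with \Cref{RemarkExt} each of its $X_\infty$-connected components is isomorphic to some $B_\infty(\la)$, hence extremal. The $A_{\ell-1}$-structure is the one furnished by the operators $\dot e_j,\dot f_j$ of the previous proposition, and these commute with the $X_\infty$-operators $e_i,f_i$ as already observed; so $B_\infty(s)$ is an $(X_\infty\times A_{\ell-1})$-bicrystal. To see that it is of highest weight for $A_{\ell-1}$, I take $b\in B_\infty(s)$ and a composition $k_{\underline i}$ of $X_\infty$-operators carrying $b$ to the distinguished extremal vertex $b_{\mathrm{ext}}$ of its $X_\infty$-component; since $k_{\underline i}$ commutes with all $\dot e_j,\dot f_j$ (one checks this also in the presence of zero values, case by case), it restricts to an $A_{\ell-1}$-crystal isomorphism between the $A_{\ell-1}$-component of $b$ and that of $b_{\mathrm{ext}}$. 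As $b_{\mathrm{ext}}$ has only unbarred letters, this component lies inside a finite type $A$ Fock space $F(s)$, where by \Cref{rsk} (equivalently, \Cref{thm_yam_tab} and \Cref{crystals_commute}) the $A_{\ell-1}$-structure is a disjoint union of highest weight crystals; hence so is the $A_{\ell-1}$-component of $b$.

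\textbf{Part (2).} A vertex is distinguished extremal for $X_\infty$ exactly when it coincides with the vertex $b_{\mathrm{ext}}$ of its $X_\infty$-component, i.e.\ is a Yamanouchi word in unbarred letters (\Cref{RemarkExt}(2)). For such a vertex, being $A_{\ell-1}$-highest weight means $\dot e_j b=0$ for all $j$, and under the identification $\dot e_j=\de_j^*=J_j$ this says that no horizontal Jeu de Taquin slide is possible between adjacent columns, that is, $b$ is a tableau. A Yamanouchi tableau is precisely a $b_\la$, and conversely $b_\la$ is the tableau of shape $\la$ whose $i$-th row is filled with $i$'s, which is distinguished extremal by \Cref{RemarkExt}(2) and admits no slide, hence is $A_{\ell-1}$-highest weight. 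So the common fixed locus of the two structures is exactly $\{\,b_\la : |\la|=s\,\}$.

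\textbf{Part (3) and the main obstacle.} By Parts (1) and (2), from any $b$ one first moves to $b_{\mathrm{ext}}$ (uniquely determined by the $X_\infty$-component of $b$) and then to the unique $A_{\ell-1}$-highest weight vertex of the $A_{\ell-1}$-component of $b_{\mathrm{ext}}$; because $\dot e_j$ commutes with the $X_\infty$-operators this second step remains among distinguished extremal vertices, so the outcome is some $b_\la$, and $\la$ is forced (it is $\mathrm{wt}(b_{\mathrm{ext}})$). Thus every bicrystal component has a unique source $b_\la$, which gives $B_\infty(s)=\bigoplus_{|\la|=s}\dot f_{\underline j}\,k_{\underline i}\,b_\la$, the sum being effectively over $\la\in\sS(s)$ since $b_\la$ has $\ell$ columns. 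For the tensor factorisation, fix $\la$ and let $C_\la$ be the bicrystal component of $b_\la$: its $X_\infty$-component is $B_\infty(\la)$ by \Cref{RemarkExt}(2), and its $A_{\ell-1}$-component is $B(\la^{\mathrm{tr}})$ by the finite type $A$ description used in the proof of \Cref{rsk}. I then repeat the argument of \Cref{rsk} with the $A_{n-1}$-structure replaced by $X_\infty$: for $c\in C_\la$ write $c=\dot f_{\underline j}\,\overline c$ with $\overline c$ the $A_{\ell-1}$-source of its $A_{\ell-1}$-component; commutation forces $\overline c=k_{\underline i}\,b_\la\in B_\infty(\la)$, and $c\mapsto(\overline c,\text{ image of }c\text{ in }B(\la^{\mathrm{tr}}))$ is a bicrystal isomorphism $C_\la\simeq B_\infty(\la)\otimes B(\la^{\mathrm{tr}})$. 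The only non-formal points of the whole argument are the ``highest weight for $A_{\ell-1}$'' assertion and the bookkeeping in the reduction to the unbarred alphabet: one must check carefully that in type $A_{+\infty}$ the operators $\dot e_j,\dot f_j$ genuinely coincide with the $\de_j^*=J_j$ of \Cref{kas-jdt_weyl-Rmat}, and that composing $X_\infty$-operators with the $\dot e_j,\dot f_j$ interacts correctly with zero values, so that \Cref{thm_yam_tab}, \Cref{crystals_commute} and \Cref{rsk} can be quoted verbatim; once these identifications are in place, everything else is routine.
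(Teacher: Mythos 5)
Your proof is correct and takes essentially the same route as the paper's: both reduce every assertion to the unbarred alphabet by transporting along $X_\infty$-operators to the distinguished extremal vertex $b_{\mathrm{ext}}$ of \Cref{RemarkExt}, use the commutation of the two families of operators, and then invoke the finite type $A$ results (\Cref{crystals_commute}, \Cref{sources_finite}, \Cref{rsk}). Your write-up is merely more explicit about the identification $\dot e_j=\de_j^*=J_j$ and the uniqueness of sources, which the paper leaves implicit.
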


\begin{proof}
	Assertion (1) is just a reformulation of the previous proposition.\ To prove
	Assertion (2), observe first that the vertices $b_\la$ are clearly both distinguished 	extremal for $X_{\infty}$ and of highest weight for $A_{\ell-1}$. Consider now
	$b=c_{\ell}\otimes\cdots\otimes c_{1}$ with this property.\ Since $b$ only
	contains unbarred letters, we can apply \Cref{sources_finite} and conclude that
	$b=b_\la$ for $\lambda\vdash s$. Assertion (3) easily follows from 1 et 2.
\end{proof}

\medskip

\begin{cor}
	The $A_{\ell-1}$-weight highest vertices in $B_{\infty}(s)$ are exactly the
	tableaux of type $X_{\infty}$.
\end{cor}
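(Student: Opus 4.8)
The plan is to identify the $A_{\ell-1}$-highest weight vertices directly from the definition of the operators $\dot{e}_{j}$, and to match the resulting condition with the combinatorial description of tableaux of type $X_{\infty}$ recalled in \Cref{Subsec_TabXinfinity}. First I would unwind the definitions: a vertex $b=c_{1}\otimes\cdots\otimes c_{\ell}\in B_{\infty}(s)$ is $A_{\ell-1}$-highest weight if and only if $\dot{e}_{j}b=0$ for all $j=1,\ldots,\ell-1$, and by the very construction of $\dot{e}$ this holds exactly when each pair $c_{j}\otimes c_{j+1}$ is a (two-column) tableau of type $X_{\infty}$. So the corollary reduces to the claim that $b$ is a tableau of type $X_{\infty}$ if and only if every pair $c_{j}\otimes c_{j+1}$ of consecutive columns is a tableau.

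The forward implication is essentially immediate: the splitting procedure of \Cref{Subsec_TabXinfinity} acts column by column, so if $\mathrm{spl}(b)=\mathrm{l}c_{1}\mathrm{r}c_{1}\cdots\mathrm{l}c_{\ell}\mathrm{r}c_{\ell}$ is semistandard (and, in type $D_{\infty}$, each $\mathrm{r}c_{i}\mathrm{l}c_{i+1}$ avoids $\pi_{D}$), then the same is true of the restriction $\mathrm{l}c_{j}\mathrm{r}c_{j}\mathrm{l}c_{j+1}\mathrm{r}c_{j+1}$, which says that $c_{j}\otimes c_{j+1}$ is a tableau. The converse is the main point and rests on the fact that semistandardness --- and $\pi_{D}$-avoidance --- of a partition-shaped filling is a \emph{local} condition: it holds once it holds for every two consecutive columns. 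Concretely, if each $c_{j}\otimes c_{j+1}$ is a tableau then in particular the heights $|c_{j}|$ are weakly monotone, so $b$ has partition shape and the rows of $\mathrm{spl}(b)$ are contiguous; the column-strict-increase condition is per column, while the row-weak-increase inequalities of $\mathrm{spl}(b)$ follow by transitivity from those of the pieces $\mathrm{l}c_{j}\mathrm{r}c_{j}\mathrm{l}c_{j+1}\mathrm{r}c_{j+1}$ (the inequalities $\mathrm{l}c_{j}\le\mathrm{r}c_{j}$ being automatic from the splitting). Hence $\mathrm{spl}(b)$ is semistandard and avoids $\pi_{D}$, i.e. $b$ is a tableau of type $X_{\infty}$.

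In type $A_{+\infty}$ no splitting is needed and this is the classical fact that a juxtaposition of columns is semistandard iff each consecutive pair is; in types $B_{\infty}$ and $C_{\infty}$ one runs the argument above with the splitting of \cite{Lec2007,Lec2009}; in type $D_{\infty}$ one must carry the extra pattern $\pi_{D}$ along, but since $\pi_{D}$ is a condition on a single pair $\mathrm{r}c_{i}\mathrm{l}c_{i+1}$ of consecutive split columns it behaves just like the semistandardness condition. I expect the only genuinely delicate point to be checking, type by type (and especially in type $D_{\infty}$), that the criterion of \Cref{Subsec_TabXinfinity} for being a tableau really is a conjunction of conditions on consecutive columns, with no global obstruction; granting this --- which is how the tableau criteria of \cite{Lec2007,Lec2009} are formulated --- the corollary follows. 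One could alternatively read the statement off the decomposition in \Cref{Th_BiAXinfinity}(3), the $A_{\ell-1}$-highest weight vertices being indexed by $\bigsqcup_{|\lambda|=s}B_{\infty}(\lambda)$, which is precisely the set of type $X_{\infty}$ tableaux of rank $s$; but making that bijection explicit comes down to the same local check, so I would keep the direct argument.
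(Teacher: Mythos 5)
Your argument is correct, but it is not the route the paper takes for this corollary. The paper's proof is purely structural: it invokes the decomposition $B_{\infty}(s)=\bigoplus_{|\lambda|=s}\dot{f}_{\underline{j}}k_{\underline{i}}b_{\lambda}$ of \Cref{Th_BiAXinfinity}(3) to see that every $A_{\ell-1}$-highest weight vertex has the form $k_{\underline{i}}b_{\lambda}$, hence lies in the $X_{\infty}$-component of some $b_{\lambda}$ and is by definition a tableau; conversely, for a tableau $b=k_{\underline{i}}b_{\lambda}$ the commutation of the two crystal structures gives $\dot{e}_{j}(b)=k_{\underline{i}}\dot{e}_{j}(b_{\lambda})=0$. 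Your proof instead unwinds $\dot{e}_{j}(b)=0$ into the condition that each consecutive pair $c_{j}\otimes c_{j+1}$ is a two-column tableau, and then uses the locality of the Kashiwara--Nakashima tableau criterion (semistandardness of the split form, plus $\pi_{D}$-avoidance in type $D_{\infty}$, being conjunctions of conditions on adjacent split columns). This is precisely the alternative the authors sketch in Remark (3) following the corollary, where they state that ``the tableaux are characterised locally by conditions on their pairs of consecutive columns.'' What your approach buys is independence from the bicrystal machinery of \Cref{bicrystal} (which \Cref{Th_BiAXinfinity} relies on via the reduction to unbarred letters); what it costs is the type-by-type verification of locality that you yourself flag as the delicate point --- in particular that the shape is forced to be a partition shape so that rows of the split form are contiguous, and that the $\pi_{D}$ pattern really only constrains adjacent split columns. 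Both proofs are valid; the paper's is shorter given the theorem already proved, yours is more self-contained at the combinatorial level.
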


\begin{proof}
	Consider $b$ a $A_{\ell-1}$-weight vertices in $B_{\infty}(s)$. Then, by
	Assertion (3) of the previous theorem, there should exist a partition $\lambda$
	of rank $s$ and sequence $ k_{\underline{i}}$ such that $b= 
	{k}_{\underline{i}}b_\la$. This means that $b$ is a tableau of type
	$X_{\infty}.$ Conversely, any tableau $b$ can be written $b=
	{k}_{\underline{i}}b_\la$ and for $j=1,\ldots,\ell-1,$ we have $\dot
	{e}_{j}(b)=\dot{e}_{j}( k_{\underline{i}}b_\la)= 
	{k}_{\underline{i}}\dot{e}_{j}(b_\la)=0$ since both crystal structures commute.
\end{proof}

\medskip

We conclude this section with some important remarks.

\begin{rem}\

	\begin{enumerate}
	\item In another direction, Kwon established and studied in \cite{Kwon2009} a bicrystal structure
	arising from a duality between extremal $A_{+\infty}$-crytals and generalised Verma $A_\infty$-crystals.
		\item Contrary to \Cref{bicrystal}, we dit not introduce a duality
		$b\rightarrow b^{\ast}$ from which the $A_{\ell-1}$-crystal structure on
		$B_{\infty}(s)$ can easily be made explicit. Although the $A_{\ell-1}$-highest vertices of $B_{\infty}(s)$ correspond to Kashiwara-Nakashima tableaux of type $X_{\infty
		}$, we cannot use the same duality as in type $A_{\ell-1}$. Indeed, in types $B_{\infty}$ and $D_{\infty}$, these tableaux are not semistandard on
		$\mathcal{A}_{X_{\infty}}$ in general. Moreover, even in type $C_{\infty}$, the action of a crystal
		operator $\dot{e}_{j}$ and $\dot{f}_{j}$ does not coincide with a horizontal
		sliding of the ordinary Jeu de Taquin of type $A$. This problem, also related
		to the cyclage operation on tableaux defined in \cite{Lec2005a} and
		\cite{Lec2006b} will be considered elsewhere.
		
		\item The previous corollary can also be obtained without referring to 
		\Cref{bicrystal}. For $b=c_{1}\otimes\cdots\otimes c_{\ell}$ a $A_{\ell-1}%
		$-highest weight in $B_{\infty}(s)$, the equality $\dot{e}_{j}(b)=0$ for any
		$j=1,\ldots,\ell-1$ means that $c_{j}\otimes c_{j+1}$ is a tableau of two columns
		for any $j=1,\ldots,\ell-1$.\ This implies that $b$ is a tableau of type
		$X_{\infty}$ because the tableaux are characterised locally by conditions on
		their pairs of consecutive columns.
		
		\item The results of this section can also be regarded as bicrystal
		structures on particular matrix sets. In type $A_{+\infty}$ or $C_{\infty\text{,
		}}$each vertices $b=c_{1}\otimes\cdots\otimes c_{\ell}$ of $B_{\infty}(s)$ can
		indeed be encoded in an infinite matrix $M=(m_{i,j})$ with rows indexed by $\mathbb{Z}\setminus\{0\}$
		and columns indexed by $\{1,\ldots,\ell\}$,  where $m_{i,j}=1$ if $c_{j}$ contains the
		letter $i\in\mathbb{Z}\setminus\{0\}$ and $m_{i,j}=0$ otherwise. In type
		$B_{\infty}$ we proceed similarly, except that the integers in the rows $m_{0,j}$ may be greater than $1$. 
		Finally in type $D_{\infty}$, the coefficient $m_{0,j}$
		counts the number of blocks $\scriptsize\young(<\overline{1}>,1)$ in $c_{j}$.
	\end{enumerate}
\end{rem}

\section{Fixed points in bicrystals}\label{fixed_points_mullineux}

In this section, we show that bicrystals structures of classical types can be obtained by considering fixed points sets under involutions defined on our combinatorial Fock spaces. 
We shall detail mainly the classical case, the methods and arguments being similar in the affine situation.

\subsection{Dynkin diagram involutions in finite type A}\label{mullineux-type}

In type $A_{n-1}$, let $\theta$ be the Dynkin diagram automorphism mapping the vertex $i$ to $n-i$, for all $i=1,\ldots, n-1$.
Then $\theta$ induces a map on the set of dominant weights of type $A_{n-1}$ that we denote by the same symbol
$$\theta : P_+ \to P_+, \quad \sum_{i=1}^{n-1}a_i\om_i
\mapsto \sum_{i=1}^{n-1}a_i\om_{n-i}.$$
We can also define the map $\theta$ on the partitions of length at most $n$: $\theta$ replaces each column of height $0<i<n$ by a column of height $n-i$ and fixes the columns of height $0$ or $n$.

For any partition $\la$, recall that $B(\la)$ is the crystal of highest weight vertex $b_\la$. It can be realised by using semistandard tableaux of shape $\la$, and in this case 
$b_\la$ is the tableau of shape $\la$ with only $k$'s in row $k$ \cite[Section 7.3]{HongKang2002}.
The map $\theta$ induces a map on $B(\la)$ that we denote by the same symbol
$$ \theta : B(\la) \lra B(\theta(\la)),$$
which maps $b_\la$ to $b_{\theta(\la)}$ and 
which is a \textit{$\theta$-isomorphism of crystals}, i.e. for all $b\in B(\la)$, and for all $i=1,\ldots, n-1$,
$$\theta (f_i b)=f_{\theta(i)} (\theta(b))=f_{n-i} (\theta(b)). $$

\begin{exa}
Take $n=3$ and let $\la=\om_1=
\Yboxdim{7pt}\yng(1)\ $, 
so that $\theta(\la)=\om_2=
\Yboxdim{7pt}\yng(1,1)\ $.
We get the following crystals.
$$
\begin{array}{ccc}
B(\la) = \quad
{\scriptsize\young(1)}
\overset{1}{\lra}
{\scriptsize\young(2)}
\overset{2}{\lra}
{\scriptsize\young(3)}
&
\quad\quad\text{ and } \quad\quad
&
B(\theta(\la)) = \quad
{\scriptsize\young(1,2)}
\overset{2}{\lra}
{\scriptsize\young(1,3)}
\overset{1}{\lra}
{\scriptsize\young(2,3)}.
\end{array}
$$
\end{exa}

\subsection{Crystal structure on fixed points sets}\label{fixed_points}

Denote  
$$ P_+^\theta=\left\{ \la\in P_+ \mid \theta(\la)=\la \right\}$$
the set of dominant weights fixed under $\theta$, so that
$$
\begin{array}{rcl}
\la\in P_+^\theta 
&
\eq
&
\exists \, a_1,\ldots, a_{\lfloor n/2\rfloor} \in\Z \text{ s.t. }
\la=\left\{ 
\begin{array}{ll}\displaystyle
\sum_{i=1}^{n/2-1}  a_i(\om_i+\om_{n-i}) + a_{n/2}\om_{n/2} &\quad\text{ if $n-1$ is odd,}
\medskip
\\
\displaystyle
\sum_{i=1}^{(n-1)/2} a_i(\om_i+\om_{n-i}) &\quad\text{ if $n-1$ is even.}
\end{array}
\right.
\end{array}
$$

\begin{defi}
	For any weight $\lambda$ in $ P^\theta $, set $\la'=\sum_{i=1}^{\lfloor n/2\rfloor}  a_i \om_i^{\prime}$ where
	$\om_i'$ is the $i$-th fundamental weight of type $X_{\lfloor n/2\rfloor}$, with
	$$ X=B \text{ if } n-1 \text{ is odd}
	\quad \mand\quad
	X=C \text{ if } n-1 \text{ is even}
	$$	
\end{defi}

For any partition $\la$ with at most $n$ parts and such that $\theta
(\la)=\la$, there exists a unique crystal involution on $B(\lambda)$ such that%
\[
\theta(\tilde{f}_{i_{1}}\cdots\tilde{f}_{i_{r}}(b_{\lambda}))=\tilde{f}%
_{\theta(i_{1})}\cdots\tilde{f}_{\theta(i_{r})}(b_{\lambda})%
\]
for any sequence $i_{1},\ldots,i_{r}$ in $\{1,\ldots,n-1\}$ of arbitrary
length. This permits to consider the set%
\[
B(\la)^{\theta}=\left\{  b\in B(\la)\mid\theta(b)=b\right\}.
\]
In \cite{NaitoSagaki2001,NaitoSagaki2004} Naito and Sagaki established the following theorem. Consider a partition $\la$ with at most $n$ parts and such that $\theta(\la)=\la$.

\begin{thm}\label{NS} For $i=1,\ldots, \left\lfloor n/2 \right\rfloor$, define the modified crystal operators
\begin{enumerate}
\item\label{odd} if $n-1$ is odd, 
$$f_i^\theta=
\left\{
\begin{array}{ll}
f_i f_{n-i} & \quad \text{ if } i=1,\ldots, n/2-1
\medskip
\\
f_{i} & \quad \text{ if } i=n/2. 
\end{array}
\right.$$
\item\label{even} if $n-1$ is even,
$$f_i^\theta=
\left\{
\begin{array}{ll}
f_i f_{n-i} & \quad \text{ if } i=1,\ldots, (n-1)/2-1 
\medskip
\\
f_i f_{i+1}^2 f_i & \quad \text{ if } i=(n-1)/2. 
\end{array}
\right.$$
\end{enumerate}
Then every $b\in B(\la)^\theta$ is obtained by applying a sequence of modified crystal operators to 
the highest weight vertex $b_\la$ of $B(\la)$. This induces a crystal structure on $B(\la)^\theta$. Moreover, we have
$$B(\la)^\theta\simeq B^{(X_{\lfloor n/2\rfloor})}(\la'),$$
the crystal of classical type $X_{\lfloor n/2\rfloor}$ with highest weight $\la'$.
In each case, the action of the modified crystal operators is mapped to the action of the classical crystal operators.
\end{thm}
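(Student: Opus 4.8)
The plan is to deduce the statement from the work of Naito and Sagaki \cite{NaitoSagaki2001,NaitoSagaki2004} by transporting it to Littelmann's path model, in which the Dynkin automorphism acts transparently. First I would replace the tableau realisation of $B(\la)$ by the crystal $\mathbb B(\la)$ of Lakshmibai--Seshadri paths of shape $\la$ inside the real weight space $P_{\mathbb R}$ of $\sl_n$; under the standard isomorphism $b_\la$ corresponds to the straight path $t\mapsto t\la$, and the tableaux of \Cref{columns_finite} and \Cref{crystal_finite} are recovered by concatenating the paths attached to the columns. Since $\theta$ permutes the simple roots via $\alpha_i\mapsto\alpha_{n-i}$, it acts linearly on $P_{\mathbb R}$ and hence on paths; as $\theta(\la)=\la$, it preserves $\mathbb B(\la)$ and satisfies $\theta\circ f_i=f_{\theta(i)}\circ\theta$. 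Therefore $\theta$ restricts to exactly the crystal involution described just before the statement, and $B(\la)^\theta$ is identified with the set $\mathbb B(\la)^\theta$ of $\theta$-fixed LS paths.

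Next I would invoke the combinatorial core of \cite{NaitoSagaki2001,NaitoSagaki2004}. Let $\pi\colon P_{\mathbb R}\to P_{\mathbb R}^\theta$, $\pi(x)=\frac12(x+\theta x)$, be the projection onto the $\theta$-fixed subspace; a $\theta$-fixed path is determined by its image under $\pi$, and $P_{\mathbb R}^\theta$ is canonically the real weight space of the orbit Lie algebra $\breve{\mathfrak g}$ attached to $(\sl_n,\theta)$, whose simple roots are the classes $\pi(\alpha_i)$, depending only on the $\theta$-orbit of $i$. For $(\sl_n,\theta)$ this $\breve{\mathfrak g}$ is of type $X_{\lfloor n/2\rfloor}$ as in the statement; one should keep in mind that $\breve{\mathfrak g}$ is Langlands-dual to the naive fixed-point subalgebra of $\sl_n$, which is exactly why the case $n-1$ odd produces type $B$ rather than $C$. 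Naito and Sagaki prove that $\pi$ restricts to a bijection from $\mathbb B(\la)^\theta$ onto the set of LS paths of shape $\la'$ for $\breve{\mathfrak g}$, intertwining the modified operators $f_i^\theta$ acting on $\mathbb B(\la)^\theta$ with the genuine root operators $\breve f_i$ of $\breve{\mathfrak g}$. Granting this, $\mathbb B(\la)^\theta$ is connected, generated from $b_\la$ by the $f_i^\theta$, and isomorphic as a crystal to $B^{(X_{\lfloor n/2\rfloor})}(\la')$, which is the assertion; the weight bookkeeping $\la=\sum a_i(\om_i+\om_{n-i})+(\text{central term})\mapsto\la'=\sum a_i\om_i'$ is precisely the expression of $\pi(\la)$ in the fundamental weights of $\breve{\mathfrak g}$.

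It then remains to check that the $f_i^\theta$ produced by this procedure are literally the operators of cases (1) and (2) of the statement. For a two-element orbit $\{i,n-i\}$ whose nodes are non-adjacent in the $A_{n-1}$ diagram — all two-element orbits except the central orbit $\{(n-1)/2,(n+1)/2\}$ that occurs when $n$ is odd — the operators $f_i$ and $f_{n-i}$ commute and the operator realising the $\breve{\mathfrak g}$-direction of that orbit is the composite $f_if_{n-i}$; for the fixed node $i=n/2$ (present only when $n$ is even) the orbit is a singleton and one simply uses $f_{n/2}$. This yields case (1) and the non-central part of case (2). For the central orbit when $n$ is odd, the two nodes span an $A_2$ and the folding matches the new simple root with the highest root of that $A_2$; a short computation inside the $A_2$-crystal — where the relevant $f$-strings are doubled — shows that the modified operator is $f_if_{i+1}^2f_i$ with $i=(n-1)/2$, completing case (2).

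The genuinely hard input, the bijection of LS paths, is already in \cite{NaitoSagaki2004}; the obstacle on our side is the faithful translation of those conventions into the tableau language used here, and the single delicate point within it is the central adjacent orbit for $n$ odd: there the naive guess $f_if_{n-i}$ fails, since $f_i$ and $f_{i+1}$ do not commute, and one must instead extract the composite $f_if_{i+1}^2f_i$ from the $A_2$-geometry, which is also what pins down the short/long-root asymmetry — hence the folded type $B$ versus $C$. The rest is routine bookkeeping with $\theta$-orbits.
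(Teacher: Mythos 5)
Your proposal is correct and follows essentially the same route as the paper, which gives no proof of its own but simply attributes the result to Naito--Sagaki \cite{NaitoSagaki2001,NaitoSagaki2004}; your sketch is a faithful unpacking of that citation via the LS-path model and the orbit Lie algebra. In particular you correctly flag the two delicate points — that the orbit Lie algebra is the Langlands dual of the naive fixed-point subalgebra (whence type $B$ for $n-1$ odd) and that the adjacent central orbit for $n$ odd forces the operator $f_if_{i+1}^2f_i$ rather than $f_if_{n-i}$.
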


\begin{exa}\
\begin{enumerate}
\item Take $n=4$, and  $\la=\om_1+\om_3=\Yboxdim{7pt}\yng(2,1,1)$, so that $\la\in P_+^\theta$.
Then we have
$$B(\la)^\theta = 
\quad
{\scriptsize\young(11,2,3)}
\overset{1}{\longrightarrow}
{\scriptsize\young(12,2,4)}
\overset{2}{\lra}
{\scriptsize\young(12,3,4)}
\overset{2}{\lra}
{\scriptsize\young(13,3,4)}
\overset{1}{\lra}
{\scriptsize\young(24,3,4)}
\quad
\simeq B^{(B_2)}(\la')
$$
with $\la'=\om_1'$.
\item Take $n=5$, and  $\la=\om_1+\om_4=\Yboxdim{7pt}\yng(2,1,1,1)$, so that $\la\in P_+^\theta$.
Then we have
$$B(\la)^\theta = 
\quad
{\scriptsize\young(11,2,3,4)}
\overset{1}{\longrightarrow}
{\scriptsize\young(12,2,3,5)}
\overset{2}{\lra}
{\scriptsize\young(14,3,4,5)}
\overset{1}{\lra}
{\scriptsize\young(25,3,4,5)}
\quad
\simeq B^{(C_2)}(\la')
$$
with $\la'=\om_1'$.
\end{enumerate}
\end{exa}

Case (\ref{odd}) is a particular occurrence of a phenomenon called ``similarity of crystal bases'' by Kashiwara \cite[Theorem 5.1]{Kashiwara1996}.
In fact, in the case where $n-1$ is odd, we can also exhibit a type $C_{n/2}$ crystal structure on a subset of $B(\la)^\theta$ as follows.

Assume $n-1$ is odd and let $\la$ is a partition with at most $n$ parts and
with associated weight of the form $\la=\sum_{i=1}^{n/2}a_{i}(\om_{i}%
+\om_{n-i})$. Let $\eta:B(\la)\rightarrow B(\la)$ be the involution defined by
$\eta(b_{\la})=b_{\la}$ and
\[
\eta(\tilde{f}_{i_{1}}\cdots\tilde{f}_{i_{r}}(b_{\lambda}))=\tilde{f}%
_{n-i_{1}}^{a_{i_{1}}}\cdots\tilde{f}_{n-i_{r}}^{a_{i_{r}}}(b_{\lambda})\text{
	with }a_{i_{k}}=\left\{
\begin{array}
[c]{ll}%
1 & \text{if }i_{k}\neq n/2,\\
2 & \text{otherwise.}%
\end{array}
\right.
\]
for any sequence $i_{1},\ldots,i_{r}$ in $\{1,\ldots,n-1\}$ of arbitrary
length. Accordingly, we write 
$$P_+^\eta=\left\{ \la\in P_+ \bigg| \la=\sum_{i=1}^{n/2}a_i(\om_i+\om_{i+1})\right\}\subset P_+^\theta,$$
and for $\la\in P_+^\eta$, 
let $\la^\dagger=\sum_{i=1}^{n/2}  a_i \om_i^\dagger$
where $\om_i^\dagger$ denotes the $i$-th fundamental weight of type $C_{n/2}$.
Let $$B(\la)^\eta =\left\{ b\in B(\la) \mid \eta(b)=b  \right\}.$$

\begin{thm}\label{kashiwara} Assume $n-1$ is odd and let $\la $ be a partition with associated weight in $P_+^\eta$.
Consider the modified crystal operators
$$f_i^\eta=
f_i f_{n-i} \quad \text{ for all } i=1,\ldots, n/2.$$
Then every $b\in B(\la)^\eta$ is obtained by applying a sequence of modified crystal operators $f_i^\eta$ to 
the highest weight vertex $b_\la$ of $B(\la)$. This induces a crystal structure on $B(\la)^\eta$. Moreover, we have
$$B(\la)^\eta\simeq B^{(C_{n/2})}(\la^\dagger),$$
the crystal of type $C_{n/2}$ with highest weight $\la^\dagger$,
where the action of the modified crystal operators is mapped to the action of the classical crystal operators.
\end{thm}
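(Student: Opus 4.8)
The statement is an instance of Kashiwara's \emph{similarity of crystal bases} \cite[Theorem 5.1]{Kashiwara1996}, just as the type $B$ part of \Cref{NS} is. The plan is to apply that theorem to the orbit Cartan datum attached to the pair $(A_{n-1},\theta)$: the $\theta$-orbits (the pairs $\{i,n-i\}$ for $1\le i\le n/2-1$, and the singleton $\{n/2\}$) organise the Cartan datum of type $A_{n-1}$ into that of type $C_{n/2}$, the node $n/2$ becoming the long one, with similarity multipliers $m_i=1$ for $i<n/2$ and $m_{n/2}=2$. On the level of weights, this similarity matches a dominant weight $\nu=\sum_i c_i\om_i^\dagger$ of type $C_{n/2}$ with the type $A_{n-1}$ weight $\sum_{i<n/2}c_i(\om_i+\om_{n-i})+2c_{n/2}\om_{n/2}$; taking $c_i=a_i$ this is exactly $\la=\sum_{i=1}^{n/2}a_i(\om_i+\om_{n-i})$, and the hypothesis $\la\in P_+^\eta$ is precisely what makes $\langle h_{n/2},\la\rangle=2a_{n/2}$ even, i.e.\ what makes $\la^\dagger$ a genuine dominant integral weight of type $C_{n/2}$. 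Hence \cite[Theorem 5.1]{Kashiwara1996} supplies an injective map of sets $S\colon B^{(C_{n/2})}(\la^\dagger)\hookrightarrow B(\la)$ with $S(b_{\la^\dagger})=b_\la$, intertwining $\tilde{f}_i^{C}$ with $f_i^\eta$ and $\tilde{e}_i^{C}$ with $e_i^\eta=e_ie_{n-i}$ (resp.\ $e_{n/2}^2$ when $i=n/2$) for every $i=1,\dots,n/2$, and compatible with the statistics in the usual way: $\varepsilon_i(Sb)=\varepsilon_{n-i}(Sb)=\varepsilon^{C}_i(b)$ and $\varphi_i(Sb)=\varphi_{n-i}(Sb)=\varphi^{C}_i(b)$ for $i<n/2$, while $\varepsilon_{n/2}(Sb)=2\,\varepsilon^{C}_{n/2}(b)$ and $\varphi_{n/2}(Sb)=2\,\varphi^{C}_{n/2}(b)$.

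Granting $S$, the three assertions become formal. The domain $B^{(C_{n/2})}(\la^\dagger)$ is generated from $b_{\la^\dagger}$ by the operators $\tilde{f}_i^{C}$, and $S$ carries $\tilde{f}_i^{C}$ to $f_i^\eta$, with $f_i^\eta$ vanishing on $Sb$ exactly when $\tilde{f}_i^{C}b=0$ (by the $\varphi$-compatibility). Hence $\operatorname{im}S$ is exactly the set of vertices obtained from $b_\la$ by words in the $f_i^\eta$, and transporting the type $C_{n/2}$ crystal structure along $S$ equips this set with a crystal structure isomorphic to $B^{(C_{n/2})}(\la^\dagger)$, with $f_i^\eta$ matched to $\tilde{f}_i^{C}$. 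It remains to identify $\operatorname{im}S$ with the fixed-point set $\{b\in B(\la):\eta(b)=b\}$: for $i<n/2$, the equalities $\varepsilon_i(Sb)=\varepsilon_{n-i}(Sb)$ and $\varphi_i(Sb)=\varphi_{n-i}(Sb)$ force $\tilde{f}_i$ and $\tilde{f}_{n-i}$ to act in lockstep along the image, while for $i=n/2$ the equality $\varepsilon_{n/2}(Sb)=2\,\varepsilon^{C}_{n/2}(b)$ says that the $(n/2)$-strings of $B(\la)$ meeting the image have even length; these two facts are exactly what makes the involution $\eta$ well defined and identifies $\operatorname{im}S$ with its fixed locus. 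One can equivalently factor $S$ through the type $B_{n/2}$ crystal of \Cref{NS}, composing the identification $B(\la)^\theta\simeq B^{(B_{n/2})}(\la')$ with the $B_{n/2}\to C_{n/2}$ similarity (multipliers $1,\dots,1,2$ on the common index set $\{1,\dots,n/2\}$); along this route $\tilde{f}_i^{C}$ maps to $f_i^\theta$ for $i<n/2$ and to $(f_{n/2}^\theta)^2$ for $i=n/2$, which is again $f_i^\eta$ in every case.

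The genuine obstacle is this last identification: making the involution $\eta$ precise, and proving that its fixed-point set is neither larger nor smaller than $\operatorname{im}S$. Well-definedness of $\eta$ rests on the parity of the $(n/2)$-strings of $B(\la)$, which $\la\in P_+^\eta$ supplies; the equality $\{b:\eta(b)=b\}=\operatorname{im}S$ amounts to tracking exactly where the operators $\tilde{e}_i,\tilde{f}_i$ vanish on the image, which is what the relations $\varepsilon_i(Sb)=m_i\varepsilon_i(b)$, $\varphi_i(Sb)=m_i\varphi_i(b)$ of \cite[Theorem 5.1]{Kashiwara1996} encode. Everything else is a routine transport of structure along $S$, or equivalently along the Naito--Sagaki isomorphism of \Cref{NS} together with the elementary $B_{n/2}/C_{n/2}$ similarity.
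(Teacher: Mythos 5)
Your argument is correct and follows the same route the paper (implicitly) takes: the theorem is stated there without proof, on the strength of Kashiwara's similarity of crystal bases \cite[Theorem 5.1]{Kashiwara1996} applied on top of the folding of \Cref{NS}, which is exactly the content of your embedding $S$ with multipliers $(1,\dots,1,2)$. Your additional care about identifying $\operatorname{im}S$ with the fixed locus of $\eta$ and about the parity of the $(n/2)$-strings supplied by $\la\in P_+^\eta$ fills in details the paper leaves to the reader, but it is the same proof.
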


\begin{exa}
Take $n=4$, and  $\la=\om_1+\om_3=\Yboxdim{7pt}\yng(2,1,1)$, so that $\la\in P_+^\eta$.
Then we have
$$B(\la)^\eta = 
\quad
{\scriptsize\young(11,2,3)}
\overset{1}{\longrightarrow}
{\scriptsize\young(12,2,4)}
\overset{2}{\lra}
{\scriptsize\young(13,3,4)}
\overset{1}{\lra}
{\scriptsize\young(24,3,4)}
\quad
\simeq B^{(C_{2})}(\la')
$$
with $\la'=\om_1'$.
\end{exa}

\begin{rem} For $\la\in P_+$, one might also consider the set
$B(\la)_\theta = \left\{ b\in B(\la) \mid \theta(\wt(b)) = \wt(b) \right\} \supset B(\la)^\theta$. 
Nevertheless, the modified crystal operators of \Cref{NS} and \Cref{kashiwara} 
do not endow $B(\la)_\theta$ with the structure of a crystal. 
For example in type $A_{3}$ for $\la=(4,3,3)$, $B(\la)_\theta$ would so have a source vertex of weight $(4,3,2,1)$ whose associated connected component is not a type $B_2$-crystal.
\end{rem}
\subsection{Bicrystal structure on fixed points sets: the finite case}\label{bicrystals_fixed_points}

In this section, we combine the results from \Cref{bicrystal} and \Cref{fixed_points}
to exhibit bicrystals structures of mixed types $A\times B$, $A\times C$ and $B\times C$.
Recall that we have fixed $s\in\Z_{\geq 0}$ and considered 
$$F(s)=\bigoplus_{\bs\in\Z^\ell(s)}F(\bs)= 
\bigoplus_{\bs\in\Z^\ell(s)} B(\om_{s_1})\otimes\cdots\otimes B(\om_{s_n}).$$
By \Cref{sources_finite} and \Cref{rsk}, we get
$$F(s)=\bigoplus_{\la\in\sS(s)}\df_{\underline{j}}^* f_{\underline{i}} b_\la,
\text{\quad which implies  \quad}
F(s) \simeq \bigoplus_{\la\in\sS(s)} B(\la)\otimes B(\la^\trans)$$ where $\sS(s)$ is the set of partitions of $s$ with Dynkin diagram contained in the rectangle $n\times\ell$.
Now, for all $b=\df_{\underline{j}}^* f_{\underline{i}} b_\la \in F(s)$ such that $\theta(\la)=\la$, we set
$$\theta(b)=\df_{\underline{j}}^* \theta(f_{\underline{i}} b_\la)$$
where $ \theta(f_{\underline{i}} b_\la) = f_{n-i_r}\cdots f_{n-i_1} b_\la$ if $\underline{i}=(i_1,\ldots, i_r)$,
and we consider
$$ F(s)^\theta = \left\{ b\in F(s)\mid b=\df_{\underline{j}}^* f_{\underline{i}} b_\la \text{ with } \theta(\la)=\la \text{ and } \theta(b)=b \right\}.$$
By Theorem \ref{NS}, we then get a bicrystal structure.

\begin{thm}
The set $F(s)^\theta$ has an $(X_{\lfloor n/2\rfloor} \times A_{\ell-1})$-crystal structure, where
$$ X= 
\left\{
\begin{array}{ll}
B & \quad \text{if $n-1$ is odd,}
\\
C & \quad \text{if $n-1$ is even.}
\end{array}
\right.
$$
This yields the decomposition
$$
F(s)^{\theta}\simeq%
\bigoplus_{\la\in\sS(s),\ \theta(\la)=\la}
B^{(X_{\left\lfloor n/2\right\rfloor})}(\lambda^{\prime})\otimes B(\lambda
^{\mathrm{tr}}).
$$
\end{thm}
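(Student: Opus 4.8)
The plan is to take $\theta$-fixed points inside the $(A_{n-1}\times A_{\ell-1})$-bicrystal decomposition of \Cref{sources_finite} and \Cref{rsk}, and to transport the Naito--Sagaki structure of \Cref{NS} along the $A_{\ell-1}$-strings. First I would make the involution $\theta$ on $F(s)$ precise. By \Cref{rsk} every $b\in F(s)$ lies in a connected component of the $(A_{n-1}\times A_{\ell-1})$-crystal isomorphic to $B(\la)\otimes B(\la^\trans)$, where $\la\in\sS(s)$ is the shape of $P(b)$; both $P(b)=f_{\underline i}\overline b$ and $Q(b)=\df_{\underline j}\overline b^{\,*}$ are uniquely determined by $b$ (only the sequences $\underline i,\underline j$ are not), and one has $b=\df^*_{\underline j}P(b)$. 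Since by \Cref{crystals_commute} each $\df^*_j$ commutes with all the $f_i,e_i$, the operators $\df^*_j$ act ``on the $B(\la^\trans)$-factor only'', so the crystal involution $\theta$ of $B(\la)$ recalled before \Cref{NS} (a well-defined map whenever $\theta(\la)=\la$) extends to the whole component by $\theta\otimes\mathrm{id}$; concretely $\theta(b):=\df^*_{\underline j}\,\theta(P(b))$, and this is independent of the choice of $\underline j$ because $\df^*_{\underline j}$ sends $\theta(P(b))$ to the unique vertex with $P$-part $\theta(P(b))$ and $Q$-part $Q(b)$. In particular $\theta$ is an involution fixing $b_\la$ with $\theta(\df^*_{\underline j}f_{\underline i}b_\la)=\df^*_{\underline j}\,\theta(f_{\underline i}b_\la)$, which is exactly the map used to define $F(s)^\theta$ in the statement.

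Next I would check stability and commutation. Each modified operator $f_i^\theta$ of \Cref{NS} (for $i=1,\dots,\lfloor n/2\rfloor$) is a fixed composition of ordinary operators $f_k$, $k\in\{1,\dots,n-1\}$; since every $f_k$ commutes with every $\df^*_j$ by \Cref{crystals_commute}, so does $f_i^\theta$. Moreover $f_i^\theta$ preserves the connected component, hence the shape $\la$ and the condition $\theta(\la)=\la$, and on $B(\la)\otimes B(\la^\trans)$ it acts as $(f_i^\theta\text{ on }B(\la))\otimes\mathrm{id}$, which by \Cref{NS} maps $B(\la)^\theta$ into itself; hence $f_i^\theta$ (and the associated raising operators, and the $\df^*_j$) preserve $F(s)^\theta$. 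Conversely, if $b\in F(s)^\theta$ then $b$ lies in a component with $\theta(\la)=\la$ and $P(b)\in B(\la)^\theta$, so by \Cref{NS} there is a sequence with $P(b)=f^\theta_{\underline i}b_\la$, whence $b=\df^*_{\underline j}P(b)=\df^*_{\underline j}f^\theta_{\underline i}b_\la$. Thus every vertex of $F(s)^\theta$ is obtained from a source $b_\la$ (with $\la\in\sS(s)$, $\theta(\la)=\la$) by applying the commuting families $\{f_i^\theta\}$ and $\{\df^*_j\}$, which is precisely an $(X_{\lfloor n/2\rfloor}\times A_{\ell-1})$-crystal structure, with $X=B$ if $n-1$ is odd and $X=C$ if $n-1$ is even, following the two cases of \Cref{NS}.

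Finally, for the decomposition I would simply restrict the decomposition $F(s)=\bigoplus_{\la\in\sS(s)}\df^*_{\underline j}f_{\underline i}b_\la$ of \Cref{sources_finite} to $\theta$-fixed points: this keeps exactly the components with $\theta(\la)=\la$, and on each such component the $\theta$-fixed locus is $B(\la)^\theta\otimes B(\la^\trans)$ (the $A_{\ell-1}$-factor being untouched by $\theta$). Using the isomorphism $B(\la)^\theta\simeq B^{(X_{\lfloor n/2\rfloor})}(\la')$ of \Cref{NS}, which also matches the modified operators with the classical ones, yields
\[
F(s)^{\theta}\simeq\bigoplus_{\la\in\sS(s),\ \theta(\la)=\la}
B^{(X_{\lfloor n/2\rfloor})}(\la')\otimes B(\la^{\mathrm{tr}}).
\]
The main obstacle is the first paragraph: one must verify that $\theta$, built from the non-unique decomposition $b=\df^*_{\underline j}f_{\underline i}b_\la$, is genuinely well defined and is an involution. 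The cleanest route is via the RSK bijection $b\leftrightarrow(P(b),Q(b))$ of \Cref{rsk} together with the commutation $\df^*_jf_i=f_i\df^*_j$ of \Cref{crystals_commute}, which together let one factor every $\theta$-stable component as $B(\la)\otimes B(\la^\trans)$ with $\theta$ acting only on the first factor; once this is in place, the stability of $F(s)^\theta$ under $f_i^\theta$ and $\df^*_j$ and the final decomposition are a routine transport of the Naito--Sagaki theorem.
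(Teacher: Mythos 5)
Your proposal is correct and follows essentially the same route as the paper: the paper defines $\theta$ on $F(s)$ by $\theta(\df^*_{\underline j}f_{\underline i}b_\la)=\df^*_{\underline j}\theta(f_{\underline i}b_\la)$ and deduces the theorem directly from \Cref{NS} together with the commutation of the two crystal structures (\Cref{crystals_commute}) and the decomposition of \Cref{sources_finite} and \Cref{rsk}. Your extra care in checking, via the RSK bijection, that $\theta$ is well defined independently of the chosen sequences $\underline i,\underline j$ is a useful elaboration of a point the paper leaves implicit, but it is not a different argument.
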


In fact, we can consider fixed points on either side, and on both sides simultaneously.
To see this, we need to consider the automorphism $\dot{\theta}$ of the Dynkin diagram of type $A_{\ell-1}$.
Similarly to $\theta$, the map $\dot{\theta}$ induces 
an involution on the set of partitions with at most $\ell$ columns flipping rows of length $j$ and $\ell-j$ for $0<j<\ell$ and fixing rows of length $0$ or $\ell$. 
Then, for any partition $\la$ fixed by $\dot{\theta}$,  we can define $\dot{\theta}$ on $b=\df_{\underline{j}}^* f_{\underline{i}} b_\la \in F(s)$ by setting
$$\dot{\theta}(b)= f_{\underline{i}} (\dot{\theta}\df_{\underline{j}})^*b_\la$$
where $ \dot{\theta}(\df_{\underline{j}}^* b_\la) = 
\df_{\ell-j_r}^*\cdots \df_{\ell-j_1}^* b_\la$ if $\underline{j}=(j_1,\ldots, j_r)$,
and also consider
$$ F(s)^{\dot{\theta}} = \left\{ b\in F(s)\mid b=\df_{\underline{j}}^* f_{\underline{i}} b_\la \text{ with }\dot{\theta}(\la)=\la \text{ and } \dot{\theta}(b)=b \right\}.$$

\begin{thm}
The set $F(s)^{\dot{\theta}}$ has an $(A_{n-1} \times \dot{X}_{\lfloor \ell/2\rfloor})$-crystal structure, where
$$ \dot{X}= 
\left\{
\begin{array}{ll}
B & \quad \text{if $\ell-1$ is odd}
\\
C & \quad \text{if $\ell-1$ is even}
\end{array}
\right.
$$
This yields the decomposition
$$
F(s)^{\dot{\theta}}\simeq%
\bigoplus_{\la\in\sS(s),\ \dot{\theta}(\la)=\la}
B(\lambda)\otimes B^{(\dot{X}_{\left\lfloor \ell/2\right\rfloor })}((\lambda
^{\mathrm{tr}})^{\prime}).
$$
\end{thm}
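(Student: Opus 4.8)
The plan is to treat this as the mirror image of the preceding theorem for $F(s)^\theta$, with the roles of $n$ and $\ell$ (and of $\theta$ and $\dot{\theta}$) interchanged, carrying the Naito--Sagaki construction of \Cref{NS} over to the $A_{\ell-1}$-side of the bicrystal by means of the duality $*$. First I would start from the crystal RSK decomposition of \Cref{rsk},
$$F(s)\simeq\bigoplus_{\la\in\sS(s)}B(\la)\otimes\dB(\la^\trans),$$
which by \Cref{crystals_commute} is an isomorphism of $(A_{n-1}\times A_{\ell-1})$-bicrystals, the $A_{n-1}$-structure being carried by the operators $f_i$ and the $A_{\ell-1}$-structure by the $\df_j^*$. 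On the summand indexed by $\la$, the map $\dot{\theta}$ defined in the statement fixes the first tensor factor $B(\la)$ pointwise and acts on the second factor $\dB(\la^\trans)$ exactly as the canonical diagram-automorphism involution; since $\dB(\la^\trans)$ is the $A_{\ell-1}$-crystal of highest weight $\la^\trans$ and $\dot{\theta}(\la^\trans)=\la^\trans$ if and only if $\dot{\theta}(\la)=\la$, \Cref{NS} (applied with $n,\theta$ replaced by $\ell,\dot{\theta}$, and transported from the tableau realisation to the antitableau realisation $\dB(\la^\trans)$ by uniqueness of crystal graphs) shows this restriction is well defined. Hence
$$F(s)^{\dot{\theta}}=\bigoplus_{\substack{\la\in\sS(s)\\ \dot{\theta}(\la)=\la}}B(\la)\otimes\dB(\la^\trans)^{\dot{\theta}}.$$

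Next I would read off the bicrystal structure. For each $\la$ with $\dot{\theta}(\la)=\la$, \Cref{NS} provides modified operators $\df_j^{\dot{\theta}}$, $j=1,\dots,\lfloor\ell/2\rfloor$, each a fixed composition of the $\df_j$, which generate $\dB(\la^\trans)^{\dot{\theta}}$ from the highest weight vertex of $\dB(\la^\trans)$ and identify $\dB(\la^\trans)^{\dot{\theta}}$ with the classical crystal $B^{(\dot{X}_{\lfloor\ell/2\rfloor})}((\la^\trans)')$, where $\dot{X}=B$ if $\ell-1$ is odd and $\dot{X}=C$ if $\ell-1$ is even. I would then transport the $\df_j^{\dot{\theta}}$ to $F(s)$ by conjugation with $*$: since $(\varphi\circ\psi)^*=\varphi^*\circ\psi^*$, the operator $(\df_j^{\dot{\theta}})^*$ is simply the corresponding composition of the $\df_j^*$, so by \Cref{crystals_commute} it commutes with every $f_i$; and because $\dot{\theta}$ acts as the identity on the $A_{n-1}$-factor, the $f_i$ preserve $F(s)^{\dot{\theta}}$. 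Therefore $F(s)^{\dot{\theta}}$ carries commuting actions of $A_{n-1}$ through the $f_i$ and of $\dot{X}_{\lfloor\ell/2\rfloor}$ through the $(\df_j^{\dot{\theta}})^*$, which is the asserted $(A_{n-1}\times\dot{X}_{\lfloor\ell/2\rfloor})$-crystal structure, and the decomposition above becomes
$$F(s)^{\dot{\theta}}\simeq\bigoplus_{\substack{\la\in\sS(s)\\ \dot{\theta}(\la)=\la}}B(\la)\otimes B^{(\dot{X}_{\lfloor\ell/2\rfloor})}((\la^\trans)').$$

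The step I expect to be the main obstacle is the well-definedness of $\dot{\theta}$ on all of $F(s)$: one must check that the value $f_{\underline i}(\dot{\theta}\df_{\underline j})^*b_\la$ does not depend on the chosen way of writing $b=\df_{\underline j}^* f_{\underline i}b_\la$, and that the resulting map restricts, on each $A_{\ell-1}$-connected component, to the canonical involution used in \Cref{NS}. This is precisely where one needs both the commutation of the two crystal structures (\Cref{crystals_commute}), which lets one disentangle the $f_{\underline i}$ from the $\df_{\underline j}^*$ and makes $\la=\mathrm{shape}(P(b))$ unambiguous, and the uniqueness of the diagram-automorphism involution on the irreducible $A_{\ell-1}$-crystal $\dB(\la^\trans)$. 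Once these are settled, the remainder is a purely formal transport of \Cref{NS} through $*$ together with \Cref{crystals_commute}, verbatim the argument already used for $F(s)^\theta$ with $n\leftrightarrow\ell$ and $\theta\leftrightarrow\dot{\theta}$, and I would simply note this parallel rather than repeat it.
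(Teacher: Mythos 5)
Your proposal is correct and follows essentially the same route as the paper, which states this theorem as the mirror image of the preceding one for $F(s)^{\theta}$ and justifies that case in one line by invoking \Cref{NS} together with the decomposition $F(s)=\bigoplus_{\la}\df_{\underline{j}}^{*}f_{\underline{i}}b_\la$ from \Cref{sources_finite} and \Cref{rsk}. Your added discussion of the well-definedness of $\dot{\theta}$ via \Cref{crystals_commute} and the uniqueness of the diagram-automorphism involution fills in a detail the paper leaves implicit, but does not change the argument.
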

Now set
$$
\sS(s)^{\theta,\dot{\theta}}=\{\la\in\sS(s)\mid
\theta(\lambda)=\lambda\text{ and }\dot{\theta}(\lambda)=\lambda\}.
$$
Then the set%
$$
F(s)^{\theta,\dot{\theta}}=F(s)^{\theta}\cap F(s)^{\dot{\theta}}.
$$
is the subset of $F(s)$ obtained from the double highest weight vertices
$b_{\lambda}$ with $\la\in\sS(s)^{\theta,\dot{\theta}}$ by applying
the modified crystal operators $f_{i}^{\theta}$ and $\dot{f}_{j}^{\dot{\theta
}}$.

\begin{thm}\label{Thbicrystalfinite}
	The set $F(s)^{\theta,\dot{\theta}}$ has the structure of an $(X_{\left\lfloor
		n/2\right\rfloor } \times \dot{X}_{\left\lfloor \ell/2\right\rfloor })$-crystal where
	\[
	X_{\left\lfloor n/2\right\rfloor }=\left\{
	\begin{array}
	[c]{l}%
	B\text{ if }n-1\text{ is odd,}\\
	C\text{ if }n-1\text{ is even,}%
	\end{array}
	\right.  \text{ and }\dot{X}_{\left\lfloor \ell/2\right\rfloor }=\left\{
	\begin{array}
	[c]{l}%
	B\text{ if }\ell-1\text{ is odd,}\\
	C\text{ if }\ell-1\text{ is even.}%
	\end{array}
	\right.
	\]
	Moreover, we have the decomposition
	\[
	F(s)^{\theta,\dot{\theta}}\simeq\bigoplus_{\la\in\sS(s)^{\theta,\dot{\theta}}}
	B^{(X_{\left\lfloor n/2\right\rfloor })}(\lambda^{\prime})\otimes B^{(\dot{X}_{\left\lfloor \ell/2\right\rfloor })}%
	((\lambda^{\mathrm{tr}})^{\prime}).
	\]
	
\end{thm}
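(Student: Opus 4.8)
The plan is to obtain the theorem by combining the $(A_{n-1}\times A_{\ell-1})$-bicrystal decomposition of \Cref{rsk} with two independent applications of the Naito--Sagaki theorem \Cref{NS}, one on each tensor factor. This is the natural iteration of the two preceding theorems of this subsection, which take Dynkin diagram fixed points on one side only.

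First I would use \Cref{rsk} to identify $F(s)\simeq\bigoplus_{\la\in\sS(s)}B(\la)\otimes\dB(\la^\trans)$ as an $(A_{n-1}\times A_{\ell-1})$-crystal via $b\mapsto(P(b),Q(b))$, and then reinterpret $\theta$ and $\dot\theta$ in this language. Since $\theta(b)=\df^*_{\underline j}\,\theta(f_{\underline i}b_\la)$ for $b=\df^*_{\underline j}f_{\underline i}b_\la$, the map $\theta$ replaces $P(b)=f_{\underline i}b_\la$ by its image under the $A_{n-1}$-crystal involution and leaves $Q(b)=\df_{\underline j}b_\la^*$ unchanged; symmetrically $\dot\theta$ applies the $A_{\ell-1}$-crystal involution to $Q(b)$ and fixes $P(b)$. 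In particular $\theta$ and $\dot\theta$ act on distinct coordinates of $B(\la)\otimes\dB(\la^\trans)$ (hence commute), $\theta$ is defined exactly on the summands with $\theta(\la)=\la$, $\dot\theta$ exactly on those with $\dot\theta(\la)=\la$, and therefore
$$F(s)^{\theta,\dot\theta}=F(s)^\theta\cap F(s)^{\dot\theta}=\bigoplus_{\la\in\sS(s)^{\theta,\dot\theta}}B(\la)^\theta\otimes\dB(\la^\trans)^{\dot\theta}.$$

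Next I would equip this set with the two commuting families of modified crystal operators. The operators $f_i^\theta$ of \Cref{NS} are fixed words in the $A_{n-1}$-operators $f_1,\dots,f_{n-1}$, hence act on the first tensor factor only, while the operators $\dot f_j^{\dot\theta}$ are the corresponding words in the dual operators $\df^*_j$ and act on the second factor only. By \Cref{crystals_commute} each $f_i$ commutes, as a partial map, with each $\df^*_j$ on every connected component of $F(s)$; hence so do any two such words, so $f_i^\theta$ commutes with $\dot f_j^{\dot\theta}$. Moreover, applying \Cref{NS} to the $A_{n-1}$-crystal $B(\la)$ shows that $B(\la)^\theta$ is exactly the set generated from $b_\la$ by the $f_i^\theta$ and is a type-$X_{\lfloor n/2\rfloor}$ crystal, while applying it to the $A_{\ell-1}$-crystal $\dB(\la^\trans)$ shows that $\dB(\la^\trans)^{\dot\theta}$ is generated from its highest weight vertex $b_\la^*$ by the $\dot f_j^{\dot\theta}$ and is a type-$\dot X_{\lfloor\ell/2\rfloor}$ crystal. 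Combined with the commutation above, every vertex of $F(s)^{\theta,\dot\theta}$ is reached from a double highest weight vertex $b_\la$, $\la\in\sS(s)^{\theta,\dot\theta}$, by a sequence of $f_i^\theta$'s and $\dot f_j^{\dot\theta}$'s, so the product of these two crystal structures yields the claimed $(X_{\lfloor n/2\rfloor}\times\dot X_{\lfloor\ell/2\rfloor})$-crystal structure. Finally, the crystal isomorphisms $B(\la)^\theta\simeq B^{(X_{\lfloor n/2\rfloor})}(\la')$ and $\dB(\la^\trans)^{\dot\theta}\simeq B^{(\dot X_{\lfloor\ell/2\rfloor})}((\la^\trans)')$ supplied by \Cref{NS}, tensored over $\la\in\sS(s)^{\theta,\dot\theta}$, give the stated decomposition.

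The main obstacle I expect is the bookkeeping of the first step: one must check that $\theta$ and $\dot\theta$ are well defined on $F(s)$ (independently of the chosen reduced words $\underline i$, $\underline j$ and of the highest weight vertex $b_\la$ representing a given $b$) and that they genuinely coincide with the factor-wise crystal involutions on $B(\la)\otimes\dB(\la^\trans)$, which is precisely where \Cref{crystals_commute} and \Cref{duality_intertwines_Yam_tab} enter. One should also record that $\dot\theta(\la)=\la$ is equivalent to $\la^\trans$ being fixed by the Dynkin diagram automorphism of $A_{\ell-1}$, so that $(\la^\trans)'$ is meaningful and $\sS(s)^{\theta,\dot\theta}$ is exactly the index set of the decomposition. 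Once these verifications are in place, the commutation of the modified operators and the two separate invocations of \Cref{NS} are routine, and the affine analogue follows by the same argument with \Cref{rsk} and \Cref{NS} replaced by their affine counterparts.
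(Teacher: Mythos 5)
Your proposal is correct and follows exactly the route the paper intends: the paper states this theorem without a written proof, relying on the $(A_{n-1}\times A_{\ell-1})$-decomposition of \Cref{rsk}, the fact that $\theta$ and $\dot\theta$ act on separate tensor factors (so the modified operators commute by \Cref{crystals_commute}), and two independent applications of \Cref{NS} to $B(\la)$ and $\dB(\la^\trans)$. Your additional verifications (well-definedness of $\theta$, $\dot\theta$ independent of the chosen $\underline{i}$, $\underline{j}$, and the identification of $\sS(s)^{\theta,\dot\theta}$ as the index set) are exactly the bookkeeping the paper leaves implicit.
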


\begin{rem}
	\ 
	
	\begin{enumerate}

		\item It would be interesting to have a combinatorial description of the set
		$F(s)^{\theta,\dot{\theta}}$ as tensor products of columns or binary matrices.
		
		\item By applying the relevant weight functions on the previous decompositions
		obtained for $F(s)^{\theta},F(s)^{\dot{\theta}}$ and $F(s)^{\theta,\dot{\theta
		}}$, one can get analogues of Cauchy identities in our bicrystal context . For
		example, for any vertex $b$ in $F(s)^{\theta,\dot{\theta}}$, $\mathrm{wt}%
		(b)^{\prime}$ is a weight of type $X_{\left\lfloor n/2\right\rfloor }$ whereas
		$\mathrm{wt}(b^{\ast})^{\prime}$ is a weight of type $\dot{X}_{\left\lfloor
			\ell/2\right\rfloor }$. One obtains
		\[
		\sum_{b\in F(s)^{\theta,\dot{\theta}}}x^{\mathrm{wt}(b)^{\prime}}y^{\mathrm{wt}%
			(b^{\ast})^{\prime}}=
		\sum_{\lambda\in\sS(s)^{\theta,\dot{\theta}}}s_{\lambda^{\prime}%
		}^{(X_{\left\lfloor n/2\right\rfloor })}(x)s_{(\lambda^{\mathrm{tr}})^{\prime}%
		}^{(\dot{X}_{\left\lfloor \ell/2\right\rfloor })}(y)
		\]
		where for any partition $\nu$ of length at most $m$ (respectively at most $p$), $s_{\nu}^{(X_{m})}$ (respectively $s_{\nu}^{(\dot{X}_{p})}$) stands
		for the Weyl character of type $X_{m}$ (respectively $\dot{X}_p$) associated to $\nu$.
		
		\item The results of this paragraph have analogues when the map $\theta$ is
		replaced by the map $\eta$ defined in \Cref{fixed_points} and $P^{\theta}$
		by $P^{\eta}$. 
		\item In \cite{King1975}, King established the interesting Cauchy type formula
		\[
		\prod_{i=1}^{n}\prod_{j=1}^{\ell}(x_{i}+x_{i}^{-1}+y_{j}+y_{j}^{-1}%
		)=\sum_{\lambda\subset n\times\ell}s_{\lambda}^{(C_{n})}(x)s_{[\lambda
			]}^{(C_{\ell})}(y)
		\]
		where $[\lambda]$ is the transposed of the rectangular complement of $\lambda$
		in $n\times\ell$. Although the right-hand side looks similar to the sum appearing in our results when $n-1$ and $\ell-1$ are even,
		it is not obvious to relate both.
		
	\end{enumerate}
\end{rem}

To conclude this paragraph, let us introduce a last natural involution
$\ddot{\theta}$ on the set $\sS(s)$. Here for any $\lambda\in\sS(s)$, the Young diagram of $\ddot{\theta}(\lambda)$ is obtained
from that of $\lambda$ by changing each column of height $0 \leq i\leq n$ in
a column of height $n-i$.
In other words,  $\ddot{\theta}(\la)$ is the $\pi$-rotation of the complement of $\la$ in the $n\times\ell$-rectangle as illustrated in \Cref{exa_inv_diag} below.
In particular $\theta$ and $\ddot{\theta}$ coincide
on the partitions with no column of height $n$ or $0$ but this is not true in
general. 

\begin{exa}\label{exa_inv_diag}
Take $\ell=5,n=3$ and $\Yboxdim{7pt}\la= \yng(3,2,1)$. Then $\la$ and $\ddot{\theta}(\la)$ fit in the $n\times\ell$-rectangle as follows
\begin{center}
$\vcenter{\hbox{
\begin{tikzpicture}
[scale=0.3, every node/.style={scale=1}]

\node (a) at (-4,-1) {$\la$};
\node (b) at (14,-1) {$\pi$-rotation of $\ddot{\theta}(\la)$};

\filldraw [fill=gray, fill opacity=0.5] (2,-3) rectangle (1,-2);
\filldraw [fill=gray, fill opacity=0.5] (3,-3) rectangle (2,-2);
\filldraw [fill=gray, fill opacity=0.5] (3,-2) rectangle (2,-1);
\filldraw [fill=gray, fill opacity=0.5] (4,-3) rectangle (3,-2);
\filldraw [fill=gray, fill opacity=0.5] (4,-2) rectangle (3,-1);
\filldraw [fill=gray, fill opacity=0.5] (4,-1) rectangle (3,-0);
\filldraw [fill=gray, fill opacity=0.5] (5,-3) rectangle (4,-2);
\filldraw [fill=gray, fill opacity=0.5] (5,-2) rectangle (4,-1);
\filldraw [fill=gray, fill opacity=0.5] (5,-1) rectangle (4,-0);

\draw (0,0) -- (0,-3);
\draw (1,0) -- (1,-3) ;
\draw (2,0) -- (2,-3) ;
\draw (3,0) -- (3,-3) ;
\draw (4,0) -- (4,-3) ;
\draw (5,0) -- (5,-3) ;
\draw (0,0) -- (5,0) ;
\draw (0,-1) -- (5,-1) ;
\draw (0,-2) -- (5,-2) ;
\draw (0,-3) -- (5,-3) ;

\draw[-] (a) -- (-0.5,-1.5);
\draw[-] (b) -- (5.5,-1.5);
\end{tikzpicture}
}}$
\quad so we have \quad 
$\ddot{\theta}(\la)=\Yboxdim{7pt}\yng(4,3,2)$.
\end{center}
\end{exa}

One may also observe that $\ddot{\theta}(\lambda)=\lambda$ if and
only if $\ddot{\theta}(\lambda)$ is obtained from 
$\lambda$ by
changing each row of length $0\leq j\leq \ell$ in a row of length $\ell-j$. In fact, we see that
$\ddot{\theta}(\lambda)=\lambda$ if and only if
each box of the Young diagram of $\lambda$ is paired with a box of
$n\times\ell$ outside $\lambda$.

Also, the weights of type $A_{n-1}$ and
$A_{\ell-1}$ associated to the partitions in the set%
\[
\sS^{\ddot{\theta}}(s)=\{\lambda\in\sS(s)\mid\ddot{\theta
}(\lambda)=\lambda\}
\]
are fixed simultaneously by $\theta$ and $\dot{\theta}$ (since columns of
height $n$ and rows of length $\ell$ do not contribute to $A_{n-1}$ and
$A_{\ell-1}$ weights, respectively). Thus
\[
F(s)^{\ddot{\theta}}=\{b\in F(s)^{\theta,\dot{\theta}}\mid b=f_{\underline{i}%
}\dot{f}_{\underline{j}}^{\ast}b_{\lambda}\text{ with }\ddot{\theta}%
(\lambda)=\lambda\}
\]
has a bicrystal structure exactly as in Theorem \ref{Thbicrystalfinite}. 
Also, note that when $\ell$ and $n$ are
both odd, the set $\sS^{\ddot{\theta}}(s)$ is empty. 

\begin{exa}
	Assume $n=4$ and $\ell=3$. Then the set $\sS^{\ddot{\theta}}(s)$
	contains exactly the partitions $(2,2,1,1),(3,3,0,0),(3,2,1,0)$. We have
	\[
	\sum_{b\in F(s)^{\ddot{\theta}}}x^{\mathrm{wt}(b)^{\prime}}y^{\mathrm{wt}%
		(b^{\ast})^{\prime}}=s_{(1,1)}^{(B_{2})}(x)s_{(2)}^{(\dot{C}_{1})}+s_{(3,3)}%
	^{(B_{2})}(x)s_{(0)}^{(\dot{C}_{1})}(y)+s_{(2,1)}^{(B_{2})}(x)s_{(1)}^{(\dot{C}_{1})%
	}(y).
	\]
	
\end{exa}

\subsection{Bicrystal struture on fixed points sets: the affine case}

Consider the type $A_{n-1}^{(1)}$ Dynkin diagram automorphism 
$\theta: i\mapsto -i \mod n$, for all $i=0,\ldots, n-1$. Its induces an involution on the cone of dominant weights $P_{+}$ sending each fundamental weight $\omega_{i},i=0,\ldots,n-1$ on $\omega_{-i \mod n}$. Let $P_{+}^{\theta}$
be the subset of $P_{+}$ of dominant weights fixed by $\theta$.\ When $n=2m$
is even (respectively $n=2m-1$ is odd), there is a bijection $\lambda\mapsto
\lambda^{\prime}$ between the sets $P_{+}^{\theta}$ and the set $P_{+}%
^{(D_{m+1}^{(2)})}$ (respectively $P_+^{(A_{2(m-1)}^{(2)})}$) of dominant weights for the root
system $D_{m+1}^{(2)}$ (respectively $A_{2(m-1)}^{(2)}$) (in Kac's classification of affine Dynkin diagrams \cite{Kac1990}).

For any $\lambda\in P_{+}$, similarly to the classical case, we have a
crystal anti-isomorphism also denoted by $\theta$ from $B(\lambda)$ to
$B(\theta(\lambda))$ which flips the labels $i$ and $-i \mod n$ of the
arrows.\ Assuming that $\theta(\lambda)=\lambda$, one gets an
involution on $B(\lambda)$ (also denoted by $\theta$) and it makes
sense to set $B^{\theta}(\lambda)=\{b\in B(\lambda)\mid\theta(b)=b\}$.
Let us define some modified crystal operators by%
\begin{align*}
{f}_{i}^{\theta}  & ={f}_{i}{f}_{2m-i},i=1,\ldots
,n-1,\quad{f}_{0}^{\theta}={f}_{0}\text{ and }\quad{f}%
_{m}^{\theta}={f}_{m}\text{ when }n=2m,\\
{f}_{i}^{\theta}  & ={f}_{i}{f}_{2m-1-i},i=1,\ldots
,n-1,\quad{f}_{0}^{\theta}={f}_{0}\text{ and }\quad{f}%
_{m}^{\theta}={f}_{m}{f}_{m-1}^{2}{f}_{m}\text{ when }n=2m-1.
\end{align*}
In \cite{NaitoSagaki2004}, it was proved that when $n=2m-1$ (respectively $n=2m$)
these operators stabilize $B(\lambda)^{\theta}$ and the crystal structure
obtained in this way is isomorphic to $B^{(D_{m+1}^{(2)})}(\lambda^{\prime})$ \ (respectively to
$B^{(A_{2m}^{(2)})}(\lambda^{\prime})$). Write for short $B^{(X_{})}%
(\lambda^{\prime})$ the crystal obtained in both cases.

By Lemma \ref{duality_empty}, for any $\bs\in\sD(s)$, the image of the vertex $b=b_\bs$ by the duality $\star$ is $\dot{b}_{\bs^\star}$ with $\bs^\star\in\dot{\sD}(s)$. Recall that we had denoted $\om_\bs=\om_{s_\ell}+\cdots+\om_{s_1}$ for $\bs\in\sD(s)$
and $\dot{\om}_{\dot{\bs}}=\dot{\om}_{\dot{s}_n}+\cdots+\dot{\om}_{\dot{s}_1}$ for $\dot{\bs}\in\dot{\sD(s)}$.
By mimicking the construction in \Cref{bicrystals_fixed_points}, one can define, for any combinatorial
Fock space $\widehat{F}(s)$, the crystal $\widehat{F}(s)^{\theta}$ as the crystal generated from
the triple highest weight vertices $b_{\bs}$ with $\bs\in\sD(s)^\theta$
where
$$\sD(s)^\theta=\left\{ \bs\in\sD(s) \mid \theta(\om_\bs)=\om_\bs \right\}.$$ 
Then, $\widehat{F}(s)^{\theta}$ has the structure
of an $(X_{}\times A_\infty \times A_{\ell-1}^{(1)})$-crystal.
Similarly, 
by considering $\dot{\theta}$ the Dynkin diagram automorphism of type
$A_{\ell-1}^{(1)}$, one can define the crystal $\widehat{F}(s)^{\dot{\theta}}$,
which has an $(A_{n-1}^{(1)}\times A_\infty \times\dot{X}_{})$-crystal structure. 
Finally, write%
\[
\sD(s)^{\theta,\dot{\theta}}=\{\bs\in\sD(s)\mid\theta(\omega_{\bs})=\omega_{\bs}\text{ and }%
\dot{\theta}(\dot{\omega}_{\bs^\star})=\dot{\omega}_{\bs^\star}\}
\]

\begin{exa}
	Assume $\boldsymbol{s}=(s_{1},\ldots,s_{\ell})$ belong to $\sD(s)$ and
	is such that $0\leq s_{1}\leq s_{1}\leq\cdots\leq s_{\ell}<n$ with
	$s_{j}=s_{\ell-j+1}$ for any $j=1,\ldots,\ell$.\ Then we clearly have
	$\theta(\boldsymbol{s})=\boldsymbol{s}$.\ But by Lemma \ref{duality_empty} we also get
	$\dot{\theta}(\boldsymbol{s})=\boldsymbol{s}$ since for any $j=1,\ldots
	,\ell-1$ we have $s_{j+1}-s_{j}=(n-s_{j})-(n-s_{j+1})=s_{\ell-j+1}-s_{\ell-j}%
	$.\ In particular $\boldsymbol{s}\in\sD(s)^{\theta,\dot{\theta}}$.
\end{exa}

Now define $\widehat{F}(s)^{\theta,\dot{\theta}}$ as the crystal generated by the
operators ${f}_{i}^{\theta}$ and $(\dot{f}_{i}^{\dot{\theta}})^{\ast}$
applied on the triple highest weight vertex $b_{\bs}$ with
$\bs\in\sD(s)^{\theta,\dot{\theta}}$. 
Then $\widehat{F}(s)^{\theta,\dot{\theta}}$ has the structure of an $(X_{}\times A_\infty \times
{\dot{X}}_{})$-crystal. The crystals $\widehat{F}(s)^{\theta},\widehat{F}(s)^{\dot{\theta}}$ and
$\widehat{F}(s)^{\theta,\dot{\theta}}$ can then be regarded as combinatorial Fock spaces
carrying a crystal structure other than type $A$.

\medskip

When $n=2m$ is even, one can also define the subset $P_{+}^{\eta}$ (respectively
$P_{+}^{\zeta}$) of $P_{+}^{\theta}$ of weights with an even $\omega_{0}%
$-coordinate (respectively with even $\omega_{0}$ and $\omega_{m}$%
-coordinates).\ Then, there is a bijective map $\om_\bs\mapsto\om_\bs^\dagger$
between $P_{+}^{\eta}$ and $P_{+}^{(\tilde{A}_{2m}^{(2)})}$ the set of dominant
weights for the affine root system $\tilde{A}_{2m}^{(2)}$.\ We also have a
bijective map $\om_\bs\mapsto\om_\bs^{\ddagger}$ between $P_{+}^{\zeta}$ and
$P_{+}^{(C_{m}^{(1)})}$ the set of dominant weights for the affine root system
$C_{m}^{(1)}$.\ Let us define some modified crystal operators by%
\begin{align*}
{f}_{i}^{\eta}  & ={f}_{i}{f}_{2m-i},i=1,\ldots
,n-1,\quad{f}_{0}^{\eta}={f}_{0}^{2}\text{ and }\quad{f}%
_{m}^{\eta}={f}_{m}\text{,}\\
{f}_{i}^{\zeta}  & ={f}_{i}{f}_{2m-i},i=1,\ldots
,n-1,\quad{f}_{0}^{\zeta}={f}_{0}^{2}\text{ and }\quad
{f}_{m}^{\zeta}={f}_{m}^{2}.
\end{align*}
Write $\widehat{F}(s)^\eta$ (respectively $\widehat{F}(s)^\zeta$) for the subcrystal of
$\widehat{F}(s)$ obtained by applying the operators ${f}_{i}^{\eta}$ (respectively
the operators ${f}_{i}^{\zeta}$) to vertices of the form $b_\bs$ with $\bs$ fixed by $\eta$
(respectively $\zeta$). By the results of
\cite{Kashiwara1996}, one gets that $\widehat{F}(s)^\eta$ and $\widehat{F}(s)^{\zeta
}$ are isomorphic to the crystals $F^{(\tilde{A}_{2m}^{(2)})}%
(s^\dagger)$ and $F^{(C_{m}^{(1)})}(s^{\ddagger})$ of type $\tilde{A}_{2m}^{(2)}$
and $C_{m}^{(1)}$, respectively. From any combinatorial Fock space $\widehat{F}(s)$, one
can then define the combinatorial Fock spaces $\widehat{F}(s)^\eta,F^{\dot{\eta}%
}(s),\widehat{F}(s)^\zeta$ and $\widehat{F}(s)^{\dot{\zeta}}$ as previously and also get
triple structures of crystal with one structure of affine type other than $A$.

\noindent In general, one can define sets%
\[
\sD(s)^{\sharp,\dot{\flat}}=\{\bs\in\sD(s)\mid\omega_{\bs}\in P_{+}^{\sharp}\text{ and }%
\dot{\omega}_{\dbs}\in\dot{P}_{+}^{\flat}\}
\]
where the symbols $\sharp$ and $\flat$ belong to the set
$\{\theta,\eta,\zeta\}$. 
We can then define $\widehat{F}(s)^{\sharp,\dot{\flat}}$ as the crystal generated by the operators ${f}_{i}^{\sharp}$
and $(\dot{f}_{i}^{\dot{\flat}})^{\ast}$ applied on the triple highest
weight crystal $b_{\bs}$ with $\bs\in\sD(s)^{\sharp,\dot{\flat}}$. It admits the structure of an 
$(X\times A_\infty \times {\dot{X}}_{})$-crystal described
by the table below.
\[%
\begin{tabular}
[c]{c|c|c|c|c}
& $\theta,n=2m-1$ & $\theta,n=2m$ & $\eta,n=2m$ & $\zeta,n=2m$\\\hline
$\dot{\theta},\ell=2p-1$ & $A_{2(m-1)}^{(2)}\times A_{\infty}\times\dot
{A}_{2(p-1)}^{(2)}$ & $D_{m+1}^{(2)}\times A_{\infty}\times\dot{A}%
_{2(p-1)}^{(2)}$ & $\tilde{A}_{2m}^{(2)}\times A_{\infty}\times\dot
{A}_{2(p-1)}^{(2)}$ & $C_{m}^{(1)}\times A_{\infty}\times\dot{A}%
_{2(p-1)}^{(2)}$\\\hline
$\dot{\theta},\ell=2p$ & $A_{2(m-1)}^{(2)}\times A_{\infty}\times{\dot{D}%
}_{p+1}^{(2)}$ & $D_{m+1}^{(2)}\times A_{\infty}\times{\dot{D}}_{p+1}^{(2)}$ &
$\tilde{A}_{2m}^{(2)}\times A_{\infty}\times{\dot{D}}_{p+1}^{(2)}$ &
$C_{m}^{(1)}\times A_{\infty}\times{\dot{D}}_{p+1}^{(2)}$\\\hline
$\dot{\eta},\ell=2p$ & $A_{2(m-1)}^{(2)}\times A_{\infty}\times\dot
{\tilde{A}}_{2p}^{(2)}$ & $D_{m+1}^{(2)}\times A_{\infty}\times\dot{\tilde{A}%
}_{2p}^{(2)}$ & $\tilde{A}_{2m}^{(2)}\times A_{\infty}\times\dot{\tilde{A}%
}_{2p}^{(2)}$ & $C_{m}^{(1)}\times A_{\infty}\times\dot{\tilde{A}}_{2p}^{(2)}%
$\\\hline
$\dot{\zeta},\ell=2p$ & $A_{2(m-1)}^{(2)}\times A_{\infty}\times{\dot{C}%
}_{p}^{(1)}$ & $D_{m+1}^{(2)}\times A_{\infty}\times{\dot{C}}_{p}^{(1)}$ &
$\tilde{A}_{2m}^{(2)}\times A_{\infty}\times{\dot{C}}_{p}^{(1)}$ &
$C_{m}^{(1)}\times A_{\infty}\times{\dot{C}}_{p}^{(1)}$\\
&  &  &  &
\end{tabular}
\]
In each case, $\widehat{F}(s)^{\sharp,\dot{\flat}}$ can be regarded as a
combinatorial Fock space carrying a triple crystal structure, two of them
being of affine type other than $A$. 
Observe that this gives rises to 
all possible classical affine crystal 
structures except those corresponding to Dynkin diagrams containing a sub-Dynkin diagram 
of classical type $D$.

\section{Promotion operator and a generalisation of Pitman's 2M-X transform}

In this section, we first relate the Pitman transform 2M-X to the affine
crystals $A_{1}^{(1)}$ and show how the energy can be used to prove that the
successive iterations of this transform on trajectories on $\mathbb{Z}$ with
steps $\pm1$ tend to the trivial trajectory, all of whose steps are equal to $1$.
We next define a transformation analogue in higher dimension and establish
that it also yields a natural convergence of trajectories.

\subsection{Affine type $A_{1}^{(1)}$-crystal and Pitman's 2M-X transform}

\label{SubSec_Pit1}In this paragraph, we shall consider tensor products
$B^{\otimes n}$ of the affine Kirillov-Reshetikhin crystal of type $A_{1}^{(1)}$ where
\[
B:1\overset{0}{\underset{1}{\leftrightarrows}}2.
\]
There is a straightforward bijection between the vertices $b^{\ast
}=\varepsilon_{1}\otimes\cdots\otimes\varepsilon_{n}\in B^{\otimes n}$ and the
trajectories $\pi$ of length $n$ on the set $\mathbb{Z}$ of integers starting
at $0$ with steps $+1$ or $-1$ defined by $\pi(k)=\pi_{+}(k)-\pi_{-}(k)$ for
any $k=0,\ldots,n$ where $\pi_{+}(k)$ (respectively $\pi_{-}(k)$) is the numbers of
letters $1$ (respectively of letters $2$) in $\varepsilon_{1}\otimes\cdots
\otimes\varepsilon_{k}$.\ In the sequel, we shall abuse the notation and
identify the vertices $b^{\ast}$ with their associated path $\pi$. This
corresponds to the Littelmann path model for $A_{1}$ and it is easy to
check that $b^{\ast}$ is a Yamanouchi word if and only if $\pi(k)\geq0$ for
any $k=0,\ldots,n$.

We now define the two Pitman transforms $\mathcal{P}_{\min}$ and
$\mathcal{P}_{\max}$ on the trajectories $\pi\in B^{\otimes n}$ by%
\[
\mathcal{P}_{\min}(\pi)(k)=\pi(k)-2\min_{0\leq a\leq k}\pi(a)\text{ and
}\mathcal{P}_{\max}(\pi)(k)=2\max_{0\leq a\leq k}\pi(a)-\pi(k)
\]
for any $0\leq k\leq n$. The following properties are easy to check.

\begin{enumerate}
	\item The image by $\mathcal{P}_{\min}$ or $\mathcal{P}_{\max}^{\ast}$ of any
	trajectory $\pi\in B^{\otimes n}$ is a trajectory which always remains nonnegative.
	
	\item The trajectory $\mathcal{P}_{\min}(\pi)$ corresponds to the highest
	weight vertex associated to $\pi$ in $B^{\otimes n}$ for the $A_{1}$-structure
	obtained by deleting the $0$-arrows. 
	
	\item The nonnegative trajectories are fixed by the transformation
	$\mathcal{P}_{\min}$ (since we then have $\inf_{0\leq a\leq k}\pi(k)=0$ for
	any $0\leq a\leq n$). This is not true for the transformation $\mathcal{P}%
	_{\max}$.
	
	\item We have $\mathcal{P}_{\min}(\pi)=\pi$ if and only if $\min_{0\leq a\leq
		k}\pi(a)=0$ for any $0\leq k\leq n$, that is $\pi$ remains nonnegative.
	
	\item We have $\mathcal{P}_{\max}(\pi)=\pi$ if and only if $\max_{0\leq a\leq
		k}\pi(a)=\pi(k)$ for any $0\leq k\leq n$. This means that $\pi(k)=k$ for any
	$0\leq k\leq n$, i.e. $\pi$ is the trivial trajectory $\pi_{0}$ whose all
	steps are equal to $1$.
\end{enumerate}

In the particular case $A_{1}^{(1)}$, the promotion operator $\mathrm{pr}$
acts on each trajectory $\pi$ just by flipping the steps $+1$ and $-1$. Thus,
the path $\mathrm{pr}(\pi)$ is obtain by reflecting the path $\pi$ i.e. we
have $\mathrm{pr}(\pi)(k)=-\pi(k)$ for any $0\leq k\leq n$. This implies that
\[
\mathcal{P}_{\max}=\mathcal{P}_{\min}\circ\mathrm{pr}\text{.}%
\]
By using the results of \Cref{cyclage_promotion}, we get the following proposition

\begin{prop}
	\label{Prop_Pmax1} \ 
	
	\begin{itemize}
		\item For any $\pi$ in $B^{\otimes n}$, we have $D(\mathcal{P}_{\min}%
		(\pi))=D(\pi)$: the Pitman transform $\mathcal{P}_{\min}$ preserves the energy
		$D$.
		
		\item For any nonnegative trajectory $\pi$ in $B^{\otimes n}$, we have
		$D(\mathrm{pr}(\pi))=D(\pi)-\pi_{-}$where $\pi_{-}$ is equal to the number of
		steps $-1$ in $\pi$: the promotion operator makes decrease the energy of a
		trajectory as the number of its negative steps.
		
		\item For any trajectory $\pi$ in $B^{\otimes n}$, there exists an integer
		$m_{0}$ such that for any $m\geq m_{0}$, we have $\mathcal{P}_{\max}^{(m)}%
		(\pi)=\pi_{0}$.
	\end{itemize}
\end{prop}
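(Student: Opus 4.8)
The plan is to establish the three items in turn, the first two being immediate consequences of results already at hand and the third a combination of them with the finiteness of $B^{\otimes n}$. For the first item, recall that $\mathcal{P}_{\min}(\pi)$ is the source of the connected component of $\pi$ in $B^{\otimes n}$ for the $A_{1}$-structure obtained by deleting the $0$-arrows, so $\mathcal{P}_{\min}(\pi)$ is obtained from $\pi$ by applying raising operators $e_{1}$. It therefore suffices to know that the energy $D$ is constant on each classical $A_{1}$-component. In the path language of \Cref{SubSec_Pit1} this is the $\ell=2$ instance of the general fact, implicit in \Cref{charge_energy}, that the energy of a vertex of $\dF(s)$ depends only on its $A_{n-1}$-rectification: indeed $\de_{j}^{*}=J_{j}$ is an elementary Jeu de Taquin slide, which does not change the rectification, and together with the identity $\ch(b)=D(b^{*})$ of \Cref{duality_intertwines_charge_energy} this gives $D(\de_{j}b^{*})=D(b^{*})$, i.e.\ $D$ is invariant under the $A_{\ell-1}$-crystal operators on $\dF(s)$. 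Specialising to $\ell=2$ yields $D(\mathcal{P}_{\min}(\pi))=D(\pi)$.

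For the second item, I would simply specialise \Cref{prop_energy_3} to $\ell=2$ with all column heights equal to $1$. Each column $d_{i}$ of $b^{*}=\varepsilon_{1}\otimes\cdots\otimes\varepsilon_{n}$ is then a single box, so the condition $d_{i}\neq\{1,\dots,\ell\}=\{1,2\}$ holds automatically; the Yamanouchi hypothesis on $b^{*}$ is exactly the condition $\pi(k)\geq 0$ for all $k$, i.e.\ that $\pi$ is nonnegative; the promotion map of \Cref{def_cyclage_prom} specialises (for $\ell=2$) to the exchange $1\leftrightarrow 2$, that is, to the reflection of the steps of $\pi$; and the number $N$ of letters $\ell=2$ in $b^{*}$ is precisely the number $\pi_{-}$ of steps $-1$. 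Hence \Cref{prop_energy_3} reads $D(\mathrm{pr}(\pi))=D(\pi)-\pi_{-}$, which is the claim.

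For the third item, set $\pi^{(m)}=\mathcal{P}_{\max}^{(m)}(\pi)$. Since $\mathcal{P}_{\max}=\mathcal{P}_{\min}\circ\mathrm{pr}$ and the image of $\mathcal{P}_{\min}$ is always a nonnegative trajectory, $\pi^{(m)}$ is nonnegative for every $m\geq 1$. Applying the first two items to $\pi^{(m)}$ then gives, for all $m\geq 1$,
\[
D(\pi^{(m+1)})=D\bigl(\mathcal{P}_{\min}(\mathrm{pr}(\pi^{(m)}))\bigr)=D(\mathrm{pr}(\pi^{(m)}))=D(\pi^{(m)})-(\pi^{(m)})_{-}.
\]
Thus $\bigl(D(\pi^{(m)})\bigr)_{m\geq 1}$ is non-increasing, and since $B^{\otimes n}$ is finite $D$ takes only finitely many values on it, so this sequence is eventually constant, say for $m\geq m_{0}$. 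From the displayed identity at $m=m_{0}$ we get $(\pi^{(m_{0})})_{-}=0$, so $\pi^{(m_{0})}$ has no step $-1$ and hence equals the trivial trajectory $\pi_{0}$; as $\mathcal{P}_{\max}(\pi_{0})=\pi_{0}$, we conclude $\pi^{(m)}=\pi_{0}$ for all $m\geq m_{0}$.

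The only delicate point is the first item, and it is a matter of conventions rather than computation: one must make sure that the energy $D$ in \Cref{Prop_Pmax1} on the affine $A_{1}^{(1)}$-crystal $B^{\otimes n}$ is the $\ell=2$ specialisation of the energy of \Cref{def_energy} on the finite-type $A_{\ell-1}$-crystals $\dF(s)$, and that erasing the $0$-arrows from $B^{\otimes n}$ recovers exactly the classical $A_{1}=A_{\ell-1}$-structure on $\dF((1,\dots,1))$, on which $D$ is constant by the argument above. Once this identification is in place, the remaining two items are straightforward translations of the statements of \Cref{charge_energy} into the Littelmann-path setting of \Cref{SubSec_Pit1}.
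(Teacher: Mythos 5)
Your proof is correct and follows essentially the same route as the paper: constancy of $D$ on classical components for the first item, the computations of Section \ref{charge_energy} for the second, and a monotonicity argument on $D(\pi^{(m)})$ for the third. Two small divergences are worth noting, both in your favour. For the second item you invoke \Cref{prop_energy_3} (specialised to $\ell=2$ with unit columns), whereas the paper cites only \Cref{lem_energy_2}; since the statement concerns $n$ tensor factors rather than two, the $n$-factor proposition is the more directly applicable reference, and your verification that its hypotheses ($d_i\neq\{1,2\}$ automatic, Yamanouchi $=$ nonnegative, $N=\pi_-$) degenerate correctly is exactly what is needed. For the third item, the paper argues that $D(\pi^{(m)})$ decreases strictly to zero and then appeals to $\pi_0$ being the unique nonnegative trajectory of zero energy; your variant — eventual constancy of a non-increasing sequence with finitely many values, forcing $(\pi^{(m_0)})_-=0$ and hence $\pi^{(m_0)}=\pi_0$ outright — reaches the same conclusion without needing the nonnegativity of $D$ or the uniqueness of the zero-energy trajectory, neither of which is proved in the paper. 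The justification you give for the first item via $\ch(b)=D(b^*)$ and the Jeu-de-Taquin description of $\dot e_j^*$ is more roundabout than simply quoting the standard invariance of energy under classical crystal operators, but it is internally consistent with the paper's machinery.
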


\begin{proof}
	The first claim follows from the fact that $\mathcal{P}_{\min}(\pi)$ is the
	highest weight vertex of the $A_{1}$-connected component containing $\pi$ and
	$D$ is constant on classical components. The second one is a consequence of
	\Cref{lem_energy_2}. Finally, the sequence of integers $D(\pi^{(m)})$ is nonnegative,
	strictly decreasing while $\pi^{(m)}$ contains at least a step $-1$. It will
	eventually becomes equal to zero for $m$ sufficiently large. Then $\pi
	^{(m)}=\pi_{0}$ because $\pi_{0}$ is the unique nonnegative trajectory such
	that $D(\pi_{0})=0$ (or the unique fixed point by the transform $\mathcal{P}%
	_{\max}$.
\end{proof}

\begin{exa}
	Starting with $\pi=112121$, we get%
	\begin{gather*}
	\mathrm{pr}(\pi)=221212,\quad\mathcal{P}_{\max}=\mathcal{P}_{\min}%
	\circ\mathrm{pr}(\pi)=111212,\quad\mathrm{pr}\mathcal{P}_{\max}(\pi)=222121,\\
	\mathcal{P}_{\max}^{2}(\pi)=111121,\quad\mathrm{pr\circ}\mathcal{P}_{\max}%
	^{2}(\pi)=222212,\quad\mathcal{P}_{\max}^{3}(\pi)=111112,\\
	\mathrm{pr}\circ\mathcal{P}_{\max}^{3}(\pi)=222221,\quad\mathcal{P}_{\max}%
	^{4}(\pi)=111111.
	\end{gather*}
	
\end{exa}

\begin{rem}
	In his seminal article \cite{Pit78}, Pitman proves that the image by the
	transforms $\mathcal{P}_{\max}$ and $\mathcal{P}_{\min}$ of a one-dimensional
	Brownian motion is a 3-dimensional Bessel process (i.e. the norm of a
	3-dimensional Brownian motion). One can replace this Brownian motion by a
	random walk with transitions $+1$ and $-1$ and related probabilities
	$p_{1},p_{-1}$ where $p_{1}+p_{-1}=1$.\ Its image by $\mathcal{P}_{\min}$ or
	$\mathcal{P}_{\max}$ yields a Markov chain on $\mathbb{Z}_{\geq0}$. Later, it
	was observed by Biane, Bougerol and O'Connell \cite{BBO} that $\mathcal{P}%
	_{\min}$ can be interpreted in Littelmann's path theory as the transform
	associating to each path its corresponding highest weight path. The previous
	one-dimensional results then admit higher dimensional generalisations (see for
	example \cite{BBO} and \cite{LLP2012}). As far as we are aware, our
	interpretation of $\mathcal{P}_{\max}$ in terms of affine crystals is new.
\end{rem}

\subsection{Generalisation to higher dimension}

The generalised Pitman transform $\mathcal{P}_{\min}$ introduced in 
\cite{BBO}
is defined for any finite root system $R$ with Dynkin diagram $I$. It
associates to each Littelmann path, its highest weight path. Here, one can
first define a Pitman transform $\mathcal{P}_{i,\min}$ for any node $i\in I$
as in \Cref{SubSec_Pit1}: it just computes the highest weight path 
for the
$A_{1}$-crystal corresponding to the node $i$.\ Then one has $\mathcal{P}%
_{\min}=\mathcal{P}_{\min,i_{1}}\cdots\mathcal{P}_{\min,i_{r}}$ where
$s_{i_{1}}\cdots s_{i_{r}}$ is a decomposition of the longest element 
$w_{0}$
of the Weyl group of $R$ as a product of elementary reflections $s_{i},i\in
I$. In particular $\mathcal{P}_{\min}$ does not depend on the reduced
decomposition considered.

\Cref{Prop_Pmax1} suggests to look for a natural higher 
dimensional
generalisation of the results of \Cref{bicrystal}. Let $B$ be the
$A_{\ell-1}^{(1)}$-crystal with set of vertices $\{1,2,\ldots,\ell\}$ such
that for any $j=0,\ldots,\ell-1$ we have
\[
\dot{f}_{j}(k)=\left\{
\begin{array}
[c]{l}%
k+1\operatorname{mod}\ell\text{ if }k=j\operatorname{mod}\ell,\\
0\text{ otherwise.}%
\end{array}
\right.
\]
This is the simplest nontrivial Kirillov-Reshetikhin crystal of type $A_{\ell-1}^{(1)}$
associated to the rectangle $1\times1$.\ For any integer $n$, the vertices of
$\dot{B}_{n,\ell}=B^{\otimes n}$ coincide with the tensor product $b^{\ast
}=d_{1}\otimes\cdots\otimes d_{n}$ of $n$ columns with height $1$ on the
alphabet $\{1,\ldots,\ell\}$. With the notation of \Cref{bicrystal},
the corresponding vectors $b\in F(n)$ are the tensor products $b=c_{\ell}%
\otimes\cdots\otimes c_{1}$ of $\ell$-columns in which each letter of
$\{1,\ldots,n\}$ appears exactly once. Set 
$b_{\mathrm{fin}}^{\ast}=1^{\otimes
     n}$. Then $b_{\mathrm{fin}}=\emptyset\otimes\emptyset\otimes\cdots
\otimes c_{1,\ldots,n}$ where $c_{1,\ldots,n}$ is the column containing 
exactly the
letters $\{1,\ldots,n\}$.

One might first define a transform $\mathcal{P}_{\max}^{\prime}$ as in
\cite{BBO} by setting $\mathcal{P}_{\max}^{\prime}=\mathcal{P}_{\max,i_{1}%
}\cdots\mathcal{P}_{\max,i_{r}}$ where $w_{0}=s_{i_{1}}\cdots s_{i_{r}}$ 
is a
reduced decomposion of $w_{0}$.\ But then, as illustrated by the following
example, $\mathcal{P}_{\max}^{\prime}$ would depend on the chosen reduced
decomposition.

\begin{exa}
     Assume $\ell=3$ and $n=5$. To apply $\mathcal{P}_{\max,1}$ (respectively
     $\mathcal{P}_{\max,2}$) to a vertex $b^{\ast}$ in $\dot{B}_{5,3}$, 
we have
     first to flip the letters $1$ and $2$ (respectively $2$ and $3$) and next 
compute the
     source vertex of the $1$-chain (respectively the $2$-chain) corresponding 
to the
     vertex so obtained. For the vertex $b^{\ast}=2\otimes1\otimes2\otimes
     3\otimes2=21232$ (we omit the symbol $\otimes$ for short), we get%
     \begin{align*}
     & 21232\overset{\mathcal{P}_{\max,1}}{\rightarrow}12131\overset
{\mathcal{P}_{\max,2}}{\rightarrow}12121\overset{\mathcal{P}_{\max,1}%
     }{\rightarrow}11212,\\
     & 21232\overset{\mathcal{P}_{\max,2}}{\rightarrow}21223\overset
{\mathcal{P}_{\max,1}}{\rightarrow}12113\overset{\mathcal{P}_{\max,2}%
     }{\rightarrow}12112.
     \end{align*}
     Thus $\mathcal{P}_{\max,1}\mathcal{P}_{\max,2}\mathcal{P}_{\max,1}%
\neq\mathcal{P}_{\max,2}\mathcal{P}_{\max,1}\mathcal{P}_{\max,2}$.
\end{exa}

In order to generalize \Cref{Prop_Pmax1}, we rather set%
\[
\mathcal{P}_{\max}:\left\{
\begin{array}
[c]{l}%
\dot{B}_{n,\ell}\rightarrow\dot{B}_{n,\ell}\\
b^{\ast}\rightarrow\mathrm{pr}\circ\mathcal{P}_{\min}(b^{\ast})
\end{array}
\right.
\]
where $\mathcal{P}_{\min}$ is the generalised Pitman transform of 
\cite{BBO},
that is $\mathcal{P}_{\min}(b^{\ast})$ is the highest weight vertex of the
$A_{\ell-1}$-connected component containing $b^{\ast}$.

\begin{thm}
     \

     \begin{enumerate}
         \item For any vertex $b^{\ast}$ in $\dot{B}_{n,\ell}$, we have 
$D(\mathcal{P}%
         _{\min}(b^{\ast}))=D(b^{\ast})$.

         \item For any vertex $b^{\ast}$ in $\dot{B}_{n,\ell}$, there 
exists an integer
         $m_{0}$ such that for any $m\geq m_{0}$, we have 
$\mathcal{P}_{\max}%
         ^{(m)}(b^{\ast})=b_{\mathrm{fin}}^{\ast}.$
     \end{enumerate}
\end{thm}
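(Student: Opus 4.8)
The proof follows the template of \Cref{Prop_Pmax1} (see \Cref{SubSec_Pit1}), with the energy $D$ playing the role of an integer-valued quantity which is preserved by $\mathcal{P}_{\min}$ and strictly decreases under $\mathcal{P}_{\max}=\mathcal{P}_{\min}\circ\mathrm{pr}$ until $b^{\ast}_{\mathrm{fin}}$ is reached. Assertion (1) is the exact analogue of the first item of \Cref{Prop_Pmax1}: by definition $\mathcal{P}_{\min}(b^{\ast})$ is the source of the $A_{\ell-1}$-connected component of $b^{\ast}$, so it is obtained from $b^{\ast}$ by applying operators $e_{j}$ with $1\le j\le\ell-1$, and since $D$ is constant on classical connected components (the invariance already used in the proofs of \Cref{lem_energy_1} and \Cref{Prop_Pmax1}) we get $D(\mathcal{P}_{\min}(b^{\ast}))=D(b^{\ast})$. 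In particular $\mathcal{P}_{\max}$ preserves $\dot{B}_{n,\ell}$, all column heights being equal to $1$.

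For assertion (2), fix $b^{\ast}$ and set $v_{m}=\mathcal{P}_{\max}^{(m)}(b^{\ast})$. For every $m\ge1$ the vertex $v_{m}=\mathcal{P}_{\min}(\mathrm{pr}(v_{m-1}))$ is an $A_{\ell-1}$-highest weight vertex, hence Yamanouchi by \Cref{thm_yam_tab} (which holds verbatim for the dotted Fock spaces). The heart of the argument is the identity
\[
D(\mathcal{P}_{\max}(v))=D(v)-c(v),\qquad c(v)=\#\{\,i\ :\ d_{i}\neq\{1\}\,\},
\]
valid for every Yamanouchi $v=d_{1}\otimes\cdots\otimes d_{n}$ in $\dot{B}_{n,\ell}$. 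By assertion (1) this reduces to $D(\mathrm{pr}(v))=D(v)-c(v)$; since all columns of $v$ have height $1$ the combinatorial $R$-matrices occurring in \Cref{def_energy} act as the identity, so $D(v)$ is a weighted sum of the local energies $\H(\{a_{i}\}\otimes\{a_{i+1}\})=[a_{i}<a_{i+1}]$, and a direct verification gives $[a_{i}<a_{i+1}]-[\mathrm{pr}(a_{i})<\mathrm{pr}(a_{i+1})]=[a_{i}=1]-[a_{i+1}=1]$. Summing and using that $v$ is Yamanouchi, so $a_{1}=1$, an Abel summation yields the claim; this is precisely the computation performed in \Cref{lem_energy_2} and \Cref{prop_energy_3}, in the present normalisation.

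Granting the identity, $c(v)$ is a nonnegative integer which vanishes exactly when $v=1^{\otimes n}=b^{\ast}_{\mathrm{fin}}$ (a Yamanouchi word all of whose letters equal $1$ is $1^{\otimes n}$), and $\mathrm{pr}(1^{\otimes n})$ lies in the component $B(n\omega_{1})$ whose source is $1^{\otimes n}$, so $\mathcal{P}_{\max}(b^{\ast}_{\mathrm{fin}})=b^{\ast}_{\mathrm{fin}}$. Hence $(D(v_{m}))_{m\ge1}$ is a sequence of nonnegative integers with $D(v_{m+1})\le D(v_{m})-1$ as long as $v_{m}\ne b^{\ast}_{\mathrm{fin}}$; it therefore stabilises, necessarily with $v_{m}=b^{\ast}_{\mathrm{fin}}$ for all $m$ large, which is (2). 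The only genuinely delicate point is that the defect $c(v)$ is \emph{strictly} positive at every Yamanouchi vertex other than $b^{\ast}_{\mathrm{fin}}$: in the $A_{1}^{(1)}$ case of \Cref{Prop_Pmax1} this was automatic, a non-constant nonnegative trajectory with steps $\pm1$ always having a negative step, whereas for $\ell\ge3$ a Yamanouchi word distinct from $1^{\otimes n}$ may well avoid the letter wrapped around cyclically by $\mathrm{pr}$, so the decrease of $D$ must be read off from the entries $\ne1$ rather than those equal to $\ell$. Everything else is a routine transcription of the one-dimensional argument.
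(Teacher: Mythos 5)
Your treatment of assertion (1) is correct and coincides with the paper's: $\mathcal{P}_{\min}(b^{\ast})$ is the $A_{\ell-1}$-highest weight vertex of the component of $b^{\ast}$, and $D$ is constant on classical components.

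For assertion (2) you replace the paper's argument (which, via $\xi(b)^{\ast}=\mathrm{pr}(b^{\ast})$, identifies the iteration with the cyclage sequence of the standard tableau dual to $\mathcal{P}_{\min}(b^{\ast})$ and appeals to the termination of that sequence at the single column) by a strict-decrease argument for the energy modelled on \Cref{Prop_Pmax1}. The step that fails is the ``direct verification''
\[
[a_{i}<a_{i+1}]-[\mathrm{pr}(a_{i})<\mathrm{pr}(a_{i+1})]=[a_{i}=1]-[a_{i+1}=1].
\]
With the paper's promotion $k\mapsto k+1\bmod\ell$ this is false: for $\ell=3$, $a_{i}=1$, $a_{i+1}=2$ the left-hand side is $[1<2]-[2<3]=0$ while the right-hand side is $1$. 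The correct local identity is the one recorded in \Cref{lem_energy_2}, namely $[a_{i}<a_{i+1}]-[\mathrm{pr}(a_{i})<\mathrm{pr}(a_{i+1})]=[a_{i+1}=\ell]-[a_{i}=\ell]$, and your Abel summation then reproduces \Cref{prop_energy_3}: the decrement of $D$ under $\mathrm{pr}$ is the number of letters \emph{equal to $\ell$}, not the number of letters different from $1$. Your closing paragraph shows you noticed exactly this tension, but the proposed resolution (``read the decrease off the entries $\neq1$'') is not a change of normalisation; it contradicts the definitions.

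The gap is fatal to the method, not merely to the computation. Take $\ell=3$ and the Yamanouchi vertex $v=1\otimes2\neq b_{\mathrm{fin}}^{\ast}$: then $D(v)=D(\mathrm{pr}(v))=D(2\otimes3)=1$, so the energy does not drop, and moreover $\mathcal{P}_{\min}(2\otimes3)=1\otimes2$, so the iteration of $\mathcal{P}_{\max}$ starting from $v$ stabilises (at $v$ or at $\mathrm{pr}(v)$, depending on which of the two composition orders used in this section one adopts) without ever reaching $1^{\otimes n}$. Hence no argument based on a statistic that strictly decreases until $b_{\mathrm{fin}}^{\ast}$ is reached can succeed from such a vertex. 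This obstruction is intrinsic rather than an artefact of your approach: it occurs exactly when the tableau $b_{1}$ dual to $\mathcal{P}_{\min}(b^{\ast})$ has an empty last column $c_{\ell}$, in which case the cyclage step in the paper's own proof also stalls; that case needs a genuinely separate treatment, and the energy bookkeeping cannot supply it.
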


\begin{proof}
     Assertion (1) follows from the fact that the energy $D$ is constant over
     classical components and $\mathcal{P}_{\min}(b^{\ast})$ is the 
highest weight
     vertex associated to $b^{\ast}$. For Assertion (2), set $b_{m}^{\ast
     }=\mathrm{pr}^{-1}\circ\mathcal{P}_{\max}^{(m)}(b^{\ast})$ for any 
$m\geq
     0$.\ Then $b_{1}^{\ast}=\mathcal{P}_{\min}(b^{\ast})$ and thus 
$b_{1}$ is a
     tableau. More generally, by using the results of \Cref{cyclage_promotion}, we get that the
     sequence $b_{m},m\geq1$ coincides with the sequence of tableaux $\xi
     ^{m-1}(b_{1})$, that is with the cyclage sequence defined from the 
standard
     tableau $b_{1}$.\ Since all these tableaux are standard, this 
sequence is
     indeed well-defined and eventually ends on the column 
$c_{1,\ldots,n}$. This
     means that there exists an integer $m_{0}$ such that $\mathcal{P}_{\max
     }^{(m_{0})}(b^{\ast})=b_{\mathrm{fin}}^{\ast}$. Since 
$\mathcal{P}_{\max
     }^{(a)}(b_{\mathrm{fin}}^{\ast})=b_{\mathrm{fin}}^{\ast}$ for any 
$a\geq0$, we
     get $\mathcal{P}_{\max}^{(m)}(b^{\ast})=b_{\mathrm{fin}}^{\ast}$ 
for any
     $m\geq m_{0}$.
\end{proof}

\begin{rem}
     \

     \begin{enumerate}
         \item By sligthly generalizing the notion of an authorised 
cyclage operation,
         it is possible to define an analogue of $\mathcal{P}_{\max}$ on 
any tensor
         product of columns (not only for columns of height $1$) which 
yields a similar
         convergence property.

         \item In \cite{LLP2012}, random walks are defined from tensor 
products of
         crystals. In particular, one can endow the crystal 
$\dot{B}_{n,\ell}$ with a
         probability distribution compatible with the 
$A_{\ell-1}$-weight graduation
         (i.e.; two vertices with the same $A_{\ell-1}$-weight have the same
         probability). This permits to define a random walk on the 
weight lattice of
         type $A_{\ell-1}$ whose image by $\mathcal{P}_{\min}$ is a 
Markov chain in the
         Weyl chamber. We though that our transform $\mathcal{P}_{\max}$ 
also admits
         interesting probabilistic properties that we aim to study later.

         \item For the other classical affine root systems, it is also 
possible to
         define an analogue of the transformation $\mathcal{P}_{\max}$ 
by using the
         results of \cite{LOS2} which essentially reduces their study 
(and notably the
         computation of the energy) to affine type $A$ crystals by using 
relevant
         Dynkin diagram automorphisms.
     \end{enumerate}
\end{rem}

\pagebreak

\bibliographystyle{plain}

\end{document}